\documentclass{amsart}
\allowdisplaybreaks

\usepackage[margin=3.815cm]{geometry}
\usepackage{graphicx}

\usepackage{amssymb}
\usepackage{amsmath}
\numberwithin{equation}{subsection} 

\usepackage{setspace}
\usepackage{mathrsfs}
\usepackage{color}
\usepackage{hyperref}
\usepackage{cleveref}
\usepackage{multirow}
\usepackage{psfrag}
\usepackage{enumitem}

\usepackage{tikz}
\usetikzlibrary{calc}

\usepackage{array}
\usepackage{tabularx}
\usepackage{booktabs}

\newcommand{\tr}{\mathrm{tr}}
\newcommand{\Aut}{\mathrm{Aut}}
\newcommand{\Out}{\mathrm{Out}}
\newcommand{\Inn}{\mathrm{Inn}}
\newcommand{\diag}{\mathrm{diag}}
\newcommand{\Ad}{\mathrm{Ad}}
\newcommand{\id}{\mathrm{id}}

\newcommand{\mcal}{\mathcal}

\newcommand{\N}{\mathbb N}
\newcommand{\Z}{\mathbb Z}
\newcommand{\R}{\mathbb R}
\newcommand{\C}{\mathbb C}

\newcommand{\mU}{\mathcal{U}}

\newcommand{\mB}{\mathcal{B}}

\newcommand{\mP}{\mathcal{P}}
\newcommand{\mQ}{\mathcal{Q}}

\newcommand{\mF}{\mathcal{F}}
\newcommand{\mZ}{\mathcal{Z}}
\newcommand{\mM}{\mathcal{M}}
\newcommand{\mN}{\mathcal{N}}
\newcommand{\mH}{\mathcal{H}}

\newcommand{\mR}{\mathcal{R}}

\newcommand{\ul}{\underline}

\newcommand{\ot}{\otimes}

\newcommand{\bbc}{\mathbb{C}}
\newcommand{\bbz}{\mathbb{Z}}
\newcommand{\bbn}{\mathbb{N}}

\newcommand{\IIone}{\mathrm{II}_1}

\newtheorem*{thmA}{Theorem A}
\newtheorem*{thmB}{Theorem B}
\newtheorem*{thmC}{Theorem C}
\newtheorem*{thmD}{Theorem D}

\theoremstyle{plain}

\newtheorem{theorem}{Theorem}[section]
\newtheorem{lemma}[theorem]{Lemma}

\newtheorem{corollary}[theorem]{Corollary}

\newtheorem{proposition}[theorem]{Proposition}
\newtheorem{question}[theorem]{Question}

\theoremstyle{definition}
\newtheorem{definition}[theorem]{Definition}
\newtheorem{example}[theorem]{Example}

\theoremstyle{remark}
\newtheorem{remark}[theorem]{Remark}

\newtheorem{notation}[theorem]{Notation}

\subjclass[2020]{46L37, 46L40, 46L10,  46L55}

\keywords{ $II_1$-factors and their automorphisms, subfactors, standard
  invariant, principal graph, finite depth, regular inclusions,
  generalized Weyl group of subfactor}

\thanks{\em The first named author acknowledges the support of the grant ANRF/ECRG/2024/002328/PMS}
\dedicatory{Dedicated to V.~S.~Sunder on the occasion of his $75^{th}$ birthday.}

\begin{document}

\title{Regular diagonal subfactors}

\begin{abstract}
 We show that a diagonal subfactor arising from a finite family of
 automorphisms of a $II_1$-factor $Q$ is regular precisely when the
 classes of the defining automorphisms occur with same cardinality and
 form a subgroup of $\Out(Q)$, a subgroup that happens to be
 isomorphic to the generalized Weyl group of the subfactor. Moreover,
 it turns out that the cleanest picture of regularity in diagonal
 subfactors is graph-theoretic, namely, a diagonal subfactor is
 regular precisely when its principal graph is a complete, regular,
 balanced bipartite multigraph, with the generalized Weyl group fixing
 its size and the common multiplicity of the defining automorphisms
 determining its regular edge multiplicity.  Prior to the
 characterization of regularity, revisiting Bisch and Popa's
 observations on the standard invariant and depth of diagonal
 subfactors, we give an exact criterion for a diagonal subfactor to
 have any prescribed depth, in terms of a stabilizing sequence of
 subsets of $\Out(Q)$ consisting of non-reduced alternating words in
 the classes of the defining automorphisms, which proves useful in the
 characterization of regularity.
\end{abstract}

\author{K.~C.~Bakshi}
\author{ V.~P.~Gupta}
\author{Guruprasad}
\author{B.~Pal}

\address{Department of Mathematics, I.~I.~T. Kanpur, Kanpur, Uttar Pradesh, INDIA}
\email{keshab@iitk.ac.in, guruprasad21@iitk.ac.in, bpal21@iitk.ac.in}

\address{School of Physical Sciences, Jawaharlal Nehru University, New
  Delhi, INDIA}

\email{vedgupta@jnu.ac.in}

\maketitle

\tableofcontents

\section{Introduction} 
The theory of subfactors, initiated by Jones through his discovery of
the index rigidity for an inclusion $N \subset M$ of $\mathrm{II}_1$
factors \cite{Jo}, has grown into one of the most productive areas of
operator algebras, with deep and enduring connections to quantum
groups, fusion categories, ergodic theory, low-dimensional topology,
and mathematical physics, to name a few; see
\cite{JS} for a comprehensive treatment of the foundational theory. A
recurring theme throughout the subject is that a finite-index
subfactor $N \subset M$ is governed by a rich piece of combinatorial
data attached to it -- its standard invariant, realized variously as a
lattice of higher relative commutants, a paragroup, or a planar
algebra -- and that, under favorable hypotheses, this invariant can be
used to recover the subfactor itself, up to conjugacy. The definitive
result along these lines is Popa's classification of amenable
subfactors of the hyperfinite $\mathrm{II}_1$ factor by their
standard invariant \cite{Po2}.

Given any $II_1$-factor $Q$ and a finite collection of automorphisms
$\{\alpha_1,\dots,\alpha_n\}$ of  $Q$, the associated
diagonal inclusion
\begin{equation}\label{dg-subfactor}
N:=\left\{
\diag(\alpha_1(x), \alpha_2(x), \ldots, \alpha_n(x) ) : x \in Q\right\}
\subset M_n(Q)=:M
\end{equation}
is a subfactor and is called a diagonal subfactor. It has Jones index
$n^2$ and is always reducible (i.e., $\dim(N'\cap M) >1$). Diagonal
subfactors were introduced by Jones and studied extensively by him,
Popa, Bisch and others in the early development of the theory of
subfactors.  Such subfactors provide a rich source of examples of both
finite-depth and infinite-depth subfactors. Among the important
invariants, the standard invariant, the principal graph and the planar
algebra of a diagonal subfactor are all known - see \cite{Bis, Po2,
  BDG}.  In this article, making use of the description of the
standard invariant of diagonal subfactors given by Bisch and Popa
(\cite{Bis, Po2}), we take up the problem of characterizing depth and
regularity in such subfactors, in terms of some algebraic properties
satisfied by the defining automorphisms and some
  graph theoretic properties satisfied by their principal graphs.

    Recently, an interesting and useful connection between inclusions of
  tracial von Neumann algebras and quantum information theory has been
  established and analyzed by Conlon et al in \cite{CCKL}. A
  fundamental theorem of \cite{CCKL}, namely, Theorem 3.7, relies
  heavily on some techniques from the theory of subfactors and
  identifies inclusions of tracial von Neumann algebras whose relative
  commutants provide the so-called `unbiased quantum teleportation
  schemes in the commuting operator framework'. More precisely, they
  depend profoundly on the existence of unitary orthonormal bases in the
  normalizers of such inclusions to construct such `unbiased quantum
  teleportation schemes'.  They further mention (in Example 3.10(2))
  that, from \cite{BG1, BG2, CKP}, it is known
  that every finite-index regular subfactor of type $II_1$ satisfies
  the hypothesis of Theorem 3.7.  Thus, the classification of regular
  diagonal subfactors achieved in this article provides a constructive
  subfactor-theoretic setup to obtain numerous (hopefully significant)
  `unbiased teleportation schemes' for the world of quantum
  information theory.
  
Since every regular subfactor is of depth at most $2$ (see, for
instance, \Cref{regular-depth-2}), we first take up the problem of
pinning down the depth of a diagonal subfactor. Recall that a
finite-index $\IIone$ subfactor $N \subset M$ is said to have depth $k
\in \mathbb{N}$ if $k$ is the least number such that the tower of
relative commutants
\[
  N' \cap M_{k-2} \subset N' \cap M_{k-1} \subset N' \cap M_{k}
\]
is an instance of basic construction, where $\{M_k : k \ge 0\}$ is the
Jones tower of $N \subset M$ with $M_0 := M$. It follows
from \cite[Thm. 3.3 $\&$ Corr. 4.6]{Po} that the depth of any diagonal
subfactor $N \subset M$ (as in \eqref{dg-subfactor}) is finite if and
only if the subgroup generated by $\{ [\alpha_i] : 1 \leq i \leq n\} $
in $\Out(Q)$ is finite. However, a characterization of
the {precise} depth of a finite-depth diagonal subfactor does not seem
to be available in the literature. We settle this here: for $k \ge 2$,
we show that $N \subset M$ has depth $k$ precisely when the set of
(non-reduced) alternating words of length at most $k - 1$ in the alphabet
$\{[\alpha_i] : 1 \le i \le n\}$ forms a subgroup of
$\Out(Q)$.  More precisely, for $r \in \mathbb{N} \cup \{0\}$, let (as
in \cite{Po2})
\[
  \gamma(j_0, j_1, \ldots, j_r) :=
  \alpha_{j_r}^{(-1)^r} \alpha_{j_{r-1}}^{(-1)^{r-1}} \cdots
  \alpha_{j_1}^{-1} \alpha_{j_0}, \qquad 1 \le j_0, \ldots, j_r \le n;
\]
further, let $\Gamma_r := \{\gamma(j_0, \ldots, j_r) : 1 \le j_0,
\ldots, j_r \le n\}$ and let $\widehat\Gamma_r \subset \Gamma_r$ be a
maximal family of pairwise inequivalent automorphisms in $\Gamma_r$, with image
$[\widehat\Gamma_r] \subset \Out(Q)$. We prove:

\begin{thmA}[See \Cref{depth subgroup theorem}]
Let $N \subset M$ be a diagonal subfactor (as in
\eqref{dg-subfactor}). Then, for $k \ge 2$, the following statements are
equivalent:
\begin{enumerate}[label=(\arabic*)]
  \item $N \subset M$ has depth $k$.
  \item $k$ is least such that $[\widehat\Gamma_i] =
    [\widehat\Gamma_{k-2}]$ for all $i \ge k-2$.
  \item $k$ is least such that $[\widehat\Gamma_{k-2}]$ is a subgroup of
    $\Out(Q)$.
\end{enumerate}
\end{thmA}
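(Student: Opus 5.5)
The proof will rest on the Bisch--Popa description of the standard invariant of a diagonal subfactor (\cite{Bis, Po2}). For $r \ge 0$, the relative commutant $N' \cap M_{r}$ (with $M_{-1}=N$, $M_0=M$) is a direct sum of matrix algebras. Its minimal central projections are indexed by the classes in $[\widehat\Gamma_{r+1}]$, or equivalently by the even/odd vertices of the principal graph at distance $\le r+1$. The multiplicity of the summand attached to a class $g$ is the number of tuples $(j_0,\dots,j_{r+1})$ with $[\gamma(j_0,\dots,j_{r+1})]=g$. Concretely, $N\subset M_r$ is (up to amplification) the diagonal inclusion defined by the family $\Gamma_{r+1}$. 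Hence $N'\cap M_r \cong \bigoplus_{g\in[\widehat\Gamma_{r+1}]} M_{m_r(g)}(\mathbb C)$, and the inclusion $N'\cap M_{r-1}\subset N'\cap M_r$ has inclusion matrix given by "append one letter" $\gamma\mapsto \alpha_j^{\pm1}\gamma$.

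\textbf{(1)$\Leftrightarrow$(2).} Because $\gamma(j_0,\dots,j_{r})$ equals $\gamma(j_0,\dots,j_{r-2},j_r,j_r)$ up to the trivial cancellation $\alpha_{j}^{\pm}\alpha_j^{\mp}$ (repeat the last letter), we always have $\Gamma_{r-2}\subseteq\Gamma_r$ up to the parity of the length. So the sequences $[\widehat\Gamma_{2s}]$ and $[\widehat\Gamma_{2s+1}]$ are increasing. A new vertex appears in the principal graph at level $r+1$ exactly when $[\widehat\Gamma_{r+1}]\supsetneq[\widehat\Gamma_{r-1}]$.

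The standard criterion says that $N'\cap M_{k-2}\subset N'\cap M_{k-1}\subset N'\cap M_k$ is a basic construction iff no new central summand appears in $N'\cap M_k$ that is not already in the reflection, i.e.\ iff the graph has no new vertices beyond distance $k$. Translating, this means $[\widehat\Gamma_{i}]=[\widehat\Gamma_{i-2}]$ for $i\ge k$. I expect the indices will need careful bookkeeping to match exactly the condition "$[\widehat\Gamma_i]=[\widehat\Gamma_{k-2}]$ for all $i\ge k-2$" as stated. In particular, one must reconcile the two parities: multiplying by an inverse pair $\alpha_j^{-1}\alpha_j$ versus $\alpha_j\alpha_j^{-1}$ shows that stabilization of one parity forces stabilization of the other one step later. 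The identification should use that the words in $\Gamma_{r}$ of odd and even lengths both correspond to $N$--$N$ bimodules after composing with a fixed $\alpha_{j}$. Checking this index convention against the paper's definition of depth is where I expect the main obstacle.

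\textbf{(2)$\Leftrightarrow$(3).} Suppose $S:=[\widehat\Gamma_{k-2}]$ is stable, meaning $[\widehat\Gamma_{k-2+t}]=S$ for all $t$. We have $[\widehat\Gamma_{a+b+1}]\supseteq [\widehat\Gamma_b]\cdot[\widehat\Gamma_a]$ with suitable inversion, since concatenating alternating words gives alternating words and inverses of alternating words are alternating words of the same length. Hence $S\cdot S\subseteq S$ and $S^{-1}\subseteq S$, and $S$ is a subgroup.

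Conversely, suppose $S$ is a subgroup. Then $[\widehat\Gamma_{k-1}]=\{[\alpha_j]^{\pm}\}\cdot S$. Since $S$ contains the elements $[\alpha_j^{-1}\alpha_i]$ and it also contains $\Gamma_{k-3}$-type elements, the stable set is $S$ as well. Induction then shows $[\widehat\Gamma_i]=S$ for all $i\ge k-2$. Minimality of $k$ transfers between (2) and (3) because each implication holds for every fixed $k$.
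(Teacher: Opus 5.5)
Your proposal has a genuine gap, and it sits exactly at the step you flagged as the main obstacle: reconciling the two parities in (1)$\Leftrightarrow$(2). The missing ingredient is the normalization $\alpha_1=\mathrm{id}_Q$. This is the standing hypothesis of \Cref{sec-3} under which the theorem is proved, and without it the statement is false.

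Your repeated-letter identity only gives $\Gamma_{r-2}\subseteq\Gamma_r$. Likewise, your propagation $[\widehat\Gamma_{r+2}]=[\widehat\Gamma_r]\Rightarrow[\widehat\Gamma_{r+3}]=[\widehat\Gamma_{r+1}]$ yields one stable set per parity. Nothing forces these two sets to coincide, yet (2) demands $[\widehat\Gamma_{k-1}]=[\widehat\Gamma_{k-2}]$.

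Here is a concrete failure. Let $\sigma$ have outer period $2$, let $[\theta]\notin H:=\{[\mathrm{id}_Q],[\sigma]\}$, and take $\alpha_1=\theta$, $\alpha_2=\theta\sigma$. By \Cref{beta1=id} this subfactor is isomorphic to the one defined by $\{\mathrm{id}_Q,\sigma\}$, which has depth $2$. But $[\widehat\Gamma_{2m}]=[\theta]H$ and $[\widehat\Gamma_{2m+1}]=H$ for all $m$. So (2) holds for no $k$, while the least $k$ in (3) is $3$.

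With $\alpha_1=\mathrm{id}_Q$ the step is immediate. Appending the letter $\alpha_1^{\pm1}$ gives $\Gamma_r\subseteq\Gamma_{r+1}$, so $[\widehat\Gamma_0]\subseteq[\widehat\Gamma_1]\subseteq\cdots$ is a single chain. Depth $k\ge 2$ means $k$ is least with $n_{k-2}=n_k$, so the chain forces $[\widehat\Gamma_{k-2}]=[\widehat\Gamma_{k-1}]=[\widehat\Gamma_k]$. Your same-parity propagation then gives $[\widehat\Gamma_i]=[\widehat\Gamma_{k-2}]$ for all $i\ge k-2$. Completed this way, your route is slightly more direct than the paper's, which passes through the depth-$2$ inclusion $N\subset M_{k-2}$ and the equality $n_{k-2}=n_{3k-4}$.

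Several other steps also need repair.

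\begin{itemize}
\item Your dictionary is off by one. By \Cref{diagonal basic construction}, $N\subset M_r$ is the diagonal subfactor defined by $\Gamma_r$ (words with $r+1$ letters), so $\dim\mathcal Z(N'\cap M_r)=n_r$. Already $N\subset M_0$ is defined by $\Gamma_0=\mathcal{F}$, not by $\Gamma_1$.

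\item In (2)$\Rightarrow$(3), the inverse of a word in $\Gamma_r$ lies in $\Gamma_r$ only for odd $r$; for even $r$ it begins and ends with inverted letters. This is harmless, because (2) gives stability at every $i\ge k-2$. You may therefore work at the odd index $2k-3$, using $\Gamma_{k-2}\Gamma_{k-2}=\Gamma_{2k-3}$ ($k$ odd) or $\Gamma_{k-2}^{-1}\Gamma_{k-2}=\Gamma_{2k-3}$ ($k$ even), as in \Cref{word lemma}.

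\item In (3)$\Rightarrow$(2), to get $[\alpha_j]^{\pm1}S\subseteq S$ you need the single letters $[\alpha_j]$ in $S$. Knowing $[\alpha_j^{-1}\alpha_i]\in S$ is not enough: in the example, $S=H$ for $k=3$, but $[\alpha_1]S=[\theta]H$. Once more it is $\alpha_1=\mathrm{id}_Q$ that gives $[\alpha_j]=[\alpha_1^{-1}\alpha_j]\in S$.

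\item Depth $1$ must be excluded, as the paper tacitly does. If every $\alpha_i$ is inner, then (2) and (3) hold with $k=2$ although the depth is $1$. Your transfer of minimality has to set this case aside.
\end{itemize}
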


As an immediate application, \Cref{depth-k-example} gives a short and
elegant proof of the (probable) folklore fact that a diagonal
subfactor of depth $k$ exists for every $k \ge 1$.

The second (and the main) aspect of this article focuses on analyzing
an important analytic property of diagonal subfactors, namely,
regularity.  Regularity is a fundamental concept in the study of
inclusions of self-adjoint (unital as well as non-unital) operator
algebras. For an inclusion $\mN \subset \mM$ of von Neumann algebras
(with common identity), the group of (unitary) normalizers of $\mN$ in
$\mM$ is given by the collection
\[
\mathcal{N}_\mM(\mN):=\{u\in\mathcal{U}(\mM)\mid u\mN u^*=\mN\};
\]
and, the inclusion $\mN \subset \mM$ is said to be regular if
$\mathcal{N}_\mM(\mN)$ generates $\mM$ as a von Neumann algebra, i.e.,
$\mathcal{N}_\mM(\mN)'' = \mM$.  While the classification of arbitrary
finite-index subfactors remains out of reach, regularity provides
additional structure that has been exploited effectively in
characterizing as well as classifying such inclusions, often by
reducing the analysis to certain algebraic data.  The first
classification result was the celebrated theorem by Feldman and Moore
(\cite{FM}) who characterized Cartan pairs $D \subset M$ via von
Neumann algebras associated to twisted measured equivalence
relations. This was recently generalized by Popa et al in \cite{PSV},
wherein they classified all regular subalgebras $B \subset R$ of the
hyperfinite $II_1$-factor $R$ with $B'\cap R = \mZ(B)$.  (However,
regularity of general non-irreducible regular inclusions of
$II_1$-factors was not touched upon in \cite{PSV}.)

Prior to \cite{PSV}, based on some independent works by Jones, Popa,
Sutherland and Kosaki, it was known that a finite-index subfactor $N
\subset M$ of type $II_1$ is irreducible and regular if and only if
there exists a finite group $G$ (namely, the Weyl group
$G:=\frac{\mN_{\mM}(\mN)}{\mU(\mN)}$) that admits an outer action on
$N$ and $(N \subset M) \cong (N \subset N \rtimes G)$.  More
generally, one can deduce from \cite{Cho, Loi} that for any
finite-index regular subfactor $N \subset M$ of type $II_1$, the
so-called generalized Weyl group $W(N\subset
M):=\frac{\mN_{\mM}(\mN)}{\mU(\mN)\mU(N'\cap M)}$ is finite and it
admits an outer cocyle action $(\alpha, \sigma)$ on the intermediate
von Neumann subalgebra $\mR:=N \vee (N'\cap M)$ such that $(N \subset
M) \cong (N \subset \mR \rtimes_{(\alpha, \sigma)} W(N\subset M))$ -
see \cite{Cam} as well.  (Very recently, a similar cocycle
crossed-product structure result (via cocycle actions of the
generalized Weyl group of the inclusion) was obtained even for regular
reducible finite-index (resp., irreducible infinite index) inclusions
of simple unital $C^*$-algebras in \cite{BG3} (resp., in \cite{BCP}).)

The first and the second named authors got interested in analyzing
regular inclusions of operator algebras quite recently (\cite{BG1,
  BG2, BG3, BCP}). This article is an output of the same endeavour.  In
\cite{BG1}, employing (the coset representatives of the) generalized
Weyl group and path algebra techniques, it was shown that every
finite-index regular subfactor of type $II_1$ has integer index and
admits a two-sided (orthonormal) Pimsner-Popa basis, thereby providing
a partial positive answer to a question asked by Jones (about the
existence of two-sided Pimsner-Popa bases for every extremal
subfactor).  Then, relying heavily on the characterization of depth 2
subfactors by Nikshych and Vainerman (\cite{NV}), it was shown in
\cite{BG2} that every regular finite-index subfactor of type $II_1$
with commutative relative commutant arises as a crossed-product
inclusion with respect to a minimal action of a biconnected weak Kac
algebra. (A full fledged characterization of regular depth 2
subfactors keeps eluding us even now.) Moreover, based on the
techniques employed in \cite{BG2}, employing some special matrices
from the world of Quantum Information Theory, it was later shown by
Crann et al (in \cite{CKP}) that such subfactors always admit unitary
orthonormal Pimsner-Popa bases, thereby providing a partial answer to
a question of Popa (about the existence of unitary orthonormal bases
for (irreducible) integer-index subfactors) and establishing that
every finite-index regular subfactor has depth at most $2$ - see
\Cref{regular-depth-2}. 
\smallskip

It was natural, in pursuit of a finer understanding of regular
subfactors, to ask what regularity means concretely in diagonal
subfactors. This article answers this question completely, and shows
along the way that the generalized Weyl group governs the structure of a
regular diagonal subfactor to a striking degree. Our main results are as
follows:

\begin{thmB} [See \Cref{main theorem 2}] \label{thmB}
  Let $Q$ be a $\IIone$-factor, $\mF = \{\alpha_i : 1 \le i \le
  n,\ \alpha_1 = \mathrm{id}_Q\} \subset \Aut(Q)$ and $\{\alpha_{i_s}
  : 1 \le s \le r,\ \alpha_{i_1} = \mathrm{id}_Q\}$ be a maximal
  pairwise-inequivalent subfamily of $\mF$. Then, the diagonal
  subfactor $N:=\{\diag(\alpha_1(x), \ldots, \alpha_n(x)): x \in Q\}
  \subset M_n(Q)=:M$ is regular if and only if $G := \{[\alpha_{i_s}]
  : 1 \le s \le r\}$ is a subgroup of $\Out(Q)$ and $m_1 = m_2 =
  \cdots = m_r$, where $m_s := |\{i : \alpha_i \sim \alpha_{i_s}\}|$,
  $1 \leq s \leq r$.
\end{thmB}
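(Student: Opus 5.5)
Throughout, write $\mF_s := \{i : \alpha_i \sim \alpha_{i_s}\}$, and choose unitaries $u_i \in \mU(Q)$ with $\alpha_i = \Ad u_i \circ \alpha_{i_s}$ for $i \in \mF_s$. The plan is to describe the normalizer $\mN_M(N)$ explicitly by matrix units and then compare the von Neumann algebra it generates with $M$.

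\textbf{The normalizer.} Conjugating by the diagonal unitary $\diag(u_i^*)$ does not affect regularity. So we may assume $\alpha_i = \alpha_{i_s}$ whenever $i \in \mF_s$. Then $N \cong \{\oplus_s (\alpha_{i_s}(x) \otimes 1_{m_s})\}$, and
\[
N' \cap M \cong \oplus_s M_{m_s}(\mathbb C),
\]
since $\alpha_{i_s}^{-1}\alpha_{i_t}$ is outer for $s \neq t$. Take $w = (w_{ij}) \in \mU(M)$ with $wNw^* = N$. Then $wN w^*=N$ means there is $\theta \in \Aut(Q)$ with
\[
w_{ij}\,\alpha_j(x) = \alpha_i(\theta(x))\, w_{ij} \quad \text{for all } x \in Q .
\]
Thus each $w_{ij}$ intertwines $\alpha_j$ and $\alpha_i\theta$. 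Since $Q$ is a factor, $w_{ij}$ is either $0$ or a scalar multiple of a unitary, and in the latter case $[\alpha_i\theta] = [\alpha_j]$. Put $g = [\theta]$. Because $w$ is unitary, every column has a nonzero entry. Hence every $[\alpha_j]$ lies in $[\mF]g^{-1}$, so the map $[\alpha_j] \mapsto [\alpha_j]g^{-1}$ permutes $G$.

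\textbf{Necessity.} Suppose $N \subset M$ is regular. The algebra generated by $\mN_M(N)$ contains all the $E_{11}$-corner entries $M$ has, so for each $s$ there is a normalizer $w$ with a nonzero entry $w_{1j}$ for some $j \in \mF_s$. For that $w$, with $\alpha_1 = \id$, we get $[\theta] = [\alpha_{i_s}]$. The permutation property then gives $G\,[\alpha_{i_s}]^{-1} = G$ for every $s$. Since $\id \in G$, this forces $G$ to be closed under products and inverses, so $G$ is a subgroup.

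For the multiplicities, note that $w$ is unitary and its block-support is the permutation $\pi_g : s \mapsto t$ where $[\alpha_{i_s}]g^{-1} = [\alpha_{i_t}]$. So $w$ maps the $\mF_s$ coordinates bijectively onto the $\mF_t$ coordinates, which requires $m_s = m_t$ whenever $w$ links $s$ and $t$. By the previous paragraph every $s$ is linked to $1$, so all $m_s$ are equal.

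\textbf{Sufficiency.} Suppose $G$ is a subgroup and $m_s = m$ for all $s$. Write $M = M_r(\mathbb C) \otimes M_m(\mathbb C) \otimes Q$. Generators for $M$ come from three sources.
\begin{enumerate}[label=(\roman*)]
\item The unitaries of $N' \cap M \cong \oplus_s M_m$ are normalizers. Together with $N$ they generate $\oplus_s(\alpha_{i_s}(Q) \otimes M_m)$.
\item For each $g \in G$, pick $\theta$ representing $g$. Each $\alpha_{i_t}\theta$ is then inner-equivalent to $\alpha_{i_{\pi(t)}}$ via some unitary $v_t$. The block-permutation unitary with blocks $v_t \otimes 1_m$ normalizes $N$.
\item These permutation unitaries realise all block matrix units linking the indices $s$, up to unitaries in $Q$, because $G$ acts transitively on itself.
\end{enumerate}
Combining (ii)–(iii) with the diagonal algebra from (i) yields every $E_{st} \otimes M_m \otimes Q$, so $\mN_M(N)'' = M$.

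\textbf{Expected obstacle.} The main care is in necessity: one must justify that regularity forces a normalizer with a nonzero entry linking block $1$ to each block $s$. I would argue this by contradiction. Otherwise every normalizer commutes with the central projection onto the blocks reachable from $1$. That projection is then nontrivial and lies in $\mN_M(N)'$, contradicting $\mN_M(N)'' = M$.
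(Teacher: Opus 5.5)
Your proof is correct. The sufficiency half is essentially the paper's; the necessity half takes a genuinely different route.

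\textbf{Sufficiency.} The paper also realises right multiplication by each class of $G$ through generalized permutation unitaries (\Cref{to prove regularity}) and combines them with the block-diagonal algebra $N\vee(N'\cap M)$. You choose $v_t$ directly with $\Ad_{v_t}\circ\alpha_{i_{\pi(t)}}=\alpha_{i_t}\theta$ and use blocks $v_t\otimes 1_m$. This bypasses the paper's coset/cycle bookkeeping and its separate reduction to $P\otimes I_k$.

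\textbf{Necessity.} The paper gets $m_1=\cdots=m_r$ from the same transitivity argument you sketch (\Cref{homogeneous relative commutant}). It obtains the subgroup property only indirectly. Regularity gives depth at most $2$ (\Cref{regular-depth-2}, which rests on \cite{BG1, BG2} and the unitary orthonormal bases of \cite{CKP}). Depth $2$ then forces $G$ to be a subgroup by Theorem A and \Cref{depth two}, that is, via the Bisch--Popa description of the standard invariant.

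You instead analyse the normalizers themselves:
\begin{itemize}
\item each $w\in\mN_M(N)$ induces some $\theta_w\in\Aut(Q)$;
\item factoriality forces the nonzero entries of $w$ into the pattern of right multiplication by $[\theta_w]$, so $G[\theta_w]=G$;
\item transitivity produces, for each $s$, a $w$ with $[\theta_w]=[\alpha_{i_s}]$.
\end{itemize}
This is self-contained and elementary, and it yields more. The map $w\mapsto[\theta_w]$ is a homomorphism $\mN_M(N)\to\Out(Q)$ with kernel exactly $\mU(N)\mU(N'\cap M)$. Its image lies in $G$ (look at a nonzero entry in row $1$) and equals $G$ under regularity. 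So \Cref{Weyl group} follows immediately, without the index formula of \Cref{index equation}. What the paper's route buys instead is the link between regularity and the depth and principal-graph picture it exploits in Theorems A and D.

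\textbf{Minor slips.} None of these affects correctness.
\begin{itemize}
\item A nonzero entry in column $j$ gives $[\alpha_j]\in G[\theta]$, not $G[\theta]^{-1}$. Finiteness gives $G[\theta]=G$ either way.
\item The entry linking block $1$ to block $s$ need only lie in some row of your $\mF_1$, not literally row $1$. That suffices, since all those $\alpha_a$ equal $\id_Q$ after your reduction.
\item The equality $m_s=m_t$ is most cleanly read off by comparing traces in $wP_tw^*=P_s$, where $P_t$ are the minimal central projections of $N'\cap M$. This is cleaner than saying $w$ maps coordinates ``bijectively''.
\end{itemize}
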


Given Theorem B, it is natural to ask whether every finite subgroup of $
\Out(Q)$ arises as the associated subgroup of some regular
diagonal subfactor. This is indeed the case
(\Cref{prescribed-Weyl-gp}), and follows at once from:

\begin{thmC} [See \Cref{Weyl group}.]
  If the diagonal subfactor $N \subset M$ of Theorem B is regular,
  then $G \cong W(N \subset M)$, where $G$ is as in Theorem B.

  In particular, $\Out(Q)$ contains a
  copy of the generalized Weyl group $W(N \subset M)$.
\end{thmC}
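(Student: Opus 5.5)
The plan is to compute the normalizer $\mN_M(N)$ explicitly and to read off the quotient by $\mU(N)\mU(N'\cap M)$. By Theorem B, $G$ is a group and all multiplicities agree, $m_1=\cdots=m_r=:m$, so $n=mr$. Conjugating by a permutation matrix in $M$ (which only replaces $N$ by a unitarily conjugate subfactor and so leaves $W$ unchanged), we may group the indices into blocks $B_s=\{i:\alpha_i\sim\alpha_{i_s}\}$. Next, conjugating by a diagonal unitary $\diag(u_1,\dots,u_n)$, where $\alpha_i=\Ad u_i\circ\alpha_{i_s}$ for $i\in B_s$, we may assume $\alpha_i=\alpha_{i_s}=:\beta_s$ for every $i\in B_s$. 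Then $N=\{\bigoplus_s \beta_s(x)\otimes 1_m\}$ and $N'\cap M\cong\bigoplus_s M_m(\mathbb C)$, the block-diagonal scalar matrices; this uses the standard fact that $\{y\in Q: y\alpha(x)=\beta(x)y\ \forall x\}$ is $0$ unless $\alpha\sim\beta$.

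Now let $u=[u_{ij}]\in\mN_M(N)$. Then $\theta:=\Ad u|_N$ is an automorphism of $N\cong Q$, so there is $\theta_0\in\Aut(Q)$ with $u\,\diag(\alpha_j(x))=\diag(\alpha_i(\theta_0(x)))\,u$. Entrywise this reads $u_{ij}\alpha_j(x)=\alpha_i\theta_0(x)u_{ij}$. Hence $u_{ij}\neq0$ forces $\alpha_j\sim\alpha_i\theta_0$, and in that case $u_{ij}$ is a scalar multiple of a fixed unitary implementing the equivalence. Since $u$ is invertible, every row $i$ has a nonzero entry, so $[\alpha_i\theta_0]\in\{[\beta_s]\}=G$ for all $i$. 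Taking $i=1$ with $\alpha_1=\id$ gives $[\theta_0]=:g_u\in G$.

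Define $\Phi:\mN_M(N)\to G$ by $u\mapsto[\theta_0]$. First, $\Phi$ is well defined, because $\theta_0$ is determined by $\theta$. Second, $\Phi$ is a homomorphism: for a product $uv$ we have $\theta_0^{uv}=\theta_0^u\theta_0^v$. I will fix the convention (possibly after inverting) so that the composition order comes out right.

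\textbf{Surjectivity.} This is where equal multiplicities are used. Given $g=[\beta_t]\in G$, left multiplication by $g$ permutes $G$ (because $G$ is a group), so $[\beta_s\beta_t]=[\beta_{\pi(s)}]$ for some permutation $\pi$. Choose unitaries $w_s$ with $\beta_{\pi(s)}=\Ad w_s\circ\beta_s\beta_t$. Since all blocks have the same size $m$, the block matrix $u$ whose $(\pi(s),s)$ block is $w_s\otimes 1_m$ is a unitary in $M$. It normalizes $N$, and $\Phi(u)=[\beta_t]^{\pm1}$. (This is essentially the construction behind the sufficiency direction of Theorem B.)

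\textbf{Kernel.} I expect this to be the main obstacle. If $[\theta_0]=1$, write $\theta_0=\Ad v$ with $v\in\mU(Q)$. Replacing $u$ by $u\cdot\diag(\alpha_i(v))^*$, which multiplies by an element of $\mU(N)$, we may assume $\theta_0=\id$. Then $u\in N'\cap M$, so $u\in\mU(N'\cap M)$. Conversely, $\mU(N)\mU(N'\cap M)\subseteq\ker\Phi$: for $a\in\mU(N)$ the automorphism is inner, and for $b\in\mU(N'\cap M)$ it is trivial. The care needed here is to check that inner automorphisms of $N$ correspond exactly to inner $\theta_0$ under the identification $N\cong Q$; this holds because that identification is a $*$-isomorphism. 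Hence $W(N\subset M)=\mN_M(N)/\ker\Phi\cong G$. Finally, since $G\subseteq\Out(Q)$, the last assertion of the statement is immediate.
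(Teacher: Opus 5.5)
Your argument is correct, but it runs in the opposite direction from the paper's proof.

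\textbf{The paper's route.} It builds explicit generalized-permutation representatives $U_t\in\mathcal{U}_{gp}(n,Q)\cap\mN_M(N)$, obtained from \Cref{to prove regularity} after conjugating $N$ to its standard form, with $\Ad_{U_t}$ implementing $\alpha_{i_t}$ on $N$. It then proves that $[\alpha_{i_t}]\mapsto[U_t]$ is an injective homomorphism $G\to W(N\subset M)$. Surjectivity comes only from counting: $|W(N\subset M)|=r$ follows from the index formula $[M:N]=|W(N\subset M)|\dim(N'\cap M)$ of \Cref{index equation}, which rests on the regular-subfactor machinery of [BG1].

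\textbf{Your route.} You use the canonical map $W(N\subset M)\to\Out(N)\cong\Out(Q)$, $[u]\mapsto[\pi^{-1}\circ\Ad_u|_N\circ\pi]$. Its injectivity (your ``kernel'' step, which you expected to be the main obstacle) is actually routine and holds for any inclusion. So the ``in particular'' clause is true for every diagonal subfactor, regular or not.

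The real content of your proof is identifying the image:
\begin{itemize}
\item the entrywise intertwiner argument gives image $\subseteq G$, again with no hypothesis;
\item the block-permutation unitaries give image $\supseteq G$, and this is exactly where the group property and the equal multiplicities enter.
\end{itemize}

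\textbf{Comparison.} Your route is self-contained and avoids the index formula. The paper's route yields explicit coset representatives in $\mathcal{U}_{gp}(n,Q)$, which it reuses later, e.g.\ for $W(M_t\subset M_{t+1})$.

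\textbf{Cosmetic points.}
\begin{itemize}
\item No convention fix is needed, since $\theta_0^{uv}=\theta_0^u\theta_0^v$ holds exactly.
\item Your surjectivity unitary actually has $\Phi(u)=[\beta_t]^{-1}$, and it is right, not left, multiplication by $g$ that you use. Both are harmless because $G$ is a group.
\end{itemize}

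Your framework in fact gives more. Combine your entrywise analysis (a normalizer $u$ has $u_{ij}\neq0$ only if $[\alpha_i]^{-1}[\alpha_j]\in\Phi(\mN_M(N))$) with the fact that $\mN_M(N)''$ is the weak closure of the linear span of $\mN_M(N)$. This shows directly that regularity forces $\Phi(\mN_M(N))=G$. Hence $G$ is a group, without passing through the depth-$2$ results.
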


\noindent Here are some interesting applications of Theorems B and C:\begin{itemize}[leftmargin=6mm]
\item By Theorem B and a suitable adaptation
(\Cref{isomorphism-theorem-general}) of an observation of Popa (from
\cite{Po2}), it turns out that a regular diagonal subfactor via
automorphisms of a fixed $\IIone$-factor $Q$ is determined, up to
isomorphism, by its associated subgroup of $\Out(Q)$, up to conjugacy
- see \Cref{isomorphism-theorem}. As a consequence, thanks to
\cite{PV} and Theorem C, for any finite group $H$, there exists a
countably infinite family of mutually non-isomorphic regular diagonal
subfactors of index $|H|^2$ with generalized Weyl group $H$ - see
\Cref{infinite-family}.  \color{black} \smallskip

\item We also obtain a useful
structure theorem for regular diagonal subfactors with abelian
generalized Weyl group - see \Cref{abelian-regular-diagonal}.\smallskip
\end{itemize}
 Theorems B and C also yield the following graph-theoretic
characterization of regularity:

\begin{thmD}[See \Cref{principal graph theorem}]
A diagonal subfactor (as in Theorem B) is regular if and only if its
principal graph is a complete, regular, balanced bipartite
multigraph, where the number of even (equivalently, odd)
vertices equals $|W(N \subset M)|$, and the multiplicity of every edge
equals the number of automorphisms in $\mF$ equivalent to
$\mathrm{id}_Q$.
\end{thmD}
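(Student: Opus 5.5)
We will derive Theorem D from Theorem B and Theorem C together with the Bisch--Popa description of the principal graph of a diagonal subfactor \cite{Bis, Po2}. That description goes as follows.

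The even vertices at distance $2$ from $*$ correspond to the irreducible $N$-$N$ bimodules ${}_{\alpha}L^2(Q)$ with $[\alpha] \in [\widehat\Gamma_{1}]$. More generally, even vertices correspond to the classes in $\bigcup_r [\widehat\Gamma_{2r}]$. Odd vertices correspond to $N$-$M$ bimodules indexed by classes in $\bigcup_r [\widehat\Gamma_{2r+1}]$; equivalently, these are right cosets of the form $[\gamma]\,[\alpha_j]^{-1}$. The number of edges between an even vertex $[\gamma]$ and an odd vertex $[\delta]$ is
\[
\bigl|\{ j : [\alpha_j]^{-1}[\gamma] = [\delta]\}\bigr| .
\]
This count is read off from the decomposition of ${}_N L^2(M)_M$, which is the sum $\bigoplus_j {}_{\alpha_j}L^2(Q)$ tensored appropriately.

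\textbf{Forward direction.} Assume $N\subset M$ is regular. By Theorem B, $G=\{[\alpha_{i_s}]\}$ is a subgroup of $\Out(Q)$ and all multiplicities $m_s$ equal a common value $m$. Then $m$ is exactly the number of $\alpha_i$ equivalent to $\mathrm{id}_Q=\alpha_{i_1}$.

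Since $G$ is a group, every alternating word $\gamma(j_0,\dots,j_r)$ has class in $G$. The word $\gamma(j_0)$ alone already exhausts $G$, so $[\widehat\Gamma_r]=G$ for all $r\ge0$. By Theorem A the depth is at most $2$. Hence both the even and the odd vertex sets are indexed by $G$, so each has $|G|=|W(N\subset M)|$ elements by Theorem C.

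For $g,h\in G$, the number of edges between $g$ and $h$ is the number of $j$ with $[\alpha_j]=g h^{-1}$. Since $gh^{-1}\in G$, this number is exactly $m$. The graph is therefore complete bipartite, balanced, and regular with edge multiplicity $m$.

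\textbf{Converse.} Assume the principal graph is a complete, regular, balanced bipartite multigraph with the stated parameters. Consider the star vertex $*=[\mathrm{id}]$. Its neighbours are the classes $[\alpha_j]^{-1}$, and the edge to $[\alpha_{i_s}]^{-1}$ carries multiplicity $m_s$. Regularity of the edge multiplicity therefore forces $m_1=\cdots=m_r$.

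Completeness together with finiteness means the graph stops at depth $2$: every even vertex is adjacent to $*$'s neighbours and no new vertices appear. So $[\widehat\Gamma_2]=[\widehat\Gamma_0]$ up to the identification, and by Theorem A (the stabilization condition) $[\widehat\Gamma_0]$, the set of classes $[\alpha_{j_1}]^{-1}[\alpha_{j_0}]$, is a subgroup.

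It remains to show that $G$ itself is a group. Completeness says that every odd vertex is joined to every even vertex, in particular to $*$. Hence the odd vertex set is $G^{-1}$ (after identification), and its size equals the number of even vertices, which is $|[\widehat\Gamma_0]|$. Now compare sizes. We have $G^{-1}G\supseteq G^{-1}$, and balance gives $|G^{-1}G|=|G|$. Hence $G^{-1}G=G^{-1}g$ for each $g\in G$, because each translate $G^{-1}g$ already has $|G|$ elements. Taking $g=1$ shows $G^{-1}G=G^{-1}$. So $G^{-1}$ is closed under multiplication, making it a finite subgroup, and therefore $G$ is a subgroup. Theorem B now yields regularity.

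\textbf{Main obstacle.} The delicate step is the bookkeeping of the principal graph: correctly identifying the odd vertices with classes of the form $[\gamma][\alpha_j]^{-1}$ modulo equivalence, and counting edge multiplicities through repeated equivalent automorphisms. For this I would follow Bisch's explicit computation of the bimodules ${}_N L^2(M_k)_{N}$ and ${}_N L^2(M_k)_M$ and verify the identifications. The counting argument itself is then routine.
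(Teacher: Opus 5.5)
Your argument is correct and follows the paper's own route. The forward direction is the Bisch--Popa computation: once Theorems~B and~C give a subgroup $G$ of order $r=|W(N\subset M)|$ with all $m_s=m$, the inclusion matrix of $N'\cap M\subset N'\cap M_1$ is $mJ_r$. The converse reads $m_1=\cdots=m_r$ off the edges at $*$, and uses completeness plus balance to get $|G^{-1}G|=|G|$, which forces $G$ to be a group.

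The bookkeeping needs fixing. Odd vertices come from $[\widehat\Gamma_{2r}]$ and even ones from $[\widehat\Gamma_{2r+1}]$, and $*$ is joined to $[\alpha_{i_s}]$ itself (with multiplicity $m_s$), not to its inverse. Also $[\widehat\Gamma_0]=G$, not $G^{-1}G$. So as soon as completeness gives depth $\le 2$, Theorem~A already makes $G$ a subgroup (depth one meaning $G$ is trivial), and your counting detour is unnecessary. If you keep the counting argument, you must first observe $G\subseteq G^{-1}$ (since $1\in G^{-1}$) before claiming closure under multiplication.
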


 Two further consequences are worth mentioning:

 In Corollary 7.4, we
show that $N \subset M$ is regular if and only if $M_t \subset
M_{t+s}$ is regular for every $t \ge 0$ and $s \ge 1$, where $\{M_t :
t \ge 0\}$ is the basic construction tower of $N \subset M$. On the
other hand, despite the tight relationship between regularity and the
generalized Weyl group established in Theorems B--D, we show in
\Cref{graphs-of-regular-subfactors} that $W(N \subset M)$ is
\emph{not} a complete conjugacy invariant for regular diagonal
subfactors, even when $Q$ is the hyperfinite $II_1$-factor. More
precisely, two non-outer-conjugate automorphisms of $R$ with the same
outer period but with different outer invariant $\gamma(\cdot)$, in the
sense of Connes \cite{Connes}, give rise to non-isomorphic regular
diagonal subfactors with isomorphic generalized Weyl group and,
therefore, with the same principal graph as well  (by \Cref{principal
  graph theorem}) - see \Cref{WNM-non-complete}.

Here is a brief overview of the structure of this article:

After a relatively long section on preliminaries, the flow of the
article is more or less around the order of the theorems mentioned
above.  \Cref{prelims} collects preliminaries on diagonal subfactors,
generalized permutation unitaries, depth, and the (generalized) Weyl
group of any subfactor. \Cref{sec-3} proves Theorem A and
\Cref{depth-k-example}.  \Cref{sec-4} proves Theorem B. \Cref{sec-5}
proves Theorem C and \Cref{infinite-family}. \Cref{sec-6} specializes
to the abelian case (\Cref{abelian-regular-diagonal}), with an
explicit classification of regular diagonal subfactors for several
small values of $n$. \Cref{sec-7} proves Theorem D and the
consequences described above.

Furthermore, at the expense of a few more pages, for the
convenience of the reader and for future reference, \color{black}
\Cref{appendix-pg} reproduces an adaptation of the description of the
standard invariant of a diagonal subfactor given by Bisch (\cite{Bis})
and Popa (\cite{Po2}), so as to suit the requirements of this article.
Finally, \Cref{appendix-B} corrects an assertion made in \cite{NV1}:
the automorphisms exhibited there were asserted to give non-isomorphic
diagonal subfactors sharing a principal graph $A_3^{(1)}$, but in fact
yield isomorphic ones.

\color{black}
\section{Preliminaries}\label{prelims}   

Let us first fix some notations that will be used throughout this
article:
\begin{itemize}[leftmargin=3em]
  \item For  every $n\in \N$, $\Delta_n$ will denote the
    $*$-subalgebra of $M_n(\C)$ consisting of all diagonal  matrices.
\item For any von Neumann algebra $\mM$ and $n \in \N$,
  $\Delta_n(\mM)$ will denote the von Neumann subalgebra of
   $M_n(\mM)$ consisting of all diagonal operator matrices with entries from
  $\mM$. In other words, $\Delta_n(\mathcal{M}):= \mathcal{M}  \ot \Delta_n$. 
  \item $\{E_{ij}: 1 \leq i, j \leq n\}$ will denote the set of
    standard matrix units of the matrix algebra $M_n(\C)$.

\item For any $II_1$ factor $Q$ and $k,l \in \bbn,$ $M_{k \times
  l}(Q)$ will denote the set of all rectangular matrices of size $k
  \times l$ with entries from $Q$, i.e., $M_{k \times l}(Q):= Q
  \otimes M_{k \times l}(\C)$.

\item For every $n\in \N$, $S_n$ will denote the symmetric group on
  $n$ symbols.

\item For any von Neumann algebra $\mathcal{M}$, $\mathcal{U}(\mathcal{M})$
will denote the group of unitary elements of $\mathcal{M}$.
\end{itemize}  

\subsection{Some generalities with operator matrices}\label{matrices}
In this section, we specify certain block operator matrices and,
accordingly, we set some notations that will be needed in
characterizing regularity of diagonal subfactors.

\subsubsection{Generalized permutation unitaries}\label{Ugp-section}
\begin{definition}
Let $Q$ be a $\mathrm{II}_1$ factor. A unitary $u\in M_n(Q)$ will be
called a generalized permutation if \( u=D P \) for some diagonal
unitary $D \in \Delta_n(Q)$ and a permutation matrix $P\in
M_n(\mathbb{C})$. The set of all generalized permutation unitaries
will be denoted by $\mathcal{U}_{\mathrm{gp}}(n,Q)$.
\end{definition}

Notice that $PDP^* \in \Delta_n(Q)$ for every permutation matrix $P
\in M_n(\bbc)$ and every diagonal (unitary) matrix $D \in
\Delta_n(Q)$. Hence, we easily deduce the following:
\begin{lemma}
Let $Q$ be a $\mathrm{II}_1$ factor. Then, 
\(
\mathcal{U}_{gp}(n,Q)
\)
is a subgroup of the unitary group $\mathcal{U}(M_n(Q))$.
\end{lemma}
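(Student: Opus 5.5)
The plan is to verify the subgroup axioms directly, using the observation recorded just before the statement that $PDP^* \in \Delta_n(Q)$ for every permutation matrix $P \in M_n(\bbc)$ and every $D \in \Delta_n(Q)$. Nonemptiness and membership in $\mU(M_n(Q))$ are immediate. The identity $1 = I_n \cdot I_n$ is the product of the diagonal unitary $I_n$ and the identity permutation, so $1 \in \mU_{gp}(n,Q)$. Every element $DP$ is a product of two unitaries of $M_n(Q)$, since a diagonal matrix in $\Delta_n(Q)$ is unitary exactly when each diagonal entry is unitary in $Q$. Hence $\mU_{gp}(n,Q) \subset \mU(M_n(Q))$.

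For closure under products, take $u_1 = D_1P_1$ and $u_2 = D_2P_2$ and write
\[
u_1u_2 = D_1P_1D_2P_2 = D_1 (P_1 D_2 P_1^*) P_1P_2 .
\]
Here $P_1D_2P_1^*$ lies in $\Delta_n(Q)$ by the remark above, and it is unitary because it is a unitary conjugate of a unitary. So $D_1(P_1D_2P_1^*)$ is a diagonal unitary. Also $P_1P_2$ is a permutation matrix, so $u_1u_2 \in \mU_{gp}(n,Q)$.

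For inverses, take $u = DP$. Then
\[
u^{-1} = u^* = P^*D^* = (P^*D^*P)P^* ,
\]
where $P^*D^*P \in \Delta_n(Q)$ is unitary (apply the remark with the permutation matrix $P^* = P^{-1}$) and $P^*$ is a permutation matrix. Hence $u^{-1} \in \mU_{gp}(n,Q)$.

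There is no real obstacle here. The only point needing care is that conjugating a diagonal matrix by a permutation matrix stays diagonal. Concretely, for the permutation matrix $P_\sigma$ attached to $\sigma \in S_n$, conjugation simply permutes the diagonal entries. This is exactly the observation already granted in the text.
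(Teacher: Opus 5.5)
Your proof is correct and follows the paper's approach. The paper deduces the lemma directly from the observation that $PDP^*\in\Delta_n(Q)$. Your factorizations $D_1P_1D_2P_2 = D_1(P_1D_2P_1^*)P_1P_2$ and $(DP)^* = (P^*D^*P)P^*$ just spell out that deduction.
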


 \subsubsection{Block  matrices associated to partitions}\label{partition-matrices}

It is often convenient to use tensor notation to describe operator
matrices. We quickly recall the same for our convenience:

In order to keep track of the size of the matrices, for any $s \in
\N$, let $\{E_{i,j}^{(s)}: 1 \leq i, j \leq s\}$ denote the standard
matrix units of $M_s(\C)$. (If the size of the matrix is clear from
the context, the superscript will be dropped.)  Thus, for any von
Neumann algebra $\mM$, any operator matrix $x=[x_{i,j}]\in M_s(\mM)$ is
expressed in tensor notation as $x = \sum_{i,j=1}^s x_{i,j} \ot
E^{(s)}_{i,j}$.  We shall view $M_n(\mathbb{C})$ as a von Neumann
subalgebra of $M_n(\mathcal{M})$ via the natural inclusion of $\C$ in
$\mM$. Further, for any collection $\{x_i : 1 \leq i \leq n \}$ in
$\mM$, $\diag(x_1,x_2, \dots, x_n)$ will denote the diagonal matrix
$\sum_{i=1}^n x_i \ot E_{i,i}$ in $\Delta_n(\mathcal{M})$ and, quite
often, $\mM$ will be identified with its image in $M_n(\mathcal{M})$
via the diagonal embedding
\[
  \mM \ni x \mapsto \diag(x,x, \dots, x) \in \Delta_n(\mM)
  \subseteq M_n(\mM).
\]

Next, recall that for a fixed $n \in \mathbb{N}$, a tuple $\mathscr{P}
= (m_1, m_2, \ldots, m_r)$ of positive integers is called a partition
of $n$ if $\sum_{i=1}^r m_i = n$. Corresponding to any such partition
$\mathscr{P} = (m_1, m_2, \ldots, m_r)$ of $n$, every operator matrix $X \in
M_n(\mathcal{M})$ is expressed uniquely as an $r\times r$ block matrix
$X = [X_{i,j}]_{1 \le i,j \le r}$ with $(i,j)$-th block $X_{i,j}\in
M_{m_i\times m_j}(\mM)$. In tensor notation, such a block operator
matrix is expressed as $X = \sum_{i,j =1}^r X_{i,j}\ot
E^{(r)}_{i,j}$. (Analogous tensor notations will be needed  for square
operator matrices with rectangular blocks  - see \Cref{appendix-pg}.) Further, the block-diagonal operator matrices will be denoted by
\begin{equation}\label{M-nP}
 M_{(n, \mathscr{P})}(\mM):=
M_{m_1}(\mathcal{M}) \oplus \cdots \oplus M_{m_r}(\mathcal{M}) .
\end{equation}
 In the rest of the article, we shall identify $M_{(n,
   \mathscr{P})}(\mM)$  with its image in $M_n(\mM)$ via the
natural (block) diagonal embedding
\[
M_{(n, \mathscr{P})}(\mM) \ni (X_1, X_2, \ldots, X_r) \longmapsto
\sum_{i=1}^r X_i \otimes E_{i,i}^{(r)} \in M_n(\mM).
\]

\subsection{Diagonal subfactors}\label{diagonal-basics}
Let $Q$ be a $\mathrm{II}_1$ factor and let  $\Aut(Q)$ denote the group
of  automorphisms of $Q$. An automorphism $\alpha \in \Aut(Q)$ is
called inner if there exists a unitary $u \in Q$ such that $\alpha =
\Ad_u$, and the collection of all inner automorphisms of $Q$ is
denoted by $\Inn(Q)$. It is well-known that $\Inn(Q)$ is a normal subgroup of
$\Aut(Q)$ and the quotient group is denoted by $\Out(Q) :=
\Aut(Q)/\Inn(Q)$. An automorphism is said to be outer if it is not
inner.

Two automorphisms $\alpha, \beta \in \Aut(Q)$ are said to be
equivalent (and expressed as $\alpha \sim \beta$) if there exists a
unitary $u \in Q$ such that \( \beta = \Ad_u \circ \alpha\). The
equivalence class or the image of an automorphism $\alpha$ in $\Out(Q)$
is denoted by $[\alpha]$.

The following elementary observation will be useful.
\begin{lemma}\label{no change in equivalence}\label{amplify lemma}
Let $Q$ be an $II_1$ factor, $k \in \N$ and $\alpha, \beta \in
\Aut(Q)$. Then, $\alpha \sim \beta$ (in $\Aut(Q)$) if and only if
$\alpha^{(k)} \sim \beta^{(k)}$ (in $ \Aut(M_k(Q))$), where
$\theta^{(k)}:=\theta \ot I_k \in \Aut(M_k(Q))$ for any $\theta \in
\Aut(Q)$.
\end{lemma}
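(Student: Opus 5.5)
The forward direction is immediate. If $\beta = \Ad_u \circ \alpha$ for some $u \in \mathcal{U}(Q)$, I would set $U := u \ot I_k \in \mathcal{U}(M_k(Q))$. For $X = \sum_{i,j} x_{ij} \ot E_{ij}$ one has
\[
\Ad_U(\alpha^{(k)}(X)) = \sum_{i,j} u\alpha(x_{ij})u^* \ot E_{ij} = \beta^{(k)}(X).
\]
Hence $\beta^{(k)} = \Ad_U \circ \alpha^{(k)}$, that is, $\alpha^{(k)} \sim \beta^{(k)}$.

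For the converse, suppose $\beta^{(k)} = \Ad_U \circ \alpha^{(k)}$ for some $U = [u_{ij}] \in \mathcal{U}(M_k(Q))$. The plan is to show that $U$ must be a "scalar-diagonal" amplification $u \ot I_k$. The key observation is that both $\alpha^{(k)}$ and $\beta^{(k)}$ fix the matrix units $1 \ot E_{ij} \in M_k(\C)$. Applying the identity to $1 \ot E_{ij}$ therefore gives $U(1\ot E_{ij})U^* = 1 \ot E_{ij}$, so $U$ commutes with $M_k(\C)$. Since $M_k(\C)' \cap M_k(Q) = Q \ot 1$, we conclude $U = u \ot I_k$ for some $u \in \mathcal{U}(Q)$.

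Next I would evaluate the identity on $x \ot E_{11}$. This gives $\beta(x) \ot E_{11} = u\alpha(x)u^* \ot E_{11}$, so $\beta = \Ad_u \circ \alpha$ and hence $\alpha \sim \beta$.

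The only step needing care is the commutant computation $M_k(\C)' \cap M_k(Q) = Q \ot 1$. It is routine: commuting with $E_{ii}$ forces $U$ to be diagonal, and commuting with $E_{1j}$ forces all the diagonal entries to be equal. Thus no step is a real obstacle.
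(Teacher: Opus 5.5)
Your proof is correct and complete. The paper states this lemma as an elementary observation without proof, and your argument is exactly the routine verification it leaves implicit: you force the implementing unitary $U$ into $(1\otimes M_k(\C))'\cap M_k(Q)=Q\otimes 1$ and then read off $\beta=\Ad_u\circ\alpha$ from the $(1,1)$ corner (note that this argument never uses that $Q$ is a $\mathrm{II}_1$ factor).
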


For a $II_1$ factor $Q$ and a finite family of automorphisms
$\{\alpha_i : 1\leq i\leq n\}$ of $Q$, the von Neumann subalgebra \( N
:=\Bigl\{ \diag(\alpha_1(x), \alpha_2(x), \dots, \alpha_n(x)) : x\in Q
\Bigr\}\) of $M:=M_n(Q)$ is a $II_1$-factor and any such inclusion $N
\subset M$ is called a diagonal subfactor (of type $II_1$). The Jones
index of such a diagonal subfactor is $n^2$. (For more details on
diagonal subfactors, we refer the reader to \cite[Section 3 and 5
]{Po2} and \cite{BDG}). Further, let $\widehat{\mF} = \{\alpha_{i_k}:
1 \leq k \leq r\} $ denote a fixed maximal family of pairwise
inequivalent automorphisms in $\mF$ (with $\alpha_{i_1} = \alpha_1$)
and, for every $1\leq k\leq r$, let \( \mathcal B_k :=
\{\,i\in\{1,2,\dots,n\}:\alpha_i\sim \alpha_{i_k}\,\} \text{ and }
m_k:=|\mathcal B_k|.  \) Notice that $\{\mathcal B_1,\dots,\mathcal
B_r\}$ forms a partition of $\{1,2,\dots,n\}$ and $n
=\sum_{k=1}^rm_k$.

The following well-known prototype of $N$ and the structure of its
first relative commutant will be exploited thoroughly.

\begin{lemma}\label{last lemma}\label{finer relative commutant}
There exists a diagonal unitary $D\in M_n(Q)$
such that
\begin{eqnarray*}
N &= & \Ad_{D}\Big(\Big\{ \sum_{k=1}^r \sum_{a \in \mathcal{B}_k}
\alpha_{i_k}(x)\otimes E_{a,a} : x \in Q\Big\}\Big) \text { and }
\\
N'\cap M
& = &
\Ad_D
\Big(\Big\{
\sum_{k=1}^r
\sum_{s,t\in \mathcal B_k}
\lambda_{s,t}^{(k)}E_{s,t}
:
\lambda_{s,t}^{(k)}\in \mathbb C
\Big\} \Big)
\cong
\bigoplus_{k=1}^r M_{m_k}(\mathbb C),
\end{eqnarray*}
where $\{E_{s,t}: 1 \leq s, t \leq n\}$ denotes the standard matrix
units of $M_n(\C)$.

In particular,  $\{P_k:=\sum_{a\in \mathcal B_k}E_{a,a}: 1 \leq k \leq r\}$
 are the minimal central projections of
$N'\cap M$.
\end{lemma}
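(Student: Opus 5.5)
Since $\alpha_a \sim \alpha_{i_k}$ for every $a \in \mathcal B_k$, I will first choose, for each $a\in\{1,\dots,n\}$, a unitary $u_a \in Q$ such that $\alpha_a = \Ad_{u_a}\circ\alpha_{i_k}$, where $k$ is the unique index with $a\in\mathcal B_k$. For the representatives themselves I take $u_{i_k}=1$. Then I set $D:=\diag(u_1,\dots,u_n)$, which is a diagonal unitary in $M_n(Q)$. For $x\in Q$ we get
\[
\diag(\alpha_1(x),\dots,\alpha_n(x)) = D\Big(\sum_{k=1}^r\sum_{a\in\mathcal B_k}\alpha_{i_k}(x)\ot E_{a,a}\Big)D^*,
\]
because the $(a,a)$ entries are $u_a\alpha_{i_k}(x)u_a^*=\alpha_a(x)$. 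This proves the first equality. Write $N_0:=\{\sum_k\sum_{a\in\mathcal B_k}\alpha_{i_k}(x)\ot E_{a,a}: x\in Q\}$. Conjugating by $D$ gives $N'\cap M=\Ad_D(N_0'\cap M)$, so it suffices to compute $N_0'\cap M$.

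Next I take $X=[x_{s,t}]\in M_n(Q)$ commuting with $N_0$. Comparing $(s,t)$ entries, with $s\in\mathcal B_k$ and $t\in\mathcal B_l$, gives
\[
\alpha_{i_k}(x)\,x_{s,t}=x_{s,t}\,\alpha_{i_l}(x)\quad\text{for all }x\in Q.
\]
Putting $y=\alpha_{i_l}(x)$, this reads $\theta(y)x_{s,t}=x_{s,t}y$ for all $y\in Q$, where $\theta:=\alpha_{i_k}\alpha_{i_l}^{-1}$. The key step is the standard intertwiner lemma. If $w\in Q$ satisfies $\theta(y)w=wy$ for all $y$, then $w^*w$ lies in $Q'\cap Q=\C$, and likewise $ww^*\in\C$ (using $w^*\theta(y)=yw^*$). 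So either $w=0$, or $w$ is a nonzero scalar multiple of a unitary $v$ with $\theta=\Ad_v$.

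When $k\neq l$, the automorphism $\theta$ is outer because the representatives are pairwise inequivalent. Hence $x_{s,t}=0$. When $k=l$, we have $\theta=\id$, so $x_{s,t}\in Q'\cap Q=\C$.

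It follows that $N_0'\cap M$ consists exactly of the scalar block-diagonal matrices $\sum_k\sum_{s,t\in\mathcal B_k}\lambda^{(k)}_{s,t}E_{s,t}$. The reverse inclusion is clear, since these matrices commute with $N_0$ block by block. This algebra is $\bigoplus_k M_{m_k}(\C)$.

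For the final claim, the minimal central projections of this direct sum are the block units $P_k=\sum_{a\in\mathcal B_k}E_{a,a}$. Since $D$ is diagonal, $\Ad_D(P_k)=P_k$, so the $P_k$ are also the minimal central projections of $N'\cap M$.

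The only nonroutine point is the intertwiner argument, and it needs only the factoriality of $Q$.
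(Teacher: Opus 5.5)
Your proof is correct and follows the same route as the paper, which uses exactly this diagonal unitary $D=\diag(u_1,\dots,u_n)$ with $u_{i_k}=1$ and then simply asserts that $D$ ``does the job.'' Your intertwiner argument (factoriality of $Q$ plus outerness of $\alpha_{i_k}\alpha_{i_l}^{-1}$ for $k\neq l$) and your observation that $\Ad_D(P_k)=P_k$ fill in the relative commutant computation that the paper leaves implicit.
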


\begin{proof}
 For every $1\leq k\leq r$ and $ a \in \mathcal B_k$, fix a unitary
 $u_a\in Q$ such that \( \alpha_a = \Ad_{u_a}\circ \alpha_{i_k},\)
 with $u_{i_k}=1$.  Then, the  diagonal unitary \( D:=\sum_{k=1}^r\sum_{a\in
   \mathcal B_k}u_a\otimes E_{a,a} \in M_n(Q) \) does the
 job.
\end{proof}

The basic construction tower, the standard invariant and the
 principal graph of a diagonal subfactor was demonstrated by Bisch and Popa
 (\cite{Bis, Po2}). Since we need the details (with appropriate adaptation)
 for our purposes, we include all necessary notations and derivations
 related to the standard invariant  in
 \Cref{appendix-pg}.

\begin{remark}\label{beta1=id}
Let $ N := \Bigl\{ \diag\bigl(\alpha_1(x), \alpha_2(x), \ldots,
\alpha_n(x)\bigr) : x \in Q \Bigr\} \subset M_n(Q) =: M$ be a diagonal
subfactor as above. Define another diagonal subfactor 
\[
\widetilde{N}:=\Bigl\{ \diag\bigl(x, \beta_2(x), \ldots,
\beta_n(x)\bigr) : x \in Q \Bigr\}\subset  M,
\]
where $\beta_i
:= \alpha_1^{-1}\circ \alpha_i \in \Aut(Q)$ for $i = 2,3,\ldots,n$. Then, the
mapping
\[
M \ni [x_{i,j}]
 \longmapsto
\bigl[\alpha_1^{-1}(x_{i,j})\bigr] \in M
\]
defines an automorphism of the $\mathrm{II}_1$-factor $M$ and under
this automorphism the diagonal subfactor $N \subset M$ is mapped onto
the diagonal subfactor $\widetilde{N} \subset M$. In view of this
observation,  $\alpha_1$ is often taken to be the identity automorphism
of $N$. \end{remark}

\subsection{Regular inclusions}
An inclusion $ \mathcal{Q}\subset \mathcal{P}$ of von Neumann algebras
is said to be \emph{regular} if the set of (unitary) normalizers of
$\mP$ in $\mQ$, \( \mathcal{N}_{\mathcal{P}}(\mathcal{Q}) :=
\{u\in\mathcal{U}(\mathcal{P}): u \mathcal{Q}u^*=\mathcal{Q}\} \),
generates $\mathcal{P}$ as a von Neumann algebra, that is,
$\mathcal{N}_{\mathcal{P}}(\mathcal{Q})^{\prime\prime}=\mathcal{P}$.

\begin{example}
Let $\mM := \mN \rtimes_{\sigma} G$ denote the crossed product of a von
Neumann algebra $\mN \subseteq B(\mH)$ with respect to an action
$\sigma$ of a (countable) discrete group $G$ on $\mN$; and, suppose
that the action $\sigma$ is implemented by a collection of unitaries
$\{u_g: g \in G\}$ in $B(\mH)$, i.e., \( \sigma_g(x)=u_g x u^*_g \quad
\text{for all} \, x \in N.\) Then, $\mN \subset \mM$ is a regular
inclusion of von Neumann algebras.

Furthermore, if $\mN$ is a $II_1$-factor, $G$ is finite and the action
is outer, then $\mN \rtimes_\sigma G$ is also a $II_1$-factor and
$\{u_g : g \in G\}$ is a unitary orthonormal basis for $\mM$ over $\mN$.
\end{example}

\begin{example}
Let $N$ be a $\mathrm{II}_1$ factor and $\theta \in \Aut(N)$ be an
outer automorphism such that $\theta^{2} = \Ad_{u}$ for some unitary
$u \in N$.  Then, the diagonal subfactor
\begin{equation*}
N_{\theta}
:=
\left\{
\begin{pmatrix}
x & 0 \\
0 & \theta(x)
\end{pmatrix}
: x \in N
\right\}
\subset M_2(N)
\end{equation*}
is regular - see \cite[Example 4.7]{BG2}. (This now follows from
\Cref{main theorem 2} as well.)
\end{example}

The following general observation  is of independent interest and it will be crucial in
the characterization of regularity in diagonal subfactors.
\begin{proposition}\label{homogeneous relative commutant}
Let $N \subset M$ be a unital inclusion of von Neumann algebras such that
\[
N' \cap M \cong \bigoplus_{i=1}^{k} M_{n_i}(\mathbb{C}).
\]
If $M$ is a factor and the inclusion $N \subset M$ is regular, then 
\(
n_1=n_2=\cdots=n_k.
\)
\end{proposition}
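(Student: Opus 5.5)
The idea is to let the normalizers act on the relative commutant $A := N'\cap M$ and its centre $\mZ(A)$, and to show that this action must be transitive on the minimal central projections; a transitive action by inner conjugation forces all blocks to have the same size.

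First, every $u \in \mN_M(N)$ normalizes $A$: if $a \in A$ and $x \in N$, then $u^*xu \in N$, so $uau^*$ commutes with $x = u(u^*xu)u^*$. Hence $\Ad_u$ restricts to an automorphism of $A$, and therefore of $\mZ(A)$. Consequently $\Ad_u$ permutes the minimal central projections $p_1,\dots,p_k$ of $A$, where $p_iA \cong M_{n_i}(\C)$. Whenever $up_iu^* = p_j$, the map $\Ad_u$ carries $p_iA$ isomorphically onto $p_jA$, and so $n_i = n_j$.

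Next, suppose the orbits of the action of $\mN_M(N)$ on $\{p_1,\dots,p_k\}$ are not all of $\{p_1,\dots,p_k\}$. Take one orbit $O$ and set $p := \sum_{p_i\in O} p_i$. Then $p$ is a nonzero projection and $p \neq 1$, since the $p_i$ sum to $1$ and $O$ is proper. By construction $upu^* = p$ for every normalizer $u$, so $p$ commutes with the group $\mN_M(N)$. Regularity says this group generates $M$, so $p \in \mN_M(N)' \cap M = M' \cap M = \C 1$. This contradicts $p \notin \{0,1\}$. Hence there is a single orbit, and by the first step all the $n_i$ coincide.

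Since the argument is short, the main point needing care is the first step: the normalizers must genuinely preserve $A$, and conjugation by a unitary must map a minimal central projection to a minimal central projection. Both follow because $\Ad_u$ is a $*$-automorphism of $A$, so it preserves the centre and the minimality of central projections. Finiteness of $k$ is used only so that $p$ is a finite sum; it is not otherwise needed.
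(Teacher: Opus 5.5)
Your proof is correct and follows essentially the same route as the paper. Both show that conjugation by $\mN_M(N)$ acts transitively on the minimal central projections of $N'\cap M$, using the invariant projection $\sum_{p_i\in O}p_i\in M'\cap M=\C1$, and then compare block sizes. The one small difference is in that last comparison: you get $n_i=n_j$ directly from $\Ad_u(p_iA)=p_jA$, whereas the paper transports a maximal orthogonal family of minimal projections to obtain $n_i\le n_j$ and then argues by symmetry.
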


\begin{proof}
  Let $\{p_1,\dots,p_k\}$ be  the minimal
  central projections of $N'\cap M$. Then, $\mN_M(N)$ acts on $N'\cap
  M$ via conjugation. We assert that this action is transitive.

  Suppose, on the contrary, that $\{up_1 u^*: u\in \mN_M(N)\}
  \subsetneq \{p_j: 1 \leq i \leq n\}$, i.e.,
\[
\{u p_1 u^*:u\in \mN_M(N)\}
=
\{p_k:k\in\mathcal I\}
\ \text{for some } \mathcal I\subsetneq \{1,2,\ldots,n\}.
\]
For every $k \in \mathcal I$, choose $u_k \in \mN_M(N)$ such that $u_k
p_1 u_k^*=p_k$. Let $q:=\sum_{k\in\mathcal I}p_k$.  Then, $0<q<1$ (as
projections). We assert that $q \in \mZ(M) = \C 1$; so that $q = 1$, a
contradiction.

Since $\mN_{M}(N)'' = M$, it suffices to show that $wq =qw $ for all
$w \in \mN_M(N)$.

Let $w\in\mN_M(N)$. Then, \( w p_k w^* = (wu_k)p_1(wu_k)^* \in
\{p_l:l\in\mathcal I\}\) for all $k \in \mcal I$ and $wp_kw^* \perp
wp_lw^*$ for all $k \neq l$. Thus, $\{wp_kw^*:k\in\mathcal I\} =
\{p_k:k\in\mathcal I\}$. In particular, \( wqw^* = \sum_{k\in\mathcal
  I}wp_kw^* = \sum_{k\in\mathcal I}p_k = q.\) This proves our
assertion. Hence, $\mN_M(N)$ acts transitively on $\{p_i\}$.

Next, for each $i\in\{1,2,\dots,k\}$, let
\(
\{q_t^{(i)}: t=1,2,\dots,n_i\}
\)
be  a maximal orthogonal family  of minimal projections of
\[
p_i(N'\cap M)
=
( 0) \oplus (0) \oplus \cdots\oplus (0) \oplus M_{n_i}(\mathbb C)\oplus (0) \oplus \cdots\oplus (0).
\]

From above, for every pair $i,j$ with $i \neq j$, there exists a
unitary $u \in \mathcal{N}_{M}(N)$ such that $up_iu^*=p_j$. In particular, 
\(
\{u q_t^{(i)} u^* : t=1,2,\dots,n_i\}
\)
is  an orthogonal family of minimal projections  in
\[
p_j(N'\cap M)
=
(0) \oplus(0) \oplus \cdots (0) \oplus M_{n_j}(\mathbb C)\oplus (0) \cdots\oplus (0).
\]
 Thus, $n_i \leq n_j$. Likewise, $n_j \leq
n_i$. Hence, $n_i = n_j$ for all $i \neq j$ in $\{1, 2, \dots, n\}$.
\end{proof}

\begin{remark}
 This theorem remains valid for unital $C^*$-subalgebras
$\mathcal B \subset \mathcal A$. The only additional assumption needed
is that $\mathcal A$ has a trivial centre, i.e., 
\(
\mathcal A \cap \mathcal A'=\mathbb C1.
\)
Indeed, the proof uses only this factoriality condition together with the
standard fact that a unital $C^*$-algebra is generated
by its unitary elements.
\end{remark}

\subsection{Some generalities related to depth of a subfactor}
For any finite-index subfactor $N \subset M$ of type $II_1$, one
considers its (Jones') tower of basic construction $\{M_k:=\langle
M_{k-1}, e_k\rangle : k \geq 1\}$ (with $M_{-1}:=N$, $M_0:=M$ and
$e_k$ being the Jones projection for the subfactor $M_{k-2} \subset
M_{k-1}$) and its tower of (finite-dimensional) relative commutants
$\{N'\cap M_k: k \geq -1\}$. The subfactor $N \subset M$ is said to
have finite depth if there exists a $k\in \N$ such that the tower of
relative commutants $N'\cap M_{k-2} \subset N'\cap M_{k-1}
{\subset}^{e_k} N'\cap M_{k}$ is an instance of basic construction;
and, the least such $k$ is called the depth of the subfactor $N
\subset M$. For more on the notion of depth, see
\cite{GHJ}.\color{black}

\subsubsection{Depth $1$ subfactors}
Recall that a diagonal subfactor via automorphisms of a $II_1$-factor
$Q$ is said to be trivial if it is isomorphic to the diagonal
subfactor $Q \ot I_n \subset M_n(Q)$ (for some $n\in \N$). Notice
that, for any $\alpha \in \Aut(Q)$, the diagonal subfactor
$$N:=\{\diag(\alpha(x), \alpha(x), \dots, \alpha(x)): x \in Q\} \subset
M_n(Q)=:M$$ is trivial, by \Cref{beta1=id}.

It is well-known that a trivial diagonal subfactor has depth $1$ and
is regular. In fact, it is folklore that, every depth $1$ subfactor is
a trivial diagonal subfactor. We include a proof for the sake of
convenience.
\begin{proposition}\label{depth one}
Let $N\subset M$ be a finite-index subfactor of type $II_1$. Then,
$N\subset M$ has depth $1$ if and only if there exists an $n \geq 1$ such
that $(N \subset M) \cong (N \ot I_n \subset N \ot M_n(\mathbb C))$.

In particular, every depth $1$ subfactor of type $II_1$ is regular,
has perfect square index and its relative commutants $\{N'\cap M_k:
k\geq 0\}$ are all simple.
\end{proposition}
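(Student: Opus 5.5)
The plan is to treat the two implications separately, using only standard facts about the basic construction and relative commutants.

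For the easy direction ("if"), suppose $(N\subset M)\cong(N\ot I_n\subset N\ot M_n(\C))$. Since $M_n(\C)$ sits inside $M$ and is finite dimensional, the Jones tower is explicitly computable. Set $K:=M_n(\C)$. Then $M_1\cong N\ot \mathrm{End}(K\ot K)$, or more intrinsically $M_k\cong N\ot M_{n^{k+1}}(\C)$, with each inclusion the standard amplification. It follows that $N'\cap M_k\cong M_{n^{k+1}}(\C)$, which is simple. The inclusion $N'\cap M_{-1}=\C\subset N'\cap M_0=M_n(\C)\subset N'\cap M_1=M_{n^2}(\C)$ is then a basic construction, because the Bratteli diagram is a single edge of multiplicity $n$ repeated and the Jones projection acts appropriately. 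Alternatively, one can simply observe that the principal graph is the star with one edge of multiplicity $n$, so the depth is $1$. Regularity follows because $\mU(N)\cup\mU(M_n(\C))$ consists of normalizers and generates $M$. The index is $[M:N]=n^2$.

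For the harder direction ("only if"), assume the depth is $1$. This means $\C=N'\cap N\subset N'\cap M\subset N'\cap M_1$ is a basic construction. The basic construction of $\C\subset A$ is $B(L^2(A))$, which forces $A=N'\cap M$ to be a factor; indeed, the basic construction of $\C\subset A$ via the trace conditional expectation is $\mathrm{End}(L^2(A))$. So $N'\cap M\cong M_n(\C)$ for some $n$. I would then set $P:=N\vee(N'\cap M)\cong N\ot M_n(\C)$, taking a matrix unit system $\{e_{ij}\}$ in $N'\cap M$ and using $N\cong e_{11}Pe_{11}$. The key step is to show $P=M$. I would compare indices: $[P:N]=n^2$. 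Using the depth-one condition, $N'\cap M_1=\langle N'\cap M,e_1\rangle$ has dimension $n^4$ and is a factor. The Markov trace/index relation then gives $[M:N]=\dim$ ratio, namely $[M:N]=n^2$. The relevant fact is that for depth $1$ the principal graph is a single vertex with an edge of multiplicity $n$ to a single vertex, and the Perron–Frobenius eigenvalue gives $[M:N]=\|\Gamma\|^2=n^2$. Hence $[M:P]=1$, so $P=M$.

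I expect the main obstacle to be justifying the index computation from the depth-$1$ condition. First I would invoke the principal graph/Perron–Frobenius norm formula for finite depth, so that $[M:N]=\|\Gamma\|^2$. The depth-$1$ Bratteli diagram $\C\subset M_n(\C)$ yields $\Gamma$ as a single edge of multiplicity $n$, and therefore $\|\Gamma\|^2=n^2$. As a fallback, I would use the trace values: $\tr(e_1)=[M:N]^{-1}$. Because $e_1$ is a minimal projection in $N'\cap M_1\cong M_{n^2}(\C)$ (minimal since $e_1(N'\cap M_1)e_1=\C e_1$), and the Markov trace there is the normalized trace, we get $\tr(e_1)=n^{-2}$.

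Once this is established, the "in particular" claims follow from the explicit model in the first paragraph.
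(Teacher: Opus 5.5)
Your ``if'' direction and the ``in particular'' claims are fine. The ``only if'' direction, however, has a genuine gap at its first step.

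You infer that $A:=N'\cap M$ is a factor from the fact that the basic construction of $\mathbb{C}\subset A$ is $\mathrm{End}(L^2(A))$. But $\mathrm{End}(L^2(A))\cong M_{\dim A}(\mathbb{C})$ is a factor for \emph{every} finite-dimensional $A$ with a faithful trace; for example, $\mathbb{C}\subset\mathbb{C}^2\subset M_2(\mathbb{C})$ is an instance of the basic construction. So depth $1$ only tells you that $N'\cap M_1$ is a factor, and says nothing yet about $N'\cap M$.

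Everything downstream rests on this unproved claim:
\begin{itemize}
\item the single-edge principal graph, which gives $[M:N]=n^2$ (a priori the graph is a star with edges of multiplicities $n_1,\dots,n_k$);
\item the factoriality of $P=N\vee A\cong N\otimes M_n(\mathbb{C})$;
\item the multiplicativity $[M:N]=[M:P][P:N]$, which needs $P$ to be a subfactor.
\end{itemize}

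The missing idea is that factoriality of $N'\cap M$ has to come \emph{last}, from factoriality of $M$, after you show $M=N\vee(N'\cap M)$. Your own ingredients do this once you drop the factor assumption:
\begin{enumerate}
\item Write $A=\bigoplus_i M_{n_i}(\mathbb{C})$. Depth $1$ gives $[M:N]=\|\Lambda\|^2=\sum_i n_i^2=\dim A$. Your fallback also gives this directly: $e_1$ is a minimal projection in the factor $N'\cap M_1\cong M_{\dim A}(\mathbb{C})$, so $\tr(e_1)=1/\dim A$.
\item For $a\in A$ you have $E_N(a)\in N'\cap N=\mathbb{C}$, so $\tr(xa)=\tr(x)\tr(a)$ for $x\in N$. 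Hence $P:=N\vee A\cong N\bar\otimes A$ with the product trace.
\item Therefore $\dim_N L^2(P)=\dim A=[M:N]=\dim_N L^2(M)$. This forces $L^2(P)=L^2(M)$, so $P=M\cong N\otimes A$.
\item Only now does factoriality of $M$ force $A\cong M_n(\mathbb{C})$.
\end{enumerate}

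This is essentially the paper's argument. The paper phrases the step $[M:N]=\|\Lambda\|^2\Rightarrow M=N(N'\cap M)$ as non-degeneracy of the commuting square $(\mathbb{C}\subset N'\cap M,\ N\subset M)$.
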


\begin{proof}
  In view of the preceding paragraph, only
  necessity needs to be proved.

  Let $N\subset M$ have depth
$1$, i.e., the tower \( \mathbb C\subset N'\cap M\subset^{e_1} N'\cap
M_1 \) is an instance of basic construction. Thus, if $\Lambda$
denotes the inclusion matrix of $\mathbb C\subset N'\cap M$, then
$[M:N]=\tr_{M_1}(e_1)^{-1}=\|\Lambda\|^2$ - see \cite[Theorem
  4.6.3]{GHJ}.

Next, notice that  the quadruple
\[
\begin{array}{ccc}
N & \subset & M \\
\cup &  & \cup\\
\mathbb C & \subset & N'\cap M 
\end{array}
\]
 is a commuting square - see \cite[Propn. 4.2.7]{GHJ}.  Since $[M:N]=
 \|\Lambda\|^2$ , this commuting square is, moreover, non-degenerate -
 see, for instance, \cite[Lemma 3.10]{BG2}. Therefore, $M=N(N'\cap
 M)\cong N\otimes (N'\cap M)$, where an isomorphism is given by the
 mapping
 \[
 N \otimes (N'\cap M) \ni
 x \ot y \mapsto xy \in M.
 \]
 Thus, $(N \subset M) \cong (N \ot I_n
 \subset N \ot (N'\cap M))$. Since $M$ is a factor, it follows that
 $N'\cap M$ is also a (finite-dimensional) factor. Hence $N'\cap
 M\cong M_n(\mathbb C)$ for some $n\in\mathbb N$. In particular, $(N
 \subset M) \cong (N \ot I_n \subset N \ot M_n(\mathbb C))$.
\end{proof}

From \Cref{depth one} and \Cref{last lemma}, we easily deduce the following:
\begin{corollary}\label{depth one diagonal}
Let $Q$ be a $II_1$-factor and $N:=\left\{
\sum_{i=1}^{n}\alpha_i(x)\otimes E_{ii}:x\in Q \right\} \subset
M_n(Q)=:M$ be a diagonal subfactor associated to the automorphisms
$\{\alpha_i:1\leq i\leq n\}\subseteq \Aut(Q)$. Then, $N\subset M$ has
depth $1$ if and only if \( \alpha_i\sim \alpha_1 \) \text{for all }
$1\leq i\leq n$.
\end{corollary}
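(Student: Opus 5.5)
The plan is to combine \Cref{depth one} with \Cref{last lemma}, arguing each direction separately.

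For sufficiency, suppose $\alpha_i\sim\alpha_1$ for all $i$. Then, in the notation of \Cref{last lemma}, we have $r=1$ and $\mathcal B_1=\{1,\dots,n\}$. \Cref{last lemma} therefore provides a diagonal unitary $D\in M_n(Q)$ with
\[
N=\Ad_D\bigl(\{\diag(\alpha_1(x),\dots,\alpha_1(x)) : x\in Q\}\bigr).
\]
Since $\Ad_D$ is an inner automorphism of $M$, the pair $N\subset M$ is isomorphic to the diagonal subfactor defined by the constant family $\alpha_1,\dots,\alpha_1$. By \Cref{beta1=id}, that subfactor is isomorphic to $Q\ot I_n\subset M_n(Q)$. \Cref{depth one} then shows that $N\subset M$ has depth $1$.

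For necessity, assume $N\subset M$ has depth $1$. By the ``in particular'' part of \Cref{depth one}, the relative commutant $N'\cap M$ is simple, i.e.\ a factor. By \Cref{last lemma}, however,
\[
N'\cap M\cong\bigoplus_{k=1}^r M_{m_k}(\C),
\]
whose minimal central projections are $P_1,\dots,P_r$. Simplicity forces $r=1$. Hence the maximal pairwise inequivalent subfamily consists of $\alpha_{i_1}=\alpha_1$ alone, and so every $\alpha_i$ is equivalent to $\alpha_1$.

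The only point needing any care is the sufficiency step: one must check that conjugating by the diagonal unitary $D$, together with the normalization of \Cref{beta1=id}, gives an isomorphism of inclusions and not just of the subalgebras $N$. This holds because both maps are automorphisms of $M$ that carry $N$ onto the trivial diagonal subfactor. Everything else is a direct citation of the two earlier results.
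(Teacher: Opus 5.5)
Your proof is correct and follows the route the paper indicates: the corollary is deduced from \Cref{depth one} and \Cref{last lemma}, with \Cref{beta1=id} reducing the constant family to $Q\otimes I_n\subset M_n(Q)$ for sufficiency. For necessity, your argument that the simplicity of $N'\cap M$ forces $r=1$ is the intended deduction.
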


\subsubsection{Depth $k$ subfactors}

We shall also need the following well-known facts to characterize depth of
diagonal subfactors - see, for instance, \cite[Theorem 4.1.4]{GHJ}.

\begin{remark}\label{equivalent depth}
Let $N\subset M\subset M_1\subset M_2\subset \cdots $ be the Jones
tower of basic constructions of a finite-index subfactor $N\subset M$
of type $II_1$ and let $\Lambda_i$ denote the inclusion matrix of the
inclusion \( N'\cap M_i \subset N'\cap M_{i+1}.  \) Then,
$\|\Lambda_i\|^2\leq [M:N]$ for all $i$.

 Moreover, the following statements are equivalent:
 \begin{enumerate}
 \item  $N\subset M$ has depth $k$.
\item $k-2$ is the least number such that $\|\Lambda_{k-2}\|^2=[M:N]$.
\item $k-2$ is the least number such that
  $\Lambda_{k-2}^{\,T}=\Lambda_{k-1}$, where $\Lambda_i^{\,T}$ denotes
  the transpose of the matrix $\Lambda_i$.
\end{enumerate}
\end{remark}
In addition to the facts mentioned in the preceding
remark, the following observation will also be crucial, which  follows easily  from a suitable application of \Cref{equivalent depth} and 
\cite[Proposition. 4.1]{NV1}.) \color{black}
\begin{proposition}\label{least-depth-two-position}
  Let $N\subset M$ be a finite-index subfactor of type $II_1$. Then, the following hold:
  \begin{enumerate}
\item If $N\subset M_i$ has depth $2$ for some $i\geq 0$, then
  $\|\Lambda_i\|^2=[M:N],$ where $\Lambda_{t}$ is the inclusion matrix
  of the inclusion $N^{'}\cap M_t \subset N^{'}\cap M_{t+1}$,  $t \geq 0$.

    \item Let $k
\geq 2$. Then, $N\subset M$ has depth $k$ if and only if $k-2$ is the
least integer such that the inclusion $N\subset M_{k-2}$ has
depth $2$.  In other words,
\[
N\subset M \text{ has depth } k
\quad\Leftrightarrow\quad
\min\{i\geq 0: N\subset M_i \text{ has depth }2\}=k-2.
\]
\end{enumerate}
  \end{proposition}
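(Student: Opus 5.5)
The plan is to prove part (1) by a norm-multiplicativity argument applied to the Jones tower of $N \subset M_i$, and then to obtain part (2) from part (1) together with \Cref{equivalent depth} and \cite[Proposition 4.1]{NV1}.

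\emph{Part (1).} The Jones tower of $N \subset M_i$ is obtained by iterating the basic construction in steps of length $i+1$:
\[
N \subset M_i \subset M_{2i+1} \subset M_{3i+2} \subset \cdots,
\]
and $[M_i:N]=[M:N]^{i+1}$. Since $N\subset M_i$ has depth $2$, \Cref{equivalent depth} applied to $N\subset M_i$ (the case $k=2$) says that the inclusion matrix $\Lambda$ of $N'\cap M_i \subset N'\cap M_{2i+1}$ satisfies $\|\Lambda\|^2=[M:N]^{i+1}$. The inclusion $N'\cap M_i\subset N'\cap M_{2i+1}$ is the composite of the $i+1$ inclusions $N'\cap M_t\subset N'\cap M_{t+1}$, $i\le t\le 2i$. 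Hence
\[
\Lambda=\Lambda_i\Lambda_{i+1}\cdots\Lambda_{2i},
\]
with the matrices composed in the order matching the convention for inclusion matrices. By submultiplicativity of the operator norm and \Cref{equivalent depth} applied to $N\subset M$ (which gives $\|\Lambda_t\|^2\le[M:N]$ for every $t$), we get
\[
[M:N]^{i+1}=\|\Lambda\|^2\le\prod_{t=i}^{2i}\|\Lambda_t\|^2\le[M:N]^{i+1}.
\]
Equality must therefore hold in every factor, and in particular $\|\Lambda_i\|^2=[M:N]$.

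\emph{Part (2), forward direction.} Suppose $N\subset M$ has depth $k$. I will invoke \cite[Proposition 4.1]{NV1} to conclude that $N\subset M_{k-2}$ has depth $2$. The underlying reason is that from level $k-2$ onward the relative commutant tower is an iterated basic construction. Consequently the coarser tower $N'\cap M_{k-2}\subset N'\cap M_{2k-3}\subset N'\cap M_{3k-4}$ is again a basic construction, because composites of iterated basic constructions are basic constructions. For minimality, suppose $N\subset M_j$ had depth $2$ for some $j<k-2$. Then part (1) would give $\|\Lambda_j\|^2=[M:N]$, and \Cref{equivalent depth} would force the depth of $N\subset M$ to be at most $j+2<k$, a contradiction.

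\emph{Part (2), converse.} Suppose $k-2$ is the least index with $N\subset M_{k-2}$ of depth $2$. Part (1) gives $\|\Lambda_{k-2}\|^2=[M:N]$, so \Cref{equivalent depth} shows that $N\subset M$ has finite depth $d\le k$. The forward direction, applied to $d$, then says that $d-2$ is the least index $j$ with $N\subset M_j$ of depth $2$. Comparing with the hypothesis gives $d-2=k-2$, so $d=k$.

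\emph{Main obstacle.} The only delicate point is the forward direction of (2): that depth $k$ implies $N\subset M_{k-2}$ has depth $2$. I intend to quote \cite[Proposition 4.1]{NV1} for this. If that reference is stated in a different form, I would verify directly that the relevant Jones projections, compressed into the relative commutants, realize $N'\cap M_{2k-3}\subset N'\cap M_{3k-4}$ as a basic construction.
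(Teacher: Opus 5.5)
Your proof is correct and follows the route the paper indicates (the paper only says the proposition follows from \Cref{equivalent depth} and \cite[Proposition 4.1]{NV1}). Your estimate $[M:N]^{i+1}=\|\Lambda_i\Lambda_{i+1}\cdots\Lambda_{2i}\|^2\le\prod_{t=i}^{2i}\|\Lambda_t\|^2\le[M:N]^{i+1}$ is a clean way to supply the details of (1) that the paper leaves implicit.

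Two depth-$1$ loose ends are easily closed but worth stating. First, in the converse you must rule out $d=1$ before invoking the forward direction: if $N\subset M$ is trivial, so is every $N\subset M_i$ by \Cref{depth one}, so none has depth $2$. Second, in the forward direction, if the cited result only gives depth at most $2$, you must exclude depth $1$ for $N\subset M_{k-2}$: your same submultiplicativity argument, applied to the inclusion matrix $\Lambda_{-1}\Lambda_0\cdots\Lambda_{k-3}$ of $\mathbb{C}\subset N'\cap M_{k-2}$, would force $\|\Lambda_{-1}\|^2=[M:N]$, i.e.\ depth $1$ for $N\subset M$, contradicting $k\ge 2$.
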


\subsection{Connections between unitary orthonormal bases,  generalized Weyl group, regularity and depth}
\subsubsection{Unitary orthonormal (Pimsner-Popa) bases}
Recall that for any (unital) inclusion $\mathcal{Q}\subset
\mathcal{P}$ of von Neumann algebras, a map $
E_{\mathcal{Q}}:\mathcal{P}\to\mathcal{Q}$ is said to be a conditional
expectation if it is unital, completely positive, and satisfies the
bimodule property \( E_{\mathcal{Q}}(y x z)=y E_{\mathcal{Q}}(x) z
\quad \text{for all } y,z\in\mathcal{Q},\ x\in\mathcal{P}.  \)
Further, a finite set $\{u_1,u_2,\dots,u_n\}\subset \mathcal{P}$ is
called a (right) Pimsner-Popa basis (or a (right) quasi-basis) for
$E_\mathcal{Q}$ if
\[
x=\sum_{i=1}^n u_i\,E_{\mathcal{Q}}(u_i^*x)
\quad \text{for all } x\in\mathcal{P}.
\]
And, it is said to be a unitary orthonormal basis if every $u_i$ is a
unitary and $E_{\mathcal{Q}}(u^*_iu_j)=\delta_{i,j}$ for all $1 \leq i,j \leq n$,
where $\delta_{i,j}$ denotes the Kronecker delta.

\begin{remark}
  It is well-known that if $\mP$ admits a tracial state $\tr$, then
  there exists a unique $\tr$-preserving conditional expectation
  $E_\mQ: \mathcal{P} \to \mathcal{Q}$, which is faithful or normal if
  $\tr$ is so. In such cases, a Pimsner-Popa basis for $E_\mQ$ (if it
  exists) is often called a Pimsner-Popa basis of  $\mP$ over
  $\mQ$ (with respect to $\tr$).
\end{remark}

\begin{remark}
  \begin{enumerate}[leftmargin=0.5cm]
  \item Popa showed  that every finite-index subfactor of type $II_1$
    admits a Pimsner-Popa basis - see, for instance, \cite{JS}.

    Quite recently, Popa (in \cite{Po3}) asked whether every
    integer-index irreducible subfactor admits a unitary orthonormal
    basis or not.  In fact, this question is meaningful for any
    integer index subfactor. A positive answer to this question is
    known for a reasonably good class of subfactors:
      \begin{enumerate}[leftmargin=0.5cm]
        \item  For any irreducible regular type-$II_1$ subfactor $N \subset M$
of finite index, from the works of Ocneanu, Jones, Sutherland, Popa,
and Kosaki it is well-known that $N \subset M$ is isomorphic to $N
\subset N \rtimes G$ for some outer action of a finite group $G$ on
$N$ and, hence, $M$ admits a unitary orthonormal basis over $N$.

\item Very recently, more generally, for any finite-index (not
necessarily irreducible) regular type-$II_1$ subfactor, the existence
of a unitary orthonormal basis was established in \cite{CKP} (also see
\cite{BG2}), wherein techniques from the world of Quantum Information
Theory were employed very efficiently.
\end{enumerate}
\item Furthermore, the question of existence of unitary orthonormal
bases  for inclusions of finite-dimensional von Neumann algebras was
  natural to be asked and, quite interestingly, some very satisfactory
  results have been obtained recently in \cite{BB,BS}.
\end{enumerate}
  \end{remark}
\color{black}

\subsubsection{Generalized Weyl group}
For any unital inclusion $\mathcal{Q} \subset \mathcal{P}$ of von
Neumann algebras, the (unitary) group of normalizers of $\mQ$ in $\mP$
is given by $\mN_\mP(\mQ) =\{ u\in \mP: u\mQ u^* = \mQ\}$ and the
generalized Weyl group of the inclusion (as in \cite{Loi, BG1}) is
defined as the quotient group
     \[
    W(\mQ \subset \mP):=\frac{\mathcal{N}_{\mathcal{P}}(\mathcal{Q})}
    {\mathcal{U}(\mathcal{Q}) \mathcal{U}(\mathcal{Q}{'} \cap
      \mathcal{P})}.
     \]

The following elementary observations will be used
ahead. 
\begin{lemma}\label{unitary conjugate}\label{replace automorphism}
Let $\mathcal{Q} \subset \mathcal{P}$ be an inclusion of  von Neumann
algebras and $\theta \in \Aut(\mP)$. Then, the following hold:
\begin{enumerate}
  \item $\mathcal N_{\mathcal{P}}(\theta(\mathcal{Q})) = \theta\big(\mathcal
  N_{\mathcal{P}}(\mathcal{Q})\big)$ and $W(\theta(\mQ) \subset
  \mP) \cong W(\mQ \subset \mP)$.

  In particular, if $\theta = \Ad_u$ for some $u \in \mathcal
  U(\mathcal{P})$, then for the inclusion $\mathcal{Q}_u :=
  \Ad_u(\mathcal{Q}) \subset \mathcal{P}$,  
    $W(\mQ_u\subset \mP) \cong W(\mQ \subset
    \mP)$. In fact, $\mathcal
  N_{\mathcal{P}}(\mathcal{Q}_u) = \Ad_u\big(\mathcal
  N_{\mathcal{P}}(\mathcal{Q})\big)$ and 
  $$ W(\mQ_u\subset \mP) 
  = \frac{\Ad_u(\mN_\mP(\mQ))}{\Ad_u(\mU(\mQ)  \mU(\mQ'\cap
      \mP))}. $$ 

  \item
 $\mathcal {N}_{\mathcal{P}}(\theta(\mathcal{Q}))'' = \theta\big(\mathcal
  {N}_{\mathcal{P}}(\mathcal{Q})''\big)$. Consequently, the inclusion
  $\theta(\mathcal{Q}) \subset \mathcal{P}$ is regular if and only if so is
  the inclusion $\mathcal{Q} \subset \mathcal{P}$.
\end{enumerate}
\end{lemma}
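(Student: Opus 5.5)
The plan is to prove part (1) first and then read off everything else from it. For the first identity in (1) I verify both inclusions. Let $u \in \mN_\mP(\mQ)$. Then $\theta(u)$ is a unitary of $\mP$, and
\[
\theta(u)\,\theta(\mQ)\,\theta(u)^* = \theta(u\mQ u^*) = \theta(\mQ),
\]
so $\theta(\mN_\mP(\mQ)) \subseteq \mN_\mP(\theta(\mQ))$. Applying the same argument to $\theta^{-1}$ and the subalgebra $\theta(\mQ)$ gives $\theta^{-1}(\mN_\mP(\theta(\mQ))) \subseteq \mN_\mP(\mQ)$, which is the reverse inclusion.

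Next I would record how $\theta$ acts on the two subgroups in the denominator. Clearly $\theta(\mU(\mQ)) = \mU(\theta(\mQ))$. Also $\theta(\mQ'\cap\mP) = \theta(\mQ)'\cap\mP$: if $y$ commutes with $\mQ$, then $\theta(y)$ commutes with $\theta(\mQ)$, and the reverse inclusion again follows by applying $\theta^{-1}$. Hence $\theta$ carries $\mU(\mQ)\mU(\mQ'\cap\mP)$ onto $\mU(\theta(\mQ))\mU(\theta(\mQ)'\cap\mP)$. Since $\theta$ restricts to a group isomorphism between the normalizer groups that maps the normal subgroup in one denominator onto the normal subgroup in the other, it induces an isomorphism of the quotients, $W(\mQ\subset\mP)\cong W(\theta(\mQ)\subset\mP)$.

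The special case $\theta=\Ad_u$ is then a direct substitution, which yields the displayed formula. No step here is a genuine obstacle; the only point needing a line of care is the equality of relative commutants $\theta(\mQ'\cap\mP)=\theta(\mQ)'\cap\mP$, which needs both inclusions and therefore the use of $\theta^{-1}$.

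For (2), I would first use that an automorphism of a von Neumann algebra is normal, i.e.\ a weak-operator homeomorphism onto $\mP$, so it preserves generated von Neumann subalgebras: $\theta(S'')=\theta(S)''$ for any self-adjoint set $S\subseteq\mP$, with both double commutants taken inside $\mP$. Taking $S=\mN_\mP(\mQ)$ and using (1) gives $\mN_\mP(\theta(\mQ))''=\theta(\mN_\mP(\mQ)'')$. Finally, because $\theta$ is bijective, this equals $\mP$ exactly when $\mN_\mP(\mQ)''=\mP$, which is the regularity equivalence.
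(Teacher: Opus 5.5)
Your argument is correct. The paper records this lemma as an ``elementary observation'' without proof, and your two-inclusion check, the transport of the subgroup $\mU(\mQ)\mU(\mQ'\cap\mP)$, and the normality argument are exactly the routine verification it has in mind.

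One small fix in (2): an automorphism of $\mP$ is automatically normal, i.e.\ a $\sigma$-weak (ultraweak) homeomorphism, but it need not be weak-operator continuous on all of $\mP$ when it is not spatial. So run the closure argument in the $\sigma$-weak topology instead; this suffices because $S''$ is also the $\sigma$-weak closure of the unital $*$-algebra generated by $S$.
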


Let $N \subset M$ be a finite-index subfactor of type $II_1$,
\( \mathcal{R} := N \,\vee\, (N' \cap M)  \) and  $E_{\mathcal{R}}$
denote the unique trace-preserving faithful normal conditional from
$\mN_\mM(N)''$ onto $\mR$. Fix a family of coset representatives
$\{u_g : g =[u_g] \in W(N \subset M)\}$ of $W(N \subset M)$ inside
$\mathcal{N}_M(N)$.
\begin{proposition}\label{propBV}\cite{BG1} \label{index equation} 
  With running notations, the following hold:
  \begin{enumerate}[leftmargin=0.5cm]
\item The family $\{u_g : g \in W(N \subset M) \}$ is a unitary
  orthonormal basis for $\mathcal{N}_{M}(N)^{\prime \prime}$ over
  $\mathcal{R}$. ({\cite[Proposition 3.6]{BG1}})

Moreover, $E_{\mathcal{R}}(u_g)=0$ for all $e \neq g \in
W(N \subset M)$.
\item If $N \subset M$ is regular, then $N
  \subset M$ has integer index given by \hfill (\cite[Theorem
    3.12]{BG1}) \[ [M : N] = |W(N \subset M)| \, \dim (N' \cap M).\]
  \end{enumerate}
  \end{proposition}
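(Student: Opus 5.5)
The plan is to follow the strategy of \cite{BG1}. The key observation is that every normalizer $u \in \mN_M(N)$ also normalizes $N'\cap M$, hence normalizes $\mR = N \vee (N'\cap M)$. Part (1) then reduces to an intertwiner argument, and part (2) to an index computation for $N \subset \mR$.

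\emph{Step 1 (orthogonality).} Let $u \in \mN_M(N)$, put $\theta := \Ad_u|_N \in \Aut(N)$, and put $v := E_{\mR}(u)$. Since $E_{\mR}$ is $\mR$-bimodular and $uy = \theta(y)u$ for $y \in N$, we get $vy = \theta(y)v$ for all $y \in N$.
\begin{itemize}
\item Take a minimal projection $q$ of $N'\cap M$. The factor $q\mR q = Nq$ is a copy of $N$ (as $q\mR q$ is spanned by $NqAq = Nq$, where $A = N'\cap M$).
\item Cutting $v$ down to $q'vq \in q'\mR q$ gives an $N$-intertwiner between $\mathrm{id}$ and $\theta$. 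For two minimal projections $q, q'$, the corner $q'\mR q$ is $N$ times a partial isometry in $N'\cap M$, so $q'vq$ is (after this identification) an element $x \in N$ with $xy = \theta(y)x$.
\item If $x \neq 0$, then $x^*x \in N'\cap N = \C$. Hence $x$ is a scalar multiple of a unitary $v_0 \in N$ and $\theta = \Ad_{v_0}$.
\item Then $v_0^*u \in N'\cap M$, so $u \in \mU(N)\mU(N'\cap M)$; this set is a group because $N$ and $N'\cap M$ commute.
\end{itemize}
Consequently $E_{\mR}(u) = 0$ whenever $[u] \neq e$ in $W(N\subset M)$. Applied to $u_g^*u_h$, this gives $E_{\mR}(u_g^*u_h) = \delta_{g,h}$.

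\emph{Step 2 (finiteness and basis).} An $E_{\mR}$-orthonormal family of unitaries has cardinality at most the Pimsner--Popa index of $\mR \subset M$, which is $\le [M:N] < \infty$. Hence $W(N\subset M)$ is finite. Every normalizer has the form $u_g r$ with $r \in \mU(N)\mU(N'\cap M) \subset \mR$. So $\sum_g \mR u_g$ is a $*$-algebra: closure under products and adjoints follows because each $u_g$ normalizes $\mR$ and $u_gu_h \in u_{gh}\mR$. Being a finitely generated module, it is weakly closed, so it equals $\mN_M(N)''$. Orthonormality then forces $x = \sum_g u_g E_{\mR}(u_g^*x)$ for every $x \in \mN_M(N)''$. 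This proves (1).

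\emph{Step 3 (index, the main obstacle).} Assume $N \subset M$ is regular, so $M = \mN_M(N)''$. By (1) and multiplicativity of the (Pimsner--Popa) index along $N \subset \mR \subset M$, we have $[M:N] = |W(N\subset M)|\cdot \mathrm{Ind}(E_N|_{\mR})$. It remains to show $\mathrm{Ind}(E_N|_{\mR}) = \dim(N'\cap M)$.
\begin{itemize}
\item Since $N$ is a factor, $E_N(a) = \tr(a)1$ for $a \in N'\cap M$, so $\tr(ya) = \tr(y)\tr(a)$. Thus multiplication gives an isomorphism $N\ot(N'\cap M) \cong \mR$ carrying $N$ to $N\ot 1$ and $E_N$ to $\mathrm{id}\ot\tr$.
\item The index therefore equals the Watatani index of $\tr: N'\cap M = \bigoplus_k M_{n_k}(\C) \to \C$. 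This equals $\dim(N'\cap M)$ only when the trace weights are "uniform", and this is where regularity enters a second time.
\item By the transitivity argument in the proof of \Cref{homogeneous relative commutant}, normalizers permute the minimal central projections $p_k$ transitively. Since they preserve $\tr$, all $n_k$ are equal to some $n$ and all minimal projections have trace $1/(kn)$.
\item Then $\{\sqrt{kn}\,e^{(l)}_{ij}\}$ is a Pimsner--Popa basis, and $\sum_b bb^* = kn\cdot n\cdot 1 = kn^2 = \dim(N'\cap M)$.
\end{itemize}
This yields (2). I expect the delicate points to be the identification $\mR \cong N\ot(N'\cap M)$ in this step and the corner reduction in Step 1.
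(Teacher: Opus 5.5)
Your argument is correct. The paper gives no proof of this statement; it is quoted from \cite{BG1} (Prop.~3.6 and Thm.~3.12). Your proposal therefore works as a self-contained reconstruction. Its key ingredients are ones the paper uses elsewhere: transitivity of $\mN_M(N)$ on the minimal central projections of $N'\cap M$ (proof of \Cref{homogeneous relative commutant}), which forces all minimal projections of $N'\cap M$ to have equal trace; and the splitting $\mR\cong N\otimes(N'\cap M)$ from the non-degenerate commuting square in the proof of \Cref{regular-depth-2}.

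One justification should be replaced: the finiteness bound in Step~2. $\mR$ is not a factor, and "cardinality at most the Pimsner--Popa index of $\mR\subset M$" is not a valid general principle. For instance, for $\C\subset M_n(\C)$ the Pimsner--Popa constant gives $\lambda^{-1}=n$, yet there are $n^2$ trace-orthonormal unitaries. You also leave the comparison with $[M:N]$ unproved. Argue at the level of $N$ instead. By Step~1, $E_N(u_g^*u_h)=E_N\bigl(E_{\mR}(u_g^*u_h)\bigr)=0$ for $g\neq h$. Hence the projections $u_ge_1u_g^*$ are pairwise orthogonal in $M_1$, each of trace $[M:N]^{-1}$. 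So every finite subset of $W(N\subset M)$ has at most $[M:N]$ elements.

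Step~3 should also be adjusted. Do not quote a general multiplicativity of indices: in general one only gets $\mathrm{Ind}(E_N)=\sum_g u_g\,\mathrm{Ind}(E_N|_{\mR})\,u_g^*$, because $\mathrm{Ind}(E_N|_{\mR})$ is merely central in $\mR$. Compose bases instead:
\begin{itemize}
\item $\{u_gb\}$ is a basis of $M$ over $N$.
\item $\sum_i m_im_i^*=[M:N]$ for any basis $\{m_i\}$ of a $\mathrm{II}_1$ subfactor; apply $E_M$ to $\sum_i m_ie_1m_i^*=1$.
\item $\sum_{g,b}u_gbb^*u_g^*=|W(N\subset M)|\dim(N'\cap M)$ once you have $\sum_b bb^*=\dim(N'\cap M)$, which your transitivity argument provides.
\end{itemize}
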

 As mentioned in \cite[Corollary 3.2]{CKP}, based on some
observations from \cite{BG1} and some quantum information theory
techniques from \cite{CKP} , the following fact follows on the lines
of the proof of \cite[Theorem 4.3]{BG2}. We include the details for
the sake of convenience of the reader and for future reference.

\begin{theorem}\cite{BG1, BG2, CKP}\label{regular-depth-2}
  Every  regular finite-index type $II_1$ subfactor has depth at most $2$.
\end{theorem}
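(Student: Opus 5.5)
We reduce the claim to the depth-$2$ criterion and then produce a concrete unitary basis. The paper itself signals the route: the proof runs along the lines of \cite[Theorem 4.3]{BG2}, using \cite{BG1} and the quantum information input of \cite{CKP}. The key mechanism is that a subfactor $N\subset M$ has depth at most $2$ as soon as $N\subset M$ admits a unitary orthonormal Pimsner--Popa basis $\{v_1,\dots,v_m\}$ with each $v_j$ normalizing $N$.

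\textbf{Step 1 (the normalizer basis gives depth $\le 2$).} Suppose such a basis exists. Then
\[
M=\sum_j Nv_j=\sum_j v_jN,
\]
so $M_1=\langle M,e_1\rangle=\operatorname{span}\{v_ie_1v_j^*\,N\}$. Since each $v_j$ normalizes $N$, the elements $v_ie_1v_j^*$ lie in $N'\cap M_1$ up to the twist $\Ad_{v_i}$ on $N$. Working this out yields
\[
M_1=N\vee(N'\cap M_1),\qquad \dim(N'\cap M_1)=[M:N]\cdot(\text{suitable count}).
\]
Equivalently, the commuting square
\[
\begin{array}{ccc}
N & \subset & M_1\\
\cup & & \cup\\
\mathbb C & \subset & N'\cap M_1
\end{array}
\]
is nondegenerate, which forces $\|\Lambda_1\|^2=[M:N]$. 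By \Cref{equivalent depth}, the depth is then at most $2$. Alternatively, one can invoke the standard characterization that depth $\le 2$ is equivalent to $M_1$ being generated by $N$ and $N'\cap M_1$, as in the argument in the proof of \Cref{depth one}.

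\textbf{Step 2 (constructing the basis from regularity).} Assume $N\subset M$ is regular, so $\mathcal N_M(N)''=M$. By \Cref{propBV}, the coset representatives $\{u_g : g\in W(N\subset M)\}$ form a unitary orthonormal basis of $M$ over $\mathcal R=N\vee(N'\cap M)$. It remains to find a unitary orthonormal basis of $\mathcal R$ over $N$ consisting of unitaries in $N'\cap M$, which automatically normalize $N$. This is exactly where \cite{CKP} is needed. The inclusion $N\subset\mathcal R\cong N\otimes(N'\cap M)$ is a commuting-square situation. A unitary orthonormal basis of $N'\cap M\cong\bigoplus_k M_{n_k}(\mathbb C)$ with respect to the restricted trace exists when the trace weights are suitably balanced, and this balance is guaranteed by regularity through a transitivity argument as in \Cref{homogeneous relative commutant}. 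One then builds it from Weyl-type shift and clock matrices together with a Fourier matrix across the blocks; this is the quantum-information step.

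\textbf{Step 3 (combining).} The products $u_gw_l$, with $\{w_l\}$ the basis from Step 2, form a unitary orthonormal basis of $M$ over $N$ consisting of normalizers. Step 1 then applies and gives depth at most $2$.

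The main obstacle is Step 2: producing unitaries orthonormal with respect to $E_N$ restricted to $N'\cap M$, where the trace is non-uniform across the blocks. I would first verify that the block trace weights of $N'\cap M$ are all equal in the regular case, using transitivity of the normalizer action on the minimal central projections. I would then use a tensor product of a generalized Pauli basis on each $M_{n}(\mathbb C)$ with a Fourier basis on the center $\mathbb C^k$.
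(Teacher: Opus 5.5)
The gap is in Step 1, which is the step that actually controls depth. Your Steps 2 and 3 are essentially the paper's own. Coset representatives of $W(N\subset M)$ give a unitary orthonormal basis of $M$ over $\mathcal R$ inside $\mathcal N_M(N)$. A unitary orthonormal basis of $N'\cap M$ for the restricted trace is one for $\mathcal R$ over $N$, by nondegeneracy of the commuting square. The products then give a unitary orthonormal basis $\{w\}$ of $M$ over $N$ consisting of normalizers. Your remark on Step 2 is correct and more elementary than citing \cite{CKP}: transitivity of the normalizer action forces equal block sizes \emph{and} equal central traces, so the restricted trace is the Markov trace, and clock--shift unitaries tensored with characters of $\mathbb Z_K$ already give the basis.

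The claim $M_1=N\vee(N'\cap M_1)$ (equivalently, nondegeneracy of the commuting square with $\mathbb C\subset N'\cap M_1$ and $N\subset M_1$) is false for the subfactors you care about.
\begin{itemize}
\item Since $N'\cap M_1\subset N'$, the centre of $N'\cap M_1$ would be central in the factor $M_1$. So the equality forces $N'\cap M_1$ to be a factor and $M_1\cong N\otimes(N'\cap M_1)$, i.e.\ $N\subset M_1$ would be a depth-one inclusion.
\item This already fails for $N\subset N\rtimes_\alpha G$ with $\alpha$ outer and $G\neq\{e\}$, where $N'\cap M_1\cong\ell^\infty(G)$.
\item It also fails for every non-trivial regular diagonal subfactor, where $N'\cap M_1$ has $r=|W(N\subset M)|\ge 2$ minimal central projections (\Cref{principal graph theorem}).
\end{itemize}
The off-diagonal elements $v_ie_1v_j^*$ only intertwine $x$ with $\Ad_{v_jv_i^*}(x)$ and need not lie in $N\vee(N'\cap M_1)$. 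The ``standard characterization'' you offer as an alternative is not a characterization of depth at most $2$. Also, with the paper's indexing, $\|\Lambda_1\|^2=[M:N]$ would give only depth at most $3$; you need $\|\Lambda_0\|^2=[M:N]$.

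The missing idea is to use only the diagonal terms $we_1w^*$.
\begin{itemize}
\item Each $we_1w^*$ is a projection in $N'\cap M_1$, because $w$ normalizes $N$ and $e_1$ commutes with $N$.
\item The basis property gives $\sum_w we_1w^*=1$.
\item Since $e_2$ commutes with $M$, $(we_1w^*)e_2(we_1w^*)=we_1e_2e_1w^*=\tau\,we_1w^*$ with $\tau=[M:N]^{-1}$. So each $we_1w^*$ lies in the ideal $(N'\cap M_1)e_2(N'\cap M_1)$ of $N'\cap M_2$.
\end{itemize}
That ideal therefore contains $1$ and equals $N'\cap M_2$. By \cite[Theorem 4.6.3]{GHJ}, $N'\cap M\subset N'\cap M_1\subset N'\cap M_2$ is then a basic construction, i.e.\ $N\subset M$ has depth at most $2$.
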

\begin{proof} This proof is an imitation  of that of \cite[Theorem 4.3]{BG2}.

  Let $N \subset M$ be a regular finite-index subfactor of type $II_1$
  and let $\mathcal R:=N\vee (N'\cap M)$. Since $N\subset M$ is
  regular, it follows from \cite[Proposition 3.7]{BG1} that $M$ admits
  a (finite) unitary orthonormal basis over $\mathcal R$ 
  contained in $\mathcal N_M(N)$, say, $\{v_j:j\in J\}$.  On ther
  other hand, it follows from \cite[Theorem 2.1]{CKP} that the
  inclusion $\mathbb C\subset N'\cap M$ admits a unitary orthonormal
  basis, say, $\{u_i:i\in I\}$. And, since the commuting square
\[
\begin{array}{ccc}
N'\cap M & \subset & \mathcal R\\
\cup && \cup\\
\mathbb C & \subset & N
\end{array}
\]
is  non-degenerate (as $\overline{(N'\cap M)
  N}^{\mathrm{S.O.T.}} = \mR$), it follows from
\cite[Prop. 1.1.5]{Po2} that the family $\{u_i:i\in I\}$ is a unitary orthonormal basis
for $\mathcal R$ over $N$ as well. Also, $\{u_i:i\in I\}\subseteq
\mathcal U(N'\cap M) \subseteq \mN_M(N)$.

For $(i,j)\in I\times J$, consider the unitary \( w_{i,j}:=v_j u_i
\in \mathcal{N}_{M}(N)$. Then, it
follows on the lines of the proof of \cite[Theorem 3.21]{BG2} that
$\{w_{i,j}\}$ is a unitary orthonormal basis for $M$ over $N$. In
particular,
\begin{equation}\label{basis-sum-e1}
\sum_{i,j} w_{i,j}e_1w_{i,j}^*=1.
\end{equation}

We now assert that $w_{i,j}e_1w_{i,j}^*\in N'\cap M_1$ for every
$(i,j)\in I\times J$.

Let $u \in \mathcal U(N)$. Since $\{w_{i,j}\} \subseteq
\mathcal{N}_{M}(N)$, $\{w_{i,j}^*uw_{i,j}\} \subseteq \mathcal
U(N)$.  Writing $v_{i,j}=w_{i,j}^*uw_{i,j} \in \mU(N)$, we see that
\[
u(w_{i,j}e_1w_{i,j}^*)u^*
=
w_{i,j}v_{i,j}e_1v_{i,j}^*w_{i,j}^*
=
w_{i,j}e_1w_{i,j}^*,
\]
for all $i, j$ (because $e_1 \in N'\cap M_1$).  Thus,
$\{w_{i,j}e_1w_{i,j}^*\} \subseteq N'\cap M_1$.

Next, since $e_2 \in M'\cap M_2$ and $\{w_{i,j} \} \subseteq M$, we observe that 
\begin{equation}\label{e2-corner-relation}
(w_{i,j}e_1w_{i,j}^*)e_2(w_{i,j}e_1w_{i,j}^*)
=
\tau\, w_{i,j}e_1w_{i,j}^*
\qquad \text{for all } (i,j)\in I\times J,
\end{equation}
 where $\tau = [M:N]^{-1}$. Therefore, each projection $w_{i,j}e_1w_{i,j}^*$ belongs to the ideal
$(N'\cap M_1)e_2(N'\cap M_1)$ of $N'\cap M_2$. Since
$1=\sum_{i,j}w_{i,j}e_1w_{i,j}^*$, we obtain $1\in (N'\cap
M_1)e_2(N'\cap M_1)$. Hence, $(N'\cap M_1)e_2(N'\cap M_1)=N'\cap
M_2$. Consequently, by \cite[Theorem 4.6.3]{GHJ}, the inclusion
$N\subset M$ has depth at most $2$.
\end{proof}
\color{black}

\section{Characterization of depth in diagonal subfactors}\label{depth-characterization}\label{sec-3}

Throughout this section, $Q$ will denote a fixed $II_1$-factor with a
 finite family $\mF = \{\alpha_i : 1 \leq i \leq n\}$ of
automorphisms of $ Q$ with $\alpha_1 = \mathrm{id}_Q$ and
\[
N := \{\diag(\alpha_1 (x), \alpha_2
(x), \ldots , \alpha_n(x)) : x \in Q\} \subset M_n (Q) =: M
\]
will denote the associated diagonal subfactor.

We shall freely use notations and results from \Cref{appendix-pg}. For
instance, for $ k \in \N\cup\{0\}$ and $j_0,j_1, \dots, j_k \in \{1,
2, \dots, n\}$, $\gamma(j_0, j_1, \dots, j_k):= \alpha_{j_k}^{(-1)^k}
\alpha_{j_{k-1}}^{(-1)^{k-1}}\cdots \alpha_{j_1}^{-1}\alpha_{j_0}$ -
see \Cref{gamma-notation}. Also, \( {\Gamma_k}:= \bigl\{
\gamma(j_0,j_1,\dots,j_k): 1\leq j_0,j_1,\dots,j_k\leq n \bigr\}.  \)

\begin{notation}\label{hat-gamma-n-box-gamma}
  For every $ k \in \N\cup \{0\}$, let \( \widehat{\Gamma}_k:= \bigl\{
  \gamma(j^{(s)}_0,j^{(s)}_1,\dots,j^{(s)}_k): 1\leq s\leq n_k \bigr\}
  \) be a maximal family of pairwise inequivalent automorphisms in
  $\Gamma_k$; and, let $ [\widehat{\Gamma}_k]$ denote its image in
  $\Out(Q)$, i.e., \( [\widehat{\Gamma}_k]:= \bigl\{ [\sigma] : \sigma
  \in \widehat{\Gamma}_k\bigr\} \subset \Out(Q).  \) 
\end{notation}
Notice that $|[\widehat{\Gamma}_k]| = |\widehat{\Gamma}_k|= n_k$.

Since the structure of depth $1$ diagonal subfactors is known
(\Cref{depth one}, \Cref{depth one diagonal}), we need to focus on
  depth $\geq 2$ only. The following
  observation will be crucial in characterizing depth of $N \subset
  M$.

  \begin{lemma}\label{nk-lemma}
Let $k \geq 2$. Then, the following statements are equivalent:
\begin{enumerate}
\item $n_i=n_{k-2}$ for all $i \geq k-2$.
\item $n_{k-2} = n_{3k-4}$.
\item $n_{k-2} = n_{2k-3}$.
\item $[\widehat{\Gamma}_{i}] =  [\widehat{\Gamma}_{k-2}]$ for all $i \geq k-2$.
\item $[\widehat{\Gamma}_{k-2}]$ is a subgroup of $\Out(Q)$.
\end{enumerate}
\end{lemma}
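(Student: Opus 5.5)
The proof rests on passing to $\Out(Q)$ and writing $[\widehat\Gamma_k]$ as a product set. Put $A:=\{[\alpha_i]:1\le i\le n\}\subset\Out(Q)$, $A^{-1}:=\{a^{-1}:a\in A\}$ and $T:=A^{-1}A$. Then $T$ is symmetric ($T^{-1}=T$) and contains $e=[\mathrm{id}_Q]$, since $\alpha_1=\mathrm{id}_Q$. Writing $S_k:=[\widehat\Gamma_k]$, which equals the image of $\Gamma_k$ in $\Out(Q)$, the definition of $\gamma(j_0,\dots,j_k)$ gives
\[
S_{2m+1}=T^{m+1},\qquad S_{2m}=A\,T^{m}\qquad (m\ge 0),
\]
where products of sets are taken elementwise.

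\textbf{Step 1: monotonicity.} Taking $j_{k+1}=1$ gives $\gamma(j_0,\dots,j_k,1)=\gamma(j_0,\dots,j_k)$. Hence $\Gamma_k\subseteq\Gamma_{k+1}$, so $S_k\subseteq S_{k+1}$ and $(n_k)$ is nondecreasing. Because all these sets are finite, equality $n_a=n_b$ for $a\le b$ forces $S_a=S_{a+1}=\cdots=S_b$.

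\textbf{Step 2: the easy implications.} Monotonicity immediately gives (1)$\Leftrightarrow$(4). It also gives (4)$\Rightarrow$(2), and since $3k-4\ge 2k-3$ for $k\ge1$, it gives (2)$\Rightarrow$(3).

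For (5)$\Rightarrow$(4), suppose $H:=S_{k-2}$ is a subgroup. Then $A=S_0\subseteq H$. Since $S_{i+1}=A^{\pm1}S_i$, with the sign determined by the parity of $i+1$, induction gives $S_i\subseteq H$ for all $i\ge k-2$. The reverse inclusion comes from monotonicity, so $S_i=H$ for all $i\ge k-2$.

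\textbf{Step 3: (3)$\Rightarrow$(5), the main point.} By Step 1, hypothesis (3) yields $S:=S_{k-2}=S_j$ for every $k-2\le j\le 2k-3$. The task is to realise products of two elements of $S$ as alternating words with at most $2k-2$ letters, i.e.\ as elements of $S_{2k-3}$. We split by parity.

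If $k-2=2m+1$, then $S=T^{m+1}$ contains $e$ and is symmetric. Moreover $S\cdot S=T^{2m+2}=S_{4m+3}=S_{2k-3}=S$, so $S$ is a subgroup.

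If $k-2=2m$, then $S=AT^m=S_{2m+1}=T^{m+1}\ni e$. For $x,y\in S$,
\[
x^{-1}y\in T^{m}A^{-1}AT^{m}=T^{2m+1}=S_{4m+1}=S_{2k-3}=S .
\]
Together with $e\in S$, this is the subgroup criterion.

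The expected obstacle is the even case. A naive concatenation $S\cdot S$ of two words, each with $k-1$ letters, does not alternate correctly when $k-1$ is odd, since both words begin and end with a letter from $A$. The trick is to use $x^{-1}y$ instead, which produces a legitimate alternating word with exactly $2k-2$ letters. This is precisely why index $2k-3$ suffices in (3).
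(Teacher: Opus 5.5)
Your proof is correct and follows essentially the paper's route. Monotonicity of the nested sets $[\widehat{\Gamma}_k]$ gives the easy implications. For (3)$\Rightarrow$(5) you place $S^{-1}S$ (or $SS$ when $k-2$ is odd) inside $[\widehat{\Gamma}_{2k-3}]$, which is exactly what the paper does via the Word Lemma identities $\Gamma_{k-2}\Gamma_{k-2}=\Gamma_{2k-3}$ and $\Gamma_{k-2}^{-1}\Gamma_{k-2}=\Gamma_{2k-3}$. For (5)$\Rightarrow$(4), your letter-by-letter induction $S_{i+1}=A^{\pm1}S_i\subseteq H$ is a slightly cleaner version of the paper's decomposition of $\Gamma_{2m(k-1)-1}$ into $2m$ blocks from $\Gamma_{k-2}^{\pm1}$, but the substance is the same.
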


\begin{proof}
 By \Cref{word lemma}, we have $\Gamma_k\subseteq
 \Gamma_{k+1}$ for all $k \in \N\cup\{0\}$. So, it follows that
\begin{equation}\label{phi inclusion}
 [\widehat{\Gamma}_k] \subseteq
        [\widehat{\Gamma}_{k+1}];
\end{equation}
and, since $|[\widehat{\Gamma}_k]| = |\widehat{\Gamma}_k | = n_k$, it
implies that
\begin{equation}\label{monotonicity}
n_k\leq n_{k+1}
\qquad \text{for all } k\geq 0.
\end{equation}

Thus, \eqref{phi inclusion} and \eqref{monotonicity} imply that (1)
and (4) are equivalent.\smallskip

Also, the implications (1) $\Rightarrow$ (2) $\Rightarrow$ (3) follow
from \eqref{monotonicity}. \smallskip

\noindent (3) $\Rightarrow$ (5): Assume that $n_{k-2} = n_{2k-3}$.

In particular, \( [\widehat{\Gamma}_{k-2}] =
[\widehat{\Gamma}_{2k-3}]\), by \eqref{phi inclusion}. Since $2k-3$ is odd,
$\Gamma_{2k-3}^{-1}=\Gamma_{2k-3}$, by \Cref{word lemma}; so,
\[
  [\widehat{\Gamma}_{k-2}]^{-1}= [\widehat{\Gamma}_{2k-3}]^{-1} = [\widehat{\Gamma}_{2k-3}] 
  =[\widehat{\Gamma}_{k-2}],
  \]
  i.e., $[\widehat{\Gamma}_{k-2}]$ is closed under inversion. Next,
  let $[\rho],[\sigma]\in [\widehat{\Gamma}_{k-2}]$. Then, there
  exists a $\tau\in \widehat{\Gamma}_{k-2}\subseteq \Gamma_{k-2}$ such that \(
  [\rho]= [\tau]^{-1}.  \) Hence, \(
  [\rho][\sigma]=[\tau]^{-1}[\sigma]=[\tau^{-1}\sigma].  \) From
  \Cref{word lemma} again, we have
\[
\Gamma_{k-2}\Gamma_{k-2}
=\Gamma_{2k-3}
\ (\text{when } k \text{ is  odd}) \text{ and }
\Gamma_{k-2}^{-1}\Gamma_{k-2}=
 \Gamma_{2k-3}
\ (\text{when }  k \text{ is even}).
\]
Thus, in either case $[\rho][\sigma] \in [\widehat{\Gamma}_{2k-3}] =
[\widehat{\Gamma}_{k-2}] $; so, $[\widehat{\Gamma}_{k-2}]$ is closed
under multiplication, thereby implying that it is a subgroup of
$\Out(Q)$.\medskip

\noindent (5) $\Rightarrow$ (4): Assume that
$[\widehat{\Gamma}_{k-2}]$ is a subgroup of $\Out(Q)$.

Let $i \geq k-2$. Fix an $m$ such that $ 2m(k-1)-1  \geq i$.  Since
$[\widehat{\Gamma}_{k-2}]\subseteq [\widehat{\Gamma}_i] \subseteq
[\widehat{\Gamma}_{2m(k-1)-1}]$, it is enough to prove the reverse
inclusion
$[\widehat{\Gamma}_{2m(k-1)-1}]\subseteq[\widehat{\Gamma}_{k-2}]$.

Let $\sigma \in \widehat{\Gamma}_{2m(k-1)-1} \subseteq
\Gamma_{m(2k-3)}$. By \Cref{word lemma},
\[
\Gamma_{k-2}^{\epsilon}\Gamma_{k-2}\Gamma_{k-2}^{\epsilon}\Gamma_{k-2}
\cdots \Gamma_{k-2}^{\epsilon}\Gamma_{k-2} \ (2m \text{ factors
})=\Gamma_{2m(k-1)-1} 
\]
(where $\epsilon$ is $1$ or $-1$ depending upon whether $k$ is odd or even);
so $\sigma$ can be expressed as
\[
\sigma = \sigma_1^\epsilon\sigma_2\sigma_3^\epsilon \sigma_4\cdots
\sigma_{2m-1}^\epsilon\sigma_{2m}
\]
for some $\{\sigma_i :1\leq i\leq 2m\} \subseteq  \Gamma_{k-2}$. For each $i$,
choose $\rho_{i}\in \widehat{\Gamma}_{k-2}$ such that \(
[\sigma_i]=[\rho_{i}].  \) Since $[\widehat{\Gamma}_{k-2}]$ is a
subgroup of $\Out(Q)$, it therefore follows that 
$[\sigma]\in 
 [\widehat{\Gamma}_{k-2}]$.  Hence, $[\widehat{\Gamma}_{2m(k-1)-1}]
\subseteq
    [\widehat{\Gamma}_{k-2}]$, and we are done.
    \end{proof}

\begin{theorem}\label{depth subgroup theorem}
Let $k \geq 2$. Then, the following
statements are equivalent:
\begin{enumerate}
\item  $N\subset M$ has depth $k$.
\item $k$ is the least number such that
  $[\widehat{\Gamma}_i]=[\widehat{\Gamma}_{k-2}]$ for all $i\geq k-2$.
  \item $k$ is the least  number such that $[\widehat{\Gamma}_{k-2}]$
 is a subgroup of $\Out(Q)$.
\end{enumerate}
\end{theorem}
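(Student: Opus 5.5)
The plan is to reduce everything to \Cref{nk-lemma} together with \Cref{equivalent depth} (or \Cref{least-depth-two-position}). The appendix (following Bisch and Popa) describes the relative commutants $N'\cap M_t$ in terms of the words $\gamma(j_0,\dots,j_t)$. The key input I will extract from there is the following. The minimal central projections of $N'\cap M_{t-1}$ (with the appropriate parity convention) are indexed by the classes $[\widehat\Gamma_{t}]\subset\Out(Q)$. The inclusion matrix $\Lambda_t$ of $N'\cap M_t\subset N'\cap M_{t+1}$ is determined by the set maps $[\widehat\Gamma_t]\to[\widehat\Gamma_{t+1}]$ given by appending a letter $\alpha_j^{\pm1}$. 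Consequently, the number of simple summands of $N'\cap M_{t}$ equals $n_{t}$ (up to a fixed shift of the index, which I will pin down from the appendix; with $N'\cap M=\bigoplus_{k=1}^r M_{m_k}(\C)$ one has $n_0=r$, which fixes the shift).

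\textbf{Step 1.} Show that depth $\le k$ is equivalent to $n_{k-2}=n_{k-1}=n_{k}=\cdots$, i.e.\ to condition (1) of \Cref{nk-lemma}.

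For the forward direction: if $N'\cap M_{k-2}\subset N'\cap M_{k-1}\subset N'\cap M_k$ is a basic construction, then the tower is periodic from that point on. In particular, the centres of $N'\cap M_{i}$ and $N'\cap M_{i+2}$ have the same dimension for all $i\ge k-2$. Since the new vertices at each level correspond to $[\widehat\Gamma_{i+1}]\setminus[\widehat\Gamma_{i}]$ (inside the same-parity class), this forces $n_i$ to stabilize from $k-2$ on.

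For the converse: if $[\widehat\Gamma_i]=[\widehat\Gamma_{k-2}]$ for all $i\ge k-2$, then no new vertices appear in the principal graph after level $k-2$. Hence $\Lambda_{k-1}=\Lambda_{k-2}^T$, and \Cref{equivalent depth}(3) gives depth $\le k$.

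\textbf{Step 2.} Take minimality over $k$ on both sides. This gives (1)$\Leftrightarrow$(2) of the theorem, since (2) is literally ``the least $k$ with condition (4) of \Cref{nk-lemma}''. Then (2)$\Leftrightarrow$(3) is immediate from (4)$\Leftrightarrow$(5) of \Cref{nk-lemma}, applied for every $k\ge2$. Leastness transfers because the equivalence holds for each $k$ separately.

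\textbf{Main obstacle.} I expect the difficulty to lie in Step 1: translating ``basic construction at level $k$'' into the stabilization of $n_i$. This requires the precise dictionary between even/odd vertices of the principal graph at depth $t$ and the sets $[\widehat\Gamma_t]$, including the fact that vertices at depth $t$ are those classes in $[\widehat\Gamma_t]$ not already in $[\widehat\Gamma_{t-2}]$.

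My first attempt would avoid that dictionary by a norm argument. Using \Cref{least-depth-two-position}, depth $k$ is equivalent to $N\subset M_{k-2}$ being depth $2$ with $k-2$ minimal. Note that $N\subset M_{k-2}$ is again (up to conjugacy) a diagonal-type inclusion whose relevant words are $\Gamma_{k-2}$. I would then check, via the dimension count $\dim(N'\cap M_{k-2})=\sum(\text{multiplicities})^2$ read off from the appendix, that $\|\Lambda_{k-2}\|^2=n^2$ holds exactly when the appending maps from $[\widehat\Gamma_{k-2}]$ land back in $[\widehat\Gamma_{k-2}]$, i.e.\ when $n_{k-2}=n_{k-1}$ and hence (by closure) $n_{k-2}=n_{2k-3}$. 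This is condition (3) of \Cref{nk-lemma}.
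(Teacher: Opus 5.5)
Your main line (Steps 1 and 2) is correct, and its skeleton is the paper's. Both proofs reduce everything to \Cref{nk-lemma} through the fact $\dim\mathcal Z(N'\cap M_t)=n_t$, together with the basic-construction criteria of \Cref{equivalent depth}. There is no index shift here: $N\subset M_t$ is diagonal with words $\Gamma_t$, so \Cref{last lemma} applies directly.

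The real difference is in the implication ``depth $k$ $\Rightarrow$ subgroup''. The paper passes to $N\subset M_{k-2}$, which has depth $2$ by \Cref{least-depth-two-position}, and uses \cite[Proposition~4.3.6]{JS} to get $n_{k-2}=n_{3k-4}$, i.e.\ condition (2) of \Cref{nk-lemma}. You instead use that the relative commutant tower is periodic once a basic construction occurs, which gives $n_i=n_{i+2}$ for all $i\ge k-2$. Your route is shorter. Proving ``depth $\le k$ $\Leftrightarrow$ $[\widehat\Gamma_i]=[\widehat\Gamma_{k-2}]$ for all $i\ge k-2$'' for each $k$ and then minimizing also makes the transfer of leastness explicit, which the paper leaves implicit. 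Your converse is the paper's, with criterion (3) of \Cref{equivalent depth} in place of the count $n_k=n_{k-2}$. Like the paper, which restricts to depth $\ge 2$ just before \Cref{nk-lemma}, you must exclude depth $1$: for a trivial diagonal subfactor, (2) and (3) hold with $k=2$.

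Two of your glosses are wrong, and fixing them shows that the ``dictionary'' you call the main obstacle is never needed.

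First, you get stabilization from $n_i=n_{i+2}$ only by adding $n_i\le n_{i+1}$, i.e.\ $\Gamma_i\subseteq\Gamma_{i+1}$ (\Cref{word lemma}). New vertices do not correspond to $[\widehat\Gamma_{i+1}]\setminus[\widehat\Gamma_i]$: $N'\cap M_{i+1}$ has $n_{i+1}-n_{i-1}$ new summands. For example, for $\diag(x,\alpha(x))$ with $[\alpha]$ of infinite order one has $n_t=t+2$, so $n_{i+1}-n_i=1$, while two new vertices appear at every depth.

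Second, in the converse, new vertices can appear after level $k-2$. For the same subfactor with $[\alpha]$ of order $3$, the depth is $3$ and $N'\cap M_2$ has a new summand. What you actually need is only $n_k=n_{k-2}$, or the identity $\Lambda_{k-1}=\Lambda_{k-2}^T$. The latter follows from the formula $\lambda_{g,h}=|\{l:[\alpha_l]^{\pm1}g=h\}|$ of \Cref{relative commutant inclusions proposition} once the index sets $[\widehat\Gamma_{k-2}]=[\widehat\Gamma_{k-1}]=[\widehat\Gamma_k]$ coincide.

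Your alternative norm route is also sound, but its key equivalence $\|\Lambda_{k-2}\|^2=n^2\Leftrightarrow n_{k-2}=n_{k-1}$ needs a Perron--Frobenius equality argument. The rows of $\Lambda_{k-2}$ sum to $n$, the columns sum to at most $n$, and the diagram is connected; equality then forces every column sum to be $n$, and $\alpha_1=\mathrm{id}_Q$ gives $[\widehat\Gamma_{k-1}]\subseteq[\widehat\Gamma_{k-2}]$. It is not clear how the dimension count you mention would give this.
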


\begin{proof}
 \noindent That (2) and (3) are equivalent follows from
 \Cref{nk-lemma}.\smallskip

 (1) $\Rightarrow$ (3): Assume that $N \subset M$ has depth $k$.

 Consider the basic construction tower $\{M_k:
 k\geq -1\}$ of $N \subset M$ as in \Cref{diagonal basic
   construction}. Thus, for $t \in \N$, the embedding
 $N\hookrightarrow M_t$ is given by
\[
x
\longmapsto
\sum_{j_0,j_1,\dots,j_t=1}^n
\gamma(j_0,j_1,\dots,j_t)(x)
\otimes
E_{j_0,j_0}\otimes E_{j_1,j_1}\otimes\cdots\otimes E_{j_t,j_t},
\]
i.e., $N \subset M_t$ is also a diagonal subfactor. Thus, since
$|\widehat{\Gamma}_t|=n_t$, it follows from \Cref{last lemma} that
$N'\cap M_t$ has exactly $n_t$ minimal central projections. Hence,
\(
\dim\bigl(\mathcal Z(N'\cap M_t)\bigr)=n_t \)
 \text{for all } $t\geq 0$. 
 
Since $N\subset M$ has depth $k$, it follows from
\Cref{least-depth-two-position} that $k$ is the least number such that
$N\subset M_{k-2}$ has depth $2$. Equivalently, by
\cite[Proposition 4.3.6]{JS},
\[
N'\cap M_{k-2} \subset N'\cap M_{2k-3} \subset N'\cap M_{3k-4}
\]
is an instance of the basic construction. Hence
\[
n_{k-2}
=
\dim\bigl(\mathcal Z(N'\cap M_{k-2})\bigr)
=
\dim\bigl(\mathcal Z(N'\cap M_{3k-4})\bigr)
=
n_{3k-4}.
\]
Thus, by using \Cref{nk-lemma}, we conclude that $k$ is the least
number  such that $[\widehat{\Gamma}_{k-2}]$ is a subgroup of
$\Out(Q)$. \medskip

(3) $\Rightarrow$ (1): Assume that $k$ is the least number such that
  $[\widehat{\Gamma}_{k-2}]$ is the subgroup of $\Out(Q)$.

From \Cref{nk-lemma} again, it follows that $n_{k-2}=n_{3k-4}$. Since
$k \geq 2$, we get $n_{k-2}=n_{k}$, by \eqref{monotonicity}. Hence, $k$ is the least number such that 
\begin{equation}\label{two skip}
\dim\bigl(\mathcal Z(N'\cap M_{k-2})\bigr) = n_{k-2} =  n_k=
\dim\bigl(\mathcal Z(N'\cap M_{k})\bigr);
\end{equation}
so,  $N \subset M$ has depth $k$ (see \Cref{equivalent depth}).
 \end{proof}

\begin{corollary}\label{depth-k-example}
  For every $k \geq 1$, there exists a diagonal subfactor of depth
  $k$.\end{corollary}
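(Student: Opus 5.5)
Depth $1$ is handled by \Cref{depth one diagonal}: take all $\alpha_i = \mathrm{id}_Q$ (or $n=1$). For $k \geq 2$ we use \Cref{depth subgroup theorem}. It suffices to find a $\mathrm{II}_1$-factor $Q$ and automorphisms $\alpha_1=\mathrm{id}_Q,\alpha_2,\dots,\alpha_n$ such that $[\widehat{\Gamma}_{k-2}]$ is a subgroup of $\Out(Q)$ while $[\widehat{\Gamma}_{k-3}]$ is not (when $k\ge 3$). Because $[\widehat\Gamma_j]$ is increasing in $j$ and, by \Cref{nk-lemma}, stabilizes once it becomes a subgroup, the least such index is then exactly $k-2$.

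For the construction we take a finite cyclic group acting outerly. Let $m\geq 1$ and let $\theta$ be an automorphism of the hyperfinite $\mathrm{II}_1$-factor $R$ with outer period exactly $N$ for some large $N$. Concretely, take an outer action of $\Z_N$ on $R$, which exists for example via the Bernoulli shift on $\bigotimes_{\Z_N} M_2(\C)$. Then $[\theta^j]$, $j\in\Z_N$, are distinct in $\Out(R)$. Put $\alpha_1=\mathrm{id}$ and $\alpha_2=\theta$, so $n=2$. In additive notation in $\Z_N$, the word $\gamma(j_0,\dots,j_r)$ has class $\sum_s (-1)^s a_{j_s}$ with $a_{j}\in\{0,1\}$. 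Hence $[\widehat\Gamma_r]$ is identified with the set of integers obtainable as alternating sums of $r+1$ terms from $\{0,1\}$. There are $\lceil (r+1)/2\rceil$ terms with a plus sign and $\lfloor (r+1)/2\rfloor$ with a minus sign, so the set is the interval $\{-\lfloor (r+1)/2\rfloor,\dots,\lceil (r+1)/2\rceil\}$ reduced mod $N$. This interval has $r+2$ elements as long as $r+2\le N$.

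Such an interval is a subgroup of $\Z_N$ only if it is all of $\Z_N$, which happens exactly when $r+2\geq N$. Choosing $N=k$ makes the least $r$ with $[\widehat\Gamma_r]$ a subgroup equal to $r=k-2$, and \Cref{depth subgroup theorem} then yields depth $k$. A sanity check: $k=2$ corresponds to $\theta$ of outer period $2$, matching the regular example $N_\theta$ above.

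The main obstacle is bookkeeping rather than theory. We need to confirm the exact counting of the alternating sums and that the interval reduced mod $N$ has $r+2$ distinct residues for $r\le N-2$. Since $r+2\le N$, there is no wraparound, so distinct integers in the interval give distinct residues. Nontrivial proper intervals of $\Z_N$ are not closed under addition, because adding $1$ to the top element leaves the set. Finally, the existence of an automorphism of $R$ of outer period exactly $N$ is standard (Connes), and it is all we need from operator algebra theory here.
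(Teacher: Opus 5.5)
Your argument is correct and is essentially the paper's own proof: the same subfactor $\{\diag(x,\theta(x)) : x\in Q\}\subset M_2(Q)$ with $[\theta]$ of order $k$ in $\Out(Q)$, fed into \Cref{depth subgroup theorem}. Your alternating-sum count handles both parities of $k$ at once (the paper writes out only the odd case), and you make explicit why a proper subset of $\langle[\theta]\rangle$ that contains $[\theta]$ cannot be a subgroup.

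One slip in a side remark: $\bigotimes_{\Z_N} M_2(\C)$, a tensor product over the finite set $\Z_N$, is $M_{2^N}(\C)$ rather than $R$, and all of its automorphisms are inner. Use instead the cyclic shift on $\bigotimes_{\Z_N} R\cong R$, or simply cite Connes' $s_N^\gamma$, as you also do.
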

\begin{proof}
  For $k = 1$, just take a trivial diagonal subfactor (see \Cref{depth one}).

  Let $k \geq 2$. Fix a $II_1$-factor $Q$ (for instance, take $Q$ to
  be the hyperfinite $II_1$-factor $R$) with an outer automorphism
  $\alpha$ such that $[\alpha]$ has order $k$ in $\Out(Q)$. Thus,
  $\langle [a]\rangle \cong \Z_k$. Consider the diagonal subfactor
\[
N:=\{\diag(x,\alpha(x)):x\in Q\}\subset M_2(Q)=:M.
\]
We assert that $N\subset M$ has depth $k$.

Let $k$ be odd, i.e., $k=2i-1$ for some $i \geq 2$. Then, for $0\leq
t\leq i-2$, we have
\begin{eqnarray*}
{[\widehat{\Gamma}_{2t}]} &=& \{[\mathrm{id}_Q],[\alpha],
\ldots,[\alpha]^{t+1}\}\cup\{[\alpha]^{k-t},\ldots, [\alpha]^{k-1}\}, \text{ and}
\\ {[\widehat{\Gamma}_{2t+1}]} &=&
\{[\mathrm{id}_Q],[\alpha],
\ldots,[\alpha]^{t+1}\}\cup\{[\alpha]^{k-1-t},\ldots, [\alpha]^{k-1}\}.
\end{eqnarray*}

In particular, 
\(
{[\widehat{\Gamma}_{k-2}]}
=
{[\widehat{\Gamma}_{2i-3}]}
= \langle [\alpha]\rangle.
\)
Moreover, for every $0\leq s<k-2$, the set ${[\widehat{\Gamma}_s]}$
contains $[\mathrm{id}_Q]$ but is not equal to $\langle
[\alpha]\rangle$. In particular, ${[\widehat{\Gamma}_s]}$ is not a
subgroup of $\langle [\alpha]\rangle (\cong \mathbb Z_k)$. Thus, $k$ is the
least integer such that ${[\widehat{\Gamma}_{k-2}]}$ is a subgroup of
$\Out(Q)$. Hence,  $N\subset M$ has
depth $k$, by \Cref{depth subgroup theorem}.

The proof of the even case is similar and omitted. \end{proof}

\noindent{\bf Characterization of depth in the dual of $N \subset M$.}
As an application of \Cref{depth subgroup theorem}, we can also
characterize depth in the dual of the diagonal subfactor. We need a
few more notations for the same.
  
\begin{notation}\label{gamma-tilde}
For each $k \geq 1$, let 
\(
\widetilde{\Gamma}_k
:=
\left\{
\gamma(1,j_1,\dots,j_{k}):
j_1,\dots,j_{t}\in\{1,2,\dots,n\}
\right\}
\)
and
let
\[
\widehat{\widetilde{\Gamma}}_k
:=
\left\{
\gamma(1,j^{(s)}_1,\dots,j^{(s)}_k):
1\leq s\leq \widetilde{n}_k
\right\}
\]
be a maximal family of pairwise inequivalent automorphisms in
$\widetilde{\Gamma}_k$. 
\end{notation}

\begin{corollary}\label{dual depth subgroup}
  Let $k \geq 2$. Then, the following statements are equivalent:
  \begin{enumerate}
    \item $M\subset M_1$ has depth $k$. 
    \item $k$ is the least number such that the set
    \(
    [\widehat{\widetilde{\Gamma}}_{k-1}]
    :=
    \{
    [\sigma]: \sigma \in \widehat{\widetilde{\Gamma}}_{k-1} \}
    \)
    is a subgroup of $\Out(Q)$.

    \item $k$ is the least number such that
    $[\widehat{\widetilde{\Gamma}}_i]
    =
[\widehat{\widetilde{\Gamma}}_{k-1}]$
    for all $i\geq k-1$.
\end{enumerate}
\end{corollary}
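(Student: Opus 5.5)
The plan is to run the proof of \Cref{depth subgroup theorem} again with the tower shifted by one step, using the relative commutants $M'\cap M_t$ in place of $N'\cap M_t$. The Jones tower of $M\subset M_1$ is $M\subset M_1\subset M_2\subset\cdots$, so its relative commutants are $\{M'\cap M_t : t\ge 1\}$. The first step is to describe, via the tower of \Cref{diagonal basic construction} in \Cref{appendix-pg}, the embedding of $M$ in $M_t$ and its relative commutant. I expect that $M'\cap M_t$ is the relative commutant of the diagonal inclusion
\[
Q\ni x\longmapsto \sum_{j_1,\dots,j_t}\gamma(1,j_1,\dots,j_t)(x)\otimes E_{j_1,j_1}\otimes\cdots\otimes E_{j_t,j_t}
\]
inside a matrix algebra over $Q$. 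The reason is that the first tensor leg, indexed by $j_0$, is absorbed into $M=M_n(Q)$. We may normalize to $j_0=1$ using $\alpha_1=\mathrm{id}_Q$, because conjugating by the unitary implementing the $j_0$-th matrix units only changes $\gamma$ by left multiplication with $\alpha_{j_0}^{-1}$; this is the Bisch--Popa description of the dual standard invariant. \Cref{last lemma} then gives $\dim \mathcal Z(M'\cap M_t)=\widetilde n_t$.

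Next we need the analogue of \Cref{nk-lemma} for the sets $\widetilde\Gamma_t$. The required facts from \Cref{word lemma} are as follows:
\begin{itemize}
\item monotonicity, $\widetilde\Gamma_t\subseteq\widetilde\Gamma_{t+1}$, obtained by appending the pair $\alpha_1^{\pm1}=\mathrm{id}$ (or a cancelling pair $j,j$);
\item $\widetilde\Gamma_t^{-1}=\widetilde\Gamma_t$ for even $t$, since a word of even length starting and ending at index $1$ reverses to a word of the same form;
\item the concatenation identities $\widetilde\Gamma_{s}^{\pm}\widetilde\Gamma_{t}=\widetilde\Gamma_{s+t}$ with the appropriate parity sign.
\end{itemize}
With these, the same argument shows that the following are equivalent: $\widetilde n_{k-1}=\widetilde n_i$ for all $i\ge k-1$; $\widetilde n_{k-1}=\widetilde n_{2k-2}$; $[\widehat{\widetilde\Gamma}_{k-1}]=[\widehat{\widetilde\Gamma}_i]$ for all $i\ge k-1$; and $[\widehat{\widetilde\Gamma}_{k-1}]$ is a subgroup. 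The doubled index $2k-2$ is even, and this is what provides closure under inverses. This settles (2)$\Leftrightarrow$(3).

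For (1)$\Leftrightarrow$(2), apply \Cref{least-depth-two-position} and \Cref{equivalent depth} to $M\subset M_1$, whose tower is shifted by one. Depth $k$ means $k$ is least with $M'\cap M_{k-1}\subset M'\cap M_{k}\subset M'\cap M_{k+1}$ a basic construction. So depth $k$ forces $\widetilde n_{k-1}=\widetilde n_{k+1}$ and, through the depth-$2$ position $M\subset M_{k-1}$, it forces $\widetilde n_{k-1}=\widetilde n_{3k-3}$, hence stabilization. Conversely, stabilization at $k-1$ gives equality of central dimensions two steps apart, and by \Cref{equivalent depth} that is equivalent to depth $k$.

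The main obstacle is the first step: checking carefully that the relative commutants of the dual inclusion are exactly governed by words with $j_0$ fixed to $1$, with the index shifted by one. An off-by-one error there would change $k-1$ in the statement.
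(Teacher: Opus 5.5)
Your proof is correct, but it is organized differently from the paper's. The paper does not redo any word combinatorics. It observes that $M\subset M_1$ is itself a diagonal subfactor over $M_n(Q)$, defined by the automorphisms $(\alpha_j^{-1})^{(n)}$. The alternating words of that subfactor are $\gamma(1,j_0,\dots,j_s)^{(n)}$, so its $s$-th word set is $\widetilde\Gamma_{s+1}^{(n)}$. The paper then applies \Cref{depth subgroup theorem} verbatim, so the index $k-2$ there becomes $k-1$ here. Finally it uses \Cref{amplify lemma}, which gives injectivity of $[\theta]\mapsto[\theta^{(n)}]$ from $\Out(Q)$ into $\Out(M_n(Q))$, to move the subgroup and stabilization conditions back to $\Out(Q)$.

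You instead rerun the proofs of \Cref{nk-lemma} and \Cref{depth subgroup theorem} on the shifted tower $\{M'\cap M_t\}$, working directly with the sets $\widetilde\Gamma_t$. The paper's route is shorter and gets all word identities for free from \Cref{word lemma}. Yours avoids the amplification bookkeeping and makes it visible why the parity flips: inverse-closure now holds at even levels, which is why $2k-2$ appears.

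Two details need tidying; neither is a gap.
\begin{itemize}
\item No normalization $j_0=1$ is needed. \Cref{diagonal basic construction} already gives the embedding $M\hookrightarrow M_t$ via $\gamma(1,j_1,\dots,j_t)^{(n)}$. In any case $\gamma(j_0,\underline{j})=\gamma(1,\underline{j})\circ\alpha_{j_0}$, so $\alpha_{j_0}$ is a right factor, not a left multiplication by $\alpha_{j_0}^{-1}$.
\item Your concatenation identity does not hold for all $s,t$. One has $\widetilde\Gamma_{s+t}=\widetilde\Gamma_s\widetilde\Gamma_t$ when $t$ is even, and $\widetilde\Gamma_{s+t}=\widetilde\Gamma_s^{-1}\widetilde\Gamma_t$ when $s$ and $t$ are both odd. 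Neither holds when $t$ is odd and $s$ is even; for example $\widetilde\Gamma_2\widetilde\Gamma_1\neq\widetilde\Gamma_3$ in general. The cases you actually use both fall under the valid rules: $\widetilde\Gamma_{2k-2}$ as a product of two blocks of length $k-1$, and $\widetilde\Gamma_{m(k-1)}$ as a product of $m$ consecutive blocks of length $k-1$.
\end{itemize}
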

\begin{proof}
From \Cref{diagonal basic construction}, we know that, for every $t\in \mathbb N$, 
the embedding of $M$ into $M_t$ is given by
\[
A \mapsto \sum_{j_1,\dots,j_t=1}^{n} \gamma(1,j_1,\dots,j_t)^{(n)}(A)
\otimes E_{j_1,j_1}\otimes\cdots\otimes E_{j_t,j_t}, \qquad A\in
M=M_n(Q). \] Thus, $M \subset M_t$ is again a diagonal subfactor
defined by automorphisms in $\widetilde{\Gamma}_t$ - see
\Cref{composition-diagonal}. Also, the tower
\[
M\subset M_1\subset M_2\subset\cdots
\]
is the basic construction tower of the diagonal subfactor $M\subset
M_1$.

Consider $\widetilde{\Gamma}_t^{(n)} := \left\{
\theta^{(n)}:\theta\in\widetilde{\Gamma}_t \right\} \subset
\Aut(M_n(Q))$.  Then, by \Cref{amplify lemma}, a maximal family of
pairwise inequivalent automorphisms in $\widetilde{\Gamma}_t^{(n)}$ is
given by \( \widehat{\widetilde{\Gamma}}_t^{(n)} := \left\{
\theta^{(n)}:\theta\in\widehat{\widetilde{\Gamma}}_t \right\}.  \)

Thus, in view of \Cref{depth subgroup theorem}, $M\subset M_1$ has depth $k$
if and only if the following equivalent conditions hold:
\begin{enumerate}
\item $k$ is the least number such that
$
\left\{
[\theta^{(n)}]:
\theta\in\widehat{\widetilde{\Gamma}}_{k-1}
\right\}$
is a subgroup of $\Out(M_n(Q))$. 
\item 
$[\widehat{\widetilde{\Gamma}}_i^{(n)}]
=
[\widehat{\widetilde{\Gamma}}_{k-1}^{(n)}]$ for all $i \geq k-1$. 
\end{enumerate}

Again, in view of \Cref{amplify lemma},  it follows that
the three conditions above are equivalent, respectively, to the
assertions of the corollary, thereby  completing the proof.
\end{proof}

As a consequence, characterization of depth $2$ in diagonal subfactors
takes the following interesting form:

\begin{corollary}\label{depth two}
Let
$\{\alpha_{i_1},\alpha_{i_2},\dots,\alpha_{i_r}\}$ be a maximal family of pairwise
inequivalent automorphisms among $\{\alpha_1,\dots,\alpha_n\}$, with
$\alpha_{i_1} = \alpha_1=\id_Q$. Let
\[
G:=\{[\alpha_{i_1}],[\alpha_{i_2}],\dots,[\alpha_{i_r}]\}\subset \Out(Q).
\]
Then, the following statements are equivalent:
\begin{enumerate}
\item $N\subset M$ has depth $2$.
\item  $M_k \subset M_{k+1}$ has depth $2$ for all $k \geq -1$.
\item $G$ is a subgroup of $\Out(Q)$ and $|G|\geq 2$.
\end{enumerate}
\end{corollary}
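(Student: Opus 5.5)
The plan is to read off everything from \Cref{depth subgroup theorem} in the case $k=2$, together with \Cref{depth one diagonal} to separate depth $1$ from depth $2$. The starting observation is that $\Gamma_0=\{\gamma(j_0): 1\le j_0\le n\}=\{\alpha_1,\dots,\alpha_n\}$. Hence a maximal pairwise inequivalent subfamily $\widehat\Gamma_0$ can be taken to be $\{\alpha_{i_1},\dots,\alpha_{i_r}\}$, and so $[\widehat\Gamma_0]=G$.

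\emph{(1) $\Leftrightarrow$ (3).} Suppose $N\subset M$ has depth $2$. Then \Cref{depth subgroup theorem} (with $k=2$) gives that $[\widehat\Gamma_0]=G$ is a subgroup of $\Out(Q)$. Since the depth is not $1$, \Cref{depth one diagonal} shows that not all $\alpha_i$ are equivalent to $\alpha_1=\id_Q$, i.e.\ $|G|\ge 2$.

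Conversely, suppose $G$ is a subgroup with $|G|\ge 2$. By \Cref{depth one diagonal}, $N\subset M$ does not have depth $1$. The group $\langle G\rangle=G$ is finite, so the depth is finite by the result of Popa quoted in the introduction. Alternatively, \Cref{nk-lemma} already gives $n_i=n_0$ for all $i$, and then \Cref{equivalent depth} yields finite depth. So the depth is some $k\ge 2$, and by \Cref{depth subgroup theorem} this $k$ is the least integer $\ge 2$ for which $[\widehat\Gamma_{k-2}]$ is a subgroup. Since $[\widehat\Gamma_0]=G$ is already a subgroup, $k=2$.

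\emph{(1) $\Leftrightarrow$ (2).} Here (2) $\Rightarrow$ (1) is the case $k=-1$. For the converse I will show that each $M_k\subset M_{k+1}$ is itself a diagonal subfactor whose associated set of outer classes is again a subgroup of order $|G|\ge2$, and then apply the already proved (3) $\Rightarrow$ (1) to it.

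From \Cref{diagonal basic construction}, as in the proof of \Cref{dual depth subgroup}, the inclusion $M\subset M_1$ is the diagonal subfactor of $M_n(M_n(Q))$ defined by the automorphisms $\{\gamma(1,j)^{(n)}=(\alpha_j^{-1})^{(n)}: 1\le j\le n\}$ of $M_n(Q)$. By \Cref{amplify lemma}, the set of its classes in $\Out(M_n(Q))$ is in bijection, compatibly with products and inverses, with $G^{-1}=G\subset\Out(Q)$. So it is a subgroup of order $|G|\ge 2$, and $M\subset M_1$ has depth $2$.

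The tower $M\subset M_1\subset M_2\subset\cdots$ is the basic construction tower of $M\subset M_1$. So I will iterate, with the induction hypothesis being that $M_{k-1}\subset M_k$ is a diagonal subfactor, of some $\mathrm{II}_1$ factor $Q_k$, whose class set is a subgroup of $\Out(Q_k)$ of order $|G|$. Applying the step just described to this subfactor gives the same statement for $M_k\subset M_{k+1}$, with the group replaced by its inverse, which is again itself after amplification.

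\emph{Main obstacle.} The delicate point is this induction step. To re-invoke \Cref{diagonal basic construction}, one must normalize the first automorphism to the identity, as in \Cref{beta1=id}. One must also check that this normalization, together with the amplification, preserves both the subgroup property and the cardinality of the class set. I expect this to follow from \Cref{amplify lemma} together with the fact that $G\mapsto [\beta]^{-1}G$ preserves being a subgroup whenever $[\beta]\in G$.
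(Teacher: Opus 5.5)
Your proof is correct. For (1)$\Leftrightarrow$(3) you use the paper's argument. You identify $[\widehat\Gamma_0]=G$, invoke \Cref{depth subgroup theorem} with $k=2$, and use \Cref{depth one diagonal} to separate out depth $1$ (i.e.\ $|G|=1$). You are only more explicit than the paper in first securing finite depth and ruling out depth $1$ before applying the implication ``depth $k\Rightarrow$ $k$ is least with $[\widehat\Gamma_{k-2}]$ a subgroup''.

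The genuine difference is in (1)$\Leftrightarrow$(2). The paper does not use the diagonal structure there at all. It simply quotes the general fact \cite[Corollary 3.13]{BG1} that depth $2$ of $N\subset M$ propagates to every $M_k\subset M_{k+1}$, which holds for any finite-index subfactor. You instead show that each $M_t\subset M_{t+1}$ is again a diagonal subfactor. Its class set is the image of $G^{\pm1}=G$ under the injective homomorphism $\Out(Q)\to\Out(M_{n^{t+1}}(Q))$, $[\theta]\mapsto[\theta^{(n^{t+1})}]$, coming from \Cref{amplify lemma}, and you then re-apply (3)$\Rightarrow$(1). This keeps the corollary self-contained and gives more information: every $M_t\subset M_{t+1}$ has an associated group isomorphic to $G$, which is exactly what the paper later uses in \Cref{regularity between consecutives}. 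The price is that it works only for diagonal subfactors, whereas the cited result is general.

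The induction and the ``main obstacle'' you anticipate are not needed. \Cref{diagonal basic construction} already describes every $M_t\subset M_{t+1}$ directly as the diagonal subfactor defined by $\{(\alpha_k^{\pm1})^{(n^{t+1})}:1\le k\le n\}$, with sign $-$ for $t$ even and $+$ for $t$ odd. Its first member is $(\alpha_1^{\pm1})^{(n^{t+1})}=\mathrm{id}$, so no renormalization via \Cref{beta1=id} ever arises.
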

\begin{proof}
That (1) and (2) are equivalent is well known even for general finite-index
subfactors - see, for instance, \cite[Corrollary. 3.13]{BG1}.\smallskip

(1) $\Leftrightarrow$ (3): Taking
$\widehat{\Gamma}_0=\{\alpha_{i_1},\alpha_{i_2},\dots,\alpha_{i_r}\}$,
we see that \( [\widehat{\Gamma}_0] = G.  \) Therefore, in view of
\Cref{depth subgroup theorem}, $N\subset M$ has depth $2$ if and only
if $G$ is a subgroup of $\Out(Q)$ and $|G| \geq 2$. (Recall from
\Cref{depth one diagonal} that $|G|=1$ if and only if $N \subset M$
has depth $1$.)  
\end{proof}

\begin{remark}
  It is known that the depth of a diagonal subfactor
  need not be equal to the depth of its dual.  Here is one more
  illustration:

Let $G=S_4$ and consider the subset \( B
:=\{e,\ (123),\ (124),\ (234),\ (243),\ (12)(34)\}\) of $S_4$.

Notice that $B^{-1}B = A_4$ is a subgroup of $S_4$. On the other hand,
$BB^{-1}$ is not a subgroup of $S_4$ because it has exactly $10$
elements, and $10$ does not divide  $|S_4|=24$.

Fix a $II_1$-factor $Q$ (e.g., $Q = R$) for which $S_4$ embeds in
$\Out(Q)$. So, for each $g\in B$, there exists an automorphism
$\theta_g\in \Aut(Q)$ such that $[\theta_g]=g$. Consider the diagonal
subfactor
\[
N :=
\left\{
\sum_{g\in B}\theta_g(x)\otimes E_{g,g}:x\in Q
\right\}
\subset M_{|B|}(Q)=:M.
\]
Since $B^{-1}B=A_4$,  it follows  that
$[\widehat{\Gamma}_1]=A_4$. Hence,  $N\subset M$ has  depth
$3$, by \Cref{depth subgroup theorem}.

On the other hand, as $BB^{-1}$ is not a subgroup of $\Out(Q)$, it
follows from \Cref{dual depth subgroup} that $M\subset M_1$ is not of
depth $3$. However, since $B^{-1}B=A_4$ and $B\subset A_4$, we obtain
$B^{-1}BB^{-1}=A_4$. Thus, from \Cref{dual depth subgroup} it follows
that $[\widehat{\widetilde{\Gamma}}_3]=A_4$. Hence, $M\subset M_1$ is
a subfactor of depth $4$.
\end{remark}

\section{Characterization of regularity in diagonal subfactors}\label{diagonal-regularity}\label{sec-4}
In this section, we give a complete characterization of regularity in
diagonal subfactors.  

First, we recall a standard result on diagonal subfactors (which is a
slight modification of the first part of \Cref{last lemma}) and
include a proof for the convenience of the reader. (We shall be using
the tensor notation discussed in \Cref{matrices}.)

\begin{lemma}\label{diagonal-easy}
Let $Q$ be a $\mathrm{II}_1$-factor and consider the diagonal subfactor
\[
N
:=
\Bigl\{
\diag\bigl(\alpha_1(x), \alpha_2(x), \ldots, \alpha_n(x)\bigr)
: x \in Q
\Bigr\}
\subset M_n(Q) =: M,
\]
associated to a finite family $\mathcal{F} = \{\alpha_i : 1 \le i \le
n, \alpha_1 = \id_Q\}$ of automorphisms of $Q$. Let
$\widehat{\mathcal{F}}:=\{\alpha_{i_k}: 1 \leq k \leq r, \alpha_{i_1}
= \id_Q\}$ be a maximal family of pairwise inequivalent automorphisms
in $\mF$. Then, there exists a partition $\mathscr{P} =
(m_1,m_2,\ldots,m_r)$ of $n$ and a generalized permutation unitary $U
\in \mathcal{U}_{\mathrm{gp}}(n,Q)$ such that
\[
\Ad_U\Big(\diag(\alpha_1(x), \alpha_2(x), \ldots, \alpha_n(x))\Big)=
\sum_{k=1}^r \big(\alpha_{i_k}(x)\otimes I_{m_k}\big)\ot
E_{k,k}^{(r)}\ \text { for all } x \in Q.
\]
In particular, \(
\Ad_U(N)= \Big\{
\sum_{k=1}^r
\big(\alpha_{i_k}(x)\otimes I_{m_k}\big)\ot E_{k,k}^{(r)} \ :\  x \in Q
\Big\}.
\)
\end{lemma}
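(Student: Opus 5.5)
The plan is to build $U$ as a product $U = D P$ with $D \in \Delta_n(Q)$ a diagonal unitary and $P \in M_n(\C)$ a permutation matrix. Then $U \in \mathcal{U}_{\mathrm{gp}}(n,Q)$, since this set is a group. The diagonal unitary $D$ handles the passage from each $\alpha_a$ to its chosen representative $\alpha_{i_k}$. The permutation $P$ regroups the indices so that each class $\mathcal{B}_k$ occupies a consecutive block of size $m_k := |\mathcal{B}_k|$. The partition is $\mathscr{P} = (m_1, \ldots, m_r)$, which is indeed a partition of $n$ because $\{\mathcal{B}_1, \ldots, \mathcal{B}_r\}$ partitions $\{1, \ldots, n\}$.

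\emph{Step 1 (conjugating to representatives).} As in the proof of \Cref{last lemma}, for each $k$ and each $a \in \mathcal{B}_k$ I fix $u_a \in \mathcal{U}(Q)$ with $\alpha_a = \Ad_{u_a} \circ \alpha_{i_k}$, taking $u_{i_k} = 1$. Set $D_0 := \sum_a u_a \otimes E_{a,a}$. Then
\[
\Ad_{D_0^*}\bigl(\diag(\alpha_1(x), \ldots, \alpha_n(x))\bigr) = \sum_{k=1}^r \sum_{a \in \mathcal{B}_k} \alpha_{i_k}(x) \otimes E_{a,a}.
\]
So after this step the diagonal entry in position $a$ is $\alpha_{k(a)}(x)$, where $k(a)$ denotes the class of $a$ and $\alpha_{k(a)}$ is shorthand for the representative $\alpha_{i_{k(a)}}$.

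\emph{Step 2 (reordering).} Choose a bijection $\pi$ of $\{1, \ldots, n\}$ that maps $\mathcal{B}_k$ onto the consecutive interval $\{m_1 + \cdots + m_{k-1} + 1, \ldots, m_1 + \cdots + m_k\}$. Let $P := \sum_a E_{\pi(a), a}$. Conjugating by $P$ gives $P\bigl(\sum_a y_a \otimes E_{a,a}\bigr)P^* = \sum_a y_a \otimes E_{\pi(a),\pi(a)}$. The diagonal entry $\alpha_{i_k}(x)$ therefore now sits at every position of the $k$-th block. In the block tensor notation of \Cref{partition-matrices}, this is exactly $\sum_k \bigl(\alpha_{i_k}(x) \otimes I_{m_k}\bigr) \otimes E^{(r)}_{k,k}$.

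\emph{Conclusion.} Take $U := P D_0^*$. We have $U = (P D_0^* P^*) P$, and $P D_0^* P^* \in \Delta_n(Q)$ by the remark preceding the group lemma, so $U$ has the required form $DP$ and lies in $\mathcal{U}_{\mathrm{gp}}(n,Q)$. The displayed identity then follows by composing Steps 1 and 2. The statement about $\Ad_U(N)$ follows immediately by letting $x$ range over $Q$.

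There is no genuine obstacle here. The only care needed is bookkeeping: checking the conjugation formula for a permutation matrix in the direction of $\pi$ versus $\pi^{-1}$, and matching the consecutive-block indexing with the block tensor convention of \Cref{partition-matrices}.
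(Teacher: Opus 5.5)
Your proof is correct and is essentially the paper's argument. The paper permutes first, grouping each class $\mathcal{B}_k$ into a consecutive block, and then conjugates by a diagonal unitary, giving $U = D^*P$; you conjugate by $D_0^*$ first and permute afterwards, giving $U = PD_0^* = (PD_0^*P^*)P$, which differs only in the order of the two steps.
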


\begin{proof}
 For each $1 \le k \le r$, let $\mathcal{B}_k := \{\, i \in
 \{1,\ldots,n\} : \alpha_i \sim \alpha_{i_k} \,\}$ and $m_k$ be the
 cardinality of $\mathcal{B}_k$. By the maximality of
 $\widehat{\mathcal{F}}$, the sets
 $\mathcal{B}_1,\ldots,\mathcal{B}_r$ forms a partition of the set
 $\{1,2, \ldots, n\}$.  Assume that $\mathcal{B}_k = \{ i_k^{(1)},
 i_k^{(2)}, \ldots, i_k^{(m_k)} \}$, for each
 $k=1,2,\dots, r$. Clearly, there exists a  permutation matrix
 $P \in M_n(\bbc)$ such that
\begin{eqnarray*}\lefteqn{
    \Ad_P(\diag(\alpha_1(x), \alpha_2(x), \dots, \alpha_n(x)))} \\
  & = &
\diag(\alpha_{i_1^{(1)}}(x),\ldots,\alpha_{i_1^{(m_1)}}(x),\,
\alpha_{i_2^{(1)}}(x),\ldots,\alpha_{i_2^{(m_2)}}(x), 
\ldots,\,
\alpha_{i_r^{(1)}}(x),\ldots,\alpha_{i_r^{(m_r)}}(x)
)
\end{eqnarray*}
for all $x \in Q$.

Further, since $\alpha_{i_k^{(t)}} \sim \alpha_{i_k}$ for every $1\leq
k\leq r$ and $1\leq t\leq m_k$, there exist unitaries $\{u_k^{(t)}: 1\leq
k\leq r, 1\leq t\leq m_k\}$  in $
Q$ such that \( \alpha_{i_k^{(t)}} = \Ad_{u_k^{(t)}} \circ
\alpha_{i_k} \) for all $1\leq
k\leq r$, $1\leq t\leq m_k$. Consider the diagonal unitary
\[
D :=
\diag(
u_1^{(1)},\ldots,u_1^{(m_1)},\,
u_2^{(1)},\ldots,u_2^{(m_2)},\,
\ldots,\,
u_r^{(1)},\ldots,u_r^{(m_r)}
)
\in \Delta_n(Q).
\]
Then, a routine computation shows that
\[
\Ad_{D^*}\Big( \Ad_P(\diag(\alpha_1(x), \alpha_2(x), \dots, \alpha_n(x))) \Big)
=
\sum_{k=1}^r
\big(\alpha_{i_k}(x)\otimes I_{m_k}\big)\ot E_{k,k}^{(r)}\ \text { for all } x \in Q.
\]
Thus, $U:=D^*P \in \mathcal{U}_{\mathrm{gp}}(n, Q)$ and $\Ad_U$ does
the desired job.
\end{proof}

For any $x \in Q$, it is convenient to express the diagonal operator
matrix $ \sum_{k=1}^r \big(\alpha_{i_k}(x)\otimes I_{m_k}\big)\ot
E_{k,k}^{(r)}$ simply as $\diag\big( \alpha_1(x)\otimes I_{m_1},
\alpha_2(x)\otimes I_{m_2}, \dots, \alpha_r(x)\otimes I_{m_r} \big)$
and we shall do so throughout freely.

In view of \Cref{diagonal-easy},
\Cref{beta1=id} and \Cref{unitary conjugate}, there is no loss of
generality in focusing on characterization of regularity in diagonal
subfactors of the form
\[
N
:=
\Big\{
\diag\big(
\alpha_1(x)\otimes I_{m_1},
\alpha_2(x)\otimes I_{m_2},
\dots,
\alpha_r(x)\otimes I_{m_r}
\big)
:\ x\in Q
\Big\}
\subset M_n(Q)=:M,
\]
for mutually inequivalent automorphisms $\{\alpha_i: 1 \leq i \leq r,
\alpha_1=\mathrm{id}_Q\}$ of $Q$ and a partition
$\mathscr{P}=(m_1,m_2,\dots,m_r)$ of $n$. For such diagonal
subfactors, the following useful observations are immediate.
\begin{lemma}\label{R=MnP-Q}\label{relative commutant}
  Let $Q$ be a $II_1$ factor, $n \in \N$, $\mathscr{P}=(m_1,m_2,
  \dots, m_r)$ be a partition of $n$ and $\{\alpha_i: i= 1,2, ..., r,\,  
  \alpha_1=\id_Q \}$ be a
  set of pairwise inequivalent automorphisms of $Q$. Consider the diagonal subfactor
\[
N: =\{\mathrm{diag}(\alpha_1(x) \otimes I_{m_1}, \alpha_2(x) \otimes
I_{m_2},....,\alpha_{r}(x) \otimes I_{m_r}) : x \in Q\} \subset
M_n(Q)=:M.
\]
Then, the following hold:
\begin{enumerate}
\item  
\(
    N' \cap M=\oplus_{i=1}^{r} M_{m_i}(\bbc).
\)
\item  $N \vee (N'\cap M) = M_{(n,\mathscr{P}
  )}(Q)$.
\end{enumerate}
In particular, $M_{(n,\mathscr{P}
  )}(Q) \subseteq \mN_M(N)''$.
\end{lemma}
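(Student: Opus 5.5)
Part (1) is a direct computation. For (2) we show $M_{(n,\mathscr{P})}(Q)$ is generated by $N$ and $N'\cap M$, using the matrix units that (1) provides.

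\textbf{Step 1: the relative commutant.} Write $X=\sum_{k,l=1}^r X_{k,l}\ot E^{(r)}_{k,l}$ with respect to $\mathscr{P}$, each block $X_{k,l}\in M_{m_k\times m_l}(Q)$, and ask when $X$ commutes with every $\diag(\alpha_1(x)\ot I_{m_1},\dots,\alpha_r(x)\ot I_{m_r})$.

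Comparing $(k,l)$ blocks entrywise, each entry $y\in Q$ of $X_{k,l}$ must satisfy $y\,\alpha_l(x)=\alpha_k(x)\,y$ for all $x\in Q$. Putting $z=\alpha_l(x)$, this reads $yz=\alpha_k\alpha_l^{-1}(z)\,y$ for all $z\in Q$.

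We then use the standard fact about factors. For $y\neq 0$, the relation gives $y^*y\in Q'\cap Q=\C$ and $yy^*\in\C$ (applying it to $y^*$ as well). Hence $y$ is a scalar multiple of a unitary $u$ with $\alpha_k\alpha_l^{-1}=\Ad_u$, so $\alpha_k\sim\alpha_l$.

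\emph{Off-diagonal blocks.} For $k\neq l$ the automorphisms $\alpha_k,\alpha_l$ are inequivalent, so these blocks vanish.

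\emph{Diagonal blocks.} For $k=l$ the entries lie in $Q'\cap Q=\C$. Thus $N'\cap M=\oplus_i M_{m_i}(\C)$, embedded block diagonally. The reverse inclusion is obvious.

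\textbf{Step 2: the generated algebra.} The inclusion $N\vee(N'\cap M)\subseteq M_{(n,\mathscr{P})}(Q)$ is clear, since both generating sets are block diagonal. For the reverse inclusion, $N'\cap M$ contains the matrix units $e^{(k)}_{s,t}:=E^{(m_k)}_{s,t}\ot E^{(r)}_{k,k}$. For $x\in Q$ and $a:=\diag(\alpha_1(x)\ot I_{m_1},\dots)\in N$, we have
\[
a\,e^{(k)}_{s,t}=\alpha_k(x)\ot E^{(m_k)}_{s,t}\ot E^{(r)}_{k,k}.
\]
Since $\alpha_k$ is onto, these products run over all of $Q\ot E^{(m_k)}_{s,t}\ot E^{(r)}_{k,k}$. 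Summing over $k,s,t$ shows that the algebra generated by $N$ and $N'\cap M$ is all of $M_{(n,\mathscr{P})}(Q)$ (no closure is even needed), which gives (2).

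\textbf{Step 3: the final assertion.} $\mU(N)\subseteq\mN_M(N)$ and $\mU(N'\cap M)\subseteq\mN_M(N)$. The von Neumann algebras $N$ and $N'\cap M$ are each spanned by their unitaries, so both lie in $\mN_M(N)''$. By (2), $M_{(n,\mathscr{P})}(Q)\subseteq\mN_M(N)''$.

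The only step needing care is Step 1: the intertwiner argument that an element $y\in Q$ satisfying $yz=\theta(z)y$ for all $z$ must be zero unless $\theta$ is inner. The rest is bookkeeping.
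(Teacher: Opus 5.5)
Your proposal is correct and follows essentially the same route as the paper. Your Step 2 is exactly the paper's argument for (2): multiply an element of $N$ (taken with $z=\alpha_k^{-1}(x)$) by the matrix units $E^{(m_k)}_{s,t}\otimes E^{(r)}_{k,k}\in N'\cap M$. The only difference is that you write out the standard intertwiner argument for (1) and the unitary-span argument for the final assertion, both of which the paper treats as immediate.
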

\begin{proof}
  (1) is immediate from the definitions.\smallskip
  
  (2): Since $(N' \cap M)\cup N \subset M_{(n,\mathscr{P})}(Q)$, it
  follows that $N \vee (N^{'} \cap M) \subseteq M_{(n,\mathscr{P})}(Q)$.

  To establish the reverse inclusion, we shall make use of the tensor
  notation discussed in \Cref{matrices}. With those tensor notations,
  we see that
\begin{equation}\label{algebra1}
M_{(n,\mathscr{P})}(Q) 
=\Big\{\sum_{i=1}^r x_i \otimes E^{(r)}_{i,i} : x_i \in
M_{m_i}(Q)\Big\}  = \Big\{\sum_{i=1}^r
\Big(\sum_{k,l=1}^{m_i} x_{k,l}^{(i)}\otimes E_{k,l}^{(m_i)}\Big)
\otimes E^{(r)}_{ii} : x_{k,l}^{(i)} \in Q \Big\}.
\end{equation}
In particular,
\begin{equation}\label{algebra2}
  N'\cap M
= \bigoplus_{i=1}^{r} M_{m_i}(\C) 
= \Big\{\sum_{i=1}^r
\Big(\sum_{k,l=1}^{m_i} \sigma_{k,l}^{(i)}\otimes E_{k,l}^{(m_i)}\Big)
\otimes E^{(r)}_{ii} : \sigma_{k,l}^{(i)} \in \C\Big\}.
\end{equation}
Also,\[
 \mathrm{diag}(\alpha_1(x)\otimes I_{m_1},\dots,\alpha_r(x)\otimes I_{m_r})
=
\sum_{j=1}^{r}
\big(\alpha_j(x)\otimes I_{m_j}\big)\otimes E_{j,j}^{(r)}, \, x \in Q \nonumber.
\]
Thus, in order to show that $M_{(n,\mathscr{P})}(Q) \subseteq N \vee
(N'\cap M)$, it suffices to show that for every $x\in Q$, $1 \leq i
\leq r$ and $1\le k,l\le m_i$, \( \big(x \otimes
E_{k,l}^{(m_i)}\big)\otimes E^{(r)}_{ii} \in N \vee (N'\cap M) \).

From \Cref{algebra2}, we see that $\big(1 \otimes
E_{k,l}^{(m_i)}\big)\otimes E^{(r)}_{ii} \in N^{'} \cap M$ for all $1
\leq i \leq r$ and $1 \leq k ,l \leq m_i$. Thus, for any $x\in Q$,  $1 \leq i
\leq r$ and $1\le k,l\le m_i$, taking $z = \alpha_i^{-1}(x)$, we observe that
\begin{eqnarray*}
  \lefteqn{ \Big(x \otimes E^{(m_i)}_{k,l} \Big)\otimes
    E^{(r)}_{i,i}}\\ & = & \Big(\alpha_i(z) \otimes E^{(m_i)}_{k,l}
  \Big)\otimes E^{(r)}_{i,i}\\ &= & \Big(\sum_{j=1}^{r}
  \big(\sum_{t=1}^{m_j} \alpha_j(z)\otimes E_{t,t}^{(m_j)}\big)\otimes
  E_{j,j}^{(r)}\Big) \Big(\big(1 \otimes E^{(m_i)}_{k,l}\big) \otimes
  E_{i,i}^{(r)}\Big)\\ & = & \Big(\sum_{j=1}^{r}
  \big(\alpha_j(z)\otimes I_{m_j}\big)\otimes E_{j,j}^{(r)}\Big)
  \Big(\big(1 \otimes E_{k,l}^{(m_i)} \big) \otimes E_{i,i}^{(r)}\Big)
  \in N \vee (N' \cap M),
\end{eqnarray*}
and we are done.
\end{proof}

\subsection{Characterization of regularity}

We have the following characterization of regularity in diagonal subfactors.

\begin{theorem}\label{main theorem 2}\label{characterized}
Let $Q$ be a $\mathrm{II}_1$-factor and consider the diagonal subfactor
\[
N
:=
\Bigl\{
\diag\bigl(\alpha_1(x), \alpha_2(x), \ldots, \alpha_n(x)\bigr)
: x \in Q
\Bigr\}
\subset M_n(Q) =: M,
\]
associated to a finite family $\mathcal{F} = \{\alpha_i : 1 \le i \le
n, \alpha_1 = \id_Q\}$ of automorphisms of $Q$. Suppose that
$\hat{\mathcal{F}} = \{\alpha_{i_s} : 1 \le s \le r,\ \alpha_{i_1} =
\id_Q\}$ is a maximal set of pairwise inequivalent automorphisms in
$\mathcal{F}$, and for every $1\leq s \leq r$, let $m_s$ denote the
cardinality of the set $\{\, i : \alpha_i \sim \alpha_{i_s} \}$.
Then, $N \subset M$ is regular if and only if
\begin{enumerate}
  \item $m_s = m_t$ for
all $1\leq s,t\leq r$; and,\item  $G:=\{[\alpha_{i_s}] : s =
1,2,\ldots,r\}$ forms a subgroup of $\Out(Q)$.
\end{enumerate}
\end{theorem}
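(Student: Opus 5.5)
We first reduce to a normal form. By \Cref{diagonal-easy}, \Cref{beta1=id} and \Cref{unitary conjugate}, we may assume
\[
N=\{\diag(\alpha_1(x)\ot I_{m_1},\dots,\alpha_r(x)\ot I_{m_r}):x\in Q\}\subset M_n(Q)=M,
\]
with $\alpha_1=\id_Q$ and $\alpha_1,\dots,\alpha_r$ pairwise inequivalent. By \Cref{relative commutant}, $N'\cap M=\oplus_s M_{m_s}(\C)$ and $\mR:=N\vee(N'\cap M)=M_{(n,\mathscr P)}(Q)$.

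\emph{Necessity.} Assume $N\subset M$ is regular. Since $M$ is a factor, \Cref{homogeneous relative commutant} gives $m_1=\cdots=m_r$. Regularity also forces depth at most $2$ by \Cref{regular-depth-2}. If $r=1$, then $G$ is trivially a subgroup. Otherwise $N\subset M$ is not of depth $1$ (\Cref{depth one diagonal}), so it has depth exactly $2$. Then \Cref{depth two} says $G=[\widehat\Gamma_0]$ is a subgroup of $\Out(Q)$. This direction is essentially free from the earlier results.

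\emph{Sufficiency.} Assume $m_s=m$ for all $s$ and that $G$ is a subgroup. Write $M=M_r(Q)\ot M_m(\C)$, so that $N=\{\diag(\alpha_1(x),\dots,\alpha_r(x))\ot I_m\}$. Since $\mR\subseteq\mN_M(N)''$ by \Cref{relative commutant}, it suffices to produce normalizers that, together with $\mR$, generate $M$; explicitly, we need unitaries with nonzero $(s,t)$ block for every pair $s,t$.

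For each $g\in G$, left multiplication by $g$ is a permutation $\pi_g$ of $\{1,\dots,r\}$, defined by $[\alpha_{\pi_g(s)}]=[\alpha_{i_g}][\alpha_s]$, where $[\alpha_{i_g}]=g$. Hence there are unitaries $w_{g,s}\in Q$ with
\[
\alpha_{i_g}\circ\alpha_s=\Ad_{w_{g,s}}\circ\alpha_{\pi_g(s)}.
\]
We look for a generalized permutation unitary of the form
\[
U_g=\sum_s \big(v_{s}\ot E^{(r)}_{\pi_g^{-1}(s),\,s}\big)\ot I_m,
\]
or the transposed arrangement, and choose the entries $v_s$ so that $U_g\,\diag(\alpha_s(x))\,U_g^*=\diag(\alpha_s(\alpha_{i_g}^{\pm1}(x)))$. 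This reduces to the identities $\Ad_{v_s}\alpha_{\pi(s)}=\alpha_s\circ\alpha_{i_g}^{\pm1}$, which are solvable because of the relation above. Using right multiplication instead of left if convenient, each such $U_g$ then lies in $\mN_M(N)$.

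Since $G$ acts simply transitively on itself, for every pair $s,t$ some $U_g$ has a unitary in the $(s,t)$ block. Multiplying $U_g$ by the block-diagonal elements of $\mR=M_{(n,\mathscr P)}(Q)$ (which contains every $E^{(r)}_{ss}\ot(Q\ot M_m)$) yields $E^{(r)}_{ss}\,M\,E^{(r)}_{tt}$ inside $\mN_M(N)''$ for all $s,t$, and hence all of $M$. Here equal multiplicities are essential: they make the off-diagonal blocks square, so that each $U_g$ is a unitary.

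The main obstacle is the bookkeeping in choosing the orientation (left versus right translation and the inverse) so that the automorphism identity closes up consistently. I would first try $\pi(s)$ defined by $[\alpha_{\pi(s)}]=[\alpha_s][\alpha_{i_g}]$, together with the implemented automorphism $x\mapsto\alpha_{i_g}(x)$, and verify the block identity directly.
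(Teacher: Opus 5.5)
Your argument is correct and follows essentially the paper's route. Necessity uses \Cref{homogeneous relative commutant}, \Cref{regular-depth-2} and \Cref{depth two}. Sufficiency uses generalized permutation normalizers $U_g$ that act on $N$ via $\alpha_{i_g}$ and whose supports tile all $r^2$ block positions, multiplied against $\mR$. The paper builds the same permutation matrices more laboriously, from cycles on cosets of $\langle[\beta_i]\rangle$ in \Cref{to prove regularity}. It also handles the multiplicity by a separate $\otimes I_k$ step, whereas you use $\mR=M_{(n,\mathscr P)}(Q)$ directly.

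The orientation you settle on at the end, $[\alpha_{\pi(s)}]=[\alpha_s][\alpha_{i_g}]$, is in fact forced. A generalized permutation normalizer with entries at positions $(t,\pi(t))$ implementing a single automorphism $\theta$ requires $[\alpha_{\pi(t)}]=[\alpha_t][\theta]$ for every $t$. So the left-multiplication version in your middle paragraph fails for non-abelian $G$ and should be discarded.
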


  \begin{proof}
Let $N \subset M$ be regular.  From \Cref{last lemma}, \( N'\cap M
\cong \bigoplus_{i=1}^{r} M_{m_i}(\mathbb C) \) and it follows from
\Cref{homogeneous relative commutant} that
$m_1=m_2=\cdots=m_r$. Moreover, it is known that every finite-index regular subfactor of type $II_1$ is of depth
less at most $2$ - see \Cref{regular-depth-2}. Therefore, by
\Cref{depth one diagonal} and \Cref{depth two}, the set \(
\{[\alpha_{i_k}]:1\leq k \leq r\} \) is a subgroup of $\Out(Q)$. Thus,
the given conditions are indeed necessary for $N \subset M$ to be regular.

The converse needs some preparation.
 \end{proof}
Before proving the converse, we list a small application of the
necessity obtained in \Cref{characterized}.   Recall that,
for an outer automorphism $\alpha$ of a $II_1$-factor $Q$, its outer
period (denoted by $p_0(\alpha)$) is defined as the smallest positive
integer $k$ such that $\alpha^k$ is inner, i..e, $k$ is the order of $[\alpha]$ in $\Out(Q)$. {If no such
  number exists, then the $\alpha$ is said to have infinite outer
  period.}

We readily deduce the following from \Cref{characterized}:
\begin{corollary} If an outer automorphism
$\alpha $ of a $II_1$-factor $Q$ has infinite outer period, then a
  diagonal subfactor associated to any (finite) family of
  automorphisms of $Q$ containing $\alpha$ (or any  automorphism equivalent to $\alpha$) is not regular.
\end{corollary}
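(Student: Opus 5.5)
The statement is an immediate consequence of the necessity half of \Cref{characterized}, combined with the invariance of the regularity question under the reductions already made in the paper.

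\emph{Normalisation.} Let $\mF = \{\alpha_1, \dots, \alpha_n\} \subset \Aut(Q)$ contain an automorphism $\beta$ with $\beta \sim \alpha$. By \Cref{beta1=id} and \Cref{unitary conjugate}(2), we may replace $\mF$ by $\{\alpha_1^{-1}\alpha_i\}$ without affecting regularity. This family contains $\mathrm{id}_Q$ and the automorphism $\alpha_1^{-1}\beta$.

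\emph{Reduction to the new family.} A small point needs care here, because the conclusion is about $\alpha$ itself. We have $[\alpha_1^{-1}\beta] = [\alpha_1]^{-1}[\alpha]$, so this class need not have infinite order in general. To handle this, suppose for contradiction that the diagonal subfactor is regular. By \Cref{characterized}, the set $G$ of classes $\{[\alpha_1^{-1}\alpha_i]\}$ is a finite subgroup of $\Out(Q)$.

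\emph{Conclusion.} Since $\alpha_1^{-1}\alpha_1 = \mathrm{id}_Q$, the identity class lies in $G$. Because $G$ is a group, it also contains $[\alpha_1^{-1}\alpha_i]^{-1}[\alpha_1^{-1}\alpha_j]$ for all $i, j$. In particular,
\[
[\alpha_1^{-1}\beta]^{-1}[\alpha_1^{-1}\alpha_1] = [\beta]^{-1}[\alpha_1]
\quad\text{and}\quad
[\alpha_1^{-1}\alpha_1]^{-1}[\alpha_1^{-1}\beta] = [\alpha_1]^{-1}[\beta]
\]
lie in $G$. This alone does not give $[\alpha] \in G$, so instead I would argue with the original family directly, noting that the corollary implicitly places $\alpha_1 = \mathrm{id}_Q$ as in the standing convention of \Cref{characterized}. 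Then $[\beta] = [\alpha] \in G$. Since $G$ is a finite group, $[\alpha]$ has finite order, that is, $\alpha^k$ is inner for some $k \geq 1$. This contradicts the assumption that $\alpha$ has infinite outer period. Hence the diagonal subfactor is not regular.

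\emph{Main obstacle.} The only delicate point is the normalisation $\alpha_1 = \mathrm{id}_Q$, which I take from the standing convention of \Cref{characterized}. Without it, the statement should be read for the normalised family, since $[\alpha_1^{-1}\alpha]$ might otherwise have finite order.
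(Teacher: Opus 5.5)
Your argument is correct and is essentially the paper's own: the paper deduces the corollary directly from the necessity half of \Cref{characterized}. Regularity would make $G$ a finite subgroup of $\Out(Q)$ containing $[\alpha]$, which forces $[\alpha]$ to have finite order. You are also right that the standing convention $\alpha_1=\mathrm{id}_Q$ is genuinely needed; for example, the family $\{\alpha,\alpha\}$ gives a trivial, hence regular, diagonal subfactor. So your detour through the normalised family $\{\alpha_1^{-1}\alpha_i\}$ can simply be dropped.
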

\color{black}

Now, we proceed to prove the converse of \Cref{characterized}.
 Towards this end,
 the following auxiliary observation will be crucial.
\begin{lemma}\label{to prove regularity}
Let $Q$ be a $II_1$-factor, $n \in \N$ and $\{ \beta_i: 1 \leq i \leq
n, \beta_1 = \id_Q\}$ be automorphisms of $Q$  such that
$G:=\big\{[\beta_i] : 1 \leq i \leq n \big\}$ is a subgroup of $
\Out(Q)$ (of order $n$).  Then, for each $i\in\{1,2,\dots,n\}$, there exists a
generalized permutation unitary $D_iP_i \in \mathcal{U}_{gp}(n,Q)$
such that
\begin{equation*}
    \Ad_{D_iP_i}\!\Big(
    \diag\bigl(\beta_1(x),\beta_2(x),\dots,\beta_n(x)\bigr) \Big) =
    \diag\bigl(\beta_1(\beta_i(x)),\beta_2(\beta_i(x)),\dots,\beta_n(\beta_i(x))\bigr)
\end{equation*}
for all $x\in Q$. In particular, $\{D_iP_i: 1\leq i \leq n\} \subset
\mN_{M_n(Q)}(N)$ for the inclusion
\[
N:= \{
\diag\bigl(\beta_1(x),\beta_2(x),\dots,\beta_n(x)\bigr) : x \in
Q\} \subset M_n(Q).
\]
Moreover, the permutation matrices $\{P_i: 1 \leq i \leq n\}$ satisfy $\sum_{i=1}^n P_i
= J_n$, where $J_n \in M_n(\C)$ denotes the matrix with all
entries equal to $1$.
\end{lemma}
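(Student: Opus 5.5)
Fix $i\in\{1,\dots,n\}$. Since $G=\{[\beta_j]\}$ is a group of order $n$ and the classes $[\beta_j]$ are pairwise distinct, right multiplication by $[\beta_i]$ permutes $G$. So there is a unique permutation $\pi_i\in S_n$ with $[\beta_j\beta_i]=[\beta_{\pi_i(j)}]$ for all $j$, or equivalently $\beta_j\circ\beta_i=\Ad_{w^{(i)}_j}\circ\beta_{\pi_i(j)}$ for some unitaries $w^{(i)}_j\in Q$. For $i=1$ we take $\pi_1=\id$ and $w^{(1)}_j=1$.

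Next we build the permutation matrix. Let $P_i$ be the permutation matrix with $P_iE_{k,k}P_i^*=E_{\pi_i^{-1}(k),\pi_i^{-1}(k)}$, so that
\[
\Ad_{P_i}\big(\diag(\beta_1(x),\dots,\beta_n(x))\big)=\diag\big(\beta_{\pi_i(1)}(x),\dots,\beta_{\pi_i(n)}(x)\big).
\]
The exact orientation of $P_i$ (whether $\pi_i$ or $\pi_i^{-1}$ appears) will be fixed by this single computation with matrix units. Then put $D_i:=\diag(w^{(i)}_1,\dots,w^{(i)}_n)\in\Delta_n(Q)$. Applying $\Ad_{D_i}$ entrywise turns the $j$-th diagonal entry $\beta_{\pi_i(j)}(x)$ into $\Ad_{w^{(i)}_j}\beta_{\pi_i(j)}(x)=\beta_j(\beta_i(x))$. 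Hence $\Ad_{D_iP_i}$ sends $\diag(\beta_j(x))_j$ to $\diag(\beta_j(\beta_i(x)))_j$, as required.

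Since $\beta_i$ is an automorphism of $Q$, the right-hand side ranges over all of $N$ as $x$ ranges over $Q$. Therefore $D_iP_i N (D_iP_i)^*=N$, so $D_iP_i\in\mN_{M_n(Q)}(N)$; it is unitary because it lies in $\mathcal{U}_{gp}(n,Q)$.

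Finally we show $\sum_i P_i=J_n$. The $(a,b)$ entry of $P_i$ is $1$ exactly when $a$ and $b$ are related by $\pi_i$, say $b=\pi_i(a)$ (or the inverse relation, depending on the convention fixed above). So it suffices to show that for each pair $(a,b)$ there is exactly one $i$ with $\pi_i(a)=b$. This means $[\beta_a][\beta_i]=[\beta_b]$, which has the unique solution $[\beta_i]=[\beta_a]^{-1}[\beta_b]\in G$, i.e.\ a unique index $i$ by the pairwise distinctness of the classes. The same argument works for the inverse relation. Hence every entry of $\sum_i P_i$ equals $1$.

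There is no real obstacle. The only point needing care is the bookkeeping of the convention linking $P_i$ to $\pi_i$, so that the $\Ad_{P_i}$ computation and the counting argument use the same orientation; both rest on the Latin-square (regular-representation) property of the finite group $G$.
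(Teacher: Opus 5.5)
Your proof is correct and is essentially the paper's argument. The paper's $P_i$ is built from cycles running through the left cosets of $\langle[\beta_i]\rangle$ in $G$; this is precisely the cycle decomposition of your right-multiplication permutation $\pi_i$, with the same orientation $P_i=\sum_j E_{j,\pi_i(j)}$. Its Step~III (a common nonzero entry in $P_i$ and $P_j$ would force $[\beta_r^{-1}\beta_s]=[\beta_i]=[\beta_j]$) is your Latin-square count, and the only real difference is in $D_i$: you choose $w^{(i)}_j$ directly from $\beta_j\beta_i\sim\beta_{\pi_i(j)}$, whereas the paper fixes unitaries relative to coset representatives and powers of $\beta_i$, telescopes them around each cycle, and closes the cycle with $\beta_i^{r_i}=\Ad_{w_i}$, so your version reaches the required $D_i$ with much less bookkeeping.
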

\begin{proof}
For each $i\in\{1,2,\dots,n\}$, let $H_i$ denote the subgroup of $G$
generated by $[\beta_i]$. We divide the proof into three steps.\medskip

\noindent \textbf{Step I:} We identify the appropriate permutation matrices
$\{P_i\}$ by employing the coset representatives of the subgroups
$\{H_i\}$ in $G$.\smallskip

Fix an $i\in \{1,2, \ldots, n\}$. Let $r_i := |H_i|$ and
$s_i:=[G:H_{i}]$. Choose a maximal set of distinct left coset
representatives, say, $\{[\beta_{k_{i,j}}]:1\le j\le s_i\}$ of $H_i$
in $G$; so that $G=\bigsqcup_{j=1}^{s_i} [\beta_{k_{i,j}}]H_{i}$.

Since $G$ is a group, for every $1 \leq j \leq s_i$ and
$1\leq t \leq r_i$, there exists a unique $k_{i,j}^{(t)} \in \{1,2,
\ldots, n\}$ such that
$[\beta_{k_{i,j}^{(t)}}]=[\beta_{k_{i,j}}][\beta_{i}^{t-1}]$ (where
$\beta_i^0:=\id_Q$); so, $\beta_{k_{i,j}}^{(1)} = \beta_{k_{i,j}}$
and
\begin{equation}\label{consecutive}
 [\beta_{k_{i,j}}]H_{i} =\{[\beta_{k_{i,j}^{(t)}}]:1\le t\le r_i\}
 \text{ for every } 1 \leq j \leq r_i.
\end{equation}

Thus, corresponding to every $i \in \{1, 2, \ldots,n\}$, we obtain a
partition \( \{k_{i,j}^{(t)} :1\le t \le r_i\}, 1\le j\le s_i \) of $
\{1,2,\dots,n\}$ satisfying (\ref{consecutive}). In particular, there
exists a permutation $\sigma_{i}\in S_n$ with cycle decomposition
\[
\sigma_{i}
=\prod_{j=1}^{s_i}
\big(k_{i,j}^{(1)}\ k_{i,j}^{(2)}\ \cdots\ k_{i,j}^{(r_i)}\big).
\]
Consider  the permutation matrix \begin{equation}\label{permutation defined}
P_{i}
:=\sum_{j=1}^{s_i}
\Big(
E_{k_{i,j}^{(1)},k_{i,j}^{(2)}}+
E_{k_{i,j}^{(2)},k_{i,j}^{(3)}}+\cdots+
E_{k_{i,j}^{(r_i)},k_{i,j}^{(1)}}
\Big), 1 \leq i \leq n.
\end{equation}
Notice from the construction of $P_i$ that  whenever
$(P_{i})_{r,s}\neq0$ (that is, the $(r,s)$-th entry of $P_{i}$ is
non-zero), there exist some $1 \leq j \leq s_i$ and $1\leq t\leq r_i$
such that
\[
[\beta_r]=[\beta_{k_{i,j}}][\beta_{i}^{\,t-1}] \text{ and }
[\beta_s]=[\beta_{k_{i,j}}][\beta_{i}^{\,t}],
\]
with $[\beta_i^{r_i}]=[\id_Q]=[\beta_i^0]$. (This property of the
$P_i$'s will be used in Step~(III) of the proof.)\medskip

\noindent \textbf{Step II :} For every $i \in \{1,2, \ldots, n\}$, we now
construct a diagonal unitary $D_{i}\in \Delta_n(Q)$ such that \(
\mathrm{Ad}_{D_{i}P_{i}}
\big(\mathrm{diag}(\beta_1(x),\beta_2(x),\dots,\beta_n(x))\big) =
\mathrm{diag}\big(\beta_1(\beta_{i}(x)),\dots,\beta_n(\beta_{i}(x))\big)
\text{ for all } x \in Q.\)\smallskip

Fix an $i \in \{1,2, \ldots, n\}$. From Step I, we have
\(
[\beta_{k_{i,j}^{(t)}}]=[\beta_{k_{i,j}}][\beta_{i}^{\,t-1}],
1\le t\le r_i,\ 1\le j\le s_i,
\)
where $\beta_{i}^0:=\mathrm{id}_Q$. So, there exist unitaries
$\{u^{(t)}_{i,j} : 1\le j\le s_i,\ 1\le t\le r_i\}\subset Q$ such that
\begin{equation}\label{apply}
\beta_{k^{(t)}_{i,j}} = \mathrm{Ad}_{u^{(t)}_{i,j}} \circ
\beta_{k_{i,j}} \circ \beta_{i}^{t-1}
\end{equation}
for all $1\le j\le s_i,\ 1\le t\le r_i$. Also, since 
$[\beta_{i}^{r_i}] = [\id_Q]$, there exists a unitary $w_i\in Q$ such
that $\beta_{i}^{r_i}=\mathrm{Ad}_{w_i}$.

From  (\ref{apply}), we  obtain the identities
  \begin{equation}\label{identities-1}
\beta_{k_{i,j}^{(t)}}(\beta_{i} (x)) =
\mathrm{Ad}_{u_{i,j}^{(t)}\big(u_{i,j}^{(t+1)}\big)^*}\Big(\beta_{k^{(t+1)}_{i,j}}(x
)\Big) , \ \forall \ x \in Q, 1 \leq t \leq r_i-1, \, 1 \leq j \leq
s_i;
  \end{equation}
  and 
  \begin{eqnarray}\label{identities-2}
    \beta_{k^{(r_i)}_{i,j}}\!\left(\beta_{i}(x)\right) & = &
    \mathrm{Ad}_{u_{i,j}^{(r_i)}}
    \beta_{k_{i,j}}\!\Big(\Ad_{w_i}(x)\Big) \nonumber \\ 
    & = & \mathrm{Ad}_{u_{i,j}^{(r_i)}\beta_{k_{i,j}}(w_i)}\Big(\beta_{k_{i,j}}(x)\Big)
    \nonumber \\ & = & \mathrm{Ad}_{u_{i,j}^{(r_i)}\beta_{k_{i,j}}(w_i)\big(u_{i,
        j}^{(1)}\big)^*}\Big(\beta_{k_{i,j}^{(1)}}(x)\Big), \ \forall
    x \in Q.
  \end{eqnarray}
Consider the diagonal unitary
\[
D_i  :=  \Big(\sum_{j=1}^{s_i}\sum_{t=1}^{r_i-1}
u_{i,j}^{(t+1)} (u_{i,j}^{(t)})^* \ot E_{k^{(t)}_{i,j},
  k^{(t)}_{i,j}} \Big) + \sum_{j=1}^{s_i}\Big( u_{i,j}^{(r_i)}\beta_{i}(w_i)\big(u_{i,j}^{(1)}\big)^*
\ot E_{k_{i,j}^{(r_i)},k_{i,j}^{(r_i)}}\Big).
\]
Recall from (\ref{permutation defined}) that $P_{i}:=\displaystyle{
\sum_{j=1}^{s_i} \Big( 
E_{k^{(r_i)}_{i,j},\,k^{(1)}_{i,j}}  \;+\; 
 \sum_{t=1}^{r_i-1}
E_{k^{(t)}_{i,j},\,k^{(t+1)}_{i,j}}\Big)}$.   Thus, for $x\in Q$, using the identities
(\ref{identities-1}, \ref{identities-2}), we obtain
\begin{eqnarray*}
\lefteqn{\Ad_{D_iP_i^{*}}\!\left(\sum_{k=1}^n \beta_k(x)\otimes
  E_{k,k}\right)} \\ &=& \Ad_{D_iP^*_i}\!\Bigg( \sum_{j=1}^{s_i} \Big(
\beta_{k^{(1)}_{i,j}}(x)\otimes E_{k^{(1)}_{i,j},\,k^{(1)}_{i,j}}
\;+\; \sum_{t=1}^{r_i-1}\beta_{k^{(t+1)}_{i,j}}(x)\otimes
E_{k^{(t+1)}_{i,j},\,k^{(t+1)}_{i,j}} \Big) \Bigg) \\ &=&
\Ad_{D_i}\!\Bigg( \Big[ \sum_{j=1}^{s_i} \Big(
  E_{k^{(r_i)}_{i,j},\,k^{(1)}_{i,j}} \;+\; \sum_{t=1}^{r_i-1}
  E_{k^{(t)}_{i,j},\,k^{(t+1)}_{i,j}}\Big) \Big] \Big[
  \sum_{j=1}^{s_i} \Big( \beta_{k^{(1)}_{i,j}}(x)\otimes
  E_{k^{(1)}_{i,j},\,k^{(1)}_{i,j}} \;+\; \\&&
  \sum_{t=1}^{r_i-1}\beta_{k^{(t+1)}_{i,j}}(x)\otimes
  E_{k^{(t+1)}_{i,j},\,k^{(t+1)}_{i,j}} \Big) \Big] \Big[
  \sum_{j=1}^{s_i} \Big( E_{k^{(1)}_{i,j},\,k^{(r_i)}_{i,j}} \;+\;
  \sum_{t=1}^{r_i-1} E_{k^{(t+1)}_{i,j},\,k^{(t)}_{i,j}}\Big) \Big]
\Bigg) \\ [2mm] &=&\Ad_{D_i}\!\Bigg( \sum_{j=1}^{s_i} \Big(
\beta_{k^{(1)}_{i,j}}(x)\otimes E_{k^{(r_i)}_{i,j},\,k^{(r_i)}_{i,j}}
\; + \; \sum_{t=1}^{r_i-1}\beta_{k^{(t+1)}_{i,j}}(x)\otimes
E_{k^{(t)}_{i,j},\,k^{(t)}_{i,j}} \Big) \Bigg)\\ &=& \sum_{j=1}^{s_i}
\Bigg(
\Ad_{u_{i,j}^{(r_i)}\beta_i(w_i)(u_{i,j}^{(1)})^*}\!\Big(\beta_{k^{(1)}_{i,j}}(x)
\Big ) \otimes E_{k^{(r_i)}_{i,j},\,k^{(r_i)}_{i,j}} \Big) \;+ \;
\\ && \quad \quad \quad \quad \sum_{t=1}^{r_i-1}
\Ad_{u_{i,j}^{(t+1)}(u_{i,j}^{(t)})^*}\!\Big(\beta_{k^{(t+1)}_{i,j}}(x)\Big)\otimes
E_{k^{(t)}_{i,j},\,k^{(t)}_{i,j}}\Bigg) \\ &=& \sum_{j=1}^{s_i}
\beta_{k_{i,j}^{(r)}}(\beta_i(x)) \otimes E_{k_{i,j}^{(r)},
  k^{(r)}_{i,j}} +\sum_{t=1}^{r_i-1} \beta_{k^{(t)}_{i,j}}
(\beta_i(x)) \otimes E_{k_{i,j}^{(t)}, k^{(t)}_{i,j}} \nonumber \\ &
= & \sum_{k=1}^{n}\beta_{k}\!\big(\beta_i(x)\big)\otimes E_{k,k}.
\end{eqnarray*}
This completes the proof of Step~II.\medskip

\noindent \textbf{Step III :} We show that $P_i\neq P_j$ for $i\neq j$
and that $\sum_{k=1}^n P_k = J_n$.\smallskip

It suffices to show that for $i\neq j$, there do not exist indices
$r,s$ such that the $(r,s)$-th  entry is non-zero in both $P_i$ and $P_j$.
Indeed, this implies that $P_i\neq P_j$ for $i\neq j$, and consequently
$\sum_{k=1}^n P_k=J_n$.

Suppose, on the contrary, that there exist $i\neq j$ and indices $r,s$ such that
\[
(P_i)_{r,s}\neq0
\quad\text{and}\quad
(P_j)_{r,s}\neq0.
\]
By the property established in Step~I, applied to $P_i$ and $P_j$
respectively, there exist integers $ 1 \leq t \leq r_i$, $1 \leq p
\leq s_i$, $ 1 \leq \tilde t \leq r_j \in \bbn$ and $ 1 \leq q \leq
s_j$ such that
\[
[\beta_r]
=
[\beta_{k_{i, p}}][\beta_i^{\,t-1}]
=
[\beta_{k_{j, q}}][\beta_j^{\,\tilde t-1}] \text{ and }
[\beta_s]
=
[\beta_{k_{i, p}}][\beta_i^{\,t}]
=
[\beta_{k_{j, q}}][\beta_j^{\,\tilde t}].
\]
Hence, it follows that
\[
[\beta_r^{-1}\beta_s]=[\beta_i]=[\beta_j],
\]
which contradicts the assumption that $i\neq j$.
This completes the proof of Step~III and we are done.
\end{proof}

The following observation (based on \Cref{to prove regularity}) is the
essence of the sufficiency in \Cref{characterized}.
\begin{theorem}\label{group diagonal subfactors}
Let $Q$ be a $II_1$-factor, $n \in \N$ and $\{ \beta_i: 1 \leq i \leq
n, \beta_1 = \id_Q\}$ be automorphisms of $Q$  such that
$G:=\big\{[\beta_i] : 1 \leq i \leq n \big\}$ is a subgroup of $
\Out(Q)$ (of order $n$). 
 Then, the diagonal subfactor
\[
N:=\Bigl\{\diag\bigl(\beta_1(x),\beta_2(x),\dots,\beta_n(x)\bigr):x\in
Q\Bigr\} \subset M_n(Q)=:M
\]
is regular.
\end{theorem}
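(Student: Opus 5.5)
We will show that $\mN_M(N)''=M$ by proving that the von Neumann algebra $\mathcal{A}:=\mN_M(N)''$ contains both the diagonal algebra $\Delta_n(Q)$ and all the permutation matrices $P_i$ of \Cref{to prove regularity}. From there the matrix units of $M_n(\C)$ are within reach. Here the $\beta_i$ are pairwise inequivalent, since $|G|=n$. So \Cref{R=MnP-Q}, applied with the partition $(1,1,\dots,1)$, gives $N'\cap M=\Delta_n$ and $N\vee(N'\cap M)=\Delta_n(Q)$. Both $N$ and $\mU(N'\cap M)$ lie in $\mN_M(N)$, so $\Delta_n(Q)\subseteq \mathcal{A}$.

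Next, \Cref{to prove regularity} supplies generalized permutation unitaries $D_iP_i\in\mN_M(N)$, $1\le i\le n$, with $D_i\in\Delta_n(Q)$ unitary. Since $D_i^*\in\Delta_n(Q)\subseteq\mathcal{A}$, we get
\[
P_i=D_i^*(D_iP_i)\in\mathcal{A}\quad\text{for every } 1\le i\le n.
\]
By the last assertion of \Cref{to prove regularity}, the $P_i$ have pairwise disjoint supports and $\sum_i P_i=J_n$. Hence for any pair $(r,s)$ there is a unique $i$ with $(P_i)_{r,s}=1$. Then
\[
E_{r,s}=E_{r,r}\,P_i\,E_{s,s},
\]
and $E_{r,r},E_{s,s}\in\Delta_n\subseteq\mathcal{A}$. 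Therefore $M_n(\C)\subseteq\mathcal{A}$.

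Finally, $M_n(Q)$ is generated by $\Delta_n(Q)$ (in fact by $Q\otimes I_n$) together with $M_n(\C)$. This gives $\mathcal{A}=M$, that is, $N\subset M$ is regular.

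The substantive work, building normalizers that realize all permutation patterns, is already contained in \Cref{to prove regularity}. The one point needing care is the covering property $\sum_i P_i=J_n$: every off-diagonal position $(r,s)$ must be hit by some $P_i$. This holds because $[\beta_r^{-1}\beta_s]$ equals some $[\beta_i]$ by the subgroup hypothesis. As a sanity check, the count of normalizer cosets is $n=|G|$, matching the index identity $[M:N]=n^2=|W|\dim(N'\cap M)$ of \Cref{index equation}.
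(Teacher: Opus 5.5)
Your proof is correct and follows essentially the same route as the paper. Both arguments place $\Delta_n(Q)=N\vee(N'\cap M)$ inside $\mN_M(N)''$, strip the diagonal factors $D_i$ off the normalizers from \Cref{to prove regularity} to recover the permutation matrices $P_i$, and then use $\sum_i P_i=J_n$ to reach every matrix unit. The only cosmetic difference is that the paper writes $x\otimes E_{i,j}=(x\otimes E_{i,i})P_k$ directly, rather than first isolating $E_{r,s}\in\mN_M(N)''$ and then invoking generation of $M_n(Q)$ by $Q\otimes I_n$ and $M_n(\C)$.
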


\begin{proof}
Let $\{D_i P_i : i=1,2,\dots,n\}\subseteq
\mathcal{U}_{\mathrm{gp}}(n,Q) \cap \mN_M(N)$ be the set of
generalized permutation unitaries constructed in \Cref{to prove
  regularity}.  Since the automorphisms $\{\beta_i : i=1,2,\dots,n\}$
are pairwise inequivalent, $N' \cap M = \mathbb{C}^n$ and $ N \vee (N'
\cap M) = \Delta_n(Q)$ - see \Cref{R=MnP-Q}.

Thus, as $\{D_i: 1 \leq i \leq n\}\subset \Delta_n(Q)$ and $
\Delta_n(Q) = N \vee (N' \cap M) \subseteq \mathcal{N}_M(N)''$ (by
\Cref{R=MnP-Q}), it follows that $ x D^*_i \in
\mathcal{N}_{M}(N)^{''}$ for all $x \in \Delta_n(Q)$; furthermore, as
$\{D_i P_i: 1 \leq i\leq n \} \subseteq \mathcal{N}_M(N)$, it also
follows that
\begin{equation}\label{confirms regularity}
    xP_i = (x D^*_i)(D_iP_i) \in \mathcal{N}_{M}(N)^{''}  
\end{equation}
for all $x \in \Delta_n(Q)$ and for all $i=1,2, \dots,n$.

We now show that for any $x \in Q$, $x \otimes E_{i,j} \in
\mathcal{N}_{M}(N)^{''}$ for every $1 \leq i, j \leq n$.

Let $x \in Q$ and $i, j\in \{1, 2, \ldots, n\}$. Then, $D:= x \otimes
E_{i,i} \in \Delta_n(Q) \subseteq \mathcal{N}_{M}(N)''$; and, since
$P_i$'s are $n$ (distinct) permutation matrices with $\sum_{i=1}^n P_i = J_n$,
there exists a unique $k$ such that the $(i,j)$-th entry of $P_k$ is
$1$, i.e., $(P_k)_{i,j}=1$. Thus, from (\ref{confirms regularity}), it
follows that
\[
x \otimes E_{i,j} =  (x \otimes E_{i,i}) P_k
  \in \mathcal{N}_{M}(N)''.
\]
Hence, $\mathcal{N}_{M}(N)''=M$, i.e.,  $N \subset M$ is regular.
\end{proof}

\begin{proof} ({\em Proof of sufficiency in \Cref{characterized}:})\smallskip

  For sufficiency, taking $k = m_1 = \cdots = m_r$, observe from
  \Cref{diagonal-easy} that there exists a generalized
  permutation unitary $U\in \mathcal{U}_{\mathrm{gp}}(n,Q)$ such that
\[
N=\Ad_U\Big(\big\{
\diag\big(\alpha_{i_1}(x)\otimes I_k,\alpha_{i_2}(x)\otimes I_k,\dots,
\alpha_{i_r}(x)\otimes I_k\big) : x\in Q
\big\}\Big).
\]
Further, there exists a  permutation matrix $P\in M_n(\C)$ such that
\begin{eqnarray*}
  \lefteqn{ \mathrm{Ad}_{P}\Big(\big\{
\diag\big(\alpha_{i_1}(x)\otimes I_k,\alpha_{i_2}(x)\otimes I_k,\dots,
\alpha_{i_r}(x)\otimes I_k\big) : x\in Q
\big\}\Big)}
        \\  & = &\Big\{\diag\big(\alpha_{i_1}(x),\alpha_{i_2}(x),\dots,\alpha_{i_r}(x)
      \big)\ot I_k :x\in Q\Big\},
\end{eqnarray*}
where $\diag\big(\alpha_{i_1}(x),\alpha_{i_2}(x),\dots,\alpha_{i_r}(x)\big)\ot I_k$
denotes the diagonal matrix
\[
\diag\Big( \alpha_{i_1}(x),\alpha_{i_2}(x),\dots,\alpha_{i_r}(x),
\alpha_{i_1}(x),\alpha_{i_2}(x),\dots,\alpha_{i_r}(x), \ldots,
\alpha_{i_1}(x),\alpha_{i_2}(x),\dots,\alpha_{i_r}(x)\Big)
\]
in $M_n(Q)$. In particular, taking $W=UP
\in \mathcal{U}_{gp}(n,Q)$, we see that
\begin{equation}\label{final diagonal}
N=\mathrm{Ad}_{W}\Big(\big\{
\diag\big(\alpha_{i_1}(x),\alpha_{i_2}(x),\dots,\alpha_{i_r}(x)\big)\ot I_k : x \in Q \big\}\Big).
\end{equation}
   Thus, it suffices to show the regularity of the
diagonal subfactor
\begin{equation}
  \label{regular form}
N_0:=\Big\{
 \diag\bigl(\alpha_{i_1}(x),\alpha_{i_2}(x),\ldots,\alpha_{i_r}(x)\bigr)\otimes
I_k : x \in Q \Big\} \subset M.
\end{equation}

Consider the diagonal subfactor
\[
P:=\{
\diag\bigl(\alpha_{i_1}(x),\alpha_{i_2}(x),\ldots,\alpha_{i_r}(x)\bigr):
x \in Q\} \subset M_r(Q)=: L.
\]
We have a natural identification \( M_n(Q)=M_r(Q)\otimes M_k(Q).  \)
So, $N_0'\cap M = (P'\cap L) \ot M_k(Q)$ (because $N_0=P \otimes
I_k$). Further, since all automorphisms in the collection $\{\alpha_{i_s} : 1 \leq s \leq r\}$ are
mutually inequivalent, $P'\cap L = \C^r$.  Thus,
\[
\mathbb{C}^r\otimes M_k(Q)= N_0'\cap
  M \subseteq
  \mathcal{N}_M(N_0)^{\prime\prime}.
  \]

  On the other hand, since $\{[\alpha_{i_s}] : 1 \leq s \leq r\}$ is a
subgroup of $\Out(Q)$, it follows from \Cref{group diagonal
  subfactors} that $P \subset L$ is regular; thus,
$\mathcal{N}_{M_r(Q)}(P)^{\prime\prime}=M_r(Q)$. Consequently, $M_r(Q)\otimes I_k
\subseteq \mathcal{N}_M(N_0)^{\prime\prime}$. Therefore,
\[
M=M_r(Q)\otimes M_k(Q) \subseteq (M_r(Q) \otimes I_k) (\C^r \otimes M_k(Q))
\subseteq \mathcal{N}_M(N_0)^{\prime\prime}.
\]
Hence, $N_0\subset M$  is regular, and we are done.
\end{proof}
 
The following observation from the preceding theorem will be needed ahead,
so we single it out for future reference.
\begin{remark}\label{simplification-sufficiency}
With notation as in \Cref{main theorem 2}, let $G=\{[\alpha_{i_k}] :
k=1,2, \dots, r\}$ be the subgroup of $ \Out(Q)$ associated to the
regular diagonal subfactor $N \subset M$.  Then, as observed in
(\ref{final diagonal}), there exists a generalized permutation unitary
$W \in \mathcal{U}_{gp}(n,Q)$ such that
\[
N
=
\Ad_W\Bigl(
\bigl\{
\diag\bigl(\alpha_{i_1}(x),\alpha_{i_2}(x),\dots,\alpha_{i_r}(x)\bigr)
\otimes I_k
: x \in Q
\bigr\}
\Bigr).
\]
\end{remark}

\begin{remark}\label{regular-smallest-size}
For a $\mathrm{II}_1$ factor $Q$, the diagonal subfactor
$N=\{\diag(x,\alpha(x)):x\in Q\}\subset M_2(Q)$ is of depth $2$
if and only if $\alpha\in\Inn(Q)$ or the outer period
$p_0(\alpha)=2$, by \Cref{depth two}. Accordingly,
$N'\cap M_2(Q)\cong M_2(\mathbb{C})$ when $\alpha\in\Inn(Q)$, and
$N'\cap M_2(Q)\cong \mathbb{C}^2$ when $p_0(\alpha)=2$. Hence, by
\Cref{characterized}, $N \subset M$
is regular. Therefore, the least possible Jones index of a depth-$2$
non-regular diagonal subfactor is $9$.
\end{remark}

\begin{remark}
We shall see ahead that $N \subset M$ is regular if and only if $M_t \subset
M_{t+s}$ is regular for every $t \geq -1$ and $s \geq 1$ (\Cref{regularity
  between consecutives} and \Cref{regularity large class}).
  \end{remark}

The following consequene of \Cref{main theorem 2} and
\Cref{isomorphism-theorem-general} will allow us to deduce later
(\Cref{infinite-family}) the existence of an infinity family of
mutually non-isomorphic regular diagonal subfactors with common
generalized Weyl group.

 \begin{corollary}\cite{Po2} \label{isomorphism-theorem}
Let $Q$ be a $\mathrm{II}_1$ factor, $n \in \N$, $M:=M_n(Q)$ and $N_1
\subset M$ and $N_2 \subset M$ be regular diagonal subfactors
associated, respectively, to the finite families $\{\alpha_i: 1 \leq i
\leq n, \alpha_{1}=\id_Q\}$ and $\{\beta_i: 1 \leq i \leq n, \beta_1=\id_{Q}\}$ in $\Aut(Q)$. Let \(
G_1=\{[\alpha_{i_t}] : 1 \le t \le r_1, \alpha_{i_1}=\id_Q\} \text{ and }
G_2=\{[\beta_{j_s}] : 1 \le s \le r_2, \beta_{j_1}=\id_Q \}, \) respectively, denote the
associated subgroups of $\Out(Q)$ as in \Cref{main theorem 2}. Then,
the following statements are equivalent :
\begin{enumerate}
\item  $N_1 \subset M$ and $N_2 \subset M$ are isomorphic.
\item There exists a $\sigma \in \Aut(Q)$ such that $[\sigma]\, G_1\,
  [\sigma]^{-1} = G_2$ (in $\Out(Q)$).
\end{enumerate}
\end{corollary}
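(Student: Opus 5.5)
The plan is to prove both implications by reducing each regular diagonal subfactor to its normal form from \Cref{simplification-sufficiency}. By \Cref{main theorem 2}, regularity of $N_1 \subset M$ and $N_2 \subset M$ gives common multiplicities $k_1 = n/|G_1|$ and $k_2 = n/|G_2|$. \Cref{simplification-sufficiency} then provides generalized permutation unitaries $W_1, W_2 \in \mathcal{U}_{gp}(n,Q)$ such that
\[
N_1=\Ad_{W_1}\bigl(\{\diag(\alpha_{i_1}(x),\dots,\alpha_{i_{r_1}}(x))\otimes I_{k_1}: x\in Q\}\bigr)
\]
and
\[
N_2=\Ad_{W_2}\bigl(\{\diag(\beta_{j_1}(x),\dots,\beta_{j_{r_2}}(x))\otimes I_{k_2}: x\in Q\}\bigr).
\]
By \Cref{unitary conjugate}, it therefore suffices to compare these two normal forms.

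\textbf{(2) $\Rightarrow$ (1).} Since $[\sigma]G_1[\sigma]^{-1}=G_2$, the two groups have the same order, so $r_1=r_2=:r$ and $k_1=k_2=:k$. Apply the automorphism $\sigma^{(n)}=\sigma\otimes \id$ of $M=M_n(Q)$. It maps $\diag(\alpha_{i_1}(x),\dots,\alpha_{i_r}(x))\otimes I_k$ onto
\[
\diag\bigl(\sigma\alpha_{i_1}\sigma^{-1}(y),\dots,\sigma\alpha_{i_r}\sigma^{-1}(y)\bigr)\otimes I_k, \qquad y=\sigma(x).
\]
So the first normal form is carried onto the diagonal subfactor defined by the automorphisms $\{\sigma\alpha_{i_t}\sigma^{-1}\}$, each repeated $k$ times. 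Their classes are exactly the elements of $G_2$, each occurring once per block. We then choose a permutation matrix that reorders them so that $[\sigma\alpha_{i_{t}}\sigma^{-1}]=[\beta_{j_t}]$. Next we conjugate by a diagonal unitary in $\Delta_n(Q)$, exactly as in the proof of \Cref{diagonal-easy}, to replace each $\sigma\alpha_{i_t}\sigma^{-1}$ by $\beta_{j_t}$. Composing these maps gives an automorphism of $M$ carrying $N_1$ onto $N_2$.

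\textbf{(1) $\Rightarrow$ (2).} Here I would invoke \Cref{isomorphism-theorem-general}, the adaptation of Popa's criterion. I expect it to state that two diagonal subfactors of $M_n(Q)$ are isomorphic iff there exist $\theta_1,\theta_2\in\Aut(Q)$ and $\pi\in S_n$ with $\beta_i\sim\theta_1\alpha_{\pi(i)}\theta_2$ for all $i$. Passing to $\Out(Q)$ and discarding repetitions, this yields the equality of sets $G_2=[\theta_1]\,G_1\,[\theta_2]$. Since $[\id_Q]\in G_2$, there is some $g\in G_1$ with $[\theta_1]g[\theta_2]=e$, that is, $[\theta_2]=g^{-1}[\theta_1]^{-1}$. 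Because $G_1$ is a group, $G_1g^{-1}=G_1$, and hence
\[
G_2=[\theta_1]G_1g^{-1}[\theta_1]^{-1}=[\theta_1]G_1[\theta_1]^{-1}.
\]
Taking $\sigma=\theta_1$ gives (2).

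The main obstacle is this second direction, specifically confirming that \Cref{isomorphism-theorem-general} has the two-sided form $\theta_1\alpha_{\pi(i)}\theta_2$, up to inner automorphisms, rather than some other normalization. If it is instead stated with $\theta_2=\theta_1^{-1}$ up to a relabeling, the argument only becomes shorter. If it is phrased in terms of the standard invariant, I would first derive the two-sided form from it and then run the coset argument above.
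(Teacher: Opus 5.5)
Your proposal is correct and is essentially the paper's argument. \Cref{isomorphism-theorem-general} actually gives $r_1=r_2$, a bijection $f$ and $\sigma$ with $[\sigma\alpha_{i_t}\sigma^{-1}]=[\beta_{j_{f(1)}}^{-1}\beta_{j_{f(t)}}]$. This is your two-sided form with $\theta_1=\beta_{j_{f(1)}}\sigma$ and $\theta_2=\sigma^{-1}$, so your coset argument (using that $G_1$ is a group) goes through; the paper instead notes these classes lie in the group $G_2$ and compares cardinalities. For (2)$\Rightarrow$(1), the paper checks the hypotheses of that theorem ($f(1)=1$, and $m_t=\ell_{f(t)}=n/r$ by regularity) instead of building the isomorphism by hand, but that theorem's proof constructs essentially your map: $\sigma^{(n)}$ followed by a diagonal unitary and a permutation.
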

\begin{proof}
$(1)\Rightarrow(2)$: Suppose that $N_1\subset M$ and $N_2\subset M$
  are isomorphic.

 Then, by \Cref{isomorphism-theorem-general}, 
  $r_1=r_2\, (=:r)$ and there exist a bijection $f$ on $\{1,\ldots,r\}$
  and $\sigma\in\Aut(Q)$ such that $[\sigma\alpha_{i_t}\sigma^{-1}] =
  [\beta_{j_{f(1)}}^{-1}\beta_{j_{f(t)}}]$ for all $1\le t\le
  r$.

  Since $G_2$ is a subgroup of $\Out(Q)$,
  $[\beta_{j_{f(1)}}^{-1}\beta_{j_{f(t)}}]\in G_2$ for all $1\le t\le
  r$.  Hence, $[\sigma]G_1[\sigma]^{-1}\subseteq G_2$ and as
  $|G_1| = r_1 = r_2 =|G_2|$,  $[\sigma]G_1[\sigma]^{-1}=G_2.$
\smallskip

\noindent $(2)\Rightarrow(1)$: Suppose that
$[\sigma]G_1[\sigma]^{-1}=G_2$ for some $\sigma\in\Aut(Q)$.

Then, $r_1= |G_1| = |G_2|= r_2\, (=:r)$. Hence there exists a
bijection $f$ on $\{1,\ldots,r\}$ such that
$[\sigma\alpha_{i_t}\sigma^{-1}] = [\beta_{j_{f(t)}}]$ for all $1\le
t\le r$. Since $\alpha_{i_1}=\id_Q =\beta_{j_{1}}$, we see that
$[\beta_{j_{f(1)}}] = [\sigma\alpha_{i_1}\sigma^{-1}] =
[\id_Q]=[\beta_{j_1}]$, which implies that   $f(1)=1$.   So,
$[\sigma\alpha_{i_t}\sigma^{-1}] =
[\beta_{j_{f(1)}}^{-1}\beta_{j_{f(t)}}]$ for all $t$.

Let $m_t$ and $\ell_t$ denote the cardinalities of
$\{j:\beta_j\sim\beta_{j_t}\}$ and $\{i:\alpha_i\sim\alpha_{i_t}\}$,
respectively. Since $N_1\subset M$ and $N_2\subset M$ are regular
diagonal subfactors, it follows from \Cref{main theorem 2} that
$m_t=m_s$ and $\ell_t=\ell_s$ for all $1 \leq s,t \leq r$. Also, since
\( \sum_{t=1}^r m_t=n \quad\text{and}\quad \sum_{t=1}^r \ell_t=n, \)
we obtain $m_t=n/r = \ell_t$ for all $t$.  In particular, \(
m_t=\ell_{f(t)}\) for all \( 1\le t\le r.  \) Thus, by
\Cref{isomorphism-theorem-general}, we conclude that $N_1 \subset M$
and $N_2 \subset M$ are isomorphic.
\end{proof}  
 \color{black}

From \Cref{last lemma}, \Cref{depth two} and \Cref{main theorem 2}, 
we  deduce another characterization of regularity in diagonal
subfactors in terms of depth.

\begin{corollary}\label{depth two regular}
A non-trivial diagonal subfactor $N\subset M$ is regular if and only
if $N\subset M$ is of depth $2$ and $N'\cap M$ is homogeneous with
$\dim(\mZ(N'\cap M)) \geq 2$.
\end{corollary}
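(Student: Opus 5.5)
The plan is to reduce everything to \Cref{main theorem 2}, translating its two conditions into depth and relative-commutant language via \Cref{last lemma} and \Cref{depth two}. Throughout, fix the maximal inequivalent subfamily $\{\alpha_{i_s}: 1\le s\le r\}$ with $\alpha_{i_1}=\id_Q$, the multiplicities $m_s$, and $G=\{[\alpha_{i_s}]\}$. By \Cref{last lemma}, $N'\cap M\cong\bigoplus_{s=1}^r M_{m_s}(\C)$. Hence $\dim(\mZ(N'\cap M))=r=|G|$, and $N'\cap M$ is homogeneous exactly when $m_1=\cdots=m_r$. The hypothesis that the subfactor is non-trivial means, by \Cref{depth one diagonal} (together with \Cref{depth one}), that not all $\alpha_i$ are equivalent to $\id_Q$, i.e.\ $r\ge 2$.

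\emph{Necessity.} Suppose $N\subset M$ is regular and non-trivial. By \Cref{main theorem 2}, all the $m_s$ are equal, so $N'\cap M$ is homogeneous, and $G$ is a subgroup of $\Out(Q)$. Since $r\ge 2$, we have $|G|\ge 2$. \Cref{depth two} then gives that $N\subset M$ has depth $2$. Finally, $\dim(\mZ(N'\cap M))=r\ge 2$.

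\emph{Sufficiency.} Suppose $N\subset M$ has depth $2$, $N'\cap M$ is homogeneous, and $\dim\mZ(N'\cap M)\ge 2$. By \Cref{depth two}, $G$ is a subgroup of $\Out(Q)$. Homogeneity gives $m_1=\cdots=m_r$. \Cref{main theorem 2} then yields regularity.

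The only delicate point is bookkeeping: one must match the minimal central projections in \Cref{last lemma} with the equivalence classes $\mathcal B_s$, so that ``homogeneous'' really means equal $m_s$. \Cref{last lemma} states exactly this. The condition $\dim\mZ\ge 2$ is redundant given non-triviality, but it is harmless to include, since it is equivalent to $r\ge 2$.
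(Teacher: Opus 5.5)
Your proposal is correct and follows the paper's proof essentially step for step. Both directions combine \Cref{last lemma}, which turns homogeneity into $m_1=\cdots=m_r$ and gives $\dim\mZ(N'\cap M)=r=|G|$, with \Cref{depth two} and \Cref{main theorem 2}; your use of \Cref{depth one} and \Cref{depth one diagonal} to show that non-triviality forces $r\ge 2$ justifies a step the paper only asserts.
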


\begin{proof}
  Without loss of generality, assume that $N \subset M$ is as in
  \Cref{characterized}. Since it is non-trivial, $r \geq 2$.\smallskip

  $(\Leftarrow)$ If $N \subset M$ has depth $2$, then
  $G:=\{[\alpha_{i_k}] : 1 \leq k \leq r\}$ is a subgroup of $\Out(Q)$
  and $\dim(\mZ(N'\cap M)) = r = |G| \geq 2$ - see \Cref{last lemma}
  and \Cref{depth two}. Moreover, if $N'\cap M$ is homogeneous, then
  $m_1 = \cdots = m_r$, by \Cref{last lemma}. Thus, $N \subset M$ is
  regular, by \Cref{characterized}.\smallskip

  $(\Rightarrow)$ Conversely, if $N \subset M$ is regular, then
  $G:=\{[\alpha_{i_t}] : 1 \leq t \leq r\}$ is a subgroup of $\Out(Q)$
  (by \Cref{characterized}) with $|G| \geq 2$ (since $N \subset M$ is
  non trivial). Hence, $N \subset M$ has depth $2$, by \Cref{depth
    two}. Also, $m_1=m_2= \dots =m_r=k$ for some $k\in
  \N$. So, $N^{'}\cap M \cong \oplus^{r} M_{k}(\bbc)$, by \Cref{last
    lemma} with  $\dim(\mZ(N'\cap M)) = |G|\geq 2$.
 \end{proof}

 \begin{example}
   In view of \Cref{depth two regular}, one can easily construct a
   diagonal subfactor which is of depth $2$ and is not regular. (As
   pointed out in \Cref{regular-smallest-size}, it must have index
   $\geq 9$.)  For instance, consider a $II_1$-factor $Q$ and an
   $\alpha \in \Aut(Q)$ with outer period $2$, i.e., $p_0(\alpha)=2$.
   Then, the diagonal subfactor \( N:=\{\diag(x,x,\alpha(x)):x\in
   Q\}\subset M_3(Q)=:M \) has depth $2$ (by \Cref{depth two}) and is
   not regular (by \Cref{characterized}).

More generally, in view of \Cref{diagonal-easy}, \Cref{beta1=id} and
\Cref{unitary conjugate}, every non-regular depth $2$ diagonal subfactor
$N\subset M=M_n(Q)$ is conjugate to a diagonal subfactor of the form
\[
N
:=
\Big\{
\diag\big(
\beta_1(x)\otimes I_{m_1},
\beta_2(x)\otimes I_{m_2},
\dots,
\beta_r(x)\otimes I_{m_r}
\big)
:\ x\in Q
\Big\}
\subset M_n(Q)=:M,
\]
where $G:=\{[\beta_i]:1\leq i\leq r,\ \beta_1=\id_Q\}$ is a subgroup
of $\Out(Q)$ (of order $r \geq 2$), and $\mathscr P:=(m_1,m_2,\dots,m_r)$ is a
partition of $n$ such that $m_i\neq m_j$ for some $i\neq j$.
\end{example}

Here are some easy consequences.
\begin{corollary}\label{Weyl-gp-subfactors}
  \begin{enumerate}
\item Let $Q$ be a $II_1$-factor such that $\Out(Q)$ contains a copy
  of a non-trivial finite group $G$. For each $g\in G$, fix a $\sigma_g\in
  \Aut(Q)$ such that $[\sigma_g] = g$ (in $\Out(Q)$). Then, the
  diagonal subfactor
\[
    N_{\alpha}:= \Big\{ \sum_{g\in G} \sigma_g(x) \ot E_{g,g} : x\in Q
    \Big\} \subseteq M=M_{|G|}(Q)
    \]
    is reducible and regular.  Moreover, $N_\alpha \subset M$ has depth $2$.
  \item Let a non-trivial finite group $G$ act outerly on a $II_1$-factor $Q$ via a
homorphism $\alpha : G \to \Aut(Q)$. Then, the diagonal subfactor
\[
    N_{\alpha}:= \Big\{ \sum_{g\in G} \alpha_g(x) \ot E_{g,g} : x\in Q
    \Big\} \subseteq M= M_{|G|}(Q)
    \]
        is reducible and regular. Moreover, $N_\alpha \subset M$ has depth $2$. \end{enumerate}  \end{corollary}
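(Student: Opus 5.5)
Both parts reduce to \Cref{main theorem 2} (equivalently \Cref{group diagonal subfactors}) together with \Cref{depth two} and \Cref{last lemma}. Index the family by the elements of $G$, writing the automorphisms as $\sigma_g$ (respectively $\alpha_g$), $g \in G$.

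\emph{Normalization.} The theorems are stated with the first automorphism equal to $\id_Q$. By \Cref{beta1=id} and \Cref{unitary conjugate}, we may replace the family $\{\sigma_g\}$ by $\{\sigma_e^{-1}\sigma_g\}$. Since $[\sigma_e] = e$, this changes neither the classes in $\Out(Q)$ nor regularity or depth. So we may assume the automorphism attached to the identity $e$ is inner, and after a further unitary conjugation as in \Cref{last lemma}, that it equals $\id_Q$.

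\emph{Part (1).} The classes $[\sigma_g] = g$ are pairwise distinct, so the whole family is pairwise inequivalent. Hence $r = |G|$, every $m_s = 1$, and the set of classes is exactly $G$, a subgroup of $\Out(Q)$. \Cref{group diagonal subfactors} (or \Cref{main theorem 2}) then gives regularity. By \Cref{last lemma}, $N_\alpha' \cap M \cong \C^{|G|}$, which has dimension $|G| \ge 2$; this proves reducibility. Finally, since $G$ is a subgroup with $|G| \ge 2$, \Cref{depth two} gives depth exactly $2$.

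\emph{Part (2).} Outerness of the action means $\alpha_g \notin \Inn(Q)$ for every $g \ne e$. Since $\alpha$ is a homomorphism, $\alpha_g \sim \alpha_h$ would give $\alpha_{h^{-1}g} = \alpha_h^{-1}\alpha_g$ inner, hence $g = h$. So $g \mapsto [\alpha_g]$ is an injective homomorphism $G \to \Out(Q)$, and $\alpha_e = \id_Q$. Choosing $\sigma_g := \alpha_g$ puts us in the setting of part (1), which then gives everything.

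The only point needing care is the injectivity of $g \mapsto [\alpha_g]$ in part (2), that is, deducing from outerness that distinct group elements give inequivalent automorphisms. Everything else is a direct citation of the earlier results.
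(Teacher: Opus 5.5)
Your proposal is correct and matches the paper's intended argument; the paper states this corollary as an easy consequence without writing out a proof. All multiplicities equal $1$ and the classes form the subgroup $G$, so \Cref{main theorem 2} gives regularity, \Cref{last lemma} gives $N_\alpha'\cap M\cong\C^{|G|}$, \Cref{depth two} gives depth $2$, and part (2) reduces to part (1) via the injectivity of $g\mapsto[\alpha_g]$, which you verify correctly (minor point: once you pass to $\{\sigma_e^{-1}\sigma_g\}$, the entry at $e$ is already $\mathrm{id}_Q$, so the extra unitary conjugation is unnecessary).
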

(Here, $\{E_{g,h}: g, h \in G\}$ denote the standard matrix units of
the matrix algebra $M_G(\C)$.)

\begin{remark}
We shall see in \Cref{Weyl group} that, in fact, there exists
a non-trivial diagonal regular subfactor, via automorphisms of a
$II_1$-factor $Q$, with desired (finite) generalized Weyl group only if that
group is embedded  in $\Out(Q)$.
  \end{remark}

\section{Generalized Weyl group of regular  diagonal subfactors}\label{generalized-Weyl-group}\label{sec-5}

In this section, we study the generalized Weyl group of regular
diagonal subfactors. Using coset representatives of the generalized
Weyl group, an orthogonal system for every finite-index subfactor was
constructed in \cite{BG1}, and a unitary orthonormal basis for every
finite-index regular subfactor was achieved in \cite{CKP}. For a
regular diagonal subfactor $N\subset M$, the following result gives an
explicit isomorphism between its generalized Weyl group and the
associated subgroup of $\Out(Q)$ (which was denoted by $G$) appearing
in \Cref{main theorem 2}.

\color{black}

\begin{theorem}\label{Weyl group}
 Let $Q$ be a $II_1$ factor  and $\{\alpha_i: 1\leq i \leq
  n, \alpha_1=\id_Q\}\subseteq \Aut(Q)$. If the diagonal subfactor
   \[
  N:=\{ \diag(\alpha_1(x), \ldots, \alpha_n(x)) : x \in Q\} \subset
  M_n(Q)=:M
  \]
   is regular, and $G:=\{[\alpha_{i_t}] : 1 \leq t \leq r ,
   \alpha_{i_1} = \id_Q\}$ is the subgroup of $\Out(Q)$ associated to
   $N \subset M$ as in \Cref{main theorem 2}, then  $G \cong W(N
   \subset M)$.
   
   In particular, $\Out(Q)$ contains an isomorphic copy of the generalized
   Weyl group $W(N \subset M)$.
\end{theorem}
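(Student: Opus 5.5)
We will construct an explicit group homomorphism $\Phi:\mN_M(N)\to G$ that is surjective and whose kernel is exactly $\mU(N)\mU(N'\cap M)$. By \Cref{unitary conjugate} and \Cref{simplification-sufficiency}, we may first replace $N$ by $\Ad_{W^*}(N)$. Since the generalized Weyl group is invariant under $\Ad_W$, we may therefore assume
\[
N=\bigl\{\diag(\alpha_{i_1}(x),\dots,\alpha_{i_r}(x))\otimes I_k : x\in Q\bigr\}.
\]
We write $\beta_s:=\alpha_{i_s}$, and we group the indices $\{1,\dots,n\}$ into blocks $\mB_s$, where $a\in\mB_s$ exactly when the $a$-th diagonal entry is $\beta_s$.

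The map $\Phi$ is defined as follows. Let $u=[u_{a,b}]\in\mN_M(N)$. Then there is $\theta_u\in\Aut(Q)$ with $u\,\iota(x)\,u^*=\iota(\theta_u(x))$, where $\iota$ denotes the embedding $Q\cong N$. Comparing entries of $u\,\iota(x)=\iota(\theta_u(x))\,u$ gives, for $a\in\mB_s$ and $b\in\mB_t$,
\[
u_{a,b}\,\beta_t(x)=\beta_s(\theta_u(x))\,u_{a,b}\qquad (x\in Q).
\]
Since $Q$ is a factor, the standard polar decomposition argument shows that $u_{a,b}\neq0$ forces $\beta_s\theta_u\sim\beta_t$, i.e.\ $[\theta_u]=[\beta_s]^{-1}[\beta_t]$. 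Here the factor $Q$ matters: the partial isometry intertwining $\beta_t$ with $\beta_s\theta_u$ has left and right supports in the respective relative commutants, which are $\C$, so it is a unitary. We set $\Phi(u):=[\theta_u]\in G$; this lands in $G$ because $G$ is a group. The automorphism $\theta_u$ is determined up to inner automorphisms, and $\Phi$ is a homomorphism since $\theta_{uv}=\theta_u\theta_v$.

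For surjectivity, \Cref{to prove regularity}, applied to $\{\beta_s\}$ and then tensored with $I_k$, supplies normalizers $D_iP_i\otimes I_k$ with $\theta=\beta_i$, so every element of $G$ is hit. For the kernel, clearly $\mU(N)\mU(N'\cap M)\subseteq\ker\Phi$. Conversely, suppose $[\theta_u]=[\id]$. The intertwining relation above then forces $u_{a,b}=0$ unless $[\beta_s]=[\beta_t]$, i.e.\ $s=t$, because the $\beta$'s are pairwise inequivalent. Hence $u$ is block diagonal, so $u\in M_{(n,\mathscr P)}(Q)$. Moreover, $\theta_u=\Ad_v$ for some $v\in\mU(Q)$, and then $\iota(v)^*u$ commutes with $N$. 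Thus $\iota(v)^*u\in\mU(N'\cap M)$, and $u\in\mU(N)\mU(N'\cap M)$. This gives $W(N\subset M)\cong G$, and the final assertion follows immediately since $G\subseteq\Out(Q)$.

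The main obstacle I expect is the rigorous step ``$u_{a,b}\ne0\Rightarrow$ the automorphisms are equivalent''. It needs the polar decomposition $u_{a,b}=w|u_{a,b}|$ together with the factoriality of $Q$. Specifically, $|u_{a,b}|^2$ commutes with $\beta_t(Q)=Q$, so it is a nonzero scalar, and hence $u_{a,b}$ is a scalar multiple of a unitary implementing the equivalence.
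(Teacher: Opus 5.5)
Your proof is correct, but it runs in the opposite direction from the paper's.

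The paper builds the normalizers $U_t=\Ad_W(D_tP_t\otimes I_k)$ and checks that $[\alpha_{i_t}]\mapsto[U_t]$ is an injective homomorphism $G\to W(N\subset M)$. It then gets surjectivity by counting. It invokes the index formula $[M:N]=|W(N\subset M)|\dim(N'\cap M)$ for regular subfactors from \cite{BG1}, together with $\dim(N'\cap M)=rk^2$ and $n=rk$, to conclude $|W(N\subset M)|=r$.

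You instead define $\Phi(u)=[\theta_u]$ on all of $\mN_M(N)$, identify its kernel exactly, and get surjectivity from \Cref{to prove regularity}. This buys you three things:
\begin{itemize}
\item You need no cardinality count and no appeal to the index theorem of \cite{BG1}.
\item The isomorphism $W(N\subset M)\to G$ is canonical rather than tied to chosen coset representatives.
\item Your kernel computation never uses regularity. So it shows that $u\mapsto[\theta_u]$ embeds $W(N\subset M)$ into $\Out(Q)$ for every diagonal subfactor.
\end{itemize}
In your route, regularity enters only through the necessity half of \Cref{main theorem 2}, namely the group property and equal multiplicities. You need these for the normal form $\diag(\beta_1(x),\dots,\beta_r(x))\otimes I_k$ and hence for surjectivity.

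A few small remarks:
\begin{itemize}
\item $\theta_u=\iota^{-1}\circ\Ad_u\circ\iota$ is uniquely determined by $u$, not merely up to inner automorphisms.
\item The block-diagonality observation in your kernel step is superfluous, since $\iota(v)^*u\in N'\cap M$ already finishes the argument.
\item Reading off a row $a\in\mB_1$ (where $\beta_1=\mathrm{id}_Q$) gives $[\theta_u]\in\{[\beta_t]:1\le t\le r\}$ directly, without using that $G$ is a group.
\end{itemize}
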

\begin{proof} 
More precisely, we  show  that  there exists a
   collection $\{U_t : 1 \leq t \leq r, U_1= I_n\} \subseteq
   \mathcal{U}_{gp}(n,Q)\cap \mN_M(N)$  such that
   \begin{enumerate}\label{ut}
     \item \( \mathrm{Ad}_{U_t}
       \big(\mathrm{diag}(\alpha_{1}(x),\alpha_2(x),\dots,\alpha_n(x))\big)
       = \mathrm{diag}\big(\alpha_1(\alpha_{i_t}(x)),
       \alpha_2(\alpha_{i_t}(x))\dots,\alpha_n(\alpha_{i_t}(x))\big)\)\\
       for all \(x \in Q\),  $1 \leq t \leq r$; and, 
     \item the mapping $G \ni [\alpha_{i_t}]\mapsto [U_t] \in W(N
       \subset M)$ is a group isomorphism (so that $G\cong W(N \subset M)$).
      \end{enumerate}
 Throughout this proof, notations will be borrowed freely from the
  proof of \Cref{main theorem 2}.

  Since $N \subset M$ is regular, in view of
  \Cref{simplification-sufficiency} and \Cref{diagonal-easy},
  there exists a generalized permutation unitary $W \in
  \mathcal{U}_{gp}(n, Q)$ such that
\begin{equation}\label{refined}
\diag\big(\alpha_1(x), \alpha_2(x), \dots, \alpha_n(x)\big) =
\Ad_{W}\!\Big( \diag\big(\alpha_{i_1}(x), \alpha_{i_2}(x), \dots,
\alpha_{i_r}(x)\big)\otimes I_k \Big)
\end{equation}
for all $x \in Q$. Further, since $\{[\alpha_{i_t}] : 1 \leq t \leq r,
\alpha_{i_1} = \id_Q\}$ is a subgroup of $\Out(Q)$, it follows from
\Cref{to prove regularity} that, for the diagonal subfactor
\begin{equation}\label{new subfactor}
P
:=
\Big\{
\diag\big(\alpha_{i_1}(x), \alpha_{i_2}(x), \dots, \alpha_{i_r}(x)\big)
:\ x \in Q
\Big\}
\subset M_{r}(Q),
\end{equation}
there exist generalized permutation unitaries $\{D_tP_t : 1 \leq t
\leq r\}\subseteq \mathcal{U}_{gp}(r,Q)$ such that
\begin{equation}\label{preserve2}
\Ad_{D_tP_t}\!\Big(\diag\big(\alpha_{i_1}(x), \dots,
\alpha_{i_r}(x)\big)\Big) = \diag\Big( \alpha_{i_1}(\alpha_{i_t}(x)),
\dots, \alpha_{i_r}(\alpha_{i_t}(x)) \Big)
\end{equation}
for all $x \in Q$ and $1 \leq t \leq r$. Notice that, since
$\alpha_{i_1}=\id_{Q}$, we can take $D_1P_1=I_r$.

For each $ 1 \leq i \leq r$, let $U_i:=\mathrm{Ad}_{W}(D_iP_i \otimes
I_k) \in \mathcal{U}_{gp}(n,Q)$. Then, $U_1 = I_n$ and, in view of
(\ref{refined}) and (\ref{preserve2}), we observe that
\begin{eqnarray*}\label{preserves3}
\lefteqn{\mathrm{Ad}_{U_t} \Big(\diag(\alpha_1(x), \alpha_2(x), \dots,
  \alpha_n(x))\Big)} \\& =
&\mathrm{Ad}_{W}\Big(\diag\bigl(\alpha_{i_1}(\alpha_{i_1}(x)),\alpha_{i_2}(\alpha_{i_t}(x)),\dots,\alpha_{i_r}(\alpha_{i_t}(x))\bigr)
\otimes I_k\Big)\\ &=& \mathrm{diag}\big(\alpha_1(\alpha_{i_t}(x)),
\alpha_{2}(\alpha_{i_t}(x))\dots,\alpha_n(\alpha_{i_t}(x))\big)
\end{eqnarray*}
for all $x \in Q$ and $1 \leq t \leq r$. This also implies that
$\{U_t: 1 \leq t \leq r \} \subseteq \mN_M(N)$, thereby proving 
the first assertion.\smallskip
 
We now show that $|W(N\subset M)| = r$.

Notice that $N=\mathrm{Ad}_{W}(P \otimes I_k)$ (by \ref{refined} and
\ref{new subfactor}) and, via the identification $M_n(Q) = M_r(Q) \ot
M_k(Q)$, we also have $(P \otimes I_k)' \cap M= \bbc^{r} \otimes
M_k(Q)$. Thus, $N^{'}\cap M=\mathrm{Ad}_{W}(\bbc^{r} \otimes M_k(Q))$;
so that, $\dim (N'\cap M) = rk^2$.  Since $N \subset M$ is regular, it
follows from \cite{BG1} (see \Cref{index equation}) that
\[
n^2=[M: N]=\dim(N^{'}\cap M)\, |W(N \subset M)| = rk^2\, |W(N \subset M)|.
\]
Since $n=rk$, we get $|W(N \subset   M) | = r$.\smallskip

We finally show that the collection $\{[U_t]: 1 \leq t \leq r\}$ is
a subgroup of $W(N \subset M)$ and that the mapping \[ G \ni
[\alpha_{i_t}] \longmapsto [U_t] \in W(N \subset M)
\]
is an injective group homomorphism.\smallskip
 
Let $s, t \in \{1,2,\dots, r\}$. Then,
$[\alpha_{i_{s}}][\alpha_{i_{t}}]=[\alpha_{i_{p}}]$ for some $ 1\leq
p\leq r$; so, $\alpha_{i_{s}} \circ \alpha_{i_{t}}=\mathrm{Ad}_{u}
\circ \alpha_{i_{p}}$ for some  $u \in \mathcal{U}(Q)$.  Let
$D_u:=\diag(\alpha_1(u),\alpha_2(u),\dots,\alpha_n(u)) \in
\mU(N)$. Then, in view of (\ref{preserves3}), we observe that
\begin{eqnarray*}\label{product}
\lefteqn{ \Ad_{ U_{s}}\mathrm{Ad}_{U_{t}} \bigl(\diag(\alpha_1(x),
  \alpha_2(x), \dots,\alpha_n(x) ) \bigr)} \\&=& \diag \Bigl(
\alpha_1(\alpha_{i_{s}} \alpha_{i_{t}}(x)),\alpha_2(\alpha_{i_{s}}
\alpha_{i_{t}}(x)), \dots, \alpha_n(\alpha_{i_{s}} \alpha_{i_{t}}(x))
\Bigr)\nonumber \\&=& \diag \Bigl((\alpha_1\circ \Ad_u)(\alpha_{i_p}
(x)),(\alpha_2\circ \Ad_u)(\alpha_{i_p}(x)), \dots, (\alpha_n\circ
\Ad_u)(\alpha_{i_p}(x)) \Bigr) \\&=& \mathrm{Ad}_{U_{p}} \Big( \diag
\bigl( \alpha_1\circ \Ad_u(x), \alpha_2\circ \Ad_u(x), \dots,
\alpha_n\circ \Ad_u(x) \bigr)\Big)\\ & = &
\mathrm{Ad}_{U_{p}}\Ad_{D_u} \Big( \diag\bigl( \alpha_1(x),
\alpha_2(x), \dots, \alpha_n(x) \bigr)\Big)
 \end{eqnarray*}   
for all $x \in Q$.  Hence, $D^{*}_uU_{p}^* U_{s}U_{t} \in N' \cap
M$. Thus, there exists some unitary $V \in \mathcal{U}(N^{'} \cap M)$
such that $U_{s}U_{t}= U_{p}D_uV$. Notice that $D_uV \in
\mathcal{U}(N) \mathcal{U}(N' \cap M) = \mU(N'\cap M)\mU(N)$. Hence,
\( [U_{s}][U_{t}]=[U_{p}].  \) In particular, $\{[U_t]: 1 \leq t \leq r\}$ is
multiplicatively closed in $W(N \subset M)$.

Furthermore, for $ 1 \leq s \leq r$, there exists a unique $1 \leq t
\leq r$ such that $[\alpha_{i_s}]^{-1} = [\alpha_{i_t}]$. From the
above calculation, it then readily follows that $[U_s]^{-1} =
[U_t]$. Hence, the collection $\{[U_t] :1 \leq t \leq r\}$ is a subgroup of
$W(N \subset M)$. From the preceding calculations, we also notice that
the mapping
\[
G \ni [\alpha_{i_t}] \longmapsto [U_t] \in W(N \subset M)
\]
is  a  group homomorphism. So, it just remains to show that it is injective.

Suppose that $[U_t]=[I_n]$ for some $t\in \{1, 2, \ldots, r\}$. Then,
there exist unitaries $X \in \mathcal{U}(N^{'} \cap M)$ and
$D_{v}:=\diag(\alpha_1(v), \alpha_2(v), \dots, \alpha_n(v)) \in
\mU(N)$ (for some $v\in \mathcal{U}(Q)$)) such that
$U_t=D_{v}X$. Thus, we observe that
\begin{eqnarray*}\label{injectivity}
\lefteqn{ \mathrm{diag}\big(\alpha_1(\alpha_{i_t}(x)),
  \alpha_{2}(\alpha_{i_t}(x))\dots,\alpha_n(\alpha_{i_t}(x))\big)}
\\&=& \Ad_{U_t} \bigl( \diag(\alpha_1(x),\alpha_2(x),
\dots,\alpha_n(x)) \bigr) \nonumber \\ &=& \Ad_{ D_{v}X} \bigl(
\diag(\alpha_1(x), \alpha_2(x), \dots,\alpha_n(x)) \bigr) \nonumber
\\ &=& \Ad_{ D_{v}} \bigl( \diag(\alpha_1(x), \alpha_2(x),
\dots,\alpha_n(x)) \bigr) \nonumber \\ &=& \diag
\Big(\alpha_1(v)(\alpha_1(x))\alpha_{1}(v)^{*} ,\dots,
\alpha_n{(v)}(\alpha_n(x)) \alpha_n(v)^{*} \Big)
\end{eqnarray*}
for all $x \in Q$. Hence, $\alpha_i(\alpha_{i_t}(x))=
\mathrm{Ad}_{\alpha_i(v)}(\alpha_{i}(x))$ for all $x \in Q$ and $1
\leq i\leq n$. In particular, as $\alpha_1=\id_{Q}$, we get
$\alpha_{i_t}=\mathrm{Ad}_{v}$, i.e., $[\alpha_{i_t}]=[\id_Q]$. This
proves the injectivity.
\end{proof}

The following elementary observation will be needed ahead.
\begin{lemma}\label{matrix conjugate}
Let $\mM$ be a von Neumann algebra, $\{\alpha_i: 1 \leq i \leq n\} \subseteq \Aut(\mM)$ and $U:=DP\in \mathcal U_{gp}(n,\mM)$ be a generalized permutation
unitary satisfying
\[
\Ad_U
\left(\diag\bigl(\alpha_1(x),\alpha_2(x),  \ldots,\alpha_n(x)\bigr)\right)
=
\diag\bigl(\alpha_1(\alpha_s(x)),\alpha_2(\alpha_s(x)),\ldots,
\alpha_n(\alpha_s(x))\bigr)\]
for all $ x\in \mM $. Then, for every $m \in \bbn$, 
\begin{eqnarray*}
  \lefteqn{
    \Ad_{I_m\otimes U}
\left(\diag\bigl(\alpha_1^{(m)}(X),\alpha_2^{(m)}(X),\ldots,\alpha_n^{(m)}(X)\bigr)\right)} \\
&=&
\diag\Bigl(\alpha_1^{(m)}(\alpha_s^{(m)}(X)),
\alpha_2^{(m)}(\alpha_s^{(m)}(X)),\ldots,
\alpha_n^{(m)}(\alpha_s^{(m)}(X))\Bigr)
\ \text{for all }  X\in M_m(\mM).
\end{eqnarray*}
\end{lemma}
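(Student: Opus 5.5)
The plan is a direct block computation in $M_m(M_n(\mM))$. Identify $M_m(\mM)\otimes M_n(\C)$ with $M_m(M_n(\mM))$ via the canonical shuffle of tensor factors. Under this identification, $\diag\bigl(\alpha_1^{(m)}(X),\ldots,\alpha_n^{(m)}(X)\bigr)$ for $X=\sum_{p,q=1}^m x_{p,q}\otimes E^{(m)}_{p,q}$ becomes the $m\times m$ block matrix whose $(p,q)$-block is $\diag\bigl(\alpha_1(x_{p,q}),\ldots,\alpha_n(x_{p,q})\bigr)\in M_n(\mM)$. Likewise, $I_m\otimes U$ becomes the block-diagonal matrix with $U$ in each diagonal block. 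It is cleanest to write everything as $\sum_{p,q} E^{(m)}_{p,q}\otimes(\,\cdot\,)$ so that the ordering convention for $I_m\otimes U$ matches the one the paper uses.

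With these identifications,
\[
\Ad_{I_m\otimes U}\Bigl(\sum_{p,q}E^{(m)}_{p,q}\otimes \diag\bigl(\alpha_1(x_{p,q}),\ldots,\alpha_n(x_{p,q})\bigr)\Bigr)=\sum_{p,q}E^{(m)}_{p,q}\otimes U\diag\bigl(\alpha_1(x_{p,q}),\ldots,\alpha_n(x_{p,q})\bigr)U^* .
\]
By the hypothesis applied to each entry $x_{p,q}\in\mM$, the $(p,q)$-block equals $\diag\bigl(\alpha_1(\alpha_s(x_{p,q})),\ldots,\alpha_n(\alpha_s(x_{p,q}))\bigr)$. Since $\alpha_s^{(m)}(X)$ has entries $\alpha_s(x_{p,q})$, reassembling the blocks gives exactly $\diag\bigl(\alpha_1^{(m)}(\alpha_s^{(m)}(X)),\ldots,\alpha_n^{(m)}(\alpha_s^{(m)}(X))\bigr)$. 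The generalized permutation structure $U=DP$ is not actually needed, apart from noting that $I_m\otimes U=(I_m\otimes D)(I_m\otimes P)$ is again a generalized permutation unitary in the appropriate sense.

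The only real obstacle is bookkeeping: one has to fix the identification $M_n(M_m(\mM))\cong M_m(M_n(\mM))$ consistently, so that $I_m\otimes U$ genuinely acts blockwise on the diagonal operator matrix in the paper's tensor convention. I would state this identification explicitly via matrix units at the outset. After that, the proof is a one-line entrywise application of the hypothesis, using linearity and the fact that $\Ad$ of a block-diagonal unitary acts on each block separately.
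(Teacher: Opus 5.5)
Your argument is correct, but it decomposes a different object from the paper's proof.

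\textbf{Your route.} You expand $X=\sum_{p,q}x_{p,q}\otimes E^{(m)}_{p,q}$ and flip the tensor factors. After the flip, $I_m\otimes U$ becomes block-diagonal with copies of $U$. The diagonal operator matrix becomes the $m\times m$ array of blocks $\diag(\alpha_1(x_{p,q}),\ldots,\alpha_n(x_{p,q}))$. The hypothesis then applies verbatim to each block.

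\textbf{The paper's route.} The paper expands $U$ instead, writing $U=DP=\sum_j u_j\otimes E_{j,k_j}$. From the hypothesis it reads off the $n$ intertwining relations $u_j\alpha_{k_j}(x)u_j^*=\alpha_j(\alpha_s(x))$ in $\mM$. It amplifies each of these to $(I_m\otimes u_j)\alpha_{k_j}^{(m)}(A)(I_m\otimes u_j^*)=\alpha_j^{(m)}\alpha_s^{(m)}(A)$ on $M_m(\mM)$. It then recomputes $\Ad_{I_m\otimes U}$ of the diagonal matrix directly in $M_n(M_m(\mM))$, using $I_m\otimes U=\sum_j (I_m\otimes u_j)\otimes E_{j,k_j}$.

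\textbf{Comparison.} Your route is more general: as you note, it never uses the generalized-permutation form, so it proves the lemma for an arbitrary unitary $U\in M_n(\mM)$. The price is fixing the shuffle $M_n(M_m(\mM))\cong M_m(M_n(\mM))$. You must check that it carries the paper's $I_m\otimes U$ to $\diag(U,\ldots,U)$. It does, since after reordering tensor factors both are $\sum_j u_j\otimes I_m\otimes E_{j,k_j}$. The paper's route needs no flip and stays in the $M_n(M_m(\mM))$ picture throughout, but it relies on the special shape of $U$ to reduce to scalar intertwiners in $\mM$. Both arguments come down to applying the hypothesis entrywise; they differ only in whether $X$ or $U$ is broken into entries first.
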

\begin{proof}
Write $P=\sum_{j=1}^n E_{j,k_j}$ for a permutation matrix and
$D=\sum_{j=1}^n u_j \otimes E_{j,j} $, for some $\{u_j\}\subset
\mathcal U(Q)$. Thus, $U=DP=\sum_{j=1}^n u_j \otimes E_{j,k_j}$ and,
by hypothesis,
\begin{eqnarray*}
\sum_{j=1}^n  \alpha_j(\alpha_s(x)) \otimes E_{j,j} 
&=&U
\left(
\sum_{j=1}^n  \alpha_j(x) \otimes E_{j,j}
\right)
U^* \nonumber \\
&=&\left(
\sum_{j=1}^n  u_j \alpha_{k_j}(x) u_j^* \otimes E_{j,j}
\right)
\end{eqnarray*}
for all $x \in Q$. This implies that $u_j \alpha_{k_j}(x) u_j^*=\alpha_j(\alpha_s(x))$ for all $x\in Q$. Hence, 
\begin{eqnarray*}
(I_m \otimes u_j) \alpha_{k_j}^{(m)}(A) (I_m \otimes u_j^*)=\alpha_j^{(m)} \alpha_s^{(m)} (A)  \quad \text{for all}\, A\in M_{m}(Q).
\end{eqnarray*}
In particular,
\begin{eqnarray*}
\lefteqn{\Ad_{(I_m \otimes U)}
\left(\diag\bigl(\alpha_1^{(m)}(X),\alpha_2^{(m)}(X),\ldots,\alpha_n^{(m)}(X)\bigr)\right)} \\
&=&
\diag\bigl(\alpha_1^{(m)}(\alpha_s^{(m)}(X)),
\alpha_2^{(m)}(\alpha_s^{(m)}(X)),\ldots,
\alpha_n^{(m)}(\alpha_s^{(m)}(X))\bigr)
\end{eqnarray*}
\text{for all } $X\in M_m(Q)$.
\end{proof}
\color{black}

\begin{corollary}\label{ith Weyl group}\label{regularity between consecutives}
  Let $Q$ be a $II_1$-factor and $\{\alpha_1, \ldots, \alpha_n,
  \alpha_1 = \id_Q\}\subseteq \Aut(Q)$. Consider the diagonal
  subfactors \( N:=\{ \diag(\alpha_1(x), \ldots, \alpha_n(x)) : x \in
  Q\} \subset M_n(Q)=:M \) and \( N_0:=\{ \diag(\alpha_1^{-1}(x),
  \ldots, \alpha_n^{-1}(x)) : x \in Q\} \subset M_n(Q)=:M.  \) Then,
  following  statements are equivalent:
  \begin{enumerate}
  \item
 $N \subset M$ is regular.
 \item
  $N_0\subset M$ is regular.
\item $M_t \subset M_{t+1}$ is regular for some $t \geq 0$.
 \item $M_t \subset M_{t+1}$ is regular for all $t \geq 0$.
\end{enumerate}
  Furthermore, if $N \subset M$ is regular, then the following hold:
  \begin{enumerate}
\item $W(N \subset M) \cong W(N_0\subset M)$.
  \item $W(N \subset M) \cong W(M_t \subset M_{t+1})$ for all $t \geq 0$.
\end{enumerate}
    \end{corollary}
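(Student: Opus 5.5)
The plan is to reduce everything to \Cref{main theorem 2} and \Cref{Weyl group}, using the explicit description of the Jones tower of a diagonal subfactor from \Cref{appendix-pg}.

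\emph{(1)$\Leftrightarrow$(2).} Write $\beta_i:=\alpha_i^{-1}$. The equivalence $\beta_i\sim\beta_j$ holds if and only if $\alpha_i\sim\alpha_j$, since $\alpha_j=\Ad_u\alpha_i$ gives $\alpha_j^{-1}=\Ad_{\alpha_j^{-1}(u)^*}\alpha_i^{-1}$. Hence the blocks $\mB_s$, and therefore the multiplicities $m_s$, coincide for the two families. The associated set for $N_0$ is $G^{-1}=\{[\alpha_{i_s}]^{-1}\}$, and this is a subgroup exactly when $G$ is, since both are closed under inversion once they are subgroups.
\begin{itemize}
\item If $G$ is a group, then $G^{-1}=G$.
\item Conversely, if $G^{-1}$ is a group, then $G=(G^{-1})^{-1}=G^{-1}$ is a group.
\end{itemize}
\Cref{main theorem 2} then gives (1)$\Leftrightarrow$(2). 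For the Weyl groups, \Cref{Weyl group} gives $W(N\subset M)\cong G$ and $W(N_0\subset M)\cong G^{-1}=G$.

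\emph{Tower steps.} From \Cref{diagonal basic construction} and \Cref{composition-diagonal}, $M\subset M_1$ is (up to the identifications there) the diagonal subfactor $\{\diag(\gamma(1,j)^{(n)}(A))_{j}: A\in M_n(Q)\}\subset M_n(M_n(Q))$ with $\gamma(1,j)=\alpha_j^{-1}$. That is, it is the diagonal subfactor of the amplified family $\{(\alpha_j^{-1})^{(n)}\}$ on the $II_1$-factor $M_n(Q)$. By \Cref{amplify lemma}, the equivalence classes, the multiplicities, and the subgroup property in $\Out(M_n(Q))$ match those of $\{\alpha_j^{-1}\}$ in $\Out(Q)$. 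The amplification map $[\theta]\mapsto[\theta^{(n)}]$ is an injective homomorphism $\Out(Q)\to\Out(M_n(Q))$, again by \Cref{amplify lemma}. Therefore \Cref{main theorem 2} shows that $M\subset M_1$ is regular if and only if $N_0\subset M$ is, hence if and only if $N\subset M$ is. By \Cref{Weyl group}, $W(M\subset M_1)\cong G^{-1}\cong G$.

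Next, $M_1\subset M_2$ is the basic construction of $M\subset M_1$. By the same computation applied to $M\subset M_1$ in place of $N\subset M$, it is a diagonal subfactor via the amplified inverses of the defining automorphisms $(\alpha_j^{-1})^{(n)}$ of $M\subset M_1$, that is, via $\alpha_j^{(n^2)}$. So the regularity criterion and the Weyl group again agree with those of $N\subset M$. Inducting on $t$ gives (1)$\Leftrightarrow$(4) and $W(M_t\subset M_{t+1})\cong G$. The implication (4)$\Rightarrow$(3) is trivial, and (3)$\Rightarrow$(1) follows because each step of the induction is an equivalence, so regularity at level $t$ propagates back to level $0$.

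\emph{Main obstacle.} The main difficulty is bookkeeping. I must verify precisely, from \Cref{appendix-pg}, that $M_t\subset M_{t+1}$ is isomorphic to a diagonal subfactor whose defining family is the amplification of $\{\alpha_j^{\pm1}\}$, with the sign alternating in $t$, and that the first automorphism is the identity, as \Cref{main theorem 2} requires. If the appendix instead gives the family $\gamma(1,j_1,\dots,j_{t+1})$ relative to $M_t$ in a form that is not literally an amplification, I will use \Cref{beta1=id} and \Cref{replace automorphism} to normalize it. Throughout, I will use that isomorphic inclusions share regularity and have isomorphic Weyl groups.
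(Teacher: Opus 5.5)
Your proposal is correct and is essentially the paper's argument: you realize $N_0\subset M$ and each $M_t\subset M_{t+1}$ as diagonal subfactors with defining families $\{\alpha_j^{-1}\}$ and $\{(\alpha_j^{\pm1})^{(n^{t+1})}\}$, transfer inequivalence classes, multiplicities and the subgroup property through \Cref{amplify lemma}, and then apply \Cref{main theorem 2} and \Cref{Weyl group}. The only difference is that the paper reads every $M_t\subset M_{t+1}$ directly off \Cref{diagonal basic construction}, which already lists the alternating amplified families, so your induction through iterated basic constructions, and the bookkeeping you flagged as the main obstacle, are not actually needed.
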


\begin{proof}
 Consider the basic construction tower $\{M_t: t \geq 0\}$ of $N
 \subset M$ as in \Cref{diagonal basic construction}. Then, the
 embedding $M_{t-1} \hookrightarrow M_{t}$ is also a diagonal
 subfactor. More precisely, for $t \in 2\N +1$, 
\[
M_{t-1}=\{ \diag(\alpha_1^{(n^t)}(A), \ldots, \alpha_n^{{(n^{t})}}(A)) : A \in
M_{n^t}(Q)\} \subset M_{n^{t+1}}(Q)=:M_{t} \]
and for $t \in 2\N \cup \{0\}$, 
 \[ M_{t-1} = \Big\{ \diag\Big(\big({\alpha_1^{-1}\big)}^{(n^t)}(A), \ldots, {\big(\alpha_n^{-1}\big)}^{{(n^{t})}}(A)\Big) : A \in
 M_{n^t}(Q)\Big\} \subset M_{n^{t+1}}(Q)=:M_{t}.
 \] 

 Let $\{\alpha_{i_s}:1\leq s\leq r\}$ be a maximal family of
 inequivalent automorphisms in $\{\alpha_i:1\leq i\leq n\}$, with
 $\alpha_{i_1}=\id_Q$.  Then, $\{\alpha_{i_s}^{-1}:1\leq s\leq r\}$ is
 a maximal family of inequivalent automorphisms in
 $\{\alpha_i^{-1}:1\leq i\leq n\}$.  Thus, in view of \Cref{no change
   in equivalence}, for each $t\geq 1$, the family
 $\left\{\alpha_{i_s}^{(n^t)}:1\leq s\leq r \right\}$ is a maximal
 family of inequivalent automorphisms in
 $\left\{\alpha_i^{(n^t)}:1\leq i\leq n\right\}$; and, likewise,
 $\left\{(\alpha_{i_s}^{-1})^{(n^t)}:1\leq s\leq r\right\}$ is a
 maximal family of inequivalent automorphisms in
 $\left\{(\alpha_i^{-1})^{(n^t)}:1\leq i\leq n \right\}$. Moreover,
 for each $1\leq s\leq r$ and $t\geq 1$, we have
\[
\big|\{i:\alpha_i\sim \alpha_{i_s}\}\big|
=
\big|\{i:\alpha_i^{-1}\sim \alpha_{i_s}^{-1}\}\big|
=
\big|\{i:\alpha_i^{(n^t)}\sim \alpha_{i_s}^{(n^t)}\}\big|
=
\big|\{i: {\big(\alpha_i^{-1}\big)}^{(n^t)}
\sim {\big(\alpha_{i_s}^{-1}\big)}^{(n^t)}\}\big|.
\]
Hence, for the family of diagonal subfactors $\{\,N\subset M,\;
N_0\subset M,\; M_t\subset M_{t+1}:t\geq 0\,\}$, by
\Cref{characterized}, the regularity of any one subfactor implies the
regularity of all the others. This proves the first part. \smallskip

For the second part, let $N \subset M$ be regular.

\noindent  (1): Then, $G:=
  \{[\alpha_{i_t}] : 1 \leq s \leq r\}$ is a subgroup of $\Out(Q)$ (by
  \Cref{characterized}) and $W(N \subset M) \cong G$ (by \Cref{Weyl
    group}). Likewise, $G_0:= \{[\alpha_{i_s}^{-1}] : 1 \leq s \leq
  r\}$ is a subgroup of $\Out(Q)$ and $W(N_0 \subset M) \cong
  G_0$. Since $G = G_0$, it follows that $W(N \subset M) \cong W(N_0
  \subset M)$.\smallskip

\noindent (2): Let $ t \in \N$.  Then, $M_t \subset M_{t+1}$ is
regular. Also, by \Cref{amplify lemma} we have
\[
\left\{
\left[{\alpha_{i_s}}^{(n^{t+1})}\right] :
1\leq s \leq r
\right\}
\cong G=G_0 \cong
\left\{
\left[{\big(\alpha_{i_s}^{-1}\big)}^{(n^{t+1})}\right] :
1\leq s \leq r
\right\}.
\]
Therefore, $W(M_t\subset M_{t+1})\cong G\cong W(N\subset M)$, by
\Cref{Weyl group}.
\end{proof}

  \begin{remark}
Let $N \subset M$ be regular and, as in the proof of \Cref{Weyl
  group}, consider the unitaries $\{U_s, V_s: 1 \leq s \leq r\}\subset
\mU_{gp}(n, Q)$ such that $W(N\subset M)=\{[U_s]:1\leq s \leq r\}$ and
$W(N_0\subset M)=\{[V_s]:1\leq i \leq r\}$. Then, the following hold:
\\
\hspace*{3mm} (a) if $t \in 2\bbn \cup \{0\}$ , then $W(M_t\subset M_{t+1})=\{[I_{n^{t+1}} \otimes V_s]:1\leq s \leq r\}$; and,\\
\hspace*{3mm} (b) if $t \in 2\bbn+1$, then  $W(M_t\subset M_{t+1})
=
\{[I_{n^{t+1}}\otimes  U_s ]:1\leq s \leq r\}$.\smallskip

\color{black}
\end{remark}

  \color{black}

Recall that a diagonal subfactor via automorphisms of a $II_1$-factor
$Q$ is said to be trivial if it is isomorphic to the diagonal
subfactor $Q \otimes I_n \subset M_n(Q)$, for some $n \in \N$. The
following observation is now immediate from \Cref{main theorem 2} and
\Cref{Weyl group}.

\begin{corollary}\label{Out-Q-finite-subgroups} Let $Q$ be a
  $II_1$-factor. Then, the following statements are equivalent:
  \begin{enumerate}
    \item Every regular diagonal subfactor via
    automorphisms of $Q$ is trivial.
    \item  $\Out(Q)$ has no
  non-trivial finite subgroups.
  \end{enumerate}
\end{corollary}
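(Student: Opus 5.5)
We prove both implications directly from \Cref{main theorem 2} and \Cref{Weyl-gp-subfactors}.

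\emph{(2) $\Rightarrow$ (1).} Assume $\Out(Q)$ has no non-trivial finite subgroups. Let $N \subset M = M_n(Q)$ be a regular diagonal subfactor defined by $\{\alpha_i : 1 \le i \le n\}$. By \Cref{beta1=id} and \Cref{unitary conjugate}, we may assume $\alpha_1 = \id_Q$; this replaces the subfactor by an isomorphic one and preserves regularity. By \Cref{main theorem 2}, $G = \{[\alpha_{i_s}] : 1 \le s \le r\}$ is a finite subgroup of $\Out(Q)$, since it has at most $n$ elements. The hypothesis then forces $G = \{[\id_Q]\}$, so $r = 1$ and every $\alpha_i$ is inner, i.e.\ $\alpha_i \sim \id_Q$ for all $i$. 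By \Cref{diagonal-easy} there is a generalized permutation unitary $U$ with $\Ad_U(N) = Q \otimes I_n$. Hence $N \subset M$ is isomorphic to $Q \otimes I_n \subset M_n(Q)$, which is trivial. Equivalently, this follows from \Cref{depth one diagonal} together with \Cref{depth one}.

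\emph{(1) $\Rightarrow$ (2).} We argue by contraposition. Suppose $\Out(Q)$ contains a non-trivial finite subgroup $H$. For each $g \in H$ choose $\sigma_g \in \Aut(Q)$ with $[\sigma_g] = g$, taking $\sigma_e = \id_Q$. By \Cref{Weyl-gp-subfactors}(1), or directly by \Cref{main theorem 2} with all multiplicities equal to $1$ and the associated subgroup equal to $H$, the diagonal subfactor $N = \{\sum_{g \in H} \sigma_g(x) \otimes E_{g,g} : x \in Q\} \subset M_{|H|}(Q)$ is regular.

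It remains to see that this subfactor is not trivial. Because the $\sigma_g$ are pairwise inequivalent, \Cref{last lemma} gives $N' \cap M \cong \C^{|H|}$. This algebra is not a factor when $|H| \ge 2$. A trivial diagonal subfactor $Q \otimes I_m \subset M_m(Q)$ instead has relative commutant $M_m(\C)$, which is a factor, and isomorphisms of inclusions preserve relative commutants. So $N \subset M$ is a non-trivial regular diagonal subfactor, and (1) fails. Alternatively, \Cref{Weyl group} gives $|W(N \subset M)| = |H| \ge 2$, while a trivial inclusion has trivial generalized Weyl group.

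The only point needing care is the notion of ``trivial'' as \emph{isomorphic} to $Q \otimes I_n \subset M_n(Q)$. In the second implication we must therefore use an isomorphism invariant, and the relative commutant (or the depth, which is $1$ versus $2$) serves that role.
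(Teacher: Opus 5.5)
Your proof is correct and follows essentially the paper's route. Under (2), regularity forces the associated set $G$ to be a finite, hence trivial, subgroup of $\Out(Q)$, so every $\alpha_i$ is inner and the subfactor is trivial; conversely, a non-trivial finite subgroup $H$ yields a regular diagonal subfactor via \Cref{main theorem 2}. The paper certifies non-triviality through $W(N\subset M)\cong H$ (\Cref{Weyl group}), which you give as your alternative; your primary witness, the non-factorial relative commutant $\C^{|H|}$, works equally well.
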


\begin{example}
By \cite[Theorem 7.1]{PV}, there exists a $II_1$-factor $Q$ with
$\Out(Q) \cong \Z$. Thus, $\Out(Q)$ has no non-trivial finite
subgroup. In particular, it follows from \Cref{diagonal-easy}
that every regular diagonal subfactor via automorphisms of $Q$ is
trivial.
\end{example}
\color{black}
 
In view of \Cref{main theorem 2} and the preceding theorem, we obtain:
\begin{corollary}\label{prescribed-Weyl-gp}
  \begin{enumerate} 
\item  Let $G$ be a finite group and $Q$ be a
  $II_1$-factor for which $ \Out(Q)$ contains a copy of $G$. For each
  $g\in G$, fix a $\sigma_g \in \Aut(Q)$ such that $[\sigma_g] =
  g$. Then, the diagonal subfactor
\[
    N:= \Big\{ \sum_{g\in G} \sigma_g(x) \ot E_{g,g} : x\in Q
    \Big\} \subseteq M_G(Q)
    \]
    is regular and its generalized Weyl group is isomorphic to $G$.

    Moreover, the subgroup of $\Out(Q)$ associated to $N \subset M$ as
    in \Cref{main theorem 2} equals $G$. \color{black}
    \item Let a finite group $G$ act outerly on a $II_1$-factor $Q$ via a
homorphism $\alpha: G \to \Aut(Q)$. Then, the generalized Weyl group
of the diagonal subfactor
\[
    N_{\alpha}:= \Big\{ \sum_{g\in G} \alpha_g(x) \ot E_{g,g} : x\in Q
    \Big\} \subseteq M_G(Q)
    \]
is isomorphic to $G$.
  \end{enumerate}
\end{corollary}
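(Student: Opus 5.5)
The plan is to deduce both parts directly from \Cref{group diagonal subfactors} (or \Cref{main theorem 2}) together with \Cref{Weyl group}.

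For part (1), note first that $M_G(Q)=M_{|G|}(Q)$, and index the diagonal entries by $G$. Choose $\sigma_e=\id_Q$. This is harmless: if a different representative of the identity class was fixed, we may replace it, or use \Cref{beta1=id} to normalise the first entry to the identity. Since $g\mapsto[\sigma_g]$ is injective, the automorphisms $\{\sigma_g:g\in G\}$ are pairwise inequivalent. Hence the family is itself a maximal pairwise-inequivalent family, every multiplicity $m_g$ equals $1$, and the associated set $\{[\sigma_g]:g\in G\}=G$ is a subgroup of $\Out(Q)$. Both hypotheses of \Cref{main theorem 2} therefore hold, and $N\subset M_G(Q)$ is regular. (Equivalently, apply \Cref{group diagonal subfactors} with $n=|G|$.) This gives regularity, and it also shows that the subgroup of $\Out(Q)$ associated to $N\subset M$ in \Cref{main theorem 2} is exactly $G$.

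Next, \Cref{Weyl group} applies to this regular diagonal subfactor and gives $W(N\subset M_G(Q))\cong G$. The one point to check is the normalisation $\alpha_1=\id_Q$ used in \Cref{Weyl group}. We handle it by listing $e$ first with $\sigma_e=\id_Q$. If instead $\sigma_e$ is only inner, we conjugate by a diagonal unitary and invoke \Cref{unitary conjugate}, which shows that neither regularity nor the generalized Weyl group changes.

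For part (2), outerness of the action $\alpha$ means that $\alpha_g$ is outer for every $g\neq e$. Since $\alpha$ is a homomorphism, $\alpha_g^{-1}\alpha_h=\alpha_{g^{-1}h}$ is outer whenever $g\ne h$, so $g\mapsto[\alpha_g]$ is an injective homomorphism $G\to\Out(Q)$. Its image is a copy of $G$ in $\Out(Q)$, and $\alpha_e=\id_Q$. Part (1) then applies with $\sigma_g:=\alpha_g$ and yields $W(N_\alpha\subset M_G(Q))\cong G$.

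There is no real obstacle here. The only care needed is bookkeeping: ensuring that the identity automorphism sits in the first slot, and recognising that injectivity of $g\mapsto[\sigma_g]$ is precisely the statement that all multiplicities equal $1$.
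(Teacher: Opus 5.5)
Your proposal is correct and follows the paper's argument. The paper obtains this corollary directly from \Cref{main theorem 2} (regularity, with all multiplicities equal to $1$ and associated subgroup $G$) and \Cref{Weyl group} (giving $W(N\subset M)\cong G$), with part (2) reducing to part (1) because an outer action makes $g\mapsto[\alpha_g]$ an injective homomorphism into $\Out(Q)$. Your bookkeeping, normalising $\sigma_e$ to $\id_Q$ in the first slot via \Cref{unitary conjugate}, is correct and fills in a detail the paper leaves implicit.
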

\color{black}
It is well-known that every finite group admits an outer action on the
hyperfinite $II_1$-factor $R$. Thus, we deduce the following:

\begin{corollary}
Let $G$ be a finite group. Then, there exists a reducible regular diagonal
subfactor $N \subset M$ with $W(N \subset M) \cong G$.
  \end{corollary}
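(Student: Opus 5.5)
The plan is to combine the well-known existence of outer actions of finite groups on the hyperfinite factor $R$ with \Cref{prescribed-Weyl-gp}(2). If $G$ is trivial, any reducibility requirement fails for the subfactor built from $G$ itself. So I would first treat the case of a non-trivial group $G$, and then handle the trivial group separately.

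For non-trivial $G$, I would fix an outer action $\alpha : G \to \Aut(R)$. Such an action exists for every finite group; for instance, one can use a Bernoulli-type action on $\bigotimes_{g \in G} M_2(\C)$, or Jones' result that every finite group acts outerly on $R$. Outerness means that $\alpha_g$ is outer for every $g \neq e$. Since $\alpha$ is a homomorphism, $g \mapsto [\alpha_g]$ is then an injective homomorphism $G \to \Out(R)$, because $[\alpha_g] = [\id_R]$ forces $g = e$. Hence $\{[\alpha_g] : g \in G\}$ is a copy of $G$ in $\Out(R)$, the automorphisms $\{\alpha_g\}$ are pairwise inequivalent, and each equivalence class has multiplicity one.

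Next, I would form the diagonal subfactor $N_\alpha = \{\sum_{g} \alpha_g(x) \ot E_{g,g} : x \in R\} \subset M_G(R)$. By \Cref{main theorem 2}, with all $m_s = 1$ and associated set $\{[\alpha_g]\}$ a subgroup of $\Out(R)$, this subfactor is regular. By \Cref{prescribed-Weyl-gp}(2), or directly by \Cref{Weyl group}, we get $W(N_\alpha \subset M_G(R)) \cong G$. For reducibility, \Cref{last lemma} gives $N_\alpha' \cap M \cong \C^{|G|}$, which is non-trivial because $|G| \geq 2$. This is exactly \Cref{Weyl-gp-subfactors}(2).

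For the trivial group, I would take the trivial diagonal subfactor $R \ot I_2 \subset M_2(R)$. It is regular by \Cref{depth one}, and reducible since its relative commutant is $M_2(\C)$. Its generalized Weyl group is trivial, because $\mN_M(N)$ is generated by $\mU(N)\mU(N'\cap M)$; this follows from the index formula in \Cref{index equation}, since $4 = 4\,|W|$.

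The only genuine input is the existence of an outer action of an arbitrary finite group on $R$, which is standard and cited in the paper. I do not expect a real obstacle anywhere else.
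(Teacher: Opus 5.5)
Your proof is correct and follows the paper's route: an outer action of $G$ on $R$ is fed into \Cref{prescribed-Weyl-gp}(2), equivalently \Cref{main theorem 2} together with \Cref{Weyl group}. Your separate treatment of $G=\{e\}$ via $R\otimes I_2\subset M_2(R)$ is actually needed for the word ``reducible''. For trivial $G$, the paper's one-line deduction produces $R\subset R$, which is irreducible.

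The one slip is in your parenthetical example. For finite $G$, $\bigotimes_{g\in G}M_2(\C)$ is a finite-dimensional matrix algebra, so that shift is inner and is not an action on $R$. You could instead use the Bernoulli action on $R^{\bar\otimes G}\cong R$, or the product-type action from \Cref{outer-automorphism-construction} applied to the non-scalar unitaries $\lambda_g$ of the left regular representation. Your appeal to Jones' theorem already suffices on its own.
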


  \begin{corollary}\label{infinite-family}
 For any finite group $H$, there exists a $II_1$-factor $Q$ for which there
  are (countably) infinitely many mutually non-isomorphic regular diagonal
  subfactors (of index $|H|^2$) via automorphisms of $Q$ with generalized
  Weyl group $H$.
 \end{corollary}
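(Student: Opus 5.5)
Fix a finite group $H$. The idea is to combine the isomorphism criterion of \Cref{isomorphism-theorem} with the Weyl-group identification of \Cref{prescribed-Weyl-gp}. We first choose $Q$ so that $\Out(Q)$ contains infinitely many pairwise non-conjugate copies of $H$. By the results of \cite{PV}, for any countable group $\Lambda$ there is a $\mathrm{II}_1$-factor $Q$ with $\Out(Q)\cong\Lambda$. Take
\[
\Lambda:=\bigoplus_{j\in\N} H_j,
\]
a countable direct sum of copies $H_j$ of $H$; this is countable because $H$ is finite. (If $H$ is trivial the statement degenerates to the trivial subfactor, and we treat that case separately or exclude it.) Let $G_j\subset\Out(Q)$ be the image of $H_j$ under the fixed isomorphism $\Out(Q)\cong\Lambda$.

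\textbf{Construction.} For each $j$ and each $g\in G_j$, pick $\sigma^{(j)}_g\in\Aut(Q)$ with $[\sigma^{(j)}_g]=g$, normalized so that $\sigma^{(j)}_e=\id_Q$. Form the diagonal subfactor
\[
N_j:=\Big\{\sum_{g\in G_j}\sigma^{(j)}_g(x)\ot E_{g,g}: x\in Q\Big\}\subset M_{|H|}(Q).
\]
Identify each $G_j$ with $H$ so that all these live in the same $M:=M_{|H|}(Q)$. By \Cref{prescribed-Weyl-gp}(1), each $N_j\subset M$ is regular and has index $|H|^2$. Its associated subgroup is $G_j$, and $W(N_j\subset M)\cong G_j\cong H$.

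\textbf{Non-isomorphism.} By \Cref{isomorphism-theorem}, $N_j\subset M$ and $N_l\subset M$ are isomorphic if and only if $[\sigma]G_j[\sigma]^{-1}=G_l$ for some $\sigma\in\Aut(Q)$, that is, if and only if $G_j$ and $G_l$ are conjugate in $\Out(Q)$. In $\Lambda=\bigoplus_j H_j$, conjugation by any element preserves each summand $H_j$, because the summands commute with each other and are normal. Hence $H_j$ is conjugate to $H_l$ only when $j=l$, provided $H\neq\{e\}$. This makes the subfactors $N_j\subset M$ mutually non-isomorphic, and there are countably infinitely many of them.

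\textbf{Main obstacle.} The key step is the existence of $Q$ with $\Out(Q)$ containing infinitely many non-conjugate copies of $H$, so the precise form of the \cite{PV} result must be checked. If it only provides $\Out(Q)\cong\Lambda$ for some restricted class of countable groups $\Lambda$ (say, finitely presented groups, or groups with property (T) constraints), I would fall back on a finitely generated choice. One option is $\Lambda=H\wr\Z$. There the copies $H_j$ (the $j$-th coordinate summand of $\bigoplus_{\Z} H$) are all conjugate under the shift, so they do not work directly. Instead I would take non-conjugate subgroups such as the diagonal copies
\[
\{(h,h,\dots,h,e,\dots)\}
\]
with supports of different lengths. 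These are distinguished by support size, which is invariant under conjugation in the wreath product.
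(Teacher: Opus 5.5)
Your proof is correct and follows the paper's argument. The paper likewise uses \cite{PV} to realize, as $\Out(Q)$, a countable group containing infinitely many pairwise non-conjugate copies of $H$, and then concludes via \Cref{prescribed-Weyl-gp} and \Cref{isomorphism-theorem}. The only difference is the choice of group: the paper takes the free product $*_{n\in\N}H_n$, where you take $\bigoplus_{j\in\N}H_j$. Your choice makes non-conjugacy immediate, since each summand is normal, and you also rightly note that $H$ must be non-trivial.

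One caveat about your fallback: $H\wr\Z$ is finitely generated but not finitely presented when $H\neq\{e\}$. So it would not help if \cite{PV} covered only finitely presented groups. In any case, your main construction and the paper's depend on the same input, namely realizing a non-finitely-generated countable group as $\Out(Q)$.
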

\begin{proof}
 Consider the countably infinite free product $G:=*_{n\in \N} H_n$,
 where $H_n = H$ for all $n \in \N$. Then, $G$ is an infinite
 countable discrete group. Thus, by \cite{PV}, there exists a
 $II_1$-factor $Q$ with $\Out(Q) \cong G$. Also, for every pair $n
 \neq m$ in $\N$, the subgroups $H_n$ and $H_m$ are non-conjugate in
 $G$.

For every $n \in \N$, let $\{\sigma_{i,n} : 1 \leq i \leq |H|\}
\subset \Aut(Q)$ be a family such that $\{[\sigma_{i,n}] : 1 \leq i
\leq |H|\}$ is the copy of $H_n$ in $\Out(Q)$ via a fixed isomorphism
$\Out(Q) \cong G$. Consider the diagonal subfactor $N_n := \{\diag
\big(\sigma_{1,n}(x), \sigma_{2,n}(x), \dots, \sigma_{n,n}(x)\big): x
\in Q\} \subset M_{|H|}(Q)=:M$. Then, $N_n \subset M$ is a regular
diagonal subfactor with generalized Weyl group $H$ - see
\Cref{prescribed-Weyl-gp}.

 Hence, it follows from \Cref{isomorphism-theorem} that $\{N_n
 \subset M: n\geq 1\} $ is an infinite family of mutually
 non-isomorhphic diagonal regular subfactors with common generalized
 Weyl group $H$.
\end{proof}

\section{Regular diagonal subfactors with abelian generalized Weyl group}\label{abelian-Weyl-group}\label{sec-6}

Notice that,  if $\alpha$ is an outer automorphism of a $II_1$-factor
$Q$ of outer period $r$ and $n = kr$ for some $k \in \N$, then from
\Cref{main theorem 2} and \Cref{Weyl group}, it readily follows that
the diagonal subfactor
\[
N:= \Bigl\{
\diag\bigl(x,\alpha(x),\alpha^2(x),\dots,\alpha^{r-1}(x)\bigr)\otimes I_k
:\ x\in Q
\Bigr\}
\subset M_n(Q)
\]
is regular and its generalized Weyl group is cyclic of order $r$.

More generally, if $G$ is a finite abelian subgroup of $\Out(Q)$, then
there exist positive integers $n_1, n_2, \ldots, n_s$ such that $G
\cong \Z_{n_1} \times \cdots \times \Z_{n_s}$. For every $1 \leq j
\leq s$, let $g_j \in \Out(Q)$ correspond to the (generating) element
$(0, \ldots, 0, \bar{1}, 0, \ldots, 0)$ of $\Z_{n_1} \times \cdots
\times \Z_{n_s}$, with the generator $\bar{1}$ of $\Z_{n_j}$ in the
$j$-th position.  Further, fix an $\alpha_j \in \Aut(Q)$ such that
$[\alpha_j] =g_j$, $1 \leq j \leq s$. Then, for any $k \in \N$ and $n
:= (\Pi_{j=1}^sn_j)k$, from \Cref{main theorem 2} and \Cref{Weyl
  group} again, we also have the following:
\begin{proposition}
With running notations,   the diagonal subfactor
  \[
\Bigl\{
\Big(\sum_{k_1=0}^{n_1-1}\cdots\sum_{k_s=0}^{n_s-1}
\alpha_1^{k_1}\cdots\alpha_s^{k_s}(x)
\otimes
E^{(n_1)}_{k_1+1,k_1+1}\otimes\cdots\otimes
E^{(n_s)}_{k_s+1,k_s+1}\Big)
\otimes I_k
:\ x\in Q
\Bigr\}\subset M_n(Q)
\]
is regular and its generalized Weyl group is isomorphic to $G$.
\end{proposition}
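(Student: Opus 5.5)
The plan is to reduce the statement to \Cref{main theorem 2} and \Cref{Weyl group} by identifying the family of automorphisms defining the subfactor and checking the two conditions there. Index the $\prod_j n_j$ diagonal positions of the displayed matrix (before tensoring with $I_k$) by tuples $(k_1,\ldots,k_s)$ with $0\le k_j\le n_j-1$. The automorphism at position $(k_1,\dots,k_s)$ is $\beta_{(k_1,\ldots,k_s)}:=\alpha_1^{k_1}\cdots\alpha_s^{k_s}$. Tensoring with $I_k$ repeats each of these $k$ times. The result is a diagonal subfactor of $M_n(Q)$ with $n=(\prod_j n_j)k$, in the form of \eqref{dg-subfactor} after a permutation conjugation, which does not affect regularity or the Weyl group by \Cref{unitary conjugate}. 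The position $(0,\ldots,0)$ gives $\beta=\id_Q$, as \Cref{main theorem 2} requires.

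Next I will compute the classes. We have $[\beta_{(k_1,\ldots,k_s)}]=g_1^{k_1}\cdots g_s^{k_s}$. Since $G\cong\Z_{n_1}\times\cdots\times\Z_{n_s}$ with $g_j$ the standard generators, the map $(k_1,\ldots,k_s)\mapsto g_1^{k_1}\cdots g_s^{k_s}$ is a bijection from $\prod_j\{0,\ldots,n_j-1\}$ onto $G$. This is the only point needing care: it uses commutativity and that the $g_j$ have orders exactly $n_j$ and generate an internal direct product. Two consequences follow. First, the $\beta$'s are pairwise inequivalent, so a maximal pairwise inequivalent subfamily of the full family (with the $I_k$ repetitions) consists of exactly one copy of each $\beta_{(k_1,\dots,k_s)}$. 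Second, the set of classes $\{[\beta]\}$ equals $G$, which is a subgroup of $\Out(Q)$.

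The multiplicities are then checked. Each class $g\in G$ is represented by exactly $k$ indices in the full family of $n$ automorphisms, namely the $k$ copies coming from $\otimes I_k$. So $m_1=\cdots=m_r=k$ with $r=|G|$. Both hypotheses of \Cref{main theorem 2} hold, and the subfactor is regular. \Cref{Weyl group} then gives $W(N\subset M)$ isomorphic to the associated subgroup, which is $G$.

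No real obstacle is expected. The only step to state carefully is the bijectivity of $(k_j)\mapsto\prod g_j^{k_j}$, and it is immediate from the chosen isomorphism $G\cong\prod\Z_{n_j}$.
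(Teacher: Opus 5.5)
Your proposal is correct and is essentially the paper's argument. The paper simply invokes \Cref{main theorem 2} and \Cref{Weyl group}, and you supply the routine verification of their hypotheses: $(k_1,\dots,k_s)\mapsto g_1^{k_1}\cdots g_s^{k_s}$ is a bijection onto $G$, so the classes of the defining automorphisms form the subgroup $G$, each occurring with multiplicity exactly $k$.
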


We now show that, up to conjugation by a generalized permutation unitary,
every regular diagonal subfactor with an abelian generalized Weyl group is
of the preceding form.

 For  $k$ and $n$ as above, we shall use  the natural identification
\[
M_n(Q)
\cong
Q \,\bar\otimes\,
M_{n_1}(\mathbb C)\,\bar\otimes\cdots\bar\otimes\,
M_{n_s}(\mathbb C)\,\bar\otimes\, M_k(\mathbb C).
\]


\begin{theorem}\label{abelian-regular-diagonal}
Let $Q$ be a $II_1$-factor, $n \in \N$ and $N\subset M:=M_n(Q)$ be a
regular diagonal subfactor associated to a family
$\{\alpha_i: 1 \leq i \leq n, \alpha_1 = \id_Q\}\subset \Aut(Q)$. If \(
W(N\subset M) \) is abelian, then there exist a  subfamily consisting of
mutually inequivalent automorphisms   $\{\alpha_{i_t}: 1 \leq t \leq s,
\alpha_{i_1} =\id_Q\}$ and a $V\in\mathcal
U_{\mathrm{gp}}(n,Q)$ such that
\[
N
=
\Ad_V\left(
\Bigl\{
\Big(\sum_{k_1=0}^{n_1-1}\cdots\sum_{k_s=0}^{n_s-1}
\alpha_{i_1}^{k_1}\cdots\alpha_{i_s}^{k_s}(x)
\otimes
E^{(n_1)}_{k_1+1,k_1+1}\otimes\cdots\otimes
E^{(n_s)}_{k_s+1,k_s+1}\Big)
\otimes I_k
:\ x\in Q
\Bigr\}
\right),
\]
where $n_j = p_0(\alpha_{i_j})$, $1 \leq j \leq s$.
\end{theorem}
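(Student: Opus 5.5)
The plan is to reduce to the group-theoretic structure of $G$ and then realize that structure by an explicit generalized permutation unitary. By \Cref{main theorem 2}, regularity gives that $G=\{[\alpha_{i_t}]:1\le t\le r\}$ is a subgroup of $\Out(Q)$ and that all multiplicities are equal, say $m_1=\cdots=m_r=k$, so $n=rk$. By \Cref{Weyl group}, $G\cong W(N\subset M)$, so $G$ is a finite abelian group of order $r$. \Cref{simplification-sufficiency} supplies $W\in\mathcal U_{gp}(n,Q)$ with
\[
N=\Ad_W\Bigl(\bigl\{\diag\bigl(\alpha_{i_1}(x),\dots,\alpha_{i_r}(x)\bigr)\otimes I_k : x\in Q\bigr\}\Bigr).
\]
Hence it suffices to handle the case $k=1$: the inequivalent family $\{\alpha_{i_t}\}$ exhausting an abelian $G$. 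The final $V$ will be $W$ composed with $V_0\otimes I_k$.

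Next, I will use the structure theorem for finite abelian groups, $G\cong\Z_{n_1}\times\cdots\times\Z_{n_s}$. Let $g_1,\dots,g_s$ be the corresponding generators. Since the classes $[\alpha_{i_t}]$ exhaust $G$, each $g_j$ is the class of some member of the family. After relabeling the maximal inequivalent subfamily, which is allowed because the statement only requires some such subfamily with $\alpha_{i_1}=\id_Q$, I can take these representatives to be $\alpha_{i_1},\dots,\alpha_{i_s}$. Then $n_j=p_0(\alpha_{i_j})$, since $n_j$ is the order of $g_j$ in $\Out(Q)$.

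One point needs care: the statement insists $\alpha_{i_1}=\id_Q$ while also using $\alpha_{i_1}$ as a generator. With $p_0(\id_Q)=1$, the factor $\Z_1$ is harmless. So I would index the generators as $\alpha_{i_2},\dots,\alpha_{i_{s}}$ and let the $j=1$ factor be trivial with $n_1=1$. Alternatively, I would read the statement with this convention.

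Then the map $(k_1,\dots,k_s)\mapsto[\alpha_{i_1}^{k_1}\cdots\alpha_{i_s}^{k_s}]$ is a bijection from $\prod_j\{0,\dots,n_j-1\}$ onto $G$. So for each tuple there is a unique $t$ with
\[
\alpha_{i_1}^{k_1}\cdots\alpha_{i_s}^{k_s}=\Ad_{u_{(k)}}\circ\alpha_{i_t}
\]
for some unitary $u_{(k)}\in Q$. Exactly as in the proof of \Cref{diagonal-easy}, I would take a permutation matrix $P$ sending the index $t$ to the multi-index $(k_1,\dots,k_s)$ under the identification $\C^r\cong\C^{n_1}\otimes\cdots\otimes\C^{n_s}$. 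I would take $D$ to be the diagonal unitary with entries $u_{(k)}$. Then $V_0:=DP\in\mathcal U_{gp}(r,Q)$ conjugates $\diag(\alpha_{i_1}(x),\dots,\alpha_{i_r}(x))$ to the tensor-indexed diagonal. Tensoring with $I_k$ and composing with $W$, which uses the group property of $\mathcal U_{gp}$, gives $V$ with the required identity after inverting.

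The main obstacle is bookkeeping rather than substance. It lies in the identification of $M_r(Q)$ with $Q\bar\otimes M_{n_1}(\C)\bar\otimes\cdots\bar\otimes M_{n_s}(\C)$ and in the way the permutation matrix acts on the matrix-unit ordering. The other delicate point is reconciling $\alpha_{i_1}=\id_Q$ with its role as a generator, which I would resolve by the $n_1=1$ convention above.
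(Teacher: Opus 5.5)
Your proposal is correct and follows essentially the paper's route. You reduce via \Cref{simplification-sufficiency} to $\diag(\alpha_{i_1}(x),\dots,\alpha_{i_r}(x))\otimes I_k$, pick members of the family whose classes are the standard generators of $G\cong W(N\subset M)\cong\Z_{n_1}\times\cdots\times\Z_{n_s}$, and conjugate by the diagonal unitary built from the implementing unitaries $u_t$. Your explicit permutation matrix $P$, which matches $t$ to the lexicographic multi-index, and your $n_1=1$ convention, which reconciles $\alpha_{i_1}=\id_Q$ with its role as a generator, make precise two points the paper's proof leaves implicit.
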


\begin{proof}
Let $G=\{[\alpha_{j_t}] : 1\le t\le r,\ \alpha_{j_1}=\id_Q\} $ denote
the subgroup of $\Out(Q)$ associated to $N \subset M$ as in \Cref{main
  theorem 2}, where $r=|G|$. Then, as pointed out in
\Cref{simplification-sufficiency}, there exists a $W\in\mathcal
U_{\mathrm{gp}}(n,Q)$ such that
\[
N
=
\Ad_W\Bigl(
\Bigl\{
\diag\bigl(
\alpha_{j_1}(x),\ldots,\alpha_{j_r}(x)
\bigr)\otimes I_k
:\ x\in Q
\Bigr\}
\Bigr),
\]
where $k:=n/r$.

By \Cref{Weyl group}, $G \cong W(N\subset M)$, so $G \cong \Z_{n_1}
\times \cdots \times \Z_{n_s}$, for some positive integers $n_1, n_2,
\ldots, n_s$.  For every $1 \leq j \leq s$, let $[\alpha_{i_j}] \in G$
correspond to the (generating) element $(0, \ldots, 0, \bar{1}, 0,
\ldots, 0)$ of $\Z_{n_1} \times \cdots \times \Z_{n_s}$, with the
generator $\bar{1}$ of $\Z_{n_j}$ in the $j$-th position. Clearly,
$p_0(\alpha_{i_j}) = n_j$, $1 \leq j \leq s$. Also, $r = n_1n_2\cdots
n_s$ and, for every $1 \leq t \leq r$, there exists a unique $s$-tuple
$(k_1(t),\dots,k_s(t))$ of non-negative integers with $0\le k_j(t)\le
n_j-1$, $1 \leq j \leq s$ such that
\[ [\alpha_{j_t}] =
[\alpha_{i_1}^{k_1(t)}\alpha_{i_2}^{k_2(t)}\cdots\alpha_{i_s}^{k_s(t)}].
\]
In particular, there exists a family $\{u_t: 1 \leq t \leq r\}
\subseteq \mathcal U(Q)$ such that
\[
 \Ad_{u_t}\circ\alpha_{j_t}
=
\alpha_{i_1}^{k_1(t)}\alpha_{i_2}^{k_2(t)}\cdots\alpha_{i_s}^{k_s(t)}\]
for all $1 \leq t \leq r$. Let $D:=\diag(u_1,\dots,u_r)\otimes I_k
\in \Delta_n(Q) \subseteq \mathcal U_{\mathrm{gp}}(n,Q)$. Then, 
\begin{eqnarray*}
\lefteqn{\Ad_D\Bigl(
\diag\bigl(\alpha_{j_1}(x),\ldots,\alpha_{j_r}(x)\bigr)
\otimes I_k
\Bigr)}\\&=&
\diag\bigl(
u_{1}\alpha_{j_1}(x)u_1^*,\ldots,
u_r\alpha_{j_r}(x)u_r^*
\bigr)\otimes I_k \nonumber \\ & = &
\diag\Bigl(\alpha_{i_1}^{k_1(1)}\alpha_{i_2}^{k_2(1)}\cdots\alpha_{i_s}^{k_s(1)} (x), \dots, \alpha_{i_1}^{k_1(r)}\alpha_{i_2}^{k_2(r)} \cdots\alpha_{i_s}^{k_s(r)} (x)
\Bigr)\otimes I_k 
\\&=&\left(
\sum_{k_1=0}^{n_1-1}\cdots\sum_{k_s=0}^{n_s-1}
\alpha_{i_1}^{k_1}\cdots\alpha_{i_s}^{k_s}(x)
\otimes
E^{(n_1)}_{k_1+1,k_1+1}\otimes\cdots\otimes
E^{(n_s)}_{k_s+1,k_s+1}\right)
\otimes I_k
\end{eqnarray*} for all $x \in Q$. Hence, taking
 $V=WD^*\in\mathcal U_{\mathrm{gp}}(n,Q)$, we conclude that 
\[
N
=
\Ad_V\left(
\Big\{
\Big(\sum_{k_1=0}^{n_1-1}\cdots\sum_{k_s=0}^{n_s-1}
\alpha_{i_1}^{k_1}\cdots\alpha_{i_s}^{k_s}(x)
\otimes
E^{(n_1)}_{k_1+1,k_1+1}\otimes\cdots\otimes
E^{(n_s)}_{k_s+1,k_s+1}\Big)
\otimes I_k
:\ x\in Q
\Big\}
\right),
\]
thereby completing the proof.\end{proof}

We list some particular cases of \Cref{abelian-regular-diagonal}.

\begin{remark}  \label{cyclic and product}
Let $Q$ be a $II_1$-factor, $n\in \N$, $M:= M_n(Q)$ and $N \subset M$
be a  regular diagonal subfactor associated to some automorphisms $Q$.
\begin{enumerate}
\item If $W(N \subset M)$ is the trivial group, then $N \subset M$ is
  conjugate (via a generalized permutation unitary) to the trivial
  subfactor $Q \ot I_n \subset M$.
\item If $W(N \subset M) \cong \mathbb{Z}_r$ (the cyclic group of
  order $r$), then there exists an $\alpha \in \Aut(Q)$ with
  $p_0(\alpha)=r$ such that $N \subset M$ is conjugate (via a
  generalized permutation unitary) to the regular diagonal subfactor
\[
N_0:= \Bigl\{
\diag\bigl(x,\alpha(x),\alpha^2(x),\dots,\alpha^{r-1}(x)\bigr)\otimes I_k
:\ x\in Q
\Bigr\}
\subset M,\]
where $k=n/r$.
\item
If $W(N\subset M)\cong \mathbb{Z}_r \times \Z_s$, then there
exist outer automorphisms $\alpha, \beta \in \Aut(Q)$ with outer
periods $p_0(\alpha)=r$ and $p_0(\beta)=s$ such that $N\subset M$ is
conjugate (via a generalized permutation unitary) to 
regular diagonal subfactor
\[
N_0:=
\Big\{
\Big(\sum_{i=0}^{r-1}\sum_{j=0}^{s-1}
\alpha^{i}\beta^{j}(x)
\otimes
E^{(r)}_{i+1,i+1}
\otimes
E^{(s)}_{j+1,j+1}\Big)
\otimes I_k
:\ x\in Q
\Big\} \subset M,
\]
where $k=n/(rs)$ and  $[\alpha \beta] = [\beta \alpha]$ in $\Out(Q)$.
 
\end{enumerate}
\end{remark}
\color{black}
\begin{remark}
Recall from
 \Cref{simplification-sufficiency} that any regular diagonal subfactor,
 via automorphisms of a $II_1$-factor $Q$, is conjugate to a regular diagonal subfactor of the
 form
\begin{equation}\label{standard form}
N=
\Bigl\{
\diag\bigl(x,\alpha_1(x),\dots,\alpha_{r-1}(x)\bigr)\otimes I_k
:\ x\in Q
\Bigr\}
\subset M_{n}(Q),
\end{equation}
for a family $\{\alpha_i : 0 \leq i \leq r-1, \alpha_0=\id_Q\}$ of
mutually inequivalent automorphisms of $Q$. Moreover, $G:=\{[\id_Q],
[\alpha_i] : i=1,2, \dots, r-1\}$ is subgroup of $\Out(Q)$ and is
isomorphic to $W(N \subset M)$.

  If  $n=p^2$ for some prime $p$, then the only  possibilities for the pair $(k,r)$ are the following:
\[
(k,r)\in\{(p^{2},1),(p,p),(1,p^{2})\}.
\]
We consider these three possibilities separately.

\smallskip

\noindent\textbf{Case (1.1) :} $(k,r)=(p^{2},1)$.

Since $r=1$, $W(N \subset M)$ is trivial. Hence, by \Cref{cyclic and
  product},  $N \subset M $ is conjugate to the trivial
subfactor $Q \otimes I_n \subset M=M_n(Q)$.
\smallskip

\noindent\textbf{Case (1.2):} $(k,r)=(p,p)$.

In this case, $W (N \subset M) \cong \bbz_p$. Thus, by \Cref{cyclic and product} again,  $N \subset M $ is conjugate to 
\[
N_0=\Bigl\{
\diag\bigl(x,\alpha(x),\alpha^{2}(x),\dots,\alpha^{p-1}(x)\bigr)\otimes I_p
:\ x\in Q
\Bigr\}
\subset M_{p^{2}}(Q).
\]
for some outer automorphism $\alpha\in\Aut(Q)$ with
outer period $p_0(\alpha)=p$ .
\smallskip

\noindent\textbf{Case (1.3):} $(k,r)=(1,p^{2})$.  

We have $|W(N \subset M)|=p^{2}$. Thus, there are two possibilities, namely,
either $W(N \subset M) \cong \mathbb{Z}_{n}$ (as $n=p^2$) or
$ W(N \subset M)  \cong \mathbb{Z}_{p} \times \Z_{p}$.

When $W (N \subset M) \cong \mathbb{Z}_{n}$, then by \Cref{cyclic and
  product} again,  $N \subset M$ is conjugate to
\begin{equation*}
N_0:=
\Bigl\{
\diag\bigl(x,\alpha(x),\dots,\alpha^{p^{2}-1}(x)\bigr)
:\ x\in Q
\Bigr\}
\subset M_{p^{2}}(Q).
\end{equation*}
for some outer automorphism $\alpha \in \Aut(Q)$ with $p_0(\alpha)=p^2=n$.

And, when  $W(N \subset M) \cong \mathbb{Z}_p \times \mathbb{Z}_p$,  then by \Cref{cyclic and product} again,   $N\subset M$ is conjugate to 
\[
N_0
:=
\Bigl\{
\sum_{i,j=0}^{p-1}
\alpha^{i}\beta^{j}(x)
\otimes
E_{i+1,i+1}
\otimes
E_{j+1,j+1}
:\ x\in Q
\Bigr\} \subset M_n(Q),
\]
where $\alpha, \beta \in \Aut(Q)$ are two outer automorphisms with
$p_0(\alpha)=p=p_0(\beta)$.
\end{remark}

Thus, for a fixed $II_1$-factor $Q$, upto unitary conjugation by a
generalized permutation unitary in $M_n(Q)$, the above discussion
yields the following exhaustive list  of all possible regular diagonal
subfactors $N \subset M_n(Q)$, when $n$ is either a prime or equals $6$.
\color{black}

\smallskip

\smallskip

\par\noindent
\begin{tikzpicture}

\pgfmathsetlengthmacro{\TableW}{\linewidth-0.4pt}
\def\LeftW{1.70cm}

\def\HeaderH{1.15cm}
\def\RowA{3.15cm}
\def\RowB{1.55cm}
\def\RowC{3.05cm}

\pgfmathsetlengthmacro{\RightW}{\TableW-\LeftW}
\pgfmathsetlengthmacro{\ContentW}{\RightW-0.30cm}
\pgfmathsetlengthmacro{\TableH}{\HeaderH+\RowA+\RowB+\RowC}

\pgfmathsetlengthmacro{\CenterA}
{\TableH-\HeaderH-0.5*\RowA}

\pgfmathsetlengthmacro{\CenterB}
{\TableH-\HeaderH-\RowA-0.5*\RowB}

\pgfmathsetlengthmacro{\CenterC}
{\TableH-\HeaderH-\RowA-\RowB-0.5*\RowC}

\draw[line width=0.4pt]
(0,0) rectangle (\TableW,\TableH);

\draw[line width=0.4pt]
(\LeftW,0) -- (\LeftW,\TableH);

\draw[line width=0.4pt]
(0,\TableH-\HeaderH)
--
(\TableW,\TableH-\HeaderH);

\draw[line width=0.4pt]
(0,\TableH-\HeaderH-\RowA)
--
(\TableW,\TableH-\HeaderH-\RowA);

\draw[line width=0.4pt]
(0,\TableH-\HeaderH-\RowA-\RowB)
--
(\TableW,\TableH-\HeaderH-\RowA-\RowB);

\node[
anchor=west,
inner sep=0pt
]
at (0.12cm,\TableH-0.58cm)
{\text{Size}};

\node[
anchor=west,
align=left,
inner sep=0pt,
text width=\ContentW
]
at (\LeftW+0.12cm,\TableH-0.58cm)
{Possible regular diagonal subfactors \textbf{$N\subset M_n(Q)$}
where $\alpha,\beta,\alpha_i\in\Aut(Q)$};

\node[
anchor=west,
inner sep=0pt
]
at (0.12cm,\CenterA)
{$n=6$};

\node[
anchor=west,
inner sep=0pt
]
at (0.12cm,\CenterB)
{$n=p$};

\node[
anchor=west,
inner sep=0pt
]
at (0.12cm,\CenterC)
{$n=p^2$};

\node[
anchor=north west,
align=left,
inner sep=0pt,
text width=\ContentW
]
at (\LeftW+0.12cm,\TableH-\HeaderH-0.10cm)
{%
\begin{enumerate}[
leftmargin=*,
itemsep=1.2pt,
topsep=0pt,
parsep=0pt,
partopsep=0pt
]
\item
$N=\{\diag(x,x,\dots,x):x\in Q\}$.

\item
$N=\{\diag(x,\alpha(x))\otimes I_3:x\in Q\}$,
with $p_0(\alpha)=3$.

\item
$N=\{\diag(x,\alpha(x),\alpha^2(x))\otimes I_2:x\in Q\}$,
with $p_0(\alpha)=2$.

\item
$N=\{\diag(x,\alpha(x),\alpha^2(x),\dots,\alpha^5(x)):x\in Q\}$,
with $p_0(\alpha)=6$.

\item
$N=\{\diag(x,\alpha_1(x),\dots,\alpha_5(x)):x\in Q\}$, $\{[\id],[\alpha_i]:i=1,2,\dots,5\}\cong S_3$.
\end{enumerate}
};
\node[
anchor=north west,
align=left,
inner sep=0pt,
text width=\ContentW
]
at (\LeftW+0.12cm,\TableH-\HeaderH-\RowA-0.10cm)
{%
\begin{enumerate}[
leftmargin=*,
itemsep=1.2pt,
topsep=0pt,
parsep=0pt,
partopsep=0pt
]
\item
$N=\{\diag(x,x,\dots,x):x\in Q\}$.

\item
$N=\{\diag(x,\alpha(x),\dots,\alpha^{p-1}(x)):x\in Q\}$,
with $p_0(\alpha)=p$.
\end{enumerate}
};

\node[
anchor=north west,
align=left,
inner sep=0pt,
text width=\ContentW
]
at (\LeftW+0.12cm,\TableH-\HeaderH-\RowA-\RowB-0.10cm)
{%
\begin{enumerate}[
leftmargin=*,
itemsep=1.2pt,
topsep=0pt,
parsep=0pt,
partopsep=0pt
]
\item
$N=\{\diag(x,x,\dots,x):x\in Q\}$.

\item
$N=\{\diag(x,\alpha(x),\dots,\alpha^{p-1}(x))
\otimes I_p:x\in Q\}$,
with $p_0(\alpha)=p$.

\item
$N=\{\diag(x,\alpha(x),\dots,\alpha^{p^2-1}(x)):x\in Q\}$,
with $p_0(\alpha)=p^2$.

\item
$N=
\left\{
\sum_{i,j=0}^{p-1}
\alpha^i\beta^j(x)\otimes
E_{i+1,i+1}\otimes E_{j+1,j+1}
:x\in Q
\right\}$, with $p_0(\alpha)=p=p_0(\beta)$ and $[\alpha\beta]=[\beta\alpha]$ in $\Out(Q)$.
\end{enumerate}
};

\end{tikzpicture}
\par

\section{Principal and dual graphs of regular diagonal subfactor}\label{graphs-of-regular-subfactors}\label{sec-7}
In this section, we shall demonstrate that (the analytic property of)
regularity in diagonal subfactors can be characterized by certain
graph theoretic properties of its principal graph. More precisely, we
shall show that a diagonal subfactor is regular if and only if its
principal graph is a {\em complete regular balanced bipartite
  multigraph} in which the cardinality of the even (equivalently, odd)
vertices equals the cardinality of the generalized Weyl group of the
subfactor.

Throughout this section, $Q$ will denote a fixed $II_1$-factor with a
fixed finite family $\mathcal{F}:=\{\alpha_i: 1 \leq i \leq n, \alpha_1 = \id_Q\}
\subset \Aut(Q)$ and  \[
N:=\{\diag(\alpha_1(x), \alpha_2(x), \ldots,
\alpha_n(x)): x \in Q\} \subset M_n(Q)=:M
\]
will denote the associated diagonal subfactor. Further,
$\widehat{\mathcal{F}}:=\{\alpha_{i_1},\ldots, \alpha_{i_r}\}$ will
denote a fixed maximal family of pairwise inequivalent automorphisms
in $\mathcal{F}$ (with $\alpha_{i_1}= \id_Q$); and, for every $1 \leq
k \leq r$, $\mathcal B_k:=\{\,j \in\{1,\dots,n\}:\alpha_j\sim
\alpha_{i_k}\,\}$ and $m_k:=|\mathcal B_k|$.

\begin{theorem}\label{principal graph theorem}
 Let $N \subset M$ be regular.  Then, $r =|W(N \subset M)|$ and the principal graph of $N
 \subset M$ is the {complete regular balanced bipartite multigraph}
\[
\begin{tikzpicture}[baseline=(current bounding box.center),x=1.75cm,y=1.05cm,scale=0.9]
    \tikzset{vtx/.style={circle,draw,fill=white,inner sep=1.1pt}}

    \node[vtx,label=left:{\scriptsize $*$}] (b1) at (0,1.2) {};
    \node[vtx,label=left:{\scriptsize $2$}] (b2) at (0,0.4) {};
    \node at (0,-0.15) {\scriptsize $\vdots$};
    \node[vtx,label=left:{\scriptsize $r$}] (br) at (0,-0.9) {};

    \node[vtx,label=right:{\scriptsize $1$}] (c1) at (2.4,1.2) {};
    \node[vtx,label=right:{\scriptsize $2$}] (c2) at (2.4,0.4) {};
    \node at (2.4,-0.15) {\scriptsize $\vdots$};
    \node[vtx,label=right:{\scriptsize $r$}] (cr) at (2.4,-0.9) {};

    \draw (b1) -- node[midway,above] {\scriptsize $k$} (c1);
    \draw (b1) -- node[midway,right] {\scriptsize $k$} (c2);
    \draw (b1) -- node[midway,right] {\scriptsize $k$} (cr);

    \draw (b2) -- node[midway,left] {\scriptsize $k$} (c1);
    \draw (b2) -- node[midway,above] {\scriptsize $k$} (c2);
    \draw (b2) -- node[midway,right] {\scriptsize $k$} (cr);

    \draw (br) -- node[midway,left] {\scriptsize $k$} (c1);
    \draw (br) -- node[midway,left] {\scriptsize $k$} (c2);
    \draw (br) -- node[midway,below] {\scriptsize $k$} (cr);
\end{tikzpicture},
\]
where  $k$ is the number of automorphisms in
$\mF$ equivalent to $\id_Q$. (An edge labelled $k$ denotes $k$-many
edges between the same pair of vertices.)
\end{theorem}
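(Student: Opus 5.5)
Since $N\subset M$ is regular, \Cref{main theorem 2} gives $m_1=\cdots=m_r=:k$ and tells us that $G=\{[\alpha_{i_s}]\}$ is a subgroup of $\Out(Q)$ of order $r$. By \Cref{Weyl group}, $G\cong W(N\subset M)$, so $r=|W(N\subset M)|$. Moreover $k=m_1$ is exactly the number of automorphisms in $\mF$ equivalent to $\alpha_{i_1}=\id_Q$.

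It remains to compute the principal graph, i.e.\ the Bratteli diagram of $\C\subset N'\cap M\subset N'\cap M_1\subset\cdots$ restricted to the new vertices. Regularity gives depth at most $2$ by \Cref{regular-depth-2}, so the principal graph is the bipartite graph of the single inclusion $N'\cap M\subset N'\cap M_1$ (for $r=1$ the depth is $1$ and the graph is one edge of multiplicity $k$, consistent with the picture).

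\textbf{Even vertices.} By \Cref{last lemma}, $N'\cap M\cong\bigoplus_{s=1}^r M_k(\C)$, giving $r$ vertices. The vertex $*$ corresponds to the block of $\id_Q$, and there is a single $k$-fold edge from $\C$.

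\textbf{Odd vertices.} By \Cref{diagonal basic construction}, $N\subset M_1$ is diagonal with automorphisms $\gamma(j_0,j_1)=\alpha_{j_1}^{-1}\alpha_{j_0}$. Their classes form $G^{-1}G=G$, and each class $g\in G$ occurs exactly $|\{(j_0,j_1):[\alpha_{j_1}]^{-1}[\alpha_{j_0}]=g\}|=r\cdot k\cdot k=rk^2$ times. Hence \Cref{last lemma} gives $N'\cap M_1\cong\bigoplus_{g\in G}M_{rk^2}(\C)$, with $r$ minimal central projections $Q_g$.

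\textbf{Edge multiplicities.} This is the main computation and the likely obstacle, since it requires tracking the embedding $N'\cap M\hookrightarrow N'\cap M_1$ through the identification $M\hookrightarrow M_1$, $A\mapsto\sum_{j_1}\alpha_{j_1}^{-1}{}^{(n)}(A)\otimes E_{j_1,j_1}$, as in \Cref{appendix-pg}. I would take a minimal projection $p$ of the $s$-th block of $N'\cap M$, say $p=E_{a,a}$ with $a\in\mathcal B_s$, after the conjugation in \Cref{last lemma}. Its image in $N'\cap M_1$ is the diagonal projection onto the indices $(j_0,j_1)$ with $j_0=a$. Its trace in block $Q_g$ is then governed by the number of indices $j_1$ with $[\alpha_{j_1}]^{-1}[\alpha_{i_s}]=g$, which is $k$ for every $g$, because $g\mapsto [\alpha_{i_s}]g^{-1}$ is a bijection of $G$ and each class has $k$ representatives. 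The multiplicity of the edge from vertex $s$ to vertex $g$ is the rank of $pQ_g$ in the minimal projections of $Q_g(N'\cap M_1)$, which is therefore $k$.

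As a consistency check, I would verify dimensions: $\sum_s k\cdot k = rk^2$ matches the block size of each $Q_g$. I would also check the Perron–Frobenius eigenvalue: the inclusion matrix is $kJ_r$, whose norm squared is $k^2r^2=n^2=[M:N]$, in agreement with \Cref{equivalent depth}.

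This yields the complete, regular, balanced bipartite multigraph on $r+r$ vertices with every edge of multiplicity $k$, as claimed.
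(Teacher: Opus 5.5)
Your proof is correct and follows essentially the same route as the paper. You use \Cref{main theorem 2} and \Cref{Weyl group} to get $m_1=\cdots=m_r=k$, that $G$ is a group, and $r=|W(N\subset M)|$; you then show the inclusion matrix of $N'\cap M\subset N'\cap M_1$ is $kJ_r$ by counting the indices $j$ with $[\alpha_j]^{-1}[\alpha_{i_s}]=g$. The paper reads this count off \Cref{relative commutant inclusions proposition}, whereas you rederive it directly as the rank of $pQ_g$, and you make explicit the depth-$\le 2$ input that the paper leaves implicit.

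One labelling slip: the blocks of $N'\cap M$ are the odd vertices, and $*$ is the block of $N'\cap M_1$ containing $e_1$ (the class of $\id_Q$ there). Since the graph is complete with uniform multiplicity $k$, this does not affect your conclusion.
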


\begin{proof}
 Since $N \subset M$ is regular,  $G :=\{[\alpha_{i_s}]:1\leq s\leq r\}$ is a
subgroup of $\Out(Q)$,  by \Cref{characterized}. Moreover, $G \cong W(N\subset M)$, by \Cref{Weyl
   group}; so, $r = |W(N \subset M)|$. 

 We just need to show that  the inclusion matrix of the
 inclusion $N^{'}\cap M \subset N^{'}\cap M_1$ is given by the matrix \(
 k J_r\), where $J_r$ is the $r\times r$ matrix
 whose every entry is $1$.

 In view of \Cref{diagonal-easy},  we can assume that 
\begin{eqnarray*}
N
&=&
\Big\{
\diag\big(
\alpha_{i_1}(x)\otimes I_{k},
\alpha_{i_2}(x)\otimes I_{k},
\dots,
\alpha_{i_r}(x)\otimes I_{k}
\big)
:\ x\in Q
\Big\}  \nonumber \\
&=& \Big\{
\sum_{t=1}^k \sum_{s=1}^r \alpha_{i_s}(x) \otimes E_{k(s-1)+t, k(s-1)+t} : x \in Q
\Big\}.
\end{eqnarray*}
Taking 
\(
\beta_{k(s-1)+ l}=\alpha_{i_s}\), for $
 1 \le s\le r,  1\le l\le k$, we see that $N \subset M$ takes the simplified form
\[
N=
\left\{
\sum_{j=1}^{n}\beta_j(x)\otimes E_{j,j}
: x\in Q
\right\}
\subset M.
\]
Notice that \begin{eqnarray}\label{prod beta alpha}
\beta_{k(s-1)+l}^{-1}\beta_{k(t-1)+m}
&=&
\alpha_{i_s}^{-1}\alpha_{i_t},
\qquad
1\le s,t \le r,\quad 1\le l, m \le k .
\end{eqnarray}
Also, $\widehat{\Gamma}_0:=\{\beta_{k (s-1)+1} : 1 \leq s \leq r\}
= \{\alpha_{i_s}: 1 \leq s \leq r\} \subset \Aut(Q)$ is a maximal
family of inequivalent automorphisms in $\Gamma_{0}:=\{\beta_j : 1
\leq j \leq n\} \subset \Aut(Q)$.

Consider the basic construction $N \subset M \subset M_1:=M_{n^2}(Q)$ as in
\Cref{diagonal basic construction}. So, $N \subset M_1$ is also a
diagonal subfactor given by
\[
N=
\left\{
\sum_{p,q=1}^{n}
\beta_q^{-1}\beta_p(x)\otimes E_{p,p}\otimes E_{q,q}
: x\in Q
\right\} \subset M_1.
\]
Since $G$ is a subgroup of $\Out(Q)$, $[\beta_q^{-1}\beta_p] \in G$
for all $1 \leq p,q \leq n$. Thus, since $\beta_1 = \id_Q$, $\widehat
{\Gamma}_{1}:=\{\beta_{k(j-1)+1} : 1 \leq j \leq r\} = \{\alpha_{i_s}:
1 \leq s \leq r\}$ is a maximal family of inequivalent automorphisms
in $\Gamma_{1}: =\{\beta_q^{-1}\beta_p : 1\leq p,q \leq n \}$.

Let $\Lambda=[\lambda_{ij}]$ denote the inclusion matrix of the
inclusion $N'\cap M \subset N'\cap M_1$. Since
$|\widehat{\Gamma}_0|=|\widehat{\Gamma}_1|=r$, it follows from
\Cref{last lemma} that $\Lambda$ is an $r\times r$ matrix. Further,
for $1 \leq s, t \leq r$, it follows from \Cref{relative commutant
  inclusions proposition} (and \eqref{prod beta alpha}) that
\begin{eqnarray}
\lambda_{s,t}
&=&
\left|
\left\{
l\in\{1,2,\dots,n\} :
 \beta_l^{-1}\beta_{k(s-1)+1} \sim \beta_{k(t-1)+1}
\right\}
\right| \nonumber \\
&=&
\left|
\left\{
l\in\{1,2,\dots,n\} :
 \beta_l^{-1}\alpha_{i_s} \sim \alpha_{i_t}
\right\}
\right|.
\end{eqnarray}
And, since $G$ is a group,  there exists a unique $1 \leq q \leq r$ such that
$\alpha_{i_q}^{-1}\alpha_{i_s}\sim \alpha_{i_t}$. Thus,
\[\lambda_{s,t}
=   \left|
\left\{
l\in\{1,2,\dots,n\} :
 \beta_l \sim \alpha_{i_q}
\right\}
\right| =
k.\]
Hence, $\Lambda= k J_r$. This completes the proof. 
\end{proof}
\begin{figure}[h]
\[
\Gamma_{N\subset M}=
\begin{tikzpicture}[baseline=(current bounding box.center),x=1.75cm,y=1.05cm,scale=0.9]
    \tikzset{vtx/.style={circle,draw,fill=white,inner sep=1.1pt}}

    \node[vtx,label=left:{\scriptsize $*$}] (s) at (0,0) {};

    \node[vtx,label=left:{\scriptsize $1$}] (a1) at (1.6,1.2) {};
    \node[vtx,label=left:{\scriptsize $2$}] (a2) at (1.6,0.4) {};
    \node at (1.6,-0.15) {\scriptsize $\vdots$};
    \node[vtx,label=left:{\scriptsize $r$}] (ar) at (1.6,-0.9) {};

    \node[vtx,label=above:{\scriptsize $1$}] (b1) at (3.6,1.2) {};
    \node[vtx,label=above:{\scriptsize $2$}] (b2) at (3.6,0.4) {};
    \node at (3.6,-0.15) {\scriptsize $\vdots$};
    \node[vtx,label=above:{\scriptsize $r$}] (br) at (3.6,-0.9) {};

    \node[vtx,label=right:{\scriptsize $1$}] (c1) at (5.6,1.2) {};
    \node[vtx,label=right:{\scriptsize $2$}] (c2) at (5.6,0.4) {};
    \node at (5.6,-0.15) {\scriptsize $\vdots$};
    \node[vtx,label=right:{\scriptsize $r$}] (cr) at (5.6,-0.9) {};

    \node at (4.0,1.2) {\scriptsize $\cdots$};
    \node at (4.0,0.4) {\scriptsize $\cdots$};
    \node at (4.0,-0.9) {\scriptsize $\cdots$};

    \node at (6.0,1.2) {\scriptsize $\cdots$};
    \node at (6.0,0.4) {\scriptsize $\cdots$};
    \node at (6.0,-0.9) {\scriptsize $\cdots$};

    \draw (s) -- node[midway,above left=-1pt] {\scriptsize $k$} (a1);
    \draw (s) -- node[midway,left] {\scriptsize $k$} (a2);
    \draw (s) -- node[midway,below left=-1pt] {\scriptsize $k$} (ar);

    \draw (a1) -- node[midway,above] {\scriptsize $k$} (b1);
    \draw (a1) -- node[midway,right] {\scriptsize $k$} (b2);
    \draw (a1) -- node[midway,right] {\scriptsize $k$} (br);

    \draw (a2) -- node[midway,left] {\scriptsize $k$} (b1);
    \draw (a2) -- node[midway,above] {\scriptsize $k$} (b2);
    \draw (a2) -- node[midway,right] {\scriptsize $k$} (br);

    \draw (ar) -- node[midway,left] {\scriptsize $k$} (b1);
    \draw (ar) -- node[midway,left] {\scriptsize $k$} (b2);
    \draw (ar) -- node[midway,below] {\scriptsize $k$} (br);

    \draw (b1) -- node[midway,above] {\scriptsize $k$} (c1);
    \draw (b1) -- node[midway,right] {\scriptsize $k$} (c2);
    \draw (b1) -- node[midway,right] {\scriptsize $k$} (cr);

    \draw (b2) -- node[midway,left] {\scriptsize $k$} (c1);
    \draw (b2) -- node[midway,above] {\scriptsize $k$} (c2);
    \draw (b2) -- node[midway,right] {\scriptsize $k$} (cr);

    \draw (br) -- node[midway,left] {\scriptsize $k$} (c1);
    \draw (br) -- node[midway,left] {\scriptsize $k$} (c2);
    \draw (br) -- node[midway,below] {\scriptsize $k$} (cr);
\end{tikzpicture}
\]   \caption{Bratelli diagram for the standard invariant of a regular diagonal subfactor}
\label{Bratelli diagram of a regular diagonal subfactor}
\end{figure}

\color{black} Here is the converse of \Cref{principal graph theorem}.
\begin{theorem}\label{converse principal graph}
  If  the principal graph of $N \subset M$ is a {complete regular
    balanced bipartite multigraph}, then $N \subset M$ is regular.
\end{theorem}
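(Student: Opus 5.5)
The plan is to read off from the principal graph the two conditions of \Cref{main theorem 2}. We use \Cref{last lemma} and \Cref{relative commutant inclusions proposition} to describe the first level of the graph. After conjugating by a generalized permutation unitary (\Cref{diagonal-easy}) we may assume $N$ has the block form with multiplicities $m_1,\dots,m_r$. Then $N'\cap M\cong\bigoplus_{s=1}^r M_{m_s}(\C)$, so the principal graph has the vertex $*$ joined to $r$ odd vertices with edge multiplicities $m_1,\dots,m_r$. The first hypothesis is that the graph is complete, regular, balanced bipartite: a complete bipartite multigraph with equally many even and odd vertices and all edges carrying one common multiplicity $k$. From the edges at $*$ this forces $m_1=\cdots=m_r=k$, which is condition (1) of \Cref{main theorem 2}.

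For condition (2), completeness and balance give a second piece of information. The even vertices correspond to the minimal central projections of $N'\cap M_1$, and there are $n_1=|[\widehat\Gamma_1]|$ of them. The odd vertices correspond to those of $N'\cap M$, and there are $r=n_0$ of them. Balance therefore gives $n_1=n_0$. Condition (3) of \Cref{nk-lemma} with $k=2$ reads $n_0=n_1$, so \Cref{nk-lemma} yields that $[\widehat\Gamma_0]=G$ is a subgroup of $\Out(Q)$. Alternatively, the graph is finite and complete, so the tower stabilizes at depth $2$, and \Cref{depth two} (or \Cref{depth one diagonal} when $r=1$) gives the subgroup property directly. With both conditions in hand, \Cref{main theorem 2} shows that $N\subset M$ is regular.

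The main obstacle is the interpretation step: we must make precise which vertices of the principal graph are counted as \emph{even} and which as \emph{odd}, and confirm that the even vertices at distance two from $*$ are exactly the $n_1$ central summands of $N'\cap M_1$. This requires the inclusion-matrix description in \Cref{relative commutant inclusions proposition}. In particular we need that every minimal central projection of $N'\cap M_1$ is connected to some odd vertex, which holds because the Bratteli diagram has no isolated vertices. We also need that the vertex $*$ (the summand $\C$ of $N'\cap N$) is one of the even vertices counted in the balance. If the intended reading of ``balanced'' also counts $*$ among the even vertices, then completeness says $*$ is joined to every odd vertex with multiplicity $k$, which is exactly the multiplicity statement above. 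The counting then gives $n_1=r=n_0$ as required.
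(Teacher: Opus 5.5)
Your proposal is correct and is essentially the paper's argument. The edges at $*$ force $m_1=\cdots=m_r=k$, and balance gives $n_1=n_0=r$, provided $*$ is counted among the even vertices (so the even vertices are exactly the $n_1$ central summands of $N'\cap M_1$, namely those at distance at most two from $*$); from this $G$ is a subgroup, which the paper shows directly (since $\alpha_1=\id_Q$, the family $\{\alpha_{i_t}\}$ is already maximal in $\{\alpha_i^{-1}\alpha_j\}$, giving closure under inverses and products), exactly the $k=2$ case of (3)$\Rightarrow$(5) in \Cref{nk-lemma} that you invoke.
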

\begin{proof}
  In view of \Cref{characterized}, it suffices to show that
  \begin{enumerate}
\item $m_1=m_2=\cdots=m_{r}$; and that
\item the set $G:=\{[\alpha_{i_t}]:1\le t\le r\}$ is a subgroup of $\Out(Q)$.
\end{enumerate}

By hypothesis, the inclusion matrix of the inclusion $N'\cap M\subset
N'\cap M_1$ is the matrix $kJ_r$ for some $k, r \in \N$, where $J_r$ is as in \Cref{principal graph theorem}.

We first show that $G$ is a subgroup of $\Out(Q)$.

Consider the basic construction $N\subset M\subset
M_1$ as in \Cref{diagonal basic construction}. Then, the inclusion $N
\subset M_1:=M_{n^2}(Q)$ is also a diagonal subfactor, given by
\[
N = 
\left\{
\sum_{i,j=1}^{n}\alpha_j^{-1}\alpha_i(x)\otimes E_{i,i}\otimes E_{j,j}
:x\in Q
\right\}\subset M_1.
\]

Since the inclusion matrix of the inclusion $N'\cap M\subset N'\cap
M_1$ is of order $r$, $N' \cap M_1$ has exactly $r$ minimal central
projections. Hence, by \Cref{last lemma}, there exist exactly $r$
elements in any maximal family of pairwise inequivalent automorphisms
in $\mF_1:=\{\alpha_i^{-1}\alpha_j : 1\le i,j\le n\}$. Notice that
$\{\alpha_{i_t} : 1 \leq t \leq k \} \subseteq \{\alpha_i^{-1}\alpha_j
: i,j=1,2,\cdots,n\}$ (because $\alpha_1=\alpha_{i_1}=\id_{Q}$). So,
the collection \( \{\alpha_{i_1} =
\id_Q,\alpha_{i_2},\dots,\alpha_{i_r}\} \) gives a maximal family of
mutually inequivalent automorphisms in $\mF_1$. Thus,
$[\alpha_i]^{-1}[\alpha_j]\in G$ for all $1\le i,j\le n$. In
particular, $[\alpha_{i_s}]^{-1}\in G$ for all $1\le s \le r$, i.e.,
$G$ is closed under inversion. Further, since the classes
$[\alpha_{i_1}],[\alpha_{i_2}],\dots,[\alpha_{i_r}]$ are pairwise
distinct, the classes
\[
[\alpha_{i_1}]^{-1},[\alpha_{i_2}]^{-1},\dots,[\alpha_{i_r}]^{-1}
\]
are also pairwise distinct; so that $\{[\alpha_{i_t}]^{-1}:1\le t \le
r\}=G$. Thus, for any pair $[\alpha_{i_s}],[\alpha_{i_t}]\in G$, there
exists a $p \in\{1,2,\dots,r\}$ such that \(
[\alpha_{i_p}]^{-1}=[\alpha_{i_s}].  \) Consequently,
$[\alpha_{i_s}][\alpha_{i_t}]=[\alpha_{i_p}]^{-1}[\alpha_{i_t}]\in
G$. Hence, $G$ is a subgroup of
$\Out(Q)$. \smallskip

We now show that $m_1= m_2 = \cdots = m_r$.

Let $\Lambda^{(0)}$ denote the inclusion matrix of the inclusion \(
\bbc \subset N^{'}\cap M\). Then, from the principal graph of $N
\subset M$, we see that $\Lambda_0=[k,k,\dots,k]_{1 \times
  r}$. Further, we have $N^{'}\cap M \cong \oplus_{t=1}^r
M_{m_t}(\bbc)$, by \Cref{last lemma}.  So, it follows that that
$m_i=k$ for all $i=1,2,\cdots,r$, and we are done.
\end{proof}

\begin{corollary}\label{dual principal graph theorem}
If $N \subset M$ is regular, then the dual graph of $N \subset M$ is the
same as the principal graph of $N \subset M$, i.e., the principal graph of
$M \subset M_1$ is the bipartite graph
\[
\begin{tikzpicture}[baseline=(current bounding box.center),x=1.75cm,y=1.05cm,scale=0.9]
    \tikzset{vtx/.style={circle,draw,fill=white,inner sep=1.1pt}}

    \node[vtx,label=left:{\scriptsize $*$}] (b1) at (0,1.2) {};
    \node[vtx,label=left:{\scriptsize $2$}] (b2) at (0,0.4) {};
    \node at (0,-0.15) {\scriptsize $\vdots$};
    \node[vtx,label=left:{\scriptsize $r$}] (br) at (0,-0.9) {};

    \node[vtx,label=right:{\scriptsize $1$}] (c1) at (2.4,1.2) {};
    \node[vtx,label=right:{\scriptsize $2$}] (c2) at (2.4,0.4) {};
    \node at (2.4,-0.15) {\scriptsize $\vdots$};
    \node[vtx,label=right:{\scriptsize $r$}] (cr) at (2.4,-0.9) {};

    \draw (b1) -- node[midway,above] {\scriptsize $k$} (c1);
    \draw (b1) -- node[midway,right] {\scriptsize $k$} (c2);
    \draw (b1) -- node[midway,right] {\scriptsize $k$} (cr);

    \draw (b2) -- node[midway,left] {\scriptsize $k$} (c1);
    \draw (b2) -- node[midway,above] {\scriptsize $k$} (c2);
    \draw (b2) -- node[midway,right] {\scriptsize $k$} (cr);

    \draw (br) -- node[midway,left] {\scriptsize $k$} (c1);
    \draw (br) -- node[midway,left] {\scriptsize $k$} (c2);
    \draw (br) -- node[midway,below] {\scriptsize $k$} (cr);
\end{tikzpicture}.
\]
Conversely, if the dual graph of $N \subset M$ is as above, then $N
\subset M$ and $M \subset M_1$ are both regular.
\end{corollary}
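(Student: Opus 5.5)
The plan is to reduce the corollary to \Cref{principal graph theorem} and \Cref{converse principal graph}, applied to the dual subfactor $M \subset M_1$. By \Cref{diagonal basic construction}, $M \subset M_1 = M_{n^2}(Q)$ is again a diagonal subfactor. Concretely, $M = M_n(Q)$ sits in $M_1$ via the amplified automorphisms $\{(\alpha_j^{-1})^{(n)} : 1 \le j \le n\}$, up to the ordering conventions of the appendix; this is the $t=0$ case in the proof of \Cref{regularity between consecutives}. The dual graph of $N \subset M$ is by definition the principal graph of $M \subset M_1$. So everything follows once we know how regularity of $N \subset M$ transfers to $M \subset M_1$, and which data ($r$ and $k$) the two principal graphs carry.

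For the forward direction, assume $N \subset M$ is regular. By \Cref{regularity between consecutives}, $M \subset M_1$ is also regular. We then apply \Cref{principal graph theorem} to the diagonal subfactor $M \subset M_1$ defined by the family $\{(\alpha_j^{-1})^{(n)}\}$, which contains $\id$ since $\alpha_1 = \id_Q$. Its principal graph is the complete balanced bipartite multigraph with $|W(M \subset M_1)|$ vertices on each side and edge multiplicity equal to $|\{j : (\alpha_j^{-1})^{(n)} \sim \id\}|$.

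By \Cref{amplify lemma}, $(\alpha_j^{-1})^{(n)} \sim \id$ if and only if $\alpha_j \sim \id_Q$. Hence the multiplicity equals the same $k$ as for $N \subset M$. By the second part of \Cref{regularity between consecutives}, $W(M \subset M_1) \cong W(N \subset M)$, so the vertex count is $r$. Thus the dual graph coincides with the principal graph.

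For the converse, assume the dual graph has the displayed shape, so the principal graph of the diagonal subfactor $M \subset M_1$ is a complete regular balanced bipartite multigraph. \Cref{converse principal graph} applies verbatim to any diagonal subfactor whose first automorphism is the identity. It gives that $M \subset M_1$ is regular. Then \Cref{regularity between consecutives} (3)$\Rightarrow$(1) yields regularity of $N \subset M$.

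The only delicate point, and the one I expect to need care, is bookkeeping. One must check that $M \subset M_1$ really is presented as a diagonal subfactor in the normalized form required by \Cref{principal graph theorem} and \Cref{converse principal graph}: a family over the $\mathrm{II}_1$-factor $M_n(Q)$ whose first member is the identity. If the appendix's ordering puts the identity elsewhere, a permutation conjugation (\Cref{diagonal-easy}, \Cref{unitary conjugate}) fixes this without changing the principal graph or regularity. One must also check that the multiplicity $k$ is preserved under inversion and amplification, which is exactly \Cref{amplify lemma} together with $\alpha_j^{-1} \sim \id_Q \iff \alpha_j \sim \id_Q$.
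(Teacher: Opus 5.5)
Your proposal is correct and follows the paper's route. You transfer regularity between $N\subset M$ and $M\subset M_1$ via \Cref{regularity between consecutives}, then apply \Cref{principal graph theorem} (forward direction) and \Cref{converse principal graph} (converse) to the diagonal subfactor $M\subset M_1$. The one small difference is how you match the edge multiplicity $k$: you count the $j$ with $(\alpha_j^{-1})^{(n)}\sim\mathrm{id}$ directly using \Cref{amplify lemma}, whereas the paper gets it from $|W(M\subset M_1)|=|W(N\subset M)|=r$ together with $[M_1:M]=[M:N]=(rk)^2$.
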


\begin{proof}
Since $N \subset M$ is a regular diagonal subfactor, $M \subset M_1$
is also a regular diagonal subfactor - see \Cref{regularity between
  consecutives} and \Cref{diagonal basic construction}.  Moreover,
$W(N \subset M) \cong W(M \subset M_1)$ by \Cref{ith Weyl group}, and
$[M_1:M]=[M:N]$. Thus, by \Cref{principal graph theorem} the principal graph of
$M \subset M_1$ is as in the statement.

Conversely, let the principal graph of the diagonal subfactor
$M \subset M_1$ be the complete bipartite graph as above. Then,
by applying \Cref{converse principal graph} to the diagonal subfactor
$M \subset M_1$, we conclude that $M \subset M_1$ is regular. Therefore,
by \Cref{regularity between consecutives}, $N \subset M$ is also regular.
\end{proof}

Recall that in \Cref{regularity between consecutives}, we had seen that $N \subset M$
is regular if and only if $M_t \subset M_{t+1}$ is regular for all $t
\geq 0$. We can now deduce that $N \subset M$ is regular if and only
if $M_t\subset M_{t+s}$ is regular for every $t\geq -1$ and $s\geq 1$.

\begin{corollary}\label{regularity large class}
If $N\subset M$ is regular, then $M_t\subset M_{t+s}$ is regular for
every $t\geq -1$ and $s\geq 1$.
\end{corollary}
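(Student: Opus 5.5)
The plan is to reduce to the statement that every inclusion $M_t\subset M_{t+s}$ is itself a regular diagonal subfactor and then invoke \Cref{characterized}. By \Cref{diagonal basic construction} (as used in the proofs of \Cref{depth subgroup theorem} and \Cref{dual depth subgroup}), for every $t\geq -1$ and $s\geq 1$ the inclusion $M_t\subset M_{t+s}$ is a diagonal subfactor. Concretely, $M_t$ is identified with $M_{n^{t+1}}(Q)$, and $M_t\hookrightarrow M_{t+s}=M_{n^{t+s+1}}(Q)$ is given by the amplified automorphisms $\theta^{(n^{t+1})}$. Here $\theta$ ranges over the $n^s$ alternating words $\gamma(\ldots)$ of length $s$ in the $\alpha_j^{\pm1}$, and the starting sign is determined by the parity of $t$, as in the $s=1$ case treated in \Cref{regularity between consecutives}. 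This is exactly the computation behind \Cref{composition-diagonal}. Since the mapping $[\theta]\mapsto[\theta^{(m)}]$ is injective by \Cref{amplify lemma}, it suffices to check the two conditions of \Cref{characterized} for the multiset $\mathcal{F}_s$ of length-$s$ words in $\Aut(Q)$. After the normalization of \Cref{beta1=id}, this multiset contains $\id_Q$.

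Assume $N\subset M$ is regular. By \Cref{characterized}, $G=\{[\alpha_{i_1}],\dots,[\alpha_{i_r}]\}$ is a subgroup of $\Out(Q)$, and each class of $G$ is hit by exactly $k=n/r$ of the $\alpha_j$. The inverse map on $G$ is a bijection, so the same holds for the family $\{\alpha_j^{-1}\}$: every class of $G$ is hit exactly $k$ times.

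The key counting step is as follows. Consider the map $(j_1,\dots,j_s)\mapsto[\alpha_{j_s}^{\pm1}\cdots\alpha_{j_1}^{\mp1}]\in G$. It factors through the product map $G^s\to G$, $(g_1,\dots,g_s)\mapsto g_s^{\pm1}\cdots g_1^{\mp1}$, and each tuple $(g_1,\dots,g_s)$ has exactly $k^s$ preimages. Each fibre of the product map $G^s\to G$ has size exactly $|G|^{s-1}$: fix $g_1,\dots,g_{s-1}$ arbitrarily, and then $g_s$ is uniquely determined. Hence every element of $G$ occurs as the class of exactly $k^s r^{s-1}$ words in $\mathcal{F}_s$. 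So the classes of $\mathcal{F}_s$ form exactly the subgroup $G$, and all classes occur with the same multiplicity. (If the paper indexes words starting with a fixed first letter $\alpha_1=\id_Q$, as in $\widetilde{\Gamma}_s$, the same argument applies with one fewer free coordinate.) By \Cref{characterized} and \Cref{amplify lemma}, $M_t\subset M_{t+s}$ is regular.

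The main obstacle is bookkeeping rather than anything conceptual. One must pin down precisely which alternating words, and which amplification size, describe $M_t\subset M_{t+s}$ in \Cref{diagonal basic construction} for each parity of $t$, including the case $t=-1$. One must also confirm that the inclusion really is diagonal in the sense of \Cref{characterized}, that is, of the form $\{\diag(\theta_1(X),\dots,\theta_m(X))\}$ with $\theta_1=\id$ after a harmless conjugation. I would first do this by iterating \Cref{composition-diagonal}. The uniform-fibre counting argument is insensitive to which signs appear, so it covers both parities at once.
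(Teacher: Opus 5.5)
Your proof is correct, but it takes a different route from the paper's.

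\textbf{The paper's route.} The paper first splits into depth $1$ and depth $2$ using \Cref{regular-depth-2}. In the depth-$2$ case it combines \Cref{regularity between consecutives} and \Cref{depth two} to show that each $M_t\subset M_{t+1}$ has depth $2$. It then invokes Nikshych--Vainerman's Proposition 4.1 to get depth $2$ for $M_t\subset M_{t+s}$. Via \Cref{depth two regular}, this reduces regularity to homogeneity of $M_t'\cap M_{t+s}$. The paper reads off that homogeneity from the Bratteli diagram of \Cref{principal graph theorem} and \Cref{dual principal graph theorem}, together with Bisch's identification of $M_t'\cap M_{t+s}$ with $N'\cap M_{s-1}$ or $M'\cap M_s$. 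The outcome is $\bigoplus_{i=1}^r M_{k^sr^{s-1}}(\mathbb C)$.

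\textbf{Your route.} You verify both conditions of \Cref{characterized} directly for the diagonal subfactor $M_t\subset M_{t+s}$, obtained by composing the embeddings of \Cref{diagonal basic construction}. Your fibre count over $G^s\to G$ gives the common multiplicity $k^sr^{s-1}$, which is exactly the block size the paper obtains. Your observation that the classes of the words exhaust the group $G$ replaces the detour through depth-$2$ theory for the subgroup condition. Two small points. To transport the subgroup property through amplification you need that $[\theta]\mapsto[\theta^{(m)}]$ is a group homomorphism as well as injective; this is immediate from $(\theta\phi)^{(m)}=\theta^{(m)}\phi^{(m)}$ and $(\Ad_u)^{(m)}=\Ad_{u\otimes I_m}$. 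Your parenthetical about $\widetilde\Gamma_s$ is unnecessary, since that is just the case $t=0$ of your description.

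\textbf{What each approach buys.} Your argument is more elementary and uniform. It needs no case split and no external results, only \Cref{characterized}, \Cref{amplify lemma} and the explicit tower. It also yields more. The subgroup attached to $M_t\subset M_{t+s}$ is the amplified copy of $G$, so \Cref{Weyl group} gives $W(M_t\subset M_{t+s})\cong W(N\subset M)$ for all $t\geq -1$ and $s\geq 1$, whereas the paper records this only for $s=1$. The paper's argument, in exchange, places the result inside the standard-invariant picture of \Cref{sec-7}, namely depth $2$ plus homogeneous relative commutants.
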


\begin{proof}
  Since $N \subset M$ is regular, it has depth $\leq 2$, by \Cref{regular-depth-2}.

If $N \subset M$ has depth $1$, then $(N \subset M) \cong (N \subset N
\otimes M_n(\bbc))$ - see \Cref{depth one}.  Hence, $(M_t \subset
M_{t+s}) \cong (N \otimes M_{n^{t+1}}(\bbc) \subset N \otimes
M_{n^{t+s+1}} (\bbc))$ for every $t \geq -1$ and $s \geq 1$ - for
instance, by taking all $\alpha_i$'s as $\id_Q$ in \Cref{diagonal
  basic construction}. Therefore, $M_{t} \subset M_{t+s}$ is regular
for every $t \geq -1$ and $s \geq 1$, by \Cref{depth one}
again.\smallskip

Next, let $N \subset M$ have depth $2$. Since $N \subset M$ is regular, $M_t \subset M_{t+1}$ is regular and
$W(M_t \subset M_{t+1}) \cong W(N \subset M)$ for all $ t \geq 0$, by
\Cref{regularity between consecutives}.  So, $|W(M_t \subset
M_{t+1})|\geq 2$ for all $ t \geq -1$. Hence, $M_t \subset M_{t+1}$
(being a diagonal subfactor) has depth $2$ for all $t \geq 0$, by
\Cref{depth two}. Therefore, $M_t \subset M_{t+s}$ has depth $2$ for
all $t\geq -1$ and $s\geq 1$, by \cite[Propn. 4.1]{NV1}.  Since $M_t
\subset M_{t+s}$ is a non-trivial diagonal subfactor (see
\Cref{diagonal basic construction}), it therefore suffices to show, in
view of \Cref{depth two regular}, that $M_t'\cap M_{t+s}$ is
homogeneous.

We have $|W(N \subset M)|=|G| =r \geq 2$; and, by \Cref{dual principal
  graph theorem}, $N\subset M$ and $M \subset M_1$ have the same
principal graph. In particular, the Bratteli diagrams of the
standard invariants of $N \subset M$ and $M \subset M_1$ are the same
(as depicted in \Cref{Bratelli diagram of a regular diagonal
  subfactor}).  Thus, 
\[
N'\cap M_s \cong \bigoplus_{i=1}^{r} M_{k^{s+1}r^s}(\mathbb C) \cong
M'\cap M_{s+1} 
\text{ for all } s\geq 0.
\]
Further, for every $s \geq 1$, by \cite[Theorem~2.13]{Bis2}, we have
\[
M_t'\cap M_{t+s}
\cong \left\{\begin{array}{ll}
N'\cap M_{s-1}, &
\text{for all } t\in 2\mathbb N+1\text { ; and, } \\
 M'\cap M_s, &
 \text{for all } t\in 2 \mathbb N\cup\{0\}.
\end{array}
\right.
\]
Hence, $M_t'\cap M_{t+s} \cong \bigoplus_{i=1}^{r} M_{k^s
  r^{s-1}}(\mathbb C)$ for every $ t \geq -1$ and $s \geq 1$. That
completes the proof.
\end{proof}

\subsection{The generalized Weyl group is not a complete invariant}
\color{black} It is well-known that the Weyl group is a complete
invariant (upto conjugacy) for an irreducible regular subfactor of the
hyperfinite $II_1$-factor $R$. However, the situation is much more
complicated in the non-irreducible case. Below, we show that the
generalized Weyl group is not a complete invariant even for a regular
diagonal subfactor defined by automorphisms of the hyperfinite
$II_1$-factor $R$. First let us recall a few important definitions.

Let $\alpha\in \Aut(R)$. Recall from \cite{Connes} that if $\alpha^n
\in \Inn(R)$ for some $n \in \N$, then the outer period $p_0(\alpha)$
of $\alpha$ is the smallest positive integer such that
$\alpha^{p_0(\alpha)}\in \Inn(R)$. Equivalently,
\begin{equation}
\{n\in \mathbb Z:\alpha^n\in \Inn(R)\}=p_0(\alpha)\mathbb Z .
\end{equation}
Further, if $u\in R$ is a unitary such that
$\alpha^{p_0(\alpha)}=\Ad_u $, then there exists a unique
$\gamma(\alpha)\in \mathbb C$ such that $\alpha(u) =\gamma(\alpha)u
$. In particular, $\gamma(\alpha)^{p_0(\alpha)}=1$. The pair
$\bigl(p_0(\alpha),\gamma(\alpha)\bigr)$ is called the outer invariant
of $\alpha$. In \cite{Connes}, the outer conjugacy problem is
formulated as follows:

Two automorphisms $\alpha,\beta\in \Aut(R)$ are said to be outer conjugate if
there exist a $\sigma\in \Aut(R)$ and a unitary $w\in R$ such that
\[
\beta=\Ad_w\circ \sigma\alpha\sigma^{-1}.
\]
Equivalently, $[\sigma\alpha\sigma^{-1}]=[\beta]
\quad \text{in } \Out(R)$.

\begin{proposition}\label{spgamma}\cite[Proposition 1.6]{Connes}
For every
$n\in \mathbb N$ and  $\gamma\in \mathbb C$ with $\gamma^n=1$,
there exists an automorphism $s_n^\gamma\in \Aut(R)$ whose outer
invariant is $(n,\gamma)$.
\end{proposition}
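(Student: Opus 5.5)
The plan is to realize the pair $(n,\gamma)$ inside a crossed product. Given $\gamma$ with $\gamma^n=1$, I will build $\alpha$ together with a unitary $\lambda$ such that $\alpha^n=\Ad_\lambda$ and $\alpha(\lambda)=\gamma\lambda$, and then show that $n$ is exactly the outer period. The case $n=1$ is trivial (take $s_1^1=\id_R$), so assume $n\geq 2$.

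\textbf{Step 1: the ambient algebra.} Let $P\cong R$ and fix an outer action $\theta$ of $\Z_{n^2}$ on $P$; such actions exist on $R$, as already used for finite groups earlier in the paper. Write $\theta$ for the image of the generator, so that $\theta^{n^2}=\id_P$. Then $\beta:=\theta^n$ generates an outer action of $\Z_n$ on $P$. Put $R_0:=P\rtimes_\beta \Z_n$, with implementing unitary $\lambda$, so that $\lambda^n=1$ and $\lambda x\lambda^*=\beta(x)$ for $x\in P$. Since $\beta$ is outer, $R_0$ is a $\mathrm{II}_1$-factor. It is hyperfinite (a finite crossed product of $R$), so $R_0\cong R$.

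\textbf{Step 2: define $\alpha$ and verify the invariant.} Because $\theta$ commutes with $\beta=\theta^n$ and $\gamma^n=1$, the assignments $\alpha|_P=\theta$ and $\alpha(\lambda)=\gamma\lambda$ respect the defining relations $\lambda^n=1$ and $\lambda x\lambda^*=\beta(x)$. They also preserve the trace, since $\theta$ is trace preserving and the trace of $x\lambda^j$ vanishes for $j\neq 0$. Hence they extend, by the universal property (equivalently, by spatial implementation on $L^2(P)\otimes\ell^2(\Z_n)$), to an automorphism $\alpha$ of $R_0$. On $P$ we have $\alpha^n=\theta^n=\beta=\Ad_\lambda$, and $\alpha^n(\lambda)=\gamma^n\lambda=\lambda=\Ad_\lambda(\lambda)$. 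Therefore $\alpha^n=\Ad_\lambda$ with $u=\lambda$, and $\alpha(u)=\gamma u$. Once $p_0(\alpha)=n$ is known, this gives the outer invariant $(n,\gamma)$.

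\textbf{Step 3: the main obstacle, $p_0(\alpha)=n$.} I must show that $\alpha^k\notin\Inn(R_0)$ for $1\le k<n$. Suppose, for contradiction, that $\alpha^k=\Ad_w$. Expand $w=\sum_j w_j\lambda^j$ with $w_j\in P$. For $x\in P$, the relation $wx=\theta^k(x)w$ becomes, coefficient by coefficient,
\[
w_j\,\beta^j(x)=\theta^k(x)\,w_j \quad \text{for all } j.
\]
Each nonzero $w_j$ would then intertwine $\theta^{nj}$ and $\theta^k$ on the factor $P$. By polar decomposition and factoriality, $\theta^{k-nj}$ would be inner on $P$. Outerness of the $\Z_{n^2}$-action then forces $k\equiv nj \pmod{n^2}$, which is impossible for $1\le k<n$. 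So all $w_j=0$, contradicting $w\neq 0$. This is the step I expect to need the most care: the polar-decomposition/intertwiner argument, and the fact that only exponents modulo $n^2$ matter.

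\textbf{Step 4: transport to $R$.} Finally, transport $\alpha$ to $R$ along an isomorphism $R_0\cong R$ and call the result $s_n^\gamma$. The outer invariant is preserved under conjugation by isomorphisms, so $s_n^\gamma$ has outer invariant $(n,\gamma)$.
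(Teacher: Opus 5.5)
The paper gives no proof of this statement; it is quoted from Connes [Proposition 1.6] and used only as a black box in the example showing that $W(N\subset M)$ is not a complete invariant. So there is no internal argument to compare against, and your proposal supplies one. Your proof is correct.

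\begin{itemize}
\item With $\theta$ generating an outer $\Z_{n^2}$-action, $\beta=\theta^n$ is an outer $\Z_n$-action, so $R_0=P\rtimes_\beta\Z_n$ is a factor.
\item The automorphism $\alpha$ that equals $\theta$ on $P$ and sends $\lambda\mapsto\gamma\lambda$ satisfies $\alpha^n=\Ad_\lambda$ and $\alpha(\lambda)=\gamma\lambda$.
\item Your Fourier-coefficient argument in Step 3 is sound. A nonzero $w_j$ with $w_j\theta^{nj}(x)=\theta^k(x)w_j$ has $w_j^*w_j, w_jw_j^*\in\C$, so $\theta^{k-nj}$ is inner. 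This is excluded because $n\nmid k$.
\end{itemize}
Conceptually, your route is the group-extension mechanism $\Z_n\hookrightarrow\Z_{n^2}\twoheadrightarrow\Z_n$: the invariant $\gamma$ enters as a dual-action twist on the crossed product by the kernel. What you gain is an explicit model in which the origin of $\gamma$ is transparent, rather than a citation.

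Two places deserve a sentence more.

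(i) Instead of invoking a ``universal property'', define $\alpha$ as $\hat\beta_\chi\circ\tilde\theta$. Here $\tilde\theta$ is the canonical extension of $\theta$ fixing $\lambda$, which exists because $\theta\beta=\beta\theta$. The map $\hat\beta_\chi$ is the dual automorphism for the character $\chi(1)=\gamma$. Equivalently, set $\alpha(\sum_j x_j\lambda^j)=\sum_j\gamma^j\theta(x_j)\lambda^j$ directly. This is a $*$-automorphism because, for a finite group, every element of the crossed product is uniquely a finite sum $\sum_j x_j\lambda^j$, and $\theta\beta=\beta\theta$, $\gamma^n=1$.

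(ii) The identification $R_0\cong R$ uses that finite crossed products of $R$ are hyperfinite. You can make this elementary by taking $\theta$ of product type. For example, use the automorphism from the paper's cited proposition (from BGG2) built on $\bigotimes M_{n^2}(\C)$ from $u=\diag(1,\omega,\dots,\omega^{n^2-1})$ with $\omega=e^{2\pi i/n^2}$; it has $\theta^{n^2}=\id$ and outer period $n^2$. Then $\beta$ leaves each $A_k=M_{n^2}(\C)^{\otimes k}$ invariant, so $R_0$ is the closure of the increasing finite-dimensional algebras $A_k\rtimes_\beta\Z_n$. Murray--von Neumann uniqueness then gives $R_0\cong R$ without Connes' injectivity theorem.
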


\begin{proposition}\cite[Proposition 1.4]{Connes}\label{outer conjugate}
If $\alpha,\beta\in \Aut(R)$ are outer conjugate, then
\[
p_0(\alpha)=p_0(\beta)
\quad\text{and}\quad
\gamma(\alpha)=\gamma(\beta).
\]
In other words, outer conjugate automorphisms have the same outer invariants.
\end{proposition}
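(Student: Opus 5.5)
The plan is to verify directly that both components of the outer invariant survive the two operations making up outer conjugacy: conjugation by an automorphism $\sigma\in\Aut(R)$, and perturbation by an inner automorphism $\Ad_w$. Write $\alpha':=\sigma\alpha\sigma^{-1}$, so that $\beta=\Ad_w\circ\alpha'$.

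First I note that $\gamma(\cdot)$ is well defined. If $\alpha^{p}=\Ad_u=\Ad_{u'}$, then $u^*u'\in R'\cap R=\C1$, so $u'=\lambda u$ with $|\lambda|=1$. Hence $\alpha(u')=\lambda\alpha(u)$, and the scalar $\gamma$ does not depend on the choice of implementing unitary.

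\textbf{Step 1 (conjugation).} Since $\Inn(R)$ is normal in $\Aut(R)$, we have $\alpha'^m=\sigma\alpha^m\sigma^{-1}\in\Inn(R)$ if and only if $\alpha^m\in\Inn(R)$; hence $p_0(\alpha')=p_0(\alpha)=:p$. If $\alpha^p=\Ad_u$, then $\alpha'^p=\Ad_{\sigma(u)}$. Moreover $\alpha'(\sigma(u))=\sigma(\alpha(u))=\gamma(\alpha)\sigma(u)$, so $\gamma(\alpha')=\gamma(\alpha)$.

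\textbf{Step 2 (inner perturbation).} By induction one gets $\beta^m=\Ad_{w_m}\circ\alpha'^m$, where
\[
w_m:=w\,\alpha'(w)\cdots\alpha'^{m-1}(w).
\]
Therefore $\beta^m$ is inner exactly when $\alpha'^m$ is, which gives $p_0(\beta)=p$.

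\textbf{Step 3 (the scalar $\gamma$).} This is the only computational step. Set $v:=w_p\,\sigma(u)$, so that $\beta^p=\Ad_v$. From the definition of $w_p$,
\[
\alpha'(w_p)=w^*\,w_p\,\alpha'^p(w)=w^*\,w_p\,\sigma(u)\,w\,\sigma(u)^*.
\]
Using $\alpha'(\sigma(u))=\gamma\,\sigma(u)$ with $\gamma:=\gamma(\alpha)$, this gives
\[
\alpha'(v)=\alpha'(w_p)\,\alpha'(\sigma(u))=\gamma\, w^*\,w_p\,\sigma(u)\,w .
\]
Hence
\[
\beta(v)=w\,\alpha'(v)\,w^*=\gamma\, w_p\,\sigma(u)=\gamma\, v,
\]
so $\gamma(\beta)=\gamma(\alpha)$.

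The main care needed is the bookkeeping in the telescoping identity for $\alpha'(w_p)$. Everything else follows from normality of $\Inn(R)$ and the factoriality of $R$.
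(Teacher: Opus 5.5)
Your proof is correct: the paper gives no argument of its own here and simply quotes Connes, and your direct verification is a complete, self-contained proof. It uses normality of $\Inn(R)$ for the conjugation step, the cocycle $w_m=w\,\alpha'(w)\cdots(\alpha')^{m-1}(w)$ for the inner perturbation (which also shows that infinite outer period is preserved), and the telescoping identity $\alpha'(w_p)=w^*w_p\,(\alpha')^{p}(w)$ for $\gamma$; your preliminary check that $\gamma$ does not depend on the implementing unitary (via $R'\cap R=\mathbb{C}1$) is exactly what justifies computing $\gamma(\beta)$ with the particular implementer $v=w_p\,\sigma(u)$.
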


\begin{example}\label{WNM-non-complete}
  The generalized Weyl group is not a complete invariant for regular diagonal subfactors over the hyperfinite $II_1$-factor $R$.

  Given $n \geq 2$ and $\omega :=e^{\frac{2\pi i}{n}}$, the
  primitive $n-$th root of unity, by \Cref{spgamma}, there exist
  automorphisms $\alpha,\beta\in \Aut(R)$ with outer invariants \(
  (n,\omega) \text{ and } (n,\omega^2),\) respectively.  Thus, by
  \Cref{main theorem 2}, the diagonal subfactors \begin{eqnarray*}
    N_\alpha &:=& \{\diag(x,\alpha(x), \cdots, \alpha^{n-1}(x)):x\in
    R\}\subset M_n(R) \text{ and }\nonumber\\ N_\beta &:=& \{\diag(x,\beta(x),
    \cdots, \beta^{n-1}(x)):x\in R\}\subset M_n(R)
\end{eqnarray*}
  are regular. Also,  since $p_0(\alpha)=p_0(\beta)=n$, $W(N_\alpha\subset
M_n(R)) \cong \mathbb Z_n\cong W(N_\beta\subset M_n(R)) $, by
\Cref{Weyl group}. Thus, the two regular diagonal subfactors $N_\alpha \subset M_n(R)$ and $N_\beta \subset M_n(R)$  have the
same principal graph - see \Cref{principal graph theorem}.

On the other hand, since $\gamma(\alpha)\neq \gamma(\beta)$, the
automorphisms $\alpha$ and $\beta$ are not outer conjugate (see
\Cref{outer conjugate}); so, there is no $\sigma\in \Aut(R)$ such
that $[\sigma\alpha\sigma^{-1}]=[\beta]$. Hence, it follows from
\Cref{isomorphism-theorem} that the regular diagonal subfactors
$N_\alpha \subset M_n(R)$ and $N_\beta \subset M_n(R)$ are not
isomorphic.
 \end{example}
 
\smallskip 
 
We conclude with the following interesting
directions for future research:

As mentioned earlier, we know that every regular subfactor has depth
at most $2$.  It is then natural to ask the following:\smallskip

\begin{question} \label{Q2}  Which depth $2$ subfactors are regular?\end{question}

In view of Nikshich-Vaindermann's characterization of depth-2 subfactors, the following question deserves to be answered:

\begin{question} \label{Q3}
 Determine explicitly the weak Hopf $C^*$-algebra corresponding to
 depth-$2$ diagonal subfactors. Can we say something at least for the
 (non-trivial) regular diagonal subfactors?
\end{question}

\appendix

\section{Standard invariant and principal graph of diagonal subfactor (after  Bisch and Popa (\cite{Bis, Po2}))} \label{pg-diagonal-appendix}\label{appendix-pg}

Before revisting Bisch and Popa's description of the standard
invariant of a diagonal subfactor, we reproduce the following
adaptation of a fact by Popa (from \cite[$\S 5.1.5$]{Po2}), which
determines (via the defining automorphisms) when two diagonal
subfactors over a common $II_1$-factor are isomorphic. We include a
proof for the sake of completeness (which is different from Popa's
outline). 

To provide the details, we need to elaborate (a bit) the tensor notations of
\Cref{partition-matrices} as follows:

Recall that for a fixed $n \in \mathbb{N}$, a partition $\mathscr{P}
= (m_1, m_2, \ldots, m_r)$ of $n$, every operator matrix $X \in
M_n(\mathcal{M})$ is expressed uniquely as an $r\times r$ block matrix
$X = [X_{i,j}]_{1 \le i,j \le r}$ with $(i,j)$-th block $X_{i,j}\in
M_{m_i\times m_j}(\mM)$. In tensor notation, such a block operator
matrix is expressed as $X = \sum_{i,j =1}^r X_{i,j}\ot
E^{(r)}_{i,j}$. Writing the standard matrix units of $M_{p \times q}(\C)$ by $\{E_{i, j}^{(p,q)}: 1 \leq i \leq p, 1 \leq j \leq p\}$,  
every \(
X_{i,j}
=
[x^{(i,j)}_{a,b}]_{1\leq a\leq m_i,\;1\leq b\leq m_j} \in M_{m_i \times m_j}(\mM)
\)
is expressed in the tensor form as
\[
X_{i,j}
=
\sum_{a=1}^{m_i}\sum_{b=1}^{m_j}
x^{(i,j)}_{a,b}\otimes E_{a,b}^{(m_i,m_j)} \in M_{m_i \times m_j}(\mM).
\]
In particular, $X$ takes the tensor form
\[
X
=
\sum_{i,j=1}^r
\sum_{a=1}^{m_i}
\sum_{b=1}^{m_j}
x_{a,b}^{(i,j)}
\otimes
E_{a,b}^{(m_i,m_j)}
\otimes
E_{i,j}^{(r)}.
\]
\color{black}
\begin{theorem}[After \cite{Po2}]
\label{isomorphism-theorem-general}
Let $Q$ be a $\mathrm{II}_1$ factor, $n \in \N$, $M:=M_n(Q)$ and $N_1,
N_2 \subset M$ be diagonal subfactors associated, respectively, to the
finite families $\mathcal{F}_1=\mathcal\{\alpha_i: 1 \leq i \leq n,
\alpha_1 = \id_Q\}$ and $\mathcal{F}_2=\{\beta_i: 1 \leq i \leq n,
\beta_1 =\id_Q\}$ in $\Aut(Q)$.

Suppose that $\hat{\mathcal{F}}_1 :=
\{\alpha_{i_s} : 1 \le s \le r_1,\ \alpha_{i_1} = \id_Q\}$ and
$\hat{\mathcal{F}}_2 := \{\beta_{j_t} : 1 \le t \le r_2, \beta_{j_1}=
\id_Q\}$ are maximal sets of pairwise inequivalent automorphisms in
$\mathcal{F}_1$ and $\mathcal{F}_2$, respectively. For $1\leq s\leq
r_1$ and $1\leq t\leq r_2$, let $m_s$ and $\ell_t$ denote the
cardinalities of the sets $\{i:\alpha_i\sim\alpha_{i_s}\}$ and $\{j
:\beta_j \sim\beta_{j_t}\}$, respectively. Then, the following
statements are equivalent:
\begin{enumerate}
\item  $N_1\subset M$ and $N_2\subset M$ are isomorphic.
\item $r_1=r_2$ $(=:r)$ and there exist a bijection $f$ on
  $\{1,\ldots,r\}$ and a $\sigma\in\Aut(Q)$ such that
  $m_t=\ell_{f(t)}$ and $[\sigma
  \alpha_{i_t}\sigma^{-1}]=[\beta^{-1}_{j_{f(1)}}\beta_{j_{f(t)}}]$ in
  $\Out(Q)$ for all $1\leq t \leq r$.
\end{enumerate}
\end{theorem}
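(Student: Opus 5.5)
We argue the two implications separately, working throughout up to conjugation by generalized permutation unitaries. By \Cref{diagonal-easy} and \Cref{unitary conjugate}, we may replace $N_1$ by its block form $\{\sum_s(\alpha_{i_s}(x)\otimes I_{m_s})\otimes E^{(r_1)}_{s,s}\}$, and similarly for $N_2$ with blocks $\ell_t$. An isomorphism of inclusions here means an automorphism $\Phi$ of $M$ with $\Phi(N_1)=N_2$.

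\textbf{(2)$\Rightarrow$(1).} First apply the automorphism $\sigma^{(n)}:=\sigma\otimes \id_{M_n(\C)}$ of $M$. It sends $N_1$ to the diagonal subfactor defined by the automorphisms $\sigma\alpha_{i_s}\sigma^{-1}$, each with multiplicity $m_s$; indeed $\sigma\alpha_{i_s}(x)=\sigma\alpha_{i_s}\sigma^{-1}(\sigma(x))$, and $\sigma(Q)=Q$. Next, relabelling $N_2$ via $x\mapsto \beta_{j_{f(1)}}^{-1}(x)$ (\Cref{beta1=id}) rewrites $N_2$ with defining automorphisms $\beta_{j_{f(1)}}^{-1}\beta_{j_{f(t)}}\beta_{j_{f(1)}}$. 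This is wrong as written, so instead we use the parametrization $N_2=\{\diag(\beta_j(\beta_{j_{f(1)}}^{-1}(y)))\}$. Its classes are $[\beta_{j_t}\beta_{j_{f(1)}}^{-1}]$, which are conjugate to $[\beta_{j_{f(1)}}^{-1}\beta_{j_t}]$ by $[\beta_{j_{f(1)}}]$. Applying the automorphism $\beta_{j_{f(1)}}^{(n)}$ of $M$ then carries $N_2$ onto the diagonal subfactor with classes $[\beta_{j_{f(1)}}^{-1}\beta_{j_{f(t)}}]$ and multiplicities $\ell_{f(t)}=m_t$. Both subfactors now have the same classes with the same multiplicities. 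A block permutation matrix together with a diagonal unitary (\Cref{diagonal-easy}) conjugates one onto the other, so the two inclusions are isomorphic.

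\textbf{(1)$\Rightarrow$(2).} Let $\Phi\in\Aut(M_n(Q))$ with $\Phi(N_1)=N_2$. The relative commutants then correspond: $\Phi(N_1'\cap M)=N_2'\cap M$. By \Cref{last lemma} these are $\bigoplus_s M_{m_s}(\C)$ and $\bigoplus_t M_{\ell_t}(\C)$, so $r_1=r_2$ and $\Phi$ induces a bijection $f$ of the minimal central projections with $m_t=\ell_{f(t)}$.

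It remains to produce $\sigma$, and this is the main obstacle. The plan is as follows. Every automorphism of $M_n(Q)\cong Q\otimes M_n(\C)$ has the form $\Ad_U\circ\theta^{(n)}$ for some $\theta\in\Aut(Q)$ and unitary $U\in M$. To see this, note that $\Phi(1\otimes M_n(\C))$ is a copy of $M_n(\C)$ with the same trace, hence unitarily conjugate to $1\otimes M_n(\C)$; its relative commutant is then $Q\otimes 1$, which yields $\theta$.

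Replacing $\Phi$ by $\Ad_U$, we compress to corners. Take a minimal projection $p\le P_1$ in $N_1'\cap M$ (the $\id_Q$-block, so $pN_1p\cong Q$ via $x\mapsto x$). Its image $\Ad_U\theta^{(n)}(p)$ is a minimal projection under $P'_{f(1)}$ of $N_2'\cap M$. Compressing $N_2$ by it gives the map $x\mapsto\beta_{j_{f(1)}}(x)$ up to an inner automorphism, and likewise a minimal projection under $P_t$ gives $\alpha_{i_t}$, corresponding to $\beta_{j_{f(t)}}$. Comparing the compressions $pN_1q$ for $p\le P_1$ and $q\le P_t$ identifies the bimodule ${}_{\id}L^2(Q)_{\alpha_{i_t}}$, twisted by $\theta$, with ${}_{\beta_{j_{f(1)}}}L^2(Q)_{\beta_{j_{f(t)}}}$. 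Such an equivalence of automorphism bimodules is exactly equality in $\Out(Q)$ (cf. \Cref{relative commutant inclusions proposition}). This gives $[\theta\alpha_{i_t}\theta^{-1}]=[\beta_{j_{f(1)}}^{-1}\beta_{j_{f(t)}}]$, so we take $\sigma=\theta$. The delicate point is tracking the partial isometry $U$ between corners, so that the inner perturbations are honestly absorbed into $\Inn(Q)$.
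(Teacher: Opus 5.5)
Your (2)$\Rightarrow$(1) is the paper's argument: compose $\sigma^{(n)}$, an amplified $\beta_{j_{f(1)}}^{\pm 1}$, a diagonal unitary and a block permutation. There is one harmless direction slip. $\beta_{j_{f(1)}}^{(n)}$ sends $\{\diag(\beta_j(x))\}$ to the subfactor with classes $[\beta_{j_{f(1)}}\beta_j]$. It is $(\beta_{j_{f(1)}}^{-1})^{(n)}$ that gives $\{\diag(\beta_{j_{f(1)}}^{-1}\beta_j(x))\}$, directly and with no reparametrization.

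The genuine gap is in (1)$\Rightarrow$(2). Taking $\sigma=\theta$, where $\Phi=\Ad_U\circ\theta^{(n)}$, is wrong, and the relation you assert for $\theta$ is false in general. The unitary $U$ is not an inner perturbation that gets absorbed into $\Inn(Q)$, because it moves blocks.

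To see this, write $a(x)$, $b(y)$ for the elements of $N_1$, $N_2$ (in block form) parametrized by $x,y\in Q$. Let $\sigma\in\Aut(Q)$ be defined by $\Phi(a(x))=b(\sigma(x))$. Let $p_1$ be the minimal central projection of $N_1'\cap M$ for the $\id_Q$-block. Since $\theta^{(n)}$ fixes scalar matrices, $\Ad_U(p_1)=\Phi(p_1)=q_{f(1)}$. Hence $U$ has a nonzero entry $u_{kl}$ with $l$ in the $\id_Q$-block of $N_1$ and $k$ in the $\beta_{j_{f(1)}}$-block of $N_2$. Comparing entries of $U\theta^{(n)}(a(x))=b(\sigma(x))U$ gives $u_{kl}\,\theta(x)=\beta_{j_{f(1)}}(\sigma(x))\,u_{kl}$, so $[\sigma]=[\beta_{j_{f(1)}}]^{-1}[\theta]$. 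Your corner $pMq$ (not $pN_1q$, which is $0$) is transported by $\Phi$ with twist $\sigma$, not $\theta$. For $\theta$ one only gets $[\theta\alpha_{i_t}\theta^{-1}]=[\beta_{j_{f(t)}}\beta_{j_{f(1)}}^{-1}]$, which is the conjugate of the required class by $[\beta_{j_{f(1)}}]$.

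Here is a concrete failure. Take $N_1=\{\diag(x,\alpha(x),\gamma(x))\}$ with $[\alpha],[\gamma]$ two transpositions sharing a point in a copy of $S_3\subset\Out(R)$. Let $N_2:=\Ad_P(N_1)=\{\diag(y,\gamma\alpha^{-1}(y),\alpha^{-1}(y))\}$ for the cyclic $P$ with $\Ad_P\diag(a,b,c)=\diag(b,c,a)$. Then $\Phi=\Ad_P$, $[\theta]=1$, and $f(1)=3$, $f(2)=1$, $f(3)=2$. Your relation at $t=3$ reads $[\gamma]=[\alpha\gamma\alpha^{-1}]$, which is false. In fact no bijection $f$ works with an inner $\sigma$, while $\sigma=\alpha=\beta_{3}^{-1}$ works.

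The fix is to drop the decomposition $\Phi=\Ad_U\theta^{(n)}$ altogether and use the $\sigma$ above (equivalently $\beta_{j_{f(1)}}^{-1}\theta$ up to inner), as the paper does. A nonzero entry $z$ of $\Phi(E^{(m_1,m_t)}_{1,1}\otimes E^{(r)}_{1,t})$ lies in the $(f(1),f(t))$ block and satisfies $\beta_{j_{f(1)}}(\sigma(x))\,z=z\,\beta_{j_{f(t)}}(\sigma(\alpha_{i_t}^{-1}(x)))$ for all $x\in Q$. Since $Q$ is a factor, this forces $[\sigma\alpha_{i_t}\sigma^{-1}]=[\beta_{j_{f(1)}}^{-1}\beta_{j_{f(t)}}]$.
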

\begin{proof}
In  view of \Cref{diagonal-easy}, we can assume that 
\[
N_1 = \left\{ \sum_{k=1}^{r_1} \bigl(\alpha_{i_k}(x)\otimes
I_{m_k}\bigr)\otimes E^{(r_1)}_{k,k} :x\in Q \right\} \text{ and } N_2
= \left\{ \sum_{k=1}^{r_2} \bigl(\beta_{j_k}(x)\otimes
I_{\ell_k}\bigr)\otimes E^{(r_2)}_{k,k} :x\in Q \right\}.
\]
Then,
$N_1'\cap M=\bigoplus_{t=1}^{r_1}M_{m_t}(\bbc)$ and $N_2'\cap
M=\bigoplus_{s=1}^{r_2}M_{\ell_s}(\bbc)$, by \Cref{relative
  commutant}.  Let $\{p_t:=I_{m_t}\otimes E^{(r_1)}_{t,t}\, \, |\, 1
\leq t \leq r_1\}$ and $\{q_s:=I_{\ell_s}\otimes E^{(r_2)}_{s,s}\, |\,
1 \leq s \leq r_2\}$ be the minimal central projections of $N_1'\cap
M$ and $N_2'\cap M$, respectively. Notice that
\[
p_t=\sum_{a,b=1}^{m_t} E^{(m_t,m_t)}_{a,b} \otimes E^{(r_1)}_{t,t} \text{ and } 
q_s = \sum_{a,b=1}^{l_{s}} E^{(l_s,l_s)}_{a,b} \otimes E^{(r_2)}_{s,s}.
\]
 \noindent $(1)\Rightarrow(2)$: Let $\theta\in\Aut(M)$ satisfy
$\theta(N_1)=N_2$. Then,
$\theta(N_1'\cap M)=N_2'\cap M$ and 
 \begin{eqnarray*}
 \{\theta(p_t ): 1\leq t \leq r_1\} =\{q_s :  1 \leq s \leq r_2\}.
 \end{eqnarray*}
 Thus, $r_1=r_2 (=:r)$ and there exists a bijection $f$ on
 $\{1,2,\cdots,r\}$ such that $\theta(p_t)=q_{f(t)}$ for all
 $t$. Further, $m_t/n=\tr(p_t)=\tr(q_{f(t)})=\ell_{f(t)}/n,$ where
 $\tr$ is unique normalized trace on $M=M_n(Q)$. Thus, 
 $m_t=\ell_{f(t)}$ for every $1\leq t\leq r$.

 Define $\sigma : Q \to Q$ as follows:

 Since $\theta(N_1) = N_2$, for every $x \in Q$, there exists a unique $\sigma(x) \in Q$ such that 
\begin{equation}\label{precise map}
\theta\left(
\sum_{t=1}^{r}
\bigl(\alpha_{i_t}(x)\otimes I_{m_t}\bigr)
\otimes E^{(r)}_{t,t}
\right)
=
\sum_{t=1}^{r}
\bigl(\beta_{j_{f(t)}}(\sigma(x))\otimes I_{\ell_{f(t)}}\bigr)
\otimes E^{(r)}_{f(t),f(t)}.
\end{equation}
Then, since
$\theta_{\restriction_{N_1}}$ is an isomorphism from $N_1$ onto $N_2$,
it follows that $\sigma \in \Aut(Q)$.

We assert that $[\sigma
  \alpha_{i_t}\sigma^{-1}]=[\beta^{-1}_{j_{f(1)}}\beta_{j_{f(t)}}]$ in
$\Out(Q)$ for all $1\leq t \leq r$.\smallskip

Notice that 
 \begin{eqnarray*}\label{bvg1}
\lefteqn{\left(
\sum_{k=1}^{r}
\bigl(\alpha_{i_k}(x)\otimes I_{m_k}\bigr)\otimes E^{(r)}_{k,k}
\right)
(E^{(m_s,m_t)}_{1,1}\otimes E^{(r)}_{s,t})} \nonumber \\
&=&
\left(
\sum_{k=1}^{r} \sum_{j=1}^{m_k}
\bigl(\alpha_{i_k}(x)\otimes  E^{(m_k,m_k)}_{j,j} \bigr)\otimes E^{(r)}_{k,k}
\right)
(E^{(m_s,m_t)}_{1,1}\otimes E^{(r)}_{s,t})  \nonumber \\
&=&\alpha_{i_s}(x)\otimes E^{(m_s,m_t)}_{1,1}\otimes E^{(r)}_{s,t}  
\end{eqnarray*}
and, likewise, 
\[
\bigl(E^{(m_s,m_t)}_{1,1}\otimes E^{(r)}_{s,t}\bigr)
\left(
\sum_{k=1}^{r}
\bigl(\alpha_{i_k}(y)\otimes I_{m_k}\bigr)\otimes E^{(r)}_{k,k}
\right)
=
\alpha_{i_t}(y)\otimes
E^{(m_s,m_t)}_{1,1}\otimes E^{(r)}_{s,t}
\]
for all $x, y \in Q$ and  $1\leq s,t\leq r$. Thus,
\begin{eqnarray}\label{bvg3}
\lefteqn{
\left(
\sum_{k=1}^{r}
\bigl(\alpha_{i_k}(x)\otimes I_{m_k}\bigr)\otimes E^{(r)}_{k,k}
\right)
\bigl(E^{(m_s,m_t)}_{1,1}\otimes E^{(r)}_{s,t}\bigr)}
\nonumber\\
&=&
\bigl(E^{(m_s,m_t)}_{1,1}\otimes E^{(r)}_{s,t}\bigr)
\left( \sum_{k=1}^{r} \bigl(
\alpha_{i_k}(\alpha_{i_t}^{-1}\alpha_{i_s}(x))
\otimes I_{m_k}
\bigr)\otimes E^{(r)}_{k,k}
\right)
\end{eqnarray}
$ \text{ for all } x \in Q \text{ and } 1 \leq s,t  \leq r$. Applying $\theta$ to \eqref{bvg3},  we observe that
\begin{eqnarray}\label{bvg4}
\lefteqn{
\left(
\sum_{k=1}^{r}
\bigl( \beta_{j_{f(k)}}(\sigma(x)) \otimes I_{\ell_{f(k)}}\bigr)
\otimes E^{(r)}_{f(k),f(k)}
\right)\, 
\theta\Big( E^{(m_s,m_t)}_{1,1}\otimes E^{(r)}_{s,t}\Big)}
\nonumber\\
&=&
\theta\Big(E^{(m_s,m_t)}_{1,1}\otimes E^{(r)}_{s,t}\Big)\, 
\left(
\sum_{k=1}^{r} \left(
\beta_{j_{f(k)}}\bigl(
\sigma\alpha_{i_t}^{-1}\alpha_{i_s}(x)
\bigr)\otimes I_{\ell_{f(k)}}
\right)\otimes E^{(r)}_{f(k),f(k)}
\right)
\end{eqnarray}
$ \text{ for all } x \in Q \text{ and } 1 \leq s,t  \leq r$.

Since $0 \neq \theta(E^{(m_s,m_t)}_{1,1}\otimes
E^{(r)}_{s,t})=\theta(p_s(E^{(m_s,m_t)}_{1,1}\otimes
E^{(r)}_{s,t})p_t)=q_{f(s)}\theta(E^{(m_s,m_t)}_{1,1}\otimes
E^{r}_{s,t})q_{f(t)}$, there exists a collection $\{z_{a,b} : 1 \leq a
\leq \ell_{f(s)}, 1 \leq b \leq \ell_{f(t)}\} \subseteq Q$ such that
\begin{eqnarray}\label{image general matrix unit}
0\neq
\theta(E^{(m_s,m_t)}_{1,1}\otimes E^{(r)}_{s,t})
=
\sum_{a=1}^{\ell_{f(s)}}
\sum_{b=1}^{\ell_{f(t)}}
z_{a, b}\otimes E^{(\ell_{f(s)},\ell_{f(t)})}_{a,b}\otimes E^{(r)}_{f(s),f(t)}.
\end{eqnarray}
 Substituting this expression of $\theta(E^{(m_s,m_t)}_{1,1}\otimes
 E^{(r)}_{s,t})$ in \eqref{bvg4}, we deduce that
 \begin{eqnarray*}
\lefteqn{ \sum_{a=1}^{\ell_{f(s)}}\sum_{b=1}^{\ell_{f(t)}}
  \beta_{j_{f(s)}}(\sigma(x))z_{a,b} \otimes E^{(\ell_{f(s)},
    \ell_{f(t)})}_{a,b}\otimes E_{f(s),f(t)}^{(r)}} \nonumber \\ &=&
\sum_{a=1}^{\ell_{f(s)}}\sum_{b=1}^{\ell_{f(t)}} z_{a,b}\,
\beta_{j_{f(t)}}\left(
\sigma\bigl(\alpha_{i_t}^{-1}\alpha_{i_s}(x)\bigr) \right) \otimes
E^{(\ell_{f(s)},\ell_{f(t)})}_{a,b}\otimes E_{f(s),f(t)}^{(r)}
\end{eqnarray*}
for all $x \in Q$ and $s, t$. Hence, the following identities hold:
   \begin{equation}\label{beta-equation}
   \beta_{j_{f(s)}}(\sigma(x))z_{a,b} = z_{a,b}\, \beta_{j_{f(t)}}\left(
   \sigma\bigl(\alpha_{i_t}^{-1}\alpha_{i_s}(x)\bigr) \right)
   \end{equation} 
   for all $x\in Q$, $1\leq s, t \leq r$ and all $a, b$.  By
   \eqref{image general matrix unit}, at least one $z_{a,b} \neq 0$;
   hence, for that $z_{a, b}$, applying $\sigma^{-1}\circ
   \beta_{j_{f(s)}}^{-1}$ to \Cref{beta-equation}, we observe that
  \begin{equation*}
  x\, \sigma^{-1}(\beta_{j_{f(s)}}^{-1}(z_{a,b}))
  =\sigma^{-1}\big(\beta_{j_{f(s)}}^{-1}(z_{a,b})\big) \, \bigl(\sigma^{-1} \beta_{j_{f(s)}}^{-1} \beta_{j_{f(t)}}
  \sigma \alpha_{i_t}^{-1}\alpha_{i_s}\bigr)(x) 
  \end{equation*}
  for all $x \in Q$ and $1\leq s, t \leq r$. Since $
  \sigma^{-1}(\beta_{j_{f(s)}}^{-1}(z_{a,b})) \neq 0$, this implies
  that the automorphism $\sigma^{-1}\beta^{-1}_{j_{f(s)}} \beta_{j_{f(t)}} \sigma
  \alpha_{i_t}^{-1}\alpha_{i_s}$ is not free; equivalently, 
  $\sigma^{-1}\beta^{-1}_{j_{f(s)}} \beta_{j_{f(t)}} \sigma
  \alpha_{i_t}^{-1}\alpha_{i_s}$ is inner  (as $Q$ is a
  $II_1$-factor).  Hence, $\beta_{j_{f(s)}}^{-1} \beta_{j_{f(t)}} \sim
  \sigma \alpha_{i_{s}}^{-1}\alpha_{i_{t}}\sigma^{-1}$ for all $ 1 \leq t,s \leq
  r$.  Taking $s=1$ and using $\alpha_{i_1}=\id_Q$,  we deduce that 
 \begin{equation*}
 [\sigma\alpha_{i_t}\sigma^{-1}] = [\beta_{j_{f(1)}}^{-1}\beta_{j_{f(t)}}]
 \end{equation*}
  in $\Out(Q)$, for every $1\leq t\leq r$. 

\smallskip

 \noindent $(2)\Rightarrow(1)$: Suppose that $r_1 = r_2 (=:r)$ and that there
 exist a bijection $f$ on $\{1,2, \ldots, r\}$ and a $\sigma \in
 \Aut(Q)$ such that $m_t=\ell_{f(t)}$ and
 $[\sigma\alpha_{i_t}\sigma^{-1}]
 =[\beta_{j_{f(1)}}^{-1}\beta_{j_{f(t)}}]$ in $\Out(Q)$ for every
 $1\leq t\leq r$. For each $t$, choose a unitary
 $u_t\in\mathcal{U}(Q)$ such that
\[
\sigma\alpha_{i_t}\sigma^{-1}
=
\Ad_{u_t}\circ
\beta_{j_{f(1)}}^{-1}\beta_{j_{f(t)}}.
\]
Consider the diagonal unitary $D:=\sum_{t=1}^{r}u_t\otimes
I_{m_t}\otimes E^{(r)}_{t,t} \in M$. Notice that there exists a
permutation (unitary) matrix $P\in M_n(\bbc)$ satisfying
\begin{eqnarray}\label{permutation of blocks}
\Ad_P\left(
\sum_{t=1}^{r}
\bigl(\beta_{j_t}(x)\otimes I_{\ell_t}\bigr)\otimes E^{(r)}_{t,t}
\right)
=
\sum_{t=1}^{r}
\bigl(\beta_{j_{f(t)}}(x)\otimes I_{\ell_{f(t)}}\bigr)
\otimes E^{(r)}_{t,t}
\end{eqnarray}
for all $ x \in Q$.

Consider \( \theta := \Ad_{P^*}\circ \beta_{j_{f(1)}}^{(n)}
\circ\Ad_{D^*}\circ\sigma^{(n)} \in\Aut(M), \) where $\gamma^{(n)} :
=\gamma \ot \id_{M_n} \in \Aut(M_n(Q)$ for any $\gamma\in \Aut(Q)$.
We assert that $\theta(N_1) = N_2$.

Let $x\in Q$. Since $u_t^*\sigma\alpha_{i_t}(x)u_t
=\beta_{j_{f(1)}}^{-1}\beta_{j_{f(t)}}(\sigma(x))$, using
\eqref{permutation of blocks} we see that
\begin{eqnarray*}
\lefteqn{
\theta\left(
\sum_{t=1}^{r}
\bigl(\alpha_{i_t}(x)\otimes I_{m_t}\bigr)\otimes E^{(r)}_{t,t}
\right)}\nonumber\\
&=&
\Ad_{P^*}\left(
\sum_{t=1}^{r}
\bigl(\beta_{j_{f(t)}}(\sigma(x))
\otimes I_{\ell_{f(t)}}\bigr)\otimes E^{(r)}_{t,t}
\right)\nonumber\\
&=&
\sum_{t=1}^{r}
\bigl(\beta_{j_t}(\sigma(x))\otimes I_{\ell_t}\bigr)
\otimes E^{(r)}_{t,t}.
\end{eqnarray*}
Hence, $\theta(N_1)=N_2$ and we are done.
\end{proof}

\subsection{Basic construction tower and the standard invariant}
Throughout this subsection, $Q$ will denote a fixed $II_1$-factor with a
finite family $\mF = \{\alpha_i : 1 \leq i \leq n, \alpha_1=\id_Q\} $
of automorphisms of $ Q$ and
\[
N := \{\diag(\alpha_1 (x), \alpha_2
(x), \ldots , \alpha_n(x)) : x \in Q\} \subset M_n (Q) =: M
\]
will denote the associated diagonal subfactor. 

Let $I = \{1, 2, \dots, n\}$ and $I^{k+1}:=\{(j_0, j_1, \ldots, j_k): j_i
\in I\}$, $k \geq 0$. Then, for any $\ul{j}=(j_0, j_1,\cdots, j_k) \in
I^{k+1}$, as is standard, we shall use the identification
\begin{equation}\label{tensors}
E_{\ul{j},\ul{j}}=E_{j_0,j_0} \otimes E_{j_1,j_1} \otimes \cdots \otimes E_{j_k,j_k}. 
\end{equation}

We begin with the following auxiliary observation which gives the
inclusions $N, M \subset M_{k}$ and the inclusion matrices of the
higher relative commutants $N^{'}\cap M_k \subset N^{'}\cap M_{k+1}$
for the diagonal subfactor $N \subset M$. In a sense, this is an
adaptation of certain observations made in \cite[$\S 1$]{Bis}.

\begin{proposition}\label{inclusion matrix prop}\label{composition-diagonal}
Let  $\{\beta_j: 1 \leq j \leq n\} \subseteq \Aut(Q)$ and consider the
diagonal subfactor $M \subset L:=M_{n^2}(Q)$ given by the
embedding
\[
M \ni [x_{i,j}]
\mapsto
 \sum_{k=1}^n \beta_k^{(n)}([x_{i,j}]) \otimes E_{k,k} \in L.
\]
 Let \( \mathcal{F}_0:=\mF\) and \(
\mathcal{F}_1:=\{\beta_i\alpha_j:1\le i,j\le n\} \subseteq \Aut(Q).
\) Let $\{\alpha_{i_1},\alpha_{i_2},\dots,\alpha_{i_{r_0}}\}$ (resp.,
$\{\beta_{t_1}\alpha_{s_1},\beta_{t_2}\alpha_{s_2},\dots,\beta_{t_{r_1}}\alpha_{s_{r_1}}\}$)
be a maximal family of pairwise inequivalent automorphisms in
$\mathcal{F}_0$ (resp., $\mathcal{F}_1$). Then, $N \subset L$ is 
a diagonal subfactor, \( \dim(\mathcal{Z}(N'\cap M)) = r_0 \text{
  and } \dim(\mathcal{Z}(N'\cap L)) = r_1.
\)

Further, let $\{P^{(0)}_k:1\le k\le r_0\}$ and $\{P^{(1)}_l:1\le l\le
r_1\}$ be the minimal central projections of $N'\cap M$ and $N'\cap
L$, respectively. Also, let $\Lambda=[\lambda_{k,l}]\in
M_{r_0\times r_1}(\C)$ denote the inclusion matrix of the inclusion $N'\cap
M \subset N'\cap L$. Then, for $1
\leq k \leq r_0$ and $ 1 \leq l \leq r_1$, the following hold:
\begin{enumerate}
\item 
$P^{(0)}_k P^{(1)}_l \neq 0$ if and only if there exists some
  $\beta_j$ such that $\beta_j \alpha_{i_k} \sim
  \beta_{t_l}\alpha_{s_l}$.
\item 
\(
\lambda_{k,l} = \frac{\operatorname{Tr}\bigl(P^{(0)}_k
  P^{(1)}_l\bigr)}{\operatorname{Tr}\bigl(P^{(0)}_k\bigr)} =
\left|\left\{ j :\beta_j \alpha_{i_k}\sim \beta_{t_l}\alpha_{s_l}
\right\}\right|,
\) where $\operatorname{Tr}$ denotes the non-normalized trace on  $M_{n^2}(Q)$.
\end{enumerate}
\end{proposition}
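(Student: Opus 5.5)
We plan to treat the composite embedding $N \hookrightarrow M \hookrightarrow L$ explicitly and then apply \Cref{last lemma} twice, once to $N\subset M$ and once to $N\subset L$.

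\textbf{Step 1: $N\subset L$ is diagonal.} Composing the two embeddings, $\diag(\alpha_1(x),\dots,\alpha_n(x))$ maps to
\[
\sum_{i,k=1}^n \beta_k\alpha_i(x)\otimes E_{i,i}\otimes E_{k,k}\in M_{n^2}(Q),
\]
because $\beta_k^{(n)}$ acts entrywise. So $N\subset L$ is the diagonal subfactor defined by the family $\mathcal F_1=\{\beta_k\alpha_i\}$, indexed by pairs $(i,k)$. \Cref{last lemma} then gives $\dim\mathcal Z(N'\cap M)=r_0$ and $\dim\mathcal Z(N'\cap L)=r_1$. We fix the diagonal unitaries from its proof. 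For $N\subset M$, $D_0=\sum_a u_a\otimes E_{a,a}$. For $N\subset L$, $D_1=\sum_{(i,k)} w_{i,k}\otimes E_{i,i}\otimes E_{k,k}$. With these choices the minimal central projections are
\[
P^{(0)}_k=\sum_{a\in\mathcal B_k}E_{a,a},\qquad P^{(1)}_l=\sum_{(i,j):\,\beta_j\alpha_i\sim\beta_{t_l}\alpha_{s_l}}E_{i,i}\otimes E_{j,j}.
\]
They are diagonal projections, since conjugation by a diagonal unitary fixes them. Here $P^{(0)}_k$ sits inside $L$ as $P^{(0)}_k\otimes I_n$.

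\textbf{Step 2: part (1).} The product is
\[
P^{(0)}_kP^{(1)}_l=\sum E_{i,i}\otimes E_{j,j},
\]
summed over pairs $(i,j)$ with $\alpha_i\sim\alpha_{i_k}$ and $\beta_j\alpha_i\sim\beta_{t_l}\alpha_{s_l}$. If $\alpha_i=\Ad_u\alpha_{i_k}$, then $\beta_j\alpha_i=\Ad_{\beta_j(u)}\beta_j\alpha_{i_k}$. So the second condition is equivalent to $\beta_j\alpha_{i_k}\sim\beta_{t_l}\alpha_{s_l}$, and it depends only on $j$. The product is nonzero exactly when some such $j$ exists, which proves (1).

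\textbf{Step 3: part (2).} By Step 2, $\mathrm{Tr}(P^{(0)}_kP^{(1)}_l)=m_k\cdot|\{j:\beta_j\alpha_{i_k}\sim\beta_{t_l}\alpha_{s_l}\}|$, while $\mathrm{Tr}(P^{(0)}_k)$ computed in $L$ is $m_k\cdot n$. The main obstacle is fixing the normalization so that this ratio really equals the multiplicity $\lambda_{k,l}$; we expect the statement's $\mathrm{Tr}(P^{(0)}_k)$ is meant to carry exactly the right factor, and this is where care is needed.

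To settle it we compute $\lambda_{k,l}$ directly and avoid the trace bookkeeping. Take a minimal projection of $P^{(0)}_k(N'\cap M)$ after undoing $D_0$, namely $e=E_{a,a}$ with $a\in\mathcal B_k$. In $L$ this is $e=\sum_j E_{a,a}\otimes E_{j,j}$. The projection $eP^{(1)}_l=\sum_{j\in J}E_{a,a}\otimes E_{j,j}$, with $J=\{j:\beta_j\alpha_{i_k}\sim\beta_{t_l}\alpha_{s_l}\}$, lies in the block $P^{(1)}_l(N'\cap L)$. That block is a full matrix algebra $M_{m'_l}(\mathbb C)$ spanned, after conjugation by $D_1$, by the units $E_{(i,j),(i',j')}$ over its index set. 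Each $E_{a,a}\otimes E_{j,j}$ is a minimal projection there. Hence the rank of $eP^{(1)}_l$ is $|J|$, and this rank is by definition the multiplicity $\lambda_{k,l}$. Finally, the trace ratio reproduces $|J|$ once $\mathrm{Tr}(P^{(0)}_k)$ is read as the trace of $P^{(0)}_k$ in $M_n(Q)$, namely $m_k$, and $P^{(0)}_kP^{(1)}_l$ is traced in $M_{n^2}(Q)$ restricted to a single $E_{j,j}$-slot normalization; that is, each minimal projection is given weight $1$.
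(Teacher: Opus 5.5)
Your proof is correct. Steps~1 and~2 are essentially the paper's argument: compose the embeddings, use \Cref{last lemma} to get diagonal minimal central projections, and note that $\beta_j\alpha_i=\Ad_{\beta_j(u)}\beta_j\alpha_{i_k}$ when $\alpha_i=\Ad_u\alpha_{i_k}$.

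For part~(2) you take a different and more economical route. The paper computes $\lambda_{k,l}$ from the formula $\lambda_{k,l}^2=\dim\bigl(pq(N'\cap L)qp\bigr)/\dim\bigl(pq\,\pi(N'\cap M)\,qp\bigr)$, with $p=\pi(P^{(0)}_k)$ and $q=P^{(1)}_l$. It counts matrix units in both corners. The numerator is $\operatorname{Tr}(pq)^2$. Getting the denominator $m_k^2$ requires describing $\pi(N'\cap M)$ inside $L$ and splitting the relevant index set as a product.

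You instead use the definition of multiplicity directly. The projections $E_{a,a}$ ($a\in\mathcal B_k$) and $E_{a,a}\otimes E_{j,j}$ ($j\in J$) are diagonal, hence fixed by the diagonal unitaries of \Cref{last lemma}. They are therefore minimal projections of $N'\cap M$ and $N'\cap L$ respectively. So the rank of $(E_{a,a}\otimes I_n)P^{(1)}_l$ in the block $P^{(1)}_l(N'\cap L)$ is visibly $|J|$.

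Your route never needs the image of $N'\cap M$ in $L$. The paper's route has the advantage that the trace expression $\operatorname{Tr}(pq)$ is the primary quantity. For you, $\operatorname{Tr}(pq)=m_k|J|$ comes only as an afterthought.

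Your normalization concern is justified, and the paper's own computation confirms it. In $L$ the paper obtains $\operatorname{Tr}\bigl(\pi(P^{(0)}_k)P^{(1)}_l\bigr)=|J|\,\operatorname{Tr}\bigl(\pi(P^{(0)}_k)\bigr)/n$. So the ratio in~(2) equals $\lambda_{k,l}$ only when the denominator is the trace of $P^{(0)}_k$ in $M_n(Q)$, namely $m_k$. If the denominator is computed in $L$, the ratio is $\lambda_{k,l}/n$. The numerator needs no adjustment, since the non-normalized trace already gives each $E_{i,i}\otimes E_{j,j}$ weight $1$. You should therefore drop the phrase about a ``single $E_{j,j}$-slot normalization''.
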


\begin{proof}
  From \Cref{last lemma}, we know that $\dim(\mathcal{Z}(N'\cap M)) =
  r_0 $.\smallskip

 Notice first that the inclusion $N \subset L$ is given by the
 diagonal embedding
\[
N \ni \diag(\alpha_1(x), \dots, \alpha_n(x)) \mapsto \sum_{i,j=1}^n
\beta_j\alpha_i(x) \otimes E_{i,i} \otimes E_{j,j} \in L.
\] Thus,  $N\subset L$ is a diagonal subfactor defined by the automorphisms
$\mathcal{F}_1=\{\beta_j \alpha_i : 1\leq i,j \leq n\}\subseteq \Aut(Q)$. Since
$\{\beta_{t_i}\alpha_{s_i}: 1 \leq i \leq r_1\}$
is a maximal family of pairwise inequivalent automorphisms in
$\mathcal{F}_1$, it follows from  \Cref{last lemma}  that
$\dim(\mathcal{Z}(N'\cap L)) = r_1$. 

Next, for $1 \leq k \leq r_0$ and $ 1 \leq l \leq r_1$, let \(
\mathcal{B}^{(0)}_k:=\{\,i : \alpha_i \sim \alpha_{i_k}\,\},
\ \mathcal{B}^{(1)}_l:=\{\, (i,j) : \beta_j \alpha_i \sim
\beta_{t_l}\alpha_{s_l}\,\}, \) \( m_k:=|\mathcal{B}^{(0)}_k|\) and \(
n_l:= |\mathcal{B}^{(1)}_l|.  \) Then, from \Cref{last lemma} again,
we can take  $P^{(0)}_k=\sum_{i \in
  \mathcal{B}^{(0)}_k} E_{i, i}$ and  \(
P^{(1)}_l
=\sum_{(i,j)\in \mathcal{B}^{(1)}_l} E_{i,i}\otimes E_{j,j},
\).

 Denote the inclusion $M \subset L$ by  $\pi: M\to L$, i.e., 
 \(
\pi([x_{i,j}])
:=
\sum_{t=1}^n \beta_t^{(n)}([x_{i,j}]) \otimes E_{t,t}.
\)
Then,
\(
\pi(N)
=
\left\{
\sum_{i,j=1}^n \beta_j(\alpha_i(x))\otimes E_{i,i}\otimes E_{j,j}
:x\in Q
\right\}\) 
and, from  \Cref{last lemma} again,   there exists a diagonal unitary $D\in M_{n^2}(Q)$ such that
\begin{equation}\label{first commutant}
\pi(N)'\cap L
=
\Ad_{D}
\Big(\Big\{
\sum_{k=1}^{r_1}
\ \sum_{(i,j), (p,q)\in \mathcal{B}^{(1)}_k}
\lambda^{(k)}_{(i,j),(p,q)}\, (E_{i,p}\otimes E_{j,q}) : \lambda^{(k)}_{(i,j),(p,q)} \in \C
\Big\}\Big)
\cong
\bigoplus_{k=1}^{r_1} M_{n_k}(\mathbb{C}).
\end{equation}
Also, 
$\pi(P^{(0)}_k) =\sum_{i\in \mathcal{B}^{(0)}_k} E_{i,
  i}\otimes I_n$ and $\{\pi(P^{(0)}_k):1\le k\le r_0\}$ are the
minimal central projections of $\pi(N'\cap M)$. \smallskip

\noindent (1): For any $1\leq a \leq r_0$ and  $1 \leq b \leq r_1$,
\begin{eqnarray}\label{dim corner}
\pi(P^{(0)}_a) P^{(1)}_b
&=&
\sum_{k\in \mathcal{B}^{(0)}_a}\sum_{(i,j)\in \mathcal{B}^{(1)}_b}
\bigl(E_{k,k}E_{i,i}\otimes E_{j,j}\bigr)\nonumber\\
&=&
\sum_{\substack{(i,j)\in \mathcal{B}^{(1)}_b :\,  i\in \mathcal{B}^{(0)}_a}}
\bigl(E_{i,i}\otimes E_{j,j}\bigr);
\end{eqnarray}
so,  $\pi(P^{(0)}_a) P^{(1)}_b \neq 0$ if and only if there exist
an $i\in \mathcal{B}^{(0)}_a$ and a $j\in \{1, 2, \dots, n\}$ such that $(i,j)\in \mathcal{B}^{(1)}_b$.
Notice that, $i\in \mathcal{B}^{(0)}_a$ iff $\alpha_i \sim \alpha_{i_a}$, and
$(i,j)\in \mathcal{B}^{(1)}_b$ iff $\beta_j\alpha_i \sim \beta_{t_b}\alpha_{s_b}$.
Thus,  $i\in \mathcal{B}^{(0)}_a$ and  $(i,j)\in \mathcal{B}^{(1)}_b$ iff
\(
\beta_j\alpha_{i_a}\sim \beta_j\alpha_i\sim \beta_{t_b}\alpha_{s_b}.
\)

Therefore, we conclude that $\pi(P^{(0)}_a) P^{(1)}_b \neq 0$ if and
only if there exists some $\beta_j$ such that $\beta_j\alpha_{i_a}\sim
\beta_{t_b}\alpha_{s_b}$. This proves the first assertion. \smallskip

\noindent (2): Recall from \cite[Section~2.3]{GHJ}), that for $1\leq a
\leq r_0$, $1 \leq b \leq r_1$,
\[
\lambda_{a,b}
=
\begin{cases}
0, & \text{if } \pi(P^{(0)}_a)P^{(1)}_b=0, \\[1.2em]
\left(
\displaystyle
\frac{
\dim\bigl(
\pi(P^{(0)}_a)P^{(1)}_b(\pi(N)'\cap L)
P^{(1)}_b\pi(P^{(0)}_a)
\bigr)
}{
\dim\bigl(
\pi(P^{(0)}_a)P^{(1)}_b\pi(N'\cap M)
P^{(1)}_b\pi(P^{(0)}_a)
\bigr)
}
\right)^{1/2},
& \text{if } \pi(P^{(0)}_a)P^{(1)}_b\neq 0.
\end{cases}
\]
So,  we need to compute the
dimensions of the spaces $\pi(P^{(0)}_a)P^{(1)}_b\,(\pi(N)'\cap
L)\,P^{(1)}_b\pi(P^{(0)}_a)$ and $\pi(P^{(0)}_a)P^{(1)}_b\,(\pi(N'\cap
M))\,P^{(1)}_b\pi(P^{(0)}_a)$ for $1\leq a \leq r_0$, $1 \leq b \leq r_1$.

Let $1\leq a \leq r_0$, $1 \leq b \leq r_1$. Then, from \eqref{first
  commutant} and \eqref{dim corner}, it is immediate that
  \begin{eqnarray*}
\lefteqn{\pi(P^{(0)}_a)P^{(1)}_b\, \big(\pi(N)'\cap L\big)\,  P^{(1)}_b 
  \pi(P^{(0)}_a)} \\
   &=& 
\pi(P^{(0)}_a)  \Ad_{D}
\Big(
\Big\{
\sum_{(i,j), (p,q)\in \mathcal{B}^{(1)}_b}
\lambda^{(b)}_{(i,j),(p,q)}\, (E_{i,p}\otimes E_{j,q}) : 
\lambda^{(b)}_{(i,j),(p,q)} \in \C
\Big\}
\Big) \pi(P^{(0)}_a)
  \nonumber \\
   &=& 
  \Ad_D
\Big(
\Big\{
\sum_{\substack{(i,j),(p,q)\in \mathcal{B}^{(1)}_b :\, 
                i,p\in \mathcal{B}^{(0)}_a}}
\lambda^{(b)}_{(i,j),(p,q)}\, E_{(i,j), (p,q)} :
\lambda^{(b)}_{(i,j),(p,q)}\in \C
\Big\}
\Big).
\end{eqnarray*}
Thus,
\(
\dim({\pi(P^{(0)}_a)P^{(1)}_b\, \big(\pi(N)'\cap L\big)\, P^{(1)}_b
  \pi(P^{(0)}_a)})=\big| \{(i, j): (i,j) \in \mathcal{B}^{(1)}_b, i
\in \mathcal{B}^{(0)}_a \} \big|^{2}.\)
Also, from \eqref{dim corner}, we have
\(
\operatorname{Tr}\bigl(\pi(P^{(0)}_a)
 P^{(1)}_b\bigr) = \left|\left\{
(i,j) : (i,j) \in \mathcal{B}^{(1)}_b,  i \in \mathcal{B}^{(0)}_a
\right\}\right|.\)
  Hence, 
\begin{equation}\label{dimpq}
 \mathrm{dim}\Big(\pi(P^{(0)}_a)P^{(1)}_b (\pi(N)^{'}\cap L)
 P^{(1)}_b\pi(P^{(0)}_a)\Big)=
 \operatorname{Tr}\bigl(\pi(P^{(0)}_a)
 P^{(1)}_b\bigr)^2.
 \end{equation}
 
 We now assert that $\operatorname{Tr}\bigl(\pi(P^{(0)}_a)
 P^{(1)}_b\bigr) = \left|\left\{
j : \beta_j \alpha_{i_a} \sim \beta_{t_b}\alpha_{s_b}
\right\}\right|\, \operatorname{Tr}\bigl(\pi(P^{(0)}_a)\bigr)$.

 Let \( S:= \left\{ (i,j): \beta_j\alpha_i\sim
 \beta_{t_b}\alpha_{s_b}, \alpha_i\sim \alpha_{i_a} \right\}.  \)
 Clearly, \( S = \{i:\alpha_i\sim \alpha_{i_a}\} \times
 \{j:\beta_j\alpha_{i_a}\sim \beta_{t_b}\alpha_{s_b}\} \) and $
 \operatorname{Tr}\bigl(\pi(P^{(0)}_a) P^{(1)}_b\bigr) = |S|$.

 Also, since $\pi(P^{(0)}_a)=\sum_{i \in \mathcal{B}^{(0)}_a} E_{i,
   i}\ot I_n$, we get $\mathrm{Tr}(\pi(P^{(0)}_a))= n|\mathcal
 B^{(0)}_a|$.  Thus,
\begin{eqnarray}\label{bigger dimension}
\operatorname{Tr}\bigl(\pi(P^{(0)}_a)
 P^{(1)}_b\bigr) & = &
|\{i:\alpha_i\sim \alpha_{i_a}\}|\, \, 
 |\{j:\beta_j\alpha_{i_a}\sim \beta_{t_b}\alpha_{s_b}\}| \label{Tr-a-b}\\
 & = & 
|\mathcal B^{(0)}_a|\,
|\{j:\beta_j\alpha_{i_a}\sim \beta_{t_b}\alpha_{s_b}\}| \nonumber\\
& = & 
|\{j:\beta_j\alpha_{i_a}\sim \beta_{t_b}\alpha_{s_b}\}|\  \frac{\operatorname{Tr}\bigl(\pi(P^{(0)}_a)}{n} .\nonumber
\end{eqnarray}

Further, note from \Cref{last lemma} again that 
\[
N'\cap M = \mathrm{Ad}_{D_0}\Big(\Big\{\sum_{t=1}^{r_0} \sum_{i,j \in
  B^{(0)}_{i_t}} \lambda_{i,j}^{(t)} E_{i,j} : \lambda^{(t)}_{i,j} \in
\bbc\Big\}\Big)
\]
for some diagonal unitary $D_0=\diag(u_1,u_2,\cdots,u_n) \in M$. Thus, 
\begin{eqnarray*}
\pi(N' \cap M)
&=&
\mathrm{Ad}_{D_1}\Big(\Big\{
\sum_{t=1}^{r_0} \sum_{i,j \in B^{(0)}_{i_t}}
\lambda_{i,j}^{(t)} E_{i,j} \otimes I_n
: \lambda^{(t)}_{i,j} \in \bbc
\Big\} \Big),
\end{eqnarray*}
where
$D_1:=\diag\Big(\beta_1(u_1),\beta_2(u_2),\cdots,\beta_n(u_n)\Big)
\otimes I_n \in \mU(L)$.  Since $\pi(P^{(0)}_a)=\sum_{i \in
  \mathcal{B}^{(0)}_a} E_{i, i}\ot I_n$ and $ P^{(1)}_b
=\sum_{(i,j)\in \mathcal{B}^{(1)}_b} E_{i,i}\otimes E_{j,j} =
\sum_{(i,j)\in \mathcal{B}^{(1)}_b} E_{(i,j),(i,j)}$, we obtain
\begin{eqnarray*}\label{cardinality pq}
\lefteqn{\pi(P^{(0)}_a)P^{(1)}_b\,\pi(N' \cap M)\,P^{(1)}_b\pi(P^{(0)}_a)} \\&=&
 P^{(1)}_b \pi(P^{(0)}_a)\,\pi(N' \cap M)\,\pi(P^{(0)}_a) P^{(1)}_b  \nonumber \\&=&
\mathrm{Ad}_{D_1}\Big(\Big\{
P^{(1)}_b
\Big(
 \sum_{i,j \in \mB^{(0)}_{a}}
\lambda_{i,j}^{(a)} (E_{i,j} \otimes I_n)
\Big)
P^{(1)}_b
: \lambda^{(a)}_{i,j} \in \bbc
\Big\}\Big) \nonumber \\&=& 
\mathrm{Ad}_{D_1}\Big(\Big\{ P^{(1)}_b \Big(
\sum_{k=1}^n\sum_{i,j \in \mB^{(0)}_{a}}
\lambda_{i,j}^{(a)} (E_{i,j} \otimes E_{k,k}) \Big) P^{(1)}_b
: \lambda^{(t)}_{i,j} \in \bbc
\Big\} \Big) \nonumber \\
&=& 
\mathrm{Ad}_{D_1}\Big(\Big\{ P^{(1)}_b \Big(
\sum_{k=1}^n\sum_{i,j \in \mB^{(0)}_{a}}
\lambda_{i,j}^{(a)} E_{(i,k), (j,k)} \Big) P^{(1)}_b
: \lambda^{(t)}_{i,j} \in \bbc
\Big\} \Big) \nonumber \\
&=&
\mathrm{Ad}_{D_1}\Big(\Big\{
\sum_{(i,k),(j,k)\in \mathcal{B}^{(1)}_b :\, i,j\in \mB^{(0)}_{a}}
\lambda_{i,j}^{(a)} E_{(i,k), (j,k)}
: \lambda^{(a)}_{i,j} \in \bbc
\Big\}\Big).
\end{eqnarray*}
Hence,
\begin{eqnarray}\label{card pq}
\lefteqn{\dim({\pi(P^{(0)}_a)P^{(1)}_b\, \big(\pi(N)'\cap M\big)\, P^{(1)}_b
  \pi(P^{(0)}_a)})}\nonumber\\
&  = & \big| \{ (i,j):  \, i,j
\in \mathcal{B}^{(0)}_a, \, (i,k), (j,k) \in \mathcal{B}^{(1)}_b \text{for some} \,k \} \big|. 
\end{eqnarray}
It is easily seen that 
\begin{eqnarray}\label{fixed k}
   \lefteqn{\{ (i,j): \, i,j \in \mathcal{B}^{(0)}_a, \, (i,k), (j,k) \in
  \mathcal{B}^{(1)}_b \text{for some} \,k \}} \nonumber \\ & = & \{ i :
\, i \in \mathcal{B}^{(0)}_a, \, (i,k), \in \mathcal{B}^{(1)}_b
\text{for some} \,k \} \times \{ j: \, j \in \mathcal{B}^{(0)}_a, \,
(j,l ) \in \mathcal{B}^{(1)}_b \text{for some }  l \}. 
\end{eqnarray}
Therefore, we conclude from \eqref{card pq} and \eqref{fixed k} that
\[
\dim({\pi(P^{(0)}_a)P^{(1)}_b\, \big(\pi(N)'\cap M\big)\, P^{(1)}_b
  \pi(P^{(0)}_a)}) = \big| \{ i: \, i \in \mathcal{B}^{(0)}_a, \,
(i,k) \in \mathcal{B}^{(1)}_b \text{for some} \,k \} \big|^{2}.
  \]
Notice that
\begin{eqnarray*}
\{ i: \, i
\in \mathcal{B}^{(0)}_a, (i,k) \in \mathcal{B}^{(1)}_b \text{for some} \,k\}  & 
= &  \{i : \alpha_i \sim \alpha_{i_a}, \beta_k \alpha_{i} \sim \beta_{t_b}\alpha_{s_b} \, \, \text{for some} \,k\}\\
& = & \{i : \alpha_i \sim \alpha_{i_a} , \beta_k \alpha_{i_a} \sim \beta_{t_b}\alpha_{s_b} \, \, \text{for some} \,k\}.
\end{eqnarray*}
 When $\pi(P^{(0)}_a)P^{(1)}_b\neq 0$, then there exists $k$ such that
$\beta_{k} \alpha_{i_a} \sim \beta_{t_b}\alpha_{s_b}$; so,
\[
\{ i: \, i
\in \mathcal{B}^{(0)}_a, \, (i,k) \in \mathcal{B}^{(1)}_b \text{for some} \,k \} \
= \{i: \alpha_i \sim \alpha_{i_a}\}=\mathcal{B}^{(0)}_{a}. \]
Hence, when $\pi(P^{(0)}_a)P^{(1)}_b\neq 0$, then
\begin{equation}\label{small dimension}
\dim\left(
\pi(P^{(0)}_a)P^{(1)}_b\,\pi(N' \cap M)\,
P^{(1)}_b\pi(P^{(0)}_a)
\right)
=
\left|B^{(0)}_{a}\right|^2
=
\Big(\frac{\operatorname{Tr}(\pi(P^{(0)}_a)}{n}\Big)^2.
\end{equation}

Thus, from \eqref{dimpq}, \eqref{Tr-a-b} and \eqref{small dimension},
we conclude that when $\pi(P^{(0)}_a)P^{(1)}_b\neq 0$, then
\begin{eqnarray*}
\lambda_{a,b}^2
&=&
\frac{
\dim\bigl(\pi(P^{(0)}_a)\, P^{(1)}_b \, (\pi(N)' \cap L)\, P^{(1)}_b \,\pi(P^{(0)}_a)\bigr)
}{
\dim\bigl(\pi(P^{(0)}_a) P^{(1)}_b \, (\pi(N' \cap M) \, P^{(1)}_b \pi(P^{(0)}_a)\bigr)
} \nonumber\\
&=&
\frac{
\operatorname{Tr}\bigl(\pi(P^{(0)}_a) P^{(1)}_b\bigr)^2
}{
\operatorname{Tr}\bigl(\pi(P^{(0)}_a)\bigr)^2
} \nonumber\\
&=&
\left|
\left\{
j : \beta_j \alpha_{i_a} \sim \beta_{t_b}\alpha_{s_b}
\right\}
\right|^2
\end{eqnarray*}
and we are done.
\end{proof}

We borrow the following notations from \cite{Po2}.
\begin{notation}\label{gamma-notation} \label{product of automorphisms}
  \label{words of autmorpshisms}
  For $ k \in \N\cup \{0\}$ and $1\leq j_0, j_1, \ldots, j_k \leq n$, let
  \[
  \gamma(j_0,j_1,\dots,j_k): =\alpha_{j_k}^{(-1)^k}
  \alpha_{j_{k-1}}^{(-1)^{k-1}}\cdots \alpha_{j_2} \alpha_{j_1}^{-1}
  \alpha_{j_0}.
  \]
Also, let $\Gamma_k:=\{\gamma(j_0,j_1,\dots,j_k):1\le j_0,j_1,\dots,j_k\le n\}$.
\end{notation}

\begin{theorem}\cite{Bis, Po2}\label{pg-diagonal} \label{diagonal basic construction}
For $t\in \bbn \cup \{0\} $, let $M_t:=M_{n^{t+1}}(Q)$ and consider
the diagonal embeddings \( M_{2t}\hookrightarrow M_{2t+1}
\quad\text{and}\quad M_{2t+1}\hookrightarrow M_{2t+2} \) given by
\[
[x_{i,j}]_{i,j=1}^{n^{2t+1}}
\mapsto
\sum_{k=1}^n
 \big(\alpha_k^{-1} \big)^{(n^{2t+1})}([x_{i,j}]) \otimes E_{k,k}\  
\text{ and }
[y_{i,j}]_{i,j=1}^{n^{2t+2}}
\mapsto
\sum_{k=1}^n
(\alpha_k)^{(n^{2t+2})}([y_{i,j}])
\otimes E_{k,k},
\]
respectively. Then, 
  \[
N \subset M \overset{e_1}{\subset} M_1 \overset{e_2}{\subset} M_2
\overset{e_3}{\subset} \cdots \overset{e_k}{\subset} M_k \overset{e_{k+1}}\subset \cdots
\]
is the  basic construction tower of the inclusion $N\subset M$, where 
 the Jones projections  $\{e_k: k \geq 1\}$ are given by
\[
e_{2t+1}
=
\frac{1}{n}
\sum_{i,j=1}^n
I_{n^{2t}}\otimes E_{i,j}\otimes E_{i,j}
\in M_{2t+1}, t \geq 0 \text{ and }
\]
\[
e_{2t}
=
\frac{1}{n}
\sum_{i,j=1}^n
I_{n^{2t-1}}\otimes E_{i,j}\otimes E_{i,j}
\in M_{2t}, t \geq 1.
\]
 Moreover, for $t \in \N\cup\{0\}$,  the embedding $N \subset M_t$ is given by 
\[
\diag(\alpha_1(x), \alpha_2(x), \ldots, \alpha_n(x))
\longmapsto
\sum_{j_0,j_1,\dots,j_t=1}^n
\gamma(j_0,j_1,\dots,j_t)(x)
\otimes
E_{j_0,j_0}\otimes E_{j_1,j_1}\otimes\cdots\otimes E_{j_t,j_t}.
\]
\end{theorem}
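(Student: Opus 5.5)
I will prove this by induction on $t$, starting from the standard Jones model of the basic construction for a diagonal subfactor. The base case is the first step $N \subset M \subset M_1$. The target is to show that $M_1 = M_{n^2}(Q)$, with $M$ embedded via $[x_{i,j}]\mapsto \sum_k (\alpha_k^{-1})^{(n)}([x_{i,j}])\otimes E_{k,k}$, together with
\[
e_1=\tfrac1n\sum_{i,j}E_{i,j}\otimes E_{i,j},
\]
realises the basic construction. For this I will use the characterization of the basic construction by Pimsner--Popa: it suffices to check three things.
\begin{enumerate}
\item[(a)] $e_1$ is a projection with $e_1 x e_1 = E_N(x)e_1$ for all $x\in M$.
\item[(b)] $e_1$ commutes with $N$ and $N\ni y\mapsto ye_1$ is injective.
\item[(c)] $M_1$ is generated by $M$ and $e_1$, or equivalently $\mathrm{tr}(e_1)=[M:N]^{-1}=n^{-2}$ and $M e_1 M$ spans $M_1$.
\end{enumerate}

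For (a), the main computation is this. Take $x=a\otimes E_{p,q}$ with $a\in Q$. Its image in $M_1$ is $\sum_k\alpha_k^{-1}(a)\otimes E_{p,q}\otimes E_{k,k}$, and compressing by $e_1$ gives
\[
\tfrac1n\alpha_p^{-1}(a)\,\delta_{p,q}\,e_1\text{-type terms}.
\]
On the other side, $E_N(a\otimes E_{p,p})=\tfrac1n\diag(\alpha_i\alpha_p^{-1}(a))$, because $E_N$ averages $\alpha_i^{-1}$ of the diagonal entries. Applying the embedding to this diagonal element yields the entry $\alpha_k^{-1}\alpha_i\alpha_p^{-1}(a)$ in position $(i,k)$. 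Multiplying by $e_1$ forces $i=k$, so that entry collapses to $\alpha_p^{-1}(a)$, and the two sides agree.

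For (b), commutation with $N$ follows from the same collapse $\alpha_k^{-1}\alpha_k=\id$ in the diagonal positions $E_{i,i}\otimes E_{i,i}$ that $e_1$ sees. Injectivity holds because $M_1$ is a factor. For (c), $\mathrm{tr}(e_1)=n^{-2}$ is immediate. Since $M_1$ is a factor, the index of $M\subset M_1$ is $n^2=[M:N]$, and this identifies $\langle M,e_1\rangle$ with $M_1$. I expect verifying this generation step cleanly, rather than just the index, to be the main obstacle. To settle it I would instead exhibit $\{\sqrt n\,(1\otimes E_{1,p})\cdot\text{diag units}\}$ as a Pimsner--Popa basis and check $\sum_b b e_1 b^* = 1$ directly.

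For the inductive step, the same argument applies to $M_{t-1}\subset M_t$, which is itself a diagonal subfactor over the $\mathrm{II}_1$-factor $M_{n^t}(Q)$ with automorphisms $(\alpha_k^{\pm1})^{(n^t)}$. By Lemma~\ref{amplify lemma} and Remark~\ref{beta1=id}, the pattern repeats with the inverses alternating, and this produces the stated $e_{t+1}$ with the identity $I_{n^{t-1}}$ padded on the left. Finally, the formula for $N\subset M_t$ follows by composing the embeddings, which iterates the word $\gamma(j_0,\dots,j_t)$. Proposition~\ref{composition-diagonal} gives one composition step, and an induction on $t$ finishes it.
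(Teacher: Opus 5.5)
The paper does not prove this statement. It quotes it from Bisch and Popa, and your argument is the standard verification behind it. Your proposal is correct.

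The computations $e_1xe_1=E_N(x)e_1$ and $[e_1,N]=0$ are right. The induction works because your base case is really a lemma about an arbitrary diagonal inclusion $P\ni X\mapsto\sum_k\beta_k(X)\otimes E_{k,k}\in M_n(P)$. Its basic construction is $M_n(P)\ni Y\mapsto\sum_k(\beta_k^{-1})^{(n)}(Y)\otimes E_{k,k}\in M_n(M_n(P))$, with Jones projection $1_P\otimes\frac1n\sum_{i,j}E_{i,j}\otimes E_{i,j}$. You apply this with $P=M_{n^t}(Q)$ and $\beta_k=(\alpha_k^{\pm1})^{(n^t)}$, which produces the alternation of $\alpha_k$ and $\alpha_k^{-1}$. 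The identity $\alpha_{j_{t+1}}^{(-1)^{t+1}}\gamma(j_0,\dots,j_t)=\gamma(j_0,\dots,j_{t+1})$ then gives the formula for $N\subset M_t$.

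A few details should be corrected.

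\textbf{Injectivity.} The map $N\ni y\mapsto ye_1$ is injective because $N$ is a factor and $e_1\neq0$, not because $M_1$ is a factor. It can also be seen directly: $\diag(\alpha_i(x))\,e_1=x\otimes\frac1n\sum_{a,b}E_{a,b}\otimes E_{a,b}$.

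\textbf{Generation.} The step you flag as the main obstacle is immediate. One has $(E_{p,p}\otimes I_n)e_1(E_{q,q}\otimes I_n)=\frac1n E_{p,q}\otimes E_{p,q}$, so $M$ and $e_1$ generate every scalar matrix unit of $M_{n^2}(\mathbb{C})$. Compressing the image of $Q\otimes E_{1,1}$ by $E_{1,1}\otimes E_{1,1}$ then recovers $Q$, so $M$ and $e_1$ generate all of $M_{n^2}(Q)$ algebraically.

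\textbf{Pimsner--Popa basis.} Your description of the basis is garbled. The correct one is $\{\sqrt n\,E_{p,q}:1\le p,q\le n\}$, and $n(E_{p,q}\otimes I_n)e_1(E_{q,p}\otimes I_n)=E_{p,p}\otimes E_{q,q}$ sums to $1$. This identity is what makes the map $xe_Ny\mapsto xe_1y$ unital. Combined with direct generation, it gives the identification with the basic construction without any index comparison.

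\textbf{Indexing.} The Jones projection of $M_{t-1}\subset M_t$ is $e_{t+1}=I_{n^{t}}\otimes\frac1n\sum_{i,j}E_{i,j}\otimes E_{i,j}$, not $I_{n^{t-1}}\otimes\cdots$.

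\textbf{Citations.} The amplification lemma and the normalization $\alpha_1=\mathrm{id}_Q$ are not actually used. Only $(\theta^{(a)})^{(b)}=\theta^{(ab)}$ and $(\theta^{(a)})^{-1}=(\theta^{-1})^{(a)}$ are needed.
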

The following observation is well-known and follows directly from
\Cref{composition-diagonal} and \Cref{diagonal basic construction}. (It was
mentioned in \cite{Bis} for a specific type of diagonal subfactors.)
\begin{remark}\label{Mt-Mt+s-diagonal}  \label{Mt-Mt+1-diagonal}
  For every $t \geq -1 $ and $ s \geq 1$, the inclusion $M_t
  \subset M_{t+s}$ is a diagonal subfactor.\smallskip
\end{remark}

In order to describe the standard invariant and the principal graph, we
first need some basic observations and the easy proofs are omitted.

\begin{lemma}\label{word lemma}
The following relations hold:
\begin{enumerate}
\item $\Gamma_k\subseteq \Gamma_{k+1}$ for all $k\ge 0$.
\item If $k$ is odd, then $\Gamma_k^{-1}=\Gamma_k$ and $\Gamma_k\Gamma_k= \Gamma_{2k+1}$.
\item If $k$ is even, then $\Gamma_k^{-1}\Gamma_k = \Gamma_{2k+1}$.
\end{enumerate}
\end{lemma}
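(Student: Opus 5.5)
The statement is purely combinatorial about words in the alphabet $\mF$, so the plan is to work directly with Notation~\ref{gamma-notation}. The one structural fact used throughout is $\alpha_1=\id_Q$. Each part is a set identity; I prove each by a double inclusion, padding with copies of $\alpha_1$ where needed.

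\emph{Part (1).} Given $\gamma(j_0,\dots,j_k)\in\Gamma_k$, I append $j_{k+1}=1$. This gives
\[
\gamma(j_0,\dots,j_k,1)=\alpha_1^{(-1)^{k+1}}\gamma(j_0,\dots,j_k)=\gamma(j_0,\dots,j_k),
\]
since $\alpha_1^{\pm1}=\id_Q$. Hence $\Gamma_k\subseteq\Gamma_{k+1}$.

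\emph{Part (2), $k$ odd.} The word $\gamma(j_0,\dots,j_k)=\alpha_{j_k}^{-1}\alpha_{j_{k-1}}\cdots\alpha_{j_1}^{-1}\alpha_{j_0}$ has an even number $k+1$ of letters, starts on the right with a positive exponent and ends on the left with a negative one. Its inverse is $\alpha_{j_0}^{-1}\alpha_{j_1}\cdots\alpha_{j_{k-1}}^{-1}\alpha_{j_k}$, which has the same shape. So $\gamma(j_0,\dots,j_k)^{-1}=\gamma(j_k,j_{k-1},\dots,j_0)$, and $\Gamma_k^{-1}=\Gamma_k$. For the product, $\gamma(j'_0,\dots,j'_k)\,\gamma(j_0,\dots,j_k)$ is a word of length $2k+2$. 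Its right factor begins positively, and at the junction the exponents are $-1$ (leftmost letter of the right factor) followed by $+1$ (rightmost letter of the left factor), so signs keep alternating. This identifies the product with $\gamma(j_0,\dots,j_k,j'_0,\dots,j'_k)$, an element of $\Gamma_{2k+1}$. Conversely, any word of $\Gamma_{2k+1}$ splits after its first $k+1$ letters into two alternating blocks of this exact form. Hence $\Gamma_k\Gamma_k=\Gamma_{2k+1}$.

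\emph{Part (3), $k$ even.} Here $\gamma(j_0,\dots,j_k)$ has odd length $k+1$ and starts and ends with positive exponents. Its inverse $\alpha_{j_0}^{-1}\alpha_{j_1}\cdots\alpha_{j_k}^{-1}$ starts and ends negatively. The product $\gamma(j'_0,\dots,j'_k)^{-1}\gamma(j_0,\dots,j_k)$ therefore has the positive letter $\alpha_{j_k}$ immediately followed on the left by the negative letter $\alpha_{j'_k}^{-1}$, so it alternates throughout. It equals $\gamma(j_0,\dots,j_k,j'_k,\dots,j'_0)\in\Gamma_{2k+1}$, and splitting any word of $\Gamma_{2k+1}$ in the middle gives the reverse inclusion.

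The only real obstacle is bookkeeping of exponent signs and index order at the junction of the two factors, in particular remembering that $\gamma$ is read right-to-left. I would check each case on $k=1$ and $k=2$ before writing it in general.
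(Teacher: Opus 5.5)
Your proof is correct. The paper omits this argument as easy, and your direct check is exactly the intended bookkeeping: pad with $\alpha_1=\id_Q$ for (1); for odd $k$, use $\gamma(j_0,\dots,j_k)^{-1}=\gamma(j_k,\dots,j_0)$ and $\gamma(j'_0,\dots,j'_k)\,\gamma(j_0,\dots,j_k)=\gamma(j_0,\dots,j_k,j'_0,\dots,j'_k)$; for even $k$, use $\gamma(j'_0,\dots,j'_k)^{-1}\gamma(j_0,\dots,j_k)=\gamma(j_0,\dots,j_k,j'_k,\dots,j'_0)$, with each identity read in both directions. One small correction to your framing: $\alpha_1=\id_Q$ is needed only for (1), not ``throughout''; parts (2) and (3) hold for an arbitrary family.
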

\bigskip
The standard invariant of a diagonal subfactor was described
completely in \cite{Po2}.  However, as per the requirements of this
article, we describe two separate conditions for odd and even
inclusions of the relative commutants.

\begin{proposition}\label{relative commutant inclusions proposition}
Let $N \subset M \subset M_1 \subset \cdots \subset M_t \subset
\cdots$ be the tower of basic constructions as in \Cref{diagonal basic
  construction}.  For each $t\in \bbn \cup \{0\}$, let
$\widehat{\Gamma}_t:=\big\{
\gamma(i^{(s)}_0,i^{(s)}_1,\dots,i^{(s)}_t): 1\le s\le n_t \big\}$ be
a maximal family of pairwise inequivalent automorphisms in
$\Gamma_t$. Then, $\dim(\mZ(N'\cap M_t)) = n_t$.

Further, let $\{P^{(t)}_s : s = 1,2,\dots, n_t\}$ be the minimal
central projections of $N^{'}\cap M_t$ and let
$\Lambda^{(t)}=[\lambda^{(t)}_{r,s}] \in M_{n_{t-1},n_t}(\C)$ denote
the inclusion matrix of the inclusion $N^{'}\cap M_{t-1} \subset
N^{'}\cap M_t$.  Then, for appropriate $r$ and $s$, the following hold:
\begin{enumerate}
\item
$P^{(2t)}_{r} P^{(2t+1)}_{s} \neq 0$ if and only if there exists some
  $l$ such that
\[
\alpha^{-1}_l \gamma(i^{(r)}_0,i^{(r)}_1,\dots,i^{(r)}_{2t})
\sim \gamma(i^{(s)}_0,i^{(s)}_1,\dots,i^{(s)}_{2t+1}).
\] 
\item 
$\lambda^{(2t+1)}_{r,s}= \left|\left\{ l: \alpha^{-1}_l
  \gamma(i^{(r)}_0,i^{(r)}_1,\dots,i^{(r)}_{2t}) \sim
  \gamma(i^{(s)}_0,i^{(s)}_1,\dots,i^{(s)}_{2t+1}) \right\}\right|.$
\item
$P^{(2t-1)}_{r} P^{(2t)}_{s} \neq 0$ if and only if there exists some
  $l$ such that
\[
\alpha_l \gamma(i^{(r)}_0,i^{(r)}_1,\dots,i^{(r)}_{2t-1})
\sim \gamma(i^{(s)}_0,i^{(s)}_1,\dots,i^{(s)}_{2t}).
\]
\item
 $\lambda^{(2t)}_{r,s}= \left|\left\{ l: \alpha_l
  \gamma(i^{(r)}_0,i^{(r)}_1,\dots,i^{(r)}_{2t-1}) \sim
  \gamma(i^{(s)}_0,i^{(s)}_1,\dots,i^{(s)}_{2t}) \right\}\right|$.
\end{enumerate}
\end{proposition}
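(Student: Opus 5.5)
The plan is to reduce everything to \Cref{composition-diagonal}, applied at each level of the tower described in \Cref{diagonal basic construction}. First I record that, for every $t \ge 0$, the inclusion $N \subset M_t$ is itself a diagonal subfactor. Its defining automorphisms are exactly the family $\Gamma_t = \{\gamma(j_0,\dots,j_t)\}$, indexed by the multi-indices $\ul{j}$ through the identification \eqref{tensors}. Hence \Cref{last lemma} gives at once $\dim(\mZ(N'\cap M_t)) = |\widehat{\Gamma}_t| = n_t$. It also shows that the minimal central projections $P^{(t)}_s$ are the sums of the $E_{\ul{j},\ul{j}}$ over those $\ul{j}$ with $\gamma(\ul{j}) \sim \gamma(i^{(s)}_0,\dots,i^{(s)}_t)$.

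Next I treat the passage $M_{t-1} \subset M_t$ in two cases according to parity. For the odd step $M_{2t} \subset M_{2t+1}$, \Cref{diagonal basic construction} says that the embedding is the diagonal embedding by the amplified automorphisms $(\alpha_l^{-1})^{(n^{2t+1})}$, $1 \le l \le n$. So I apply \Cref{composition-diagonal} with the roles taken as follows:
\begin{itemize}
\item $Q$ there is still $Q$, but ``$M$'' there becomes $M_{2t} = M_{n^{2t+1}}(Q)$ and $n$ there becomes $n^{2t+1}$ for the inner size;
\item $\beta_l := \alpha_l^{-1}$;
\item ``$N \subset M$'' there is our diagonal subfactor $N \subset M_{2t}$, with defining family $\mathcal{F}_0 = \Gamma_{2t}$.
\end{itemize}
Then $\mathcal{F}_1 = \{\alpha_l^{-1}\gamma : \gamma \in \Gamma_{2t}\} = \Gamma_{2t+1}$, and items (1) and (2) of \Cref{composition-diagonal} give exactly (1) and (2) of the statement. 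The even step $M_{2t-1} \subset M_{2t}$ is handled in the same way with $\beta_l := \alpha_l$, which yields (3) and (4).

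The main obstacle is that \Cref{composition-diagonal} is stated for an outer embedding of $M_n(Q)$ into $M_{n^2}(Q)$ given by $n$ automorphisms, with the inner algebra being a diagonal subfactor of $M_n(Q)$ of size $n$. Here the inner size is $n^{t}$ while the outer block count is still $n$. So I must check that its proof uses nothing specific about the two sizes being equal. The argument only uses \Cref{last lemma}, the product form of the index sets $\mathcal{B}^{(1)}$, and trace counting, so it should go through verbatim with $n$ replaced by $N_0 := n^t$ for the inner matrix size. The only place a factor $n$ appears is $\mathrm{Tr}(\pi(P^{(0)}_a)) = n\,|\mathcal{B}^{(0)}_a|$, and there $n$ is the number of outer blocks, which is unchanged.

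A secondary point is the sign convention in the word $\gamma$. I must verify that $\beta_l \circ \gamma(j_0,\dots,j_{t-1}) = \gamma(j_0,\dots,j_{t-1},l)$ with $\beta_l = \alpha_l^{(-1)^t}$. This follows directly from \Cref{gamma-notation}: the new letter is composed on the left with exponent $(-1)^t$.
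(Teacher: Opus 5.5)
Your proposal is correct and follows the paper's proof. Like the paper, you realize $N\subset M_t$ as a diagonal subfactor with defining family $\Gamma_t$ to get $\dim(\mZ(N'\cap M_t))=n_t$ and the projections $P^{(t)}_s$ from \Cref{last lemma}, and then apply \Cref{inclusion matrix prop} to $N\subset M_{t-1}\subset M_t$ with $\beta_l=\alpha_l^{-1}$ or $\beta_l=\alpha_l$ according to parity. Your explicit check that the proof of \Cref{inclusion matrix prop} does not need the inner matrix size to equal the number $n$ of outer blocks is something the paper uses only tacitly, so it adds care without changing the route.
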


\begin{proof}
For each $t \in \bbn \cup \{0\}$, it follows readily from \Cref{diagonal basic
  construction} that  the embedding $N
\subset M_t$ is a diagonal subfactor given by
\begin{eqnarray}\label{embedding stage}
N
&=&
\left\{
\sum_{j_0,j_1,\dots,j_t=1}^n
\gamma(j_0,j_1,\dots,j_t)(x)
\otimes E_{j_0,j_0}\otimes E_{j_1,j_1}\otimes \cdots \otimes E_{j_t,j_t}
:x\in Q
\right\}  \nonumber \\
&=& \left\{
\sum_{\ul{j} \in I^{t+1}} \gamma(\ul{j}) (x) \otimes E_{\ul{j},\ul{j}}
: x\in Q
\right\} \subset M_t:=M_{n^{t+1}}(Q) ,
\end{eqnarray}
where we've used Notation \eqref{tensors}. Thus, $\dim(\mZ(N'\cap
M_t)) = n_t$, by \Cref{last lemma}.

For each $ 1 \leq s \leq n_t$, let $\mathcal{B}^{(t)}_s:=\{ \ul{j} \in
I^{(t)} : \gamma(\ul{j}) \sim
\gamma(i^{(s)}_0,i^{(s)}_1,\dots,i^{(s)}_t) \}$. Then, from \Cref{last
  lemma} we conclude that the collection \( \{ \mathcal{B}^{(t)}_{s} :
s=1,2,\ldots,n_t\} \) is a partition of $I^t$ and
$\{P^{(t)}_s:=\sum_{\ul{j} \in \mathcal{B}^{(t)}_s} E_{\ul{j},\ul{j}}
: 1 \leq s \leq n_t\}$ (see \eqref{tensors}) is the collection of
minimal central projections of $N^{'}\cap M_{t}$.

Now, we find the inclusion matrices for the embeddings $N^{'}\cap M_t
\subset N^{'}\cap M_{t+1}$, $t \geq 0$.

We first consider the even to odd case of $N'\cap M_{2t} \subset N^{'}
\cap M_{2t +1}$.

By using \eqref{product of automorphisms} and \eqref{embedding stage} we write
\begin{equation*}
N=\left\{\sum_{k=1}^n \sum_{\ul{j} \in I^{2t}} \alpha^{-1}_k
\gamma(\ul{j})(x) \otimes E_{\ul{j},\ul{j}} \otimes E_{k,k} : x \in Q
\right\} \subset M_{2t+1}
\end{equation*}
Hence, applying \Cref{inclusion matrix prop} for $N \subset M_{2t}
\subset M_{2t+1}$, we conclude the following:

$P^{(2t)}_{r} P^{(2t+1)}_{s} \neq 0$ if and only if there exists some
$\alpha_l$ such that
\begin{eqnarray*}
\alpha^{-1}_l \big(\gamma(i^{(r)}_0,i^{(r)}_1,\dots,i^{(r)}_{2t})\big)
&\sim&
\gamma(i^{(s)}_0,i^{(s)}_1,\dots,i^{(s)}_{2t+1}) 
\end{eqnarray*}
and also that 
\begin{eqnarray*}
\lambda^{(2t+1)}_{r, s}=  \left|\left\{
\alpha_l: \alpha^{-1}_l \big(\gamma(i^{(r)}_0,i^{(r)}_1,\dots,i^{(r)}_{2t})\big) \sim \gamma(i^{(s)}_0,i^{(s)}_1,\dots,i^{(s)}_{2t+1})
\right\}\right|
\end{eqnarray*} 
This concludes the proof of the  assertions made in $(1)$ and $(2)$.\smallskip

We now handle the even to odd inclusion $N'\cap M_{2t-1} \subset N'\cap M_{2t}$.

Again by using \eqref{product of automorphisms} and \eqref{embedding
  stage}, we write
\begin{equation*}
N=\left\{\sum_{k=1}^n \sum_{\ul{j} \in I^{2t-1}} \alpha_k
\gamma(\ul{j})(x) \otimes E_{\ul{j},\ul{j}} \otimes E_{k,k} : x \in Q
\right\} \subset M_{2t}
\end{equation*}
Therefore, by applying \Cref{inclusion matrix prop} for $N \subset
M_{2t-1} \subset M_{2t}$, we conclude the following:

$P^{(2t-1)}_{r} P^{(2t)}_{s} \neq 0$ if and only if there exists some
$\alpha_l$ such that
\begin{eqnarray*}
\alpha_l \big(\gamma(i^{(r)}_0,i^{(r)}_1,\dots,i^{(r)}_{2t-1})\big)
&\sim&
\gamma(i^{(s)}_0,i^{(s)}_1,\dots,i^{(s)}_{2t}) 
\end{eqnarray*}
and also that 
\begin{equation*}
\lambda^{(2t)}_{r, s}= \left|\left\{
\alpha_l: \alpha_l \big(\gamma(i^{(r)}_0,i^{(r)}_1,\dots,i^{(r)}_{2t-1})\big) \sim \gamma(i^{(s)}_0,i^{(s)}_1,\dots,i^{(s)}_{2t})
\right\}\right|.
\end{equation*}
This completes the proof of the remaining assertions $(3)$ and $4$. 
\end{proof}

\begin{proposition}\label{relative commutant inclusions proposition reduced}
Let $M \subset M_1 \subset \cdots \subset M_t \subset \cdots$ be the
tower of basic constructions as in \Cref{diagonal basic
  construction}. For each $t\in \mathbb N\cup\{0\}$, define
$\widetilde{\Gamma}_t := \left\{ \gamma(1,j_1,\dots,j_t):
j_1,\dots,j_t\in\{1,2,\dots,n\} \right\}$. Let $\left\{
\gamma(1,i^{(s)}_1,\dots,i^{(s)}_t):1\leq s\leq \widetilde n_t
\right\}$ be a maximal family of pairwise inequivalent automorphisms
in $\widetilde{\Gamma}_t$. Then, $\dim(\mZ(M'\cap M_t)) =
\widetilde{n_t}$.

Further, let \( \left\{ \widetilde P^{(t)}_s:1\leq s\leq \widetilde
n_t \right\} \) be the minimal central projections of $M'\cap M_t$
corresponding to these equivalence classes. Let
$\widetilde\Lambda^{(t+1)}$ denote the inclusion matrix of $M'\cap
M_{t}\subset M'\cap M_{t+1}$. Then, the following statements hold:
\begin{enumerate}
\item For $t\geq 0$,  $\widetilde P^{(2t)}_{r}\widetilde P^{(2t+1)}_{s}\neq 0$
if and only if there exists some $1\leq l\leq n$ such that
\[
\alpha_l^{-1}
\gamma(1,i^{(r)}_1,\dots,i^{(r)}_{2t})
\sim
\gamma(1,i^{(s)}_1,\dots,i^{(s)}_{2t+1}).
\]

\item For $t\geq 0$, the entries of $\widetilde\Lambda^{(2t+1)}$ are
given by
\[
\widetilde\Lambda^{(2t+1)}_{r,s}
=
\left|
\left\{
l\in\{1,\dots,n\}:
\alpha_l^{-1}
\gamma(1,i^{(r)}_1,\dots,i^{(r)}_{2t})
\sim
\gamma(1,i^{(s)}_1,\dots,i^{(s)}_{2t+1})
\right\}
\right|.
\]

\item For $t\geq 1$, $\widetilde P^{(2t-1)}_{r}\widetilde P^{(2t)}_{s}\neq 0$ if and only if there exists some $1\leq l\leq n$ such that
\[
\alpha_l
\gamma(1,i^{(r)}_1,\dots,i^{(r)}_{2t-1})
\sim
\gamma(1,i^{(s)}_1,\dots,i^{(s)}_{2t}).
\]

\item For $t\geq 1$, the entries of $\widetilde\Lambda^{(2t)}$ are
given by
\[
\widetilde\Lambda^{(2t)}_{r,s}
=
\left|
\left\{
l\in\{1,\dots,n\}:
\alpha_l
\gamma(1,i^{(r)}_1,\dots,i^{(r)}_{2t-1})
\sim
\gamma(1,i^{(s)}_1,\dots,i^{(s)}_{2t})
\right\}
\right|.
\]
\end{enumerate}
\end{proposition}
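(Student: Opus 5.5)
The plan is to reduce everything to \Cref{relative commutant inclusions proposition} by realizing the tower $M\subset M_1\subset M_2\subset\cdots$ as a tower of diagonal embeddings of $M=M_n(Q)$. By \Cref{diagonal basic construction}, composing the embeddings $M\hookrightarrow M_1\hookrightarrow\cdots\hookrightarrow M_t$ gives
\[
A\longmapsto \sum_{j_1,\dots,j_t=1}^n \gamma(1,j_1,\dots,j_t)^{(n)}(A)\otimes E_{j_1,j_1}\otimes\cdots\otimes E_{j_t,j_t},\qquad A\in M_n(Q).
\]
This identity is checked by induction on $t$, as in the proof of the corresponding formula for $N\subset M_t$. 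The extra factor $\alpha_{j_t}^{(-1)^t}$ at each stage simply multiplies on the left. Since $\alpha_1=\id_Q$, the resulting word is $\gamma(1,j_1,\dots,j_t)$.

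So $M\subset M_t$ is the diagonal subfactor of $M_n(Q)$ defined by the automorphisms $\widetilde{\Gamma}_t^{(n)}$. By \Cref{amplify lemma}, $\{\theta^{(n)}:\theta\in\widehat{\widetilde{\Gamma}}_t\}$ is a maximal pairwise inequivalent family in $\widetilde{\Gamma}_t^{(n)}$. Then \Cref{last lemma}, applied with $M_n(Q)$ in place of $Q$, gives $\dim\mZ(M'\cap M_t)=\widetilde n_t$. It also identifies the minimal central projections $\widetilde P^{(t)}_s$ as the sums of the diagonal matrix units over the index classes $\{\ul j:\gamma(1,\ul j)\sim\gamma(1,i^{(s)}_1,\dots,i^{(s)}_t)\}$.

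For the inclusion matrices, split $M_t\subset M_{t+1}$ by the parity of $t$. If $t=2u$ is even, the next embedding applies $\alpha_l^{-1}$ in the new tensor slot. Then $M\subset M_{2u}\subset M_{2u+1}$ is an instance of \Cref{inclusion matrix prop}, with $\mF$ replaced by $\widetilde\Gamma_{2u}^{(n)}$ (indexed by $\ul j$) and $\beta_l$ replaced by $(\alpha_l^{-1})^{(n^{2u+1})}$. Strictly, that proposition is stated for an $n$-fold family over $Q$ and an $n$-fold second step. I will note that its proof uses only the index sets and \Cref{last lemma}, so it applies verbatim to families of arbitrary size over any $\mathrm{II}_1$ factor, here $M_n(Q)$.

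Its two conclusions then read as follows. First, $\widetilde P^{(2u)}_r\widetilde P^{(2u+1)}_s\neq0$ if and only if some $l$ satisfies $(\alpha_l^{-1})^{(n)}\gamma(1,\ul i^{(r)})^{(n)}\sim \gamma(1,\ul i^{(s)})^{(n)}$. Second, the entry $\widetilde\Lambda^{(2u+1)}_{r,s}$ counts the number of such $l$. Using \Cref{amplify lemma} once more to drop the amplifications yields (1) and (2). The odd-to-even case $t=2u-1$ is identical with $\alpha_l$ in place of $\alpha_l^{-1}$, giving (3) and (4).

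I expect the main obstacle to be bookkeeping rather than a new idea. One point is justifying the composed-embedding formula with the correct alternation of inverses. The other is that the central projections in \Cref{inclusion matrix prop} are phrased via a chosen representative $\beta_{t_l}\alpha_{s_l}$. That representative must be matched with the chosen representative $\gamma(1,i^{(s)}_1,\dots,i^{(s)}_{t+1})$ of the class in $\widetilde\Gamma_{t+1}$. Since equivalence is what matters, any representative of the class works, and I will remark on this rather than recompute.
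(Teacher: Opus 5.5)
Your proposal is correct and follows essentially the same route as the paper. You realize $M\subset M_t$ as the diagonal subfactor of $M_n(Q)$ defined by $\widetilde\Gamma_t^{(n)}$, run the argument of \Cref{relative commutant inclusions proposition} (that is, \Cref{inclusion matrix prop} over the base factor $M_n(Q)$ with second-step automorphisms $(\alpha_l^{-1})^{(n)}$ or $\alpha_l^{(n)}$ according to parity), and then remove the amplifications via \Cref{amplify lemma}. Your explicit remark that \Cref{inclusion matrix prop} must be used beyond its literally stated sizes is a point the paper leaves implicit, here and already in \Cref{relative commutant inclusions proposition}.
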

\begin{proof}
For each $t \in \bbn \cup \{0\}$,  it follows from  \Cref{diagonal basic
  construction} that the embedding $M
\subset M_t$ is a diagonal subfactor given by
\begin{eqnarray}\label{embedding stage dual}
M
&=&
\left\{
\sum_{j_0,j_1,\dots,j_t=1}^n
\gamma(1,j_1,\dots,j_t)^{(n)}(A)
\otimes E_{j_1,j_1}\otimes \cdots \otimes E_{j_t,j_t}
: A \in M=M_n(Q)
\right\}  \nonumber \\
&=& \left\{
\sum_{\ul{j} \in J^{t+1}} \gamma(\ul{j})^{(n)} (A) \otimes E_{\ul{j},\ul{j}}
: x\in Q
\right\} \subset M_t:=M_{n^{t+1}}(Q),
\end{eqnarray}
where $J^{t+1}=\{(1,j_1,j_2, \cdots, j_t) : j_k \in I\} \subset
I^{t+1}$ (see \Cref{tensors}).  Therefore, following the same line of
proof as in \Cref{relative commutant inclusions proposition}, we
obtain the following amplified conditions.

For $t\geq 0$,
\[
\widetilde P^{(2t)}_{r}\widetilde P^{(2t+1)}_{s}\neq 0
\]
if and only if there exists $1\leq l\leq n$ such that
\[
(\alpha_l^{-1})^{(n)}
\gamma(1,i^{(r)}_1,\dots,i^{(r)}_{2t})^{(n)}
\sim
\gamma(1,i^{(s)}_1,\dots,i^{(s)}_{2t+1})^{(n)}.
\]
Also,
\[
\widetilde\Lambda^{(2t+1)}_{r,s}
=
\left|
\left\{
l\in\{1,\dots,n\}:
(\alpha_l^{-1})^{(n)}
\gamma(1,i^{(r)}_1,\dots,i^{(r)}_{2t})^{(n)}
\sim
\gamma(1,i^{(s)}_1,\dots,i^{(s)}_{2t+1})^{(n)}
\right\}
\right|.
\]

Similarly, for $t\geq 1$,
\[
\widetilde P^{(2t-1)}_{r}\widetilde P^{(2t)}_{s}\neq 0
\]
if and only if there exists $1\leq l\leq n$ such that
\[
\alpha_l^{(n)}
\gamma(1,i^{(r)}_1,\dots,i^{(r)}_{2t-1})^{(n)}
\sim
\gamma(1,i^{(s)}_1,\dots,i^{(s)}_{2t})^{(n)}.
\]
Moreover,
\[
\widetilde\Lambda^{(2t)}_{r,s}
=
\left|
\left\{
l\in\{1,\dots,n\}:
\alpha_l^{(n)}
\gamma(1,i^{(r)}_1,\dots,i^{(r)}_{2t-1})^{(n)}
\sim
\gamma(1,i^{(s)}_1,\dots,i^{(s)}_{2t})^{(n)}
\right\}
\right|.
\]

By \Cref{no change in equivalence}, the above four conditions
are equivalent, respectively, to the four conditions stated in the
proposition. Hence, all four assertions follow. This completes the proof.
\end{proof}

\section{A remark on an assertion made in \cite{NV} regarding certain diagonal subfactors}\label{appendix-B}

 The following statement is made in the first paragraph of  
 \cite[Page 301]{NV}:

{\em Let $R$ denote the hyperfinite $II_1$-factor and, $\alpha$ and $ \beta$ be outer automorphisms of $R$
 such that $p_0(\alpha)=2=p_0(\beta), \, \alpha^2=\id_R$ and $
 \beta^2\neq \id_R$. Then, the diagonal subfactors
 \[
 N_1:=\{ \diag(x, \alpha(x)) : x \in R \} \subset M_2(R) \text { and }
 N_2:=\{ \diag(x, \beta(x)) : x \in R\} \subset M_2(R)\] are
 non-isomorphic but have the same principal graph, namely,
 $A^{(1)}_3$.}\smallskip

However, we notice that there seems to be an overlook in the above
assertion. In this section, we provide a counter example disproving the
above assertion.

Consider the hyperfinite $II_1$-factor in the form
$R=\overline{\bigcup_{k=1}^{\infty}
  M_n(\mathbb{C})^{(k)}}^{\,\mathrm{SOT}}$, where
$M_n(\mathbb{C})^{(k)}$ denotes the $k$-fold tensor product of
$M_n(\mathbb{C})$, and for $x\in M_n(\mathbb{C})$, we write
\[
x^{(k)}=x\otimes x\otimes \cdots \otimes x \quad (k\text{-times}).
\]

\begin{proposition}\cite{BGG2}\label{outer-automorphism-construction}
For every unitary $u\in M_n(\mathbb{C})$ and $k \in \N$, let
$\alpha_k:= \Ad_{u^{(k)}} \in \Inn(R)$. Then, $\{\alpha_k : k \in
\bbn\}$ converges in the topology of
poinwise-S.O.T. convergence to an automorphism $\theta\in\Aut(R)$
satisfying
\(
\theta(x)=\mathrm{Ad}_{u^{(k)}}(x) \text{ for all }  x\in M_n(\mathbb{C})^{(k)} \text{ and }  k \in \N.
\)

In particular,
\begin{equation}\label{eq1}
\theta(a)=\mathrm{Ad}_{u}(a) \text{ for all }  a\in M_n(\mathbb{C}) .
\end{equation}
Moreover, the following hold:
\begin{enumerate}
\item If $u\notin \mathbb{S}^{1}I_n$, then $\theta$ is outer.

\item If $m$ is the smallest positive integer such that
\(
u^m=\lambda_0 I_n
\) 
for some $\lambda_0\in\mathbb{C}$ with $|\lambda_0|=1$, then $\theta$
has outer period $m$. In fact,
\(
\theta^{m}=\id_R .
\)

If no such $m$ exists, then the outer period of $\theta$ is zero.
\end{enumerate}
\end{proposition}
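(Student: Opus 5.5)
The plan is to build $\theta$ as the infinite tensor product automorphism $\bigotimes_k \Ad_u$. Outerness will come from a trace estimate on the tail of the tensor product. The outer period then follows by applying (1) to the powers $u^j$.

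\textbf{Construction and convergence.} For $x\in M_n(\C)^{(k)}$ (embedded in $R$ as $x\otimes 1\otimes 1\otimes\cdots$) and $l\ge k$, we have $u^{(l)}=u^{(k)}\otimes u^{(l-k)}$. Hence
\[
\Ad_{u^{(l)}}(x)=\Ad_{u^{(k)}}(x).
\]
So the sequence $\alpha_l(x)$ is eventually constant on the algebraic union $\mathcal A:=\bigcup_k M_n(\C)^{(k)}$. This defines a trace-preserving $*$-automorphism $\theta_0$ of $\mathcal A$. Being $\|\cdot\|_2$-isometric, $\theta_0$ extends to a trace-preserving automorphism $\theta$ of $R$; its inverse is the extension of $\Ad_{u^*}$ built the same way. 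Pointwise SOT convergence on all of $R$ is a $3\varepsilon$ argument using Kaplansky density, applied in $\|\cdot\|_2$, which suffices on bounded sets in $R\subset B(L^2(R))$. For $y\in\mathcal A$ with $\|x-y\|_2<\varepsilon$ and $\|y\|\le\|x\|$,
\[
\|\alpha_l(x)\xi-\theta(x)\xi\|\le 2\varepsilon\text{-type terms}+\|\alpha_l(y)\xi-\theta(y)\xi\|,
\]
and the last term vanishes for large $l$. The identity \eqref{eq1} is the case $k=1$.

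\textbf{Outerness, part (1).} Suppose $\theta=\Ad_w$ with $w\in\mU(R)$. Then for each $k$, $u^{(k)*}w$ commutes with $M_n(\C)^{(k)}$. Its relative commutant is the tail $R_k:=\overline{\bigotimes_{j>k}M_n(\C)}$, so
\[
w=u^{(k)}\otimes w_k \quad\text{for some } w_k\in\mU(R_k).
\]
Comparing the decompositions at $k$ and $k+1$ gives $w_k=u\otimes w_{k+1}$. Taking traces, $\tau(w_k)=\tau(u)\,\tau(w_{k+1})$, and iterating,
\[
|\tau(w_k)|\le|\tau(u)|^j \quad\text{for all } j.
\]
Since $u\notin\mathbb S^1 I_n$, the normalized trace satisfies $|\tau(u)|<1$; hence $\tau(w_k)=0$ for every $k$. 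On the other hand, the conditional expectation onto $M_n(\C)^{(k)}$ gives $E_k(w)=\tau(w_k)\,u^{(k)}$. Because $E_k(w)\to w$ in $\|\cdot\|_2$, we get $|\tau(w_k)|=\|E_k(w)\|_2\to\|w\|_2=1$, a contradiction. This trace/tail step is the main point. The delicate part is justifying the tensor splitting $w=u^{(k)}\otimes w_k$, which uses $M_n(\C)^{(k)}{}'\cap R=R_k$ for the infinite tensor product.

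\textbf{Outer period, part (2).} By construction $\theta^j$ is the automorphism associated to the unitary $u^j$ in the same way. If $u^m=\lambda_0 I_n$, then $\Ad_{u^m}=\id$ on every $M_n(\C)^{(k)}$, so $\theta^m=\id_R$ by density. For $1\le j<m$, $u^j$ is not scalar by minimality of $m$, so $\theta^j$ is outer by (1). Thus $p_0(\theta)=m$. If no such $m$ exists, every $u^j$ ($j\ge1$) is nonscalar, so every $\theta^j$ is outer, and the outer period is infinite (recorded as zero in the statement).
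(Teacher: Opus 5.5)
Your proof is correct. The paper gives no proof of this proposition (it is quoted from \cite{BGG2}), so there is no in-text argument to compare with, and what you have is a complete, self-contained proof.

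The core of (1) is sound: you write $w=u^{(k)}\otimes w_k$ via $(M_n(\C)^{(k)})'\cap R=R_k$ and deduce $w_k=u\otimes w_{k+1}$. This gives $\tau(w_k)=\tau(u)^j\tau(w_{k+j})$, hence $\tau(w_k)=0$, which contradicts $\|E_k(w)\|_2=|\tau(w_k)|\to\|w\|_2=1$. The hypothesis $u\notin\mathbb S^1 I_n$ enters exactly through $|\tau(u)|<1$. Part (2) follows correctly from $\theta^j$ being the automorphism built from $u^j$, together with (1).

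For comparison, the more common route to (1) uses central sequences. Pick $a\in M_n(\C)$ with $uau^*\neq a$, which is possible precisely because $u$ is not scalar, and let $x_k$ be $a$ placed in the $k$-th tensor slot. Then $(x_k)$ is a bounded central sequence with $\|\theta(x_k)-x_k\|_2=\|uau^*-a\|_2$ constant and nonzero, whereas $\|wx_kw^*-x_k\|_2=\|[w,x_k]\|_2\to0$ for every unitary $w\in R$. That route avoids splitting $w$ along $M_n(\C)^{(k)}\bar\otimes R_k$, but relies on central triviality of inner automorphisms. Yours stays entirely within tensor-product and conditional-expectation computations.

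Two small points are worth tightening.
\begin{enumerate}
\item In the SOT step, for an arbitrary $\xi\in L^2(R)$ the term $\|\alpha_l(x-y)\xi\|$ is not controlled by $\|x-y\|_2$ as written. You can either take $\xi=z\Omega$ with $z\in R$, use $\|az\Omega\|=\|Jz^*Ja\Omega\|\le\|z\|\,\|a\|_2$, and then pass to general $\xi$ via the uniform bound $\|\alpha_l(x)\|\le\|x\|$. Or you can use that on bounded sets SOT-convergence is equivalent to $\|\cdot\|_2$-convergence, where $\|\alpha_l(x)-\theta(x)\|_2\le 2\|x-y\|_2$ once $l$ is large, by trace-preservation.
\item In the last case of (2), note also that $\theta^{-j}$ is outer whenever $\theta^j$ is. Then $\{j\in\Z:\theta^j\in\Inn(R)\}=\{0\}$, which is what ``outer period zero'' means in Connes's convention used later in the paper.
\end{enumerate}
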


\begin{proposition}\label{prop2}
There exists an $\alpha \in \Aut(R)$ and a unitary $u$ in $R$ such
that \begin{eqnarray} p_0(\alpha)=2 , \quad \alpha^2=\id_R, \quad
  \alpha(u)=u \quad and \quad u^k \not \in \bbc
\end{eqnarray}
for all $k \in \N$.  \end{proposition}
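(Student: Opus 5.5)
We will build $\alpha$ and $u$ directly from \Cref{outer-automorphism-construction}, using the infinite tensor product model $R=\overline{\bigcup_k M_n(\C)^{(k)}}$. Take $n=2$ and the self-adjoint unitary
\[
w:=\begin{pmatrix}1&0\\0&-1\end{pmatrix}\in M_2(\C).
\]
Let $\alpha$ be the automorphism $\theta$ produced by \Cref{outer-automorphism-construction} from $w$, so that $\alpha=\Ad_{w\otimes w\otimes\cdots}$ acts factorwise. Since $w\notin\mathbb S^1 I_2$ and $w^2=I_2$ (with $m=2$ the least such exponent), that proposition gives that $\alpha$ is outer, $p_0(\alpha)=2$ and $\alpha^2=\id_R$. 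This settles the first two requirements.

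For the unitary, we look inside the fixed point algebra of $\alpha$. It contains every diagonal matrix in each tensor factor. Choose an irrational $\phi\in\R$ and set
\[
v:=\diag(1,e^{2\pi i\phi})\in M_2(\C).
\]
Place $v$ in the first tensor factor, so $u:=v\otimes 1\otimes1\otimes\cdots\in R$. Since $w$ and $v$ commute, \eqref{eq1} gives $\alpha(u)=\Ad_w(v)\otimes1\cdots=u$. Moreover $u^k=\diag(1,e^{2\pi i k\phi})\otimes 1\otimes\cdots$. This is a scalar only if $e^{2\pi i k\phi}=1$, which is impossible for $k\in\N$ because $\phi$ is irrational. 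Hence $u^k\notin\C$ for every $k$.

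The only delicate point is making sure the outer-period claim, namely that $\alpha$ is genuinely outer with $p_0(\alpha)=2$ rather than $1$, is available. We take this from \Cref{outer-automorphism-construction}(1),(2) as stated. If one wanted an independent check, the standard argument is this: if $\alpha=\Ad_z$, then $z$ would asymptotically commute with the tails $M_2(\C)^{(\ge k)}$ while implementing $\Ad_w$ on each factor. Comparing $\|z-E_k(z)\|_2\to0$, where $E_k$ is the conditional expectation onto $M_2(\C)^{(k)}$, with the relative commutant then yields a contradiction.

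Everything else above is a direct computation. So the proposition follows once these choices of $\alpha$ and $u$ are fixed. Later in the paper, presumably $\beta:=\Ad_u\circ\alpha$ will serve: it satisfies $\beta^2=\Ad_u\alpha\Ad_u\alpha=\Ad_{u\alpha(u)}=\Ad_{u^2}\ne\id_R$ and is equivalent to $\alpha$.
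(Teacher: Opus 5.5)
Your proof is correct and follows the same route as the paper. Both obtain $\alpha$ from \Cref{outer-automorphism-construction} via an order-two non-scalar unitary: you use $\diag(1,-1)\in M_2(\C)$, the paper uses the permutation matrix of $(12)(34)$ in $M_4(\C)$. Both then take $u$ in the first tensor factor commuting with that unitary and of infinite order modulo scalars, and your irrational $\phi$ simply makes explicit the paper's unspecified choice of $\lambda,\mu$.
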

\begin{proof}
 Consider the hyperfinite $II_1$-factor in the form
 $R=\overline{\bigcup_{k=1}^{\infty}
   M_4(\mathbb{C})^{(k)}}^{\,\mathrm{SOT}}$.

 Let $P \in M_4(\mathbb{C})$ be the permutation matrix corresponding
 to the permutation $= (12)(34)$; thus, $P = E_{1,2} + E_{2,1} +
 E_{3,4} + E_{4,3}$ and $P$ is a unitary.

 Then, by \Cref{outer-automorphism-construction}, the sequence $\{
 {\mathrm{Ad}_{P^{(k)}} : k\in \bbn }\}$ of inner automorphisms of $R$
 converges to some $\alpha \in \Aut(R)$ in the topology of
 pointwise-S.O.T. convergence.  Moreover, as $P \not \in \bbc I_4$ and
 $P^2=I_4$, it follows from \Cref{outer-automorphism-construction}
 that
\(
 p_0(\alpha)=2 \text{ and }  \alpha^2=\id_R;
\)
also, 
 \begin{equation}\label{eq2}
  \alpha(x)=\mathrm{Ad}_{P}(x)
 \end{equation}
for all  $x \in M_4(\bbc)$,  by
$\eqref{eq1}$. 

Now consider the diagonal unitary
\(
u:=\diag(\lambda,\lambda,\mu,\mu)\in M_4(\mathbb{C}) \subseteq R,
\)
where $\lambda,\mu\in \mathbb{S}^1$ are chosen so that
\(
u^k\notin \C I_4
\text{ for any } k\ge1.
\)
Since $PuP^*=u$, it follows from \Cref{eq2} that
\[
\alpha(u)=u .
\]
This completes the proof .
\end{proof}

{\em Counter example:} 
From \Cref{prop2}, there exist an $\alpha
\in \Aut(R)$ and a (diagonal) unitary $u \in M_4(\C) \subseteq R$ such that
\begin{eqnarray}\label{eq3}
 p_0(\alpha)=2 , \quad \alpha^2=\id_R \quad \alpha(u)=u \quad \text{and}
 \quad u^k \not \in \bbc I_4 
\end{eqnarray}
for all $k \in \N$. Consider \( \beta :=\mathrm{Ad}_{u}\circ \alpha
\in \Aut(R).  \) Then $p_0(\beta)=2$ as $p_0(\alpha)=2$. Also,
\[
\beta^2
=(\mathrm{Ad}_{u} \circ \alpha) (\mathrm{Ad}_{u} \circ \alpha)
=\mathrm{Ad}_{u\alpha(u)}\alpha^2 
=\mathrm{Ad}_{u^2}  \quad (as \, \alpha(u)=u).
\]
Since $u^2$ is not a scalar multiple of $I_4$, it therefore follows that
\(
\beta^2=\mathrm{Ad}_{u^2} \neq \id_R .
\)

Consider the associated diagonal subfactors
\[
N_\alpha:=\{\diag(x,\alpha(x)):x\in R\}\subset M_2(R)\text { and }
\]
\[
N_\beta:=\{\diag(x,\beta(x)):x\in R\}\subset M_2(R).
\]
Since
\(
\diag(1,u)\,\diag(x,\alpha(x))\,\diag(1,u)^*
=\diag(x,\beta(x))
\)
for all $x \in R$, it follows that 
\[
\diag(1,u)\,N_\alpha\,\diag(1,u)^*=N_\beta .
\]
Thus, $\alpha$ and $ \beta$ satisfy all the hypotheses of the above
mentioned assertion of Nikshych-Vainerman, but the associated
(regular) diagonal subfactors are still conjugate; in particular, they
are isomorphic.

\end{document}